\newtheorem{theorem}{Theorem}[section]
\newtheorem{definition}[theorem]{Definition}
\newtheorem{lemma}[theorem]{Lemma}
\newtheorem{proposition}[theorem]{Proposition}
\newtheorem{corollary}[theorem]{Corollary}
\newtheorem{question}[theorem]{Question}
\theoremstyle{remark}
\newenvironment{remark}
{\par\pushQED{\qed}\nremark \small}
{\popQED\endnremark}
\newtheorem{examplecore}[theorem]{Example}}
\numberwithin{equation}{section}
\newenvironment{example}{\begin{examplecore}}{\hspace*{\fill}
$\square$\par\vspace{.1cm}\end{examplecore}}
\newcommand{\op}{\operatorname}
\newcommand{\om}{\omega}
\newcommand{\coker}{\op{coker}}
\newcommand{\inj}{\hookrightarrow}
\newcommand{\iso}{\cong}
\newcommand{\Hom}{\op{Hom}}
\newcommand{\Fl}{\op{Fl}}
\newcommand{\Gr}{\op{Gr}}
\newcommand{\id}{\op{id}}
\newcommand{\CH}{\op{CH}}
\newcommand{\Ch}{\op{Ch}}
\newcommand{\GL}{\op{GL}}
\newcommand{\PP}{\mathbb{P}}
\newcommand{\A}{\mathbb{A}}
\newcommand{\Q}{\mathbb{Q}}
\newcommand{\R}{\mathbb{R}}
\newcommand{\Sq}{\op{Sq}}
\newcommand{\Z}{\mathbb{Z}}
\newcommand{\C}{\mathbb{C}}
\newcommand{\stb}{,\ldots, }
\newcommand{\se}{\subseteq}
\newcommand{\bra}{\langle}
\newcommand{\ket}{\rangle}
\newcommand{\al}{\alpha}
\newcommand{\be}{\beta}
\newcommand{\ga}{\gamma}
\newcommand{\Ga}{\Gamma}
\newcommand{\De}{\Delta}
\newcommand{\de}{\delta}
\newcommand{\ka}{\kappa}
\newcommand{\la}{\lambda}
\newcommand{\La}{\Lambda}
\newcommand{\si}{\sigma}
\newcommand{\Si}{\Sigma}
\newcommand{\su}{\backslash}
\newcommand{\D}{D}
\renewcommand{\L}{\mathscr{L}}
\newcommand{\EE}{\mathcal{E}}
\newcommand{\FF}{\mathcal{F}}
\renewcommand{\SS}{\mathcal{S}}
\newcommand{\DD}{\mathcal{D}}
\newcommand{\QQ}{\mathcal{Q}}
\newcommand{\Ann}{\op{Ann}}
\newcommand{\e}{{\rm e}}
\newcommand{\p}{{\rm p}}
\newcommand{\cc}{{\rm c}}
\newcommand{\symmdiff}{\triangle}
\begin{document}

\title{Chow--Witt rings and topology of flag varieties}

\author{Thomas Hudson, \'Akos K.\ Matszangosz and Matthias Wendt}

\date{August 2024}

\address{Thomas Hudson, College of Transdisciplinary Studies, DGIST, 
Daegu, 42988, Republic of Korea}
\email{hudson@digst.ac.kr}

\address{\'Akos K.\ Matszangosz, HUN-REN Alfr\'ed R\'enyi Institute of Mathematics, Re\'altanoda utca 13-15, 1053 Budapest, Hungary}
\email{matszangosz.akos@gmail.com}

\address{Matthias Wendt, Fachgruppe Mathematik und Informatik, Bergische Universit\"at Wuppertal, Gaussstrasse 20, 42119 Wuppertal, Germany}
\email{m.wendt.c@gmail.com}

\thanks{\'A. K. M. is supported by the Hungarian National Research, Development and Innovation Office, NKFIH K 138828 and NKFIH PD 145995. Part of this work was done while \'A. K. M. was at the University of Wuppertal.}

\subjclass[2010]{14M15,14F25,19G12,57T15}
\keywords{Witt-sheaf cohomology, Chow--Witt rings, singular cohomology, (real) flag varieties, characteristic classes, refined enumerative geometry}

\begin{abstract}
  The paper computes the Witt-sheaf cohomology rings of partial flag varieties in type~A in terms of the Pontryagin classes of the subquotient bundles. The proof is based on a Leray--Hirsch-type theorem for Witt-sheaf cohomology for the maximal rank cases, and a detailed study of cohomology ring presentations and annihilators of characteristic classes for the general case. The computations have consequences for the topology of real flag manifolds: we show that all torsion in the integral cohomology is 2-torsion, which was not known in full generality previously. This allows for example to compute the Poincar\'e polynomials of complete flag varieties for cohomology with twisted integer coefficients. The computations also allow to describe the Chow--Witt rings of flag varieties, and we sketch an enumerative application to counting flags satisfying multiple incidence conditions to given hypersurfaces.
\end{abstract}

\maketitle
\setcounter{tocdepth}{1}
\tableofcontents

\section{Introduction}

The goal of the paper is to establish formulas describing the Witt-sheaf cohomology and Chow--Witt rings of partial flag varieties in type A, generalizing previous computations for Grassmannians in \cite{realgrassmannian}. The resulting formulas (and some of the methods used to establish them) are very close to the corresponding results for integral cohomology of real flag manifolds, cf.\ in particular~\cite{matszangosz}. \newpage

\textbf{Real vs.\ complex.} There are two significantly different settings to study flag varieties: over the complex and over the real numbers. On the complex side, one can consider the Chow ring or the singular cohomology of the complex flag varieties. These turn out to be isomorphic via the complex cycle class map \cite{Fulton}, and additively freely generated by the Bruhat cells. In this situation, both the characteristic class picture and the Schubert calculus picture are very well studied and understood, with strong links and applications to enumerative geometry and geometric representation theory. On the other hand, real flag varieties have been much less studied. Only recently, algebraic cohomology theories such as Chow--Witt groups or Witt-sheaf cohomology have been developed \cite{fasel:memoir} which can be compared to the integral singular cohomology of real flag manifolds via Jacobson's real cycle class map \cite{4real}. Even the structure of integral singular cohomology itself is complicated and (as far as we are aware) not completely known. Well-known computations exist with mod 2 or $\mathbb{Z}[1/2]$ coefficients, e.g.~\cite{he}, as well as in specific cases such as Grassmannians \cite{realgrassmannian} and maximal rank flag varieties \cite{matszangosz} or infinite and hence ``stable'' cases \cite{markl}. These results demonstrate that the integral singular cohomology of real flag varieties has a lot of complicated 2-torsion related to the integral Stiefel--Whitney classes of the tautological subquotient bundles. But while the known computations suggest that for real flag manifolds in general all the torsion in the integral singular cohomology is 2-torsion, this appears not to be known in full generality.

\textbf{Characteristic classes and Schubert calculus.} For an explicit description of the cohomology of flag varieties, as well as applications, there are (at least) two relevant pictures that can be considered. On the one hand, there is the characteristic class picture, where characteristic classes of the tautological subquotient bundles provide cohomology classes, with relations coming from the Whitney sum formula and other universal properties of characteristic classes. In favourable circumstances, such as for the Chow rings of flag varieties, one obtains a presentation of the cohomology ring with characteristic classes as generators and very explicit (though combinatorially complicated) relations. On the other hand, there is the Schubert calculus picture, where the Schubert cells (alternatively called Bruhat cells) of the flag variety provide additive generators of cohomology, and the intersection numbers for Schubert varieties are the structure constants which describe the multiplicative structure. The latter picture in particular gives rise to applications in enumerative geometry, where counting problems related to (flags of) subspaces are translated into computation of intersection numbers in Grassmannians or flag varieties.

\textbf{Summary of main results.} The present paper focuses on the characteristic class picture on the real flag varieties side. The main results in the paper describe the cohomology theories on the algebraic side, such as the Witt-sheaf cohomology or, ultimately, the Chow--Witt rings of partial flag varieties, cf.~Theorems~\ref{thm:untwisted} and \ref{thm:main} below. The short informal description of the results is that the Witt-sheaf cohomology is generated by the characteristic classes (Euler classes and Pontryagin classes) of the tautological subquotient bundles, plus additional exterior classes which are transgressions of Euler classes on related flag varieties. One of the principal motivations for our results -- besides the intrinsic value of obtaining a presentation of cohomology rings of such fundamental a class of examples as flag varieties -- is the long-term goal of establishing a version of Schubert calculus in the algebraic setting of Witt-sheaf cohomology or Witt rings. This, however, is beyond the scope of the present paper, and we will undertake the development of a Schubert calculus for Chow--Witt rings of flag varieties and subsequent refined enumerative geometry of degeneracy loci in sequels to the present paper.

The more immediate payoff from these computations is a complete description of the integral singular cohomology of real flag manifolds. As mentioned above, the remaining open question is a 2-primary torsion question. Using the real cycle class map isomorphism of \cite{4real}, we can deduce that all torsion in the integral singular cohomology of partial flag manifolds of type A is 2-torsion, cf.~Theorem~\ref{thm:main-2torsion} below. The reason why the detour through algebraic-geometric cohomology theories is helpful at all is that, as with Hodge structures on the cohomology of complex varieties, the algebraic theories have additional ``weight'' information which provides more structure than is visible in just singular cohomology. More concretely, the Witt-sheaf cohomology of the flag varieties encodes an analogue of ``integral singular cohomology modulo the image of the Bockstein map'', which cannot be formulated on the topological side. As direct applications of our cohomology computations, we can compute the cohomology groups of the complete flag manifolds and, in particular, determine the ranks of the 2-torsion components, cf.~Theorem~\ref{thm:completeflag}. On the algebraic side, the computations of Witt-sheaf cohomology also extend to a description of the Chow--Witt rings (with a slight indeterminacy in the reductions of the exterior classes), cf.\ the discussion in Section~\ref{sec:relationship}. 

\subsection{Tools for computing Witt-sheaf cohomology}
The key computation in the paper concerns the Witt-sheaf cohomology ${\rm H}^*\bigl(X,\mathbf{W}(\L)\bigr)$, i.e., the Nisnevich cohomology of a smooth scheme $X$ with coefficients in the sheaf $\mathbf{W}(\L)$ of Witt rings twisted by a line bundle $\L$ over $X$, cf.~Section~\ref{sec:prelims} for background. From this, computations of $\mathbf{I}$-cohomology or Chow--Witt groups follow in a rather straightforward manner. The properties of Witt-sheaf cohomology as outlined in Section~\ref{sec:prelims} also make it more amenable to computations, most of the time very similar to computations of singular cohomology. In particular, for the purposes of the present paper, we develop $\mathbf{W}$-cohomology analogues of two particularly useful computational tools from singular cohomology. 

On the one hand, we establish in Proposition~\ref{prop:kuenneth} \emph{a K\"unneth formula for $\mathbf{W}$-cohomology}, which expresses the (twisted) $\mathbf{W}$-cohomology of a product $X\times Y$ as a tensor product
\[
  {\rm H}^*\bigl(X,\mathbf{W}(\L_X)\bigr)\otimes_{{\rm W}(F)}{\rm H}^*\bigl(Y,\mathbf{W}(\L_Y)\bigr)\cong {\rm H}^*\bigl(X\times Y,\mathbf{W}(\L_X\boxtimes \L_Y)\bigr)
\]
of the $\mathbf{W}$-cohomologies of the factors, provided one of the factors is cellular. Based on this, we also establish a \emph{$\mathbf{W}$-cohomology version of the Leray--Hirsch theorem}, expressing the (twisted) $\mathbf{W}$-cohomology of the total space of a Zariski-locally trivial fiber bundle with cellular fiber as tensor product of the $\mathbf{W}$-cohomologies of base and fiber, cf.~Proposition~\ref{prop:leray-hirsch}. 

The Leray--Hirsch theorem can be used to compute $\mathbf{W}$-cohomology of many Grassmannian bundles. For a rank $n$ vector bundle $\EE$ over a smooth scheme $X$, the Leray--Hirsch theorem applies to the Grassmannian bundle $\mathscr{G}(k,\EE)$ of rank $k$ subbundles of $\EE$ over $X$ as long as the dimension $\dim{\rm Gr}(k,n)=k(n-k)$ of the fiber is even. This is the principal tool for the computation of $\mathbf{W}$-cohomology of maximal rank flag varieties. Moreover, the Leray--Hirsch theorem in fact applies to maximal rank flag bundles as well, a fact which we think will be very useful for the future study of refined enumerative geometry of degeneracy loci.

\subsection{$\mathbf{W}$-cohomology: multiplicative description}
For partial flag varieties of type A, we will now describe the multiplicative structure of the $\mathbf{W}$-cohomology, as computed in the paper. They are generated by cohomology classes arising from the tautological subquotient bundles $\DD_i$ -- Pontryagin classes $\op{p}_j(\DD_i)$, and Euler classes ${\rm e}(\DD_i)$ in appropriate twisted cohomology groups --, and certain additional classes $\op{R}_l$, which are not characteristic classes, but are related to Euler classes on other flag varieties.

Before making precise statements, let us introduce some notation, cf.\ also Section~\ref{sec:flags}. Let $\D=(d_1\stb d_m)$ be a sequence of nonnegative integers, and let $\Fl(\D)$ denote the variety of flags 
$$V_\bullet=\Bigl((0)=V_0\se V_1\se \ldots\se V_m=\mathbb{A}^N\Bigr),$$
such that $\dim V_i/V_{i-1}=d_i$ and $N=\sum_{i=1}^md_i$. Denote by $\DD_i$ the subquotient bundle whose fiber over $V_\bullet$ is $V_i/V_{i-1}$. For a vector bundle $\EE$ of rank $n$ over $X$, let $\op{p}_j(\EE)\in \op{H}^{2j}(X, \mathbf{W})$ denote its $j$th Pontryagin class and let
$$\op{p}(\EE)=1+\op{p}_1(\EE)+\cdots+\op{p}_{n}(\EE)$$ 
denote the total Pontryagin class. Note that the indexing of Pontryagin classes used here follows the unconventional conventions set out in \cite{chowwitt,realgrassmannian} and includes the odd Pontryagin classes, without reindexing or additional signs, cf.~Remark~\ref{rem:pontryagin}. Note also that the odd Pontryagin classes are trivial in Witt-sheaf cohomology. The multiplicative structure of the Witt-sheaf cohomology of $\Fl(\D)$ is then given by the following theorem.

\begin{theorem}\label{thm:untwisted}
  Let $F$ be a perfect field of characteristic $\neq 2$. Then the $\mathbf{W}$-cohomology ring of $\Fl(\D)$ (with untwisted coefficients) is:
  $$
  \op{H}^*\bigl(\Fl(\D),\mathbf{W}\bigr)\cong
  \frac{\op{W}(F)[\op{p}_{2j}(\DD_i) ]}{\prod_{i=1}^m \op{p}(\DD_i)=1}
  \otimes \bigwedge\nolimits_{l=q+1}^n\langle \op{R}_l\rangle,
  $$
  where for all $i=1,\dots,m$, the $\op{p}_{2j}(\DD_i)$, $j=1\stb \lfloor\frac{d_i}{2}\rfloor$, are the relevant even Pontryagin classes of the subquotient bundle $\DD_i$. 
  In the exterior algebra part, $q=\sum_{i=1}^m\lfloor \frac{d_i}{2}\rfloor$, $n=\lfloor \frac N 2\rfloor$ and the degree of $R_l$ is $4l-1$, with one exception: when $N$ is even, $R_n$ has degree $N-1$.
\end{theorem}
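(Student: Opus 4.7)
The plan is to proceed by induction on the number $m$ of parts of $\D = (d_1 \stb d_m)$, using the projection $\pi\colon \Fl(\D) \to \Gr(d_1, N)$ together with the Leray--Hirsch theorem for $\mathbf{W}$-cohomology of Proposition~\ref{prop:leray-hirsch}. The base case $m = 1$ is a point with trivial $\DD_1$, so $\op{p}(\DD_1) = 1$ forces the polynomial ring to collapse to $\op{W}(F)$, matching the formula. The case $m = 2$ is a Grassmannian and was handled in~\cite{realgrassmannian}.

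For the inductive step, $\pi$ is a Zariski-locally trivial bundle with cellular fiber $\Fl(d_2 \stb d_m)$. When the fiber dimension is even, Proposition~\ref{prop:leray-hirsch} yields
\[
  \op{H}^*\bigl(\Fl(\D), \mathbf{W}\bigr) \iso \op{H}^*\bigl(\Gr(d_1, N), \mathbf{W}\bigr) \otimes_{\op{W}(F)} \op{H}^*\bigl(\Fl(d_2 \stb d_m), \mathbf{W}\bigr).
\]
The generators split accordingly: $\op{p}_{2j}(\DD_1)$ come from the base, while $\op{p}_{2j}(\DD_i)$ for $i \geq 2$ and the inductive exterior classes come from the fiber. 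The base relation $\op{p}(\DD_1)\op{p}(\QQ) = 1$ (with $\QQ$ the universal quotient on $\Gr(d_1, N)$) combines with the inductive fiber relation $\prod_{i \geq 2} \op{p}(\DD_i) = 1$ to give $\prod_i \op{p}(\DD_i) = 1$ on $\Fl(\D)$, since $\pi^*\QQ$ carries a filtration with successive quotients $\DD_2 \stb \DD_m$. The exterior generators from base and fiber reassemble into $R_{q+1} \stb R_n$ of the claimed degrees by matching the additive parameters $q$ and $n$ across the fibration.

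The principal obstacle is arranging even fiber dimension at every step. When this fails (due to certain parity patterns in the odd parts $d_i$), I would instead construct a tower $\Fl(\D) \to \Fl(\D') \to \cdots \to \op{pt}$ of Grassmannian-bundle projections in which each fiber is even-dimensional, achieved by suitably pairing odd-$d_i$ positions when coarsening. Each such pairing contributes exactly one exterior generator $R_l$, accounting for the cardinality $n - q = \lfloor s/2\rfloor$ of the exterior factor (where $s$ denotes the number of odd $d_i$). The typical degree $4l - 1$ arises from transgressing the Euler class of a rank-$2l$ bundle in an even-dimensional Grassmannian base; the exceptional degree $N - 1$ for $R_n$ when $N$ is even comes from the top step of the tower, where an odd-rank Euler class transgresses in a projective-space-type fibration rather than contributing via the generic pattern. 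A final rank count, comparing the $\op{W}(F)$-rank of the claimed presentation against that obtained from the iterated Leray--Hirsch, rules out any further relations.
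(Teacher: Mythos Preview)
Your proposal works, essentially as in the paper, for the \emph{maximal rank} case (at most one odd $d_i$): there the fiber $\Fl(d_2,\dots,d_m)$ has $\mathbf{W}$-cohomology generated by Pontryagin and Euler classes, which lift to the total space, so Leray--Hirsch applies. This is the content of Section~\ref{sec:maximal}.

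The gap is in the non-maximal-rank case. The obstruction to Leray--Hirsch is not ``even fiber dimension'' but rather condition~(2) of Proposition~\ref{prop:leray-hirsch}: a basis of the fiber cohomology must lift to the total space. For a flag-variety fiber this means its cohomology must be generated by characteristic classes, i.e.\ the fiber must itself be maximal rank. As soon as the fiber has exterior classes $R_l$, these are not characteristic classes and there is no reason they extend globally---indeed, the paper's Example~\ref{ex:leray-hirsch-pn} shows this already fails for odd-dimensional projective-space bundles. Your proposed fix (reordering the tower so each Grassmannian fiber $\Gr(k,n)$ has $k(n-k)$ even) cannot succeed in general: for $\D=(1,1,1)$ every possible Grassmannian step has fiber $\mathbb{P}^1$ with $k(n-k)=1$ odd. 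And if you \emph{could} build such a tower, iterated Leray--Hirsch would produce cohomology generated purely by characteristic classes, with no exterior part at all---contradicting the statement you are trying to prove.

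The paper's route for the general case is entirely different and is the missing ingredient: one relates $\Fl(\D)$ to $\Fl(\D')$ with $\D'=(d_1,\dots,d_i{-}1,d_{i+1}{+}1,\dots,d_m)$ via a common sphere bundle (Proposition~\ref{prop:move1}), and analyzes the resulting Gysin sequences. The exterior class $R_l$ appears not through Leray--Hirsch but as a new generator in the sphere-bundle cohomology, coming from the kernel of multiplication by an Euler class; showing that this kernel is free of rank one over the cokernel (Theorem~\ref{thm:annihilator}) is a substantial algebraic computation occupying all of Section~\ref{sec:ann-euler}. Repeating this move and permuting entries reduces any $\D$ to a maximal-rank one. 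Your sketch does not contain this mechanism, and I do not see how to complete it without it.
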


This formula is analogous to the description of $\mathbb{Z}[1/2]$-cohomology of real flag manifolds, cf.\ e.g.\ \cite{he}, \cite{matszangosz}. For the case of complete flags, additive splittings of Milnor--Witt motives have been obtained by Yang \cite{yang}. One could expect that versions of Cartan's and Borel's formulas hold for projective homogeneous varieties of other types, but for that we would first need to know the Chow--Witt rings for reductive groups of the relevant types (which is not the case at this point). 

Our proof of Theorem~\ref{thm:untwisted} actually computes the total $\mathbf{W}$-cohomology ring including twisted local coefficients. This has the structure of a $\Z\oplus \op{Pic}\bigl(\Fl(\D)\bigr)/2$-graded $\op{W}(F)$-algebra, so let us recall that the Picard group of $\Fl(\D)$, for $\D=(d_1\stb d_m)$ equals
$$ \op{Pic}\bigl(\Fl(\D)\bigr)=\Z\bra \ell_1\stb \ell_m\ket\bigg/ \sum \ell_i,$$
where $\ell_i$ denotes the first Chern class ${\rm c}_1(\DD_i)$ of the bundle $\DD_i$. The full description of the $\mathbf{W}$-cohomology ring with twisted coefficients is then given by the following theorem. See also Definition~\ref{def:WD} for a slightly different way of writing out this presentation of the Witt-sheaf cohomology ring of $\Fl(\D)$.

\begin{theorem}
  \label{thm:main}
With the notation of Theorem~\ref{thm:untwisted}, the complete $\Z\oplus \op{Pic}\bigl(\Fl(\D)\bigr)/2$-graded $\mathbf{W}$-cohomology ring equals:
$$
\bigoplus_\L \op{H}^*\bigl(\op{Fl}(\D),\mathbf{W}(\L)\bigr)=
\op{H}^*\bigl(\op{Fl}(\D),\mathbf{W}\bigr)\bigotimes 
\op{W}(F)[\op{e}(\DD_i) \mid i=1\stb m]\Big/ \sim ,
$$
where the Euler classes ${\rm e}(\DD_i)$ satisfy the following relations $\sim$:
$$ \begin{cases}
  \op e(\DD_i)=0, \qquad &\text{if } d_i \text{ is odd, and}\\
  \op e(\DD_i)^2=\op{p}_{d_i}(\DD_i), \qquad &\text{if } d_i \text{ is even,}
\end{cases}$$
with an additional relation in the case when all $d_i$ are even:
$$ \prod_{i=1}^m\op{e}(\DD_i)=0.$$
\end{theorem}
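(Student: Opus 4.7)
The plan is to bootstrap from Theorem~\ref{thm:untwisted} to the full $\Z\oplus\op{Pic}\bigl(\Fl(\D)\bigr)/2$-graded statement by recovering the twisted sectors through the Euler classes of the subquotient bundles.

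First I would verify the three Euler-class relations. The vanishing $\op{e}(\DD_i)=0$ for odd $d_i$ is a standard feature of $\mathbf{W}$-cohomology: Euler classes of odd rank vector bundles are trivial in the Witt-sheaf setting (the same principle that forces the odd Pontryagin classes to vanish in $\mathbf{W}$-cohomology). The identity $\op{e}(\DD_i)^2=\op{p}_{d_i}(\DD_i)$ for even $d_i$ is the standard relation linking the top Pontryagin class to the square of the Euler class, and its target lies in untwisted cohomology since $(\det\DD_i)^{\otimes 2}$ is trivial in $\op{Pic}/2$. Finally, the product vanishing $\prod_{i=1}^{m}\op{e}(\DD_i)=0$ follows from multiplicativity of the Euler class together with the tautological isomorphism $\bigoplus_i\DD_i\cong\mathcal{O}_{\Fl(\D)}^{N}$.

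With the relations in place, the right-hand side of Theorem~\ref{thm:main} admits a well-defined ring homomorphism $\varphi$ to the total twisted $\mathbf{W}$-cohomology, and by construction this map hits every Euler class and every generator of the untwisted ring, hence is surjective. Since $\varphi$ respects the $\op{Pic}/2$-grading (with $\op{e}(\DD_i)$ placed in grade $\ell_i=\op{c}_1(\DD_i)$), injectivity can be checked sector by sector. For each subset $S\se\{1,\ldots,m\}$, taken modulo $S\sim S^c$, I would show that the product $\prod_{i\in S}\op{e}(\DD_i)$ is a free generator of the $[\L_S]$-twisted cohomology as a module over the untwisted ring of Theorem~\ref{thm:untwisted}, with the sole exception being the top sector when all $d_i$ are even.

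The rank comparison on each sector would be carried out by induction on $m$ using the Grassmannian fibration $\Fl(d_1\stb d_m)\to\Fl(d_1+d_2,d_3\stb d_m)$ obtained by forgetting the first flag step. When the fiber $\op{Gr}(d_1,d_1+d_2)$ has even dimension, the $\mathbf{W}$-cohomology Leray--Hirsch theorem (Proposition~\ref{prop:leray-hirsch}) provides a tensor product decomposition for every twist, and the known twisted $\mathbf{W}$-cohomology of Grassmannians from~\cite{realgrassmannian} supplies the fiber contribution (together with the relevant Euler classes). When the fiber has odd dimension one uses the Gysin sequence attached to a suitable rank one or rank two subbundle to recover the missing twisted sector. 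The main obstacle will be the all-even case: every individual $\op{e}(\DD_i)$ is then nonzero, and one has to verify that the single extra relation $\prod\op{e}(\DD_i)=0$ accounts exactly for the rank drop in the maximal twisted sector. I would handle this by tracking the class $\prod\op{e}(\DD_i)=\op{e}\bigl(\bigoplus_i\DD_i\bigr)=\op{e}\bigl(\mathcal{O}^N\bigr)$ through the iterated Leray--Hirsch isomorphism to see it vanish, and then doing a rank count to confirm that no further relations are needed.
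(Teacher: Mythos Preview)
Your plan coincides with the paper's for the ``easy half'': the induction via Grassmannian fibrations, with Leray--Hirsch applied whenever the fiber $\Gr(d_1,d_1+d_2)$ has even dimension, is exactly how Section~\ref{sec:maximal} establishes Theorem~\ref{thm:maxrank} for the maximal rank case (at most one odd $d_i$).

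The gap is the odd-dimensional fiber case. Your fallback to ``the Gysin sequence attached to a suitable rank one or rank two subbundle'' is where the real work hides, and it does not go through as stated. When both $d_1$ and $d_2$ are odd, the $\mathbf{W}$-cohomology of the fiber $\Gr(d_1,d_1+d_2)$ contains an exterior class~$R$ that is not a characteristic class and admits no canonical lift to the total space; no small-rank Gysin sequence will manufacture it or the corresponding twisted-sector generator. The paper's route (Sections~\ref{sec:ann-euler} and~\ref{sec:sadykov}) is different in kind: rather than fibering over a flag variety with fewer steps, one uses the sphere bundle of a full subquotient $\DD_i$ (of rank $d_i$, not $1$ or $2$) to relate $\Fl(d_1,\ldots,d_m)$ to $\Fl(d_1,\ldots,d_i{-}1,d_{i+1}{+}1,\ldots,d_m)$ with the \emph{same} number of steps. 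Controlling that Gysin sequence requires knowing $\ker{\rm e}(\DD_i)$ as a free rank-one module over $\coker{\rm e}(\DD_i)$, which is Theorem~\ref{thm:annihilator} and occupies all of Section~\ref{sec:ann-euler}. This is not a detail to be filled in later; it is the heart of the argument, and in particular it is what produces the classes $R_l$ and simultaneously pins down every twisted sector.

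Two smaller points. First, $\bigoplus_i\DD_i\cong\mathscr{O}^N$ is false over $\Fl(\D)$ since the tautological filtration does not split there; the relation $\prod_i{\rm e}(\DD_i)=0$ follows instead from the Whitney formula applied along the filtration, or by passing to the $\mathbb{A}^1$-equivalent decomposition variety $\widetilde{\Fl}(\D)$ where the splitting does hold. Second, the paper proves Theorems~\ref{thm:untwisted} and~\ref{thm:main} \emph{simultaneously}: the exterior generators $R_l$ in the untwisted ring are themselves produced by the sphere-bundle step of Section~\ref{sec:sadykov}, so taking Theorem~\ref{thm:untwisted} as a black-box input already presupposes the hard part you are trying to supply.
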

For the orientation covers of maximal rank flag varieties and $\eta$-invertible cohomology theories, similar formulas have been obtained in \cite{ananyevskiy}. Our strategy to prove Theorem~\ref{thm:main} for $\Fl(\D)$ is by induction and consists of the following steps:
\begin{itemize}
\item The base case $\D=(k,l)$ of Grassmannians was completely settled in \cite{realgrassmannian}.
\item When $\D$ is of maximal rank, i.e.,\ when the number of odd $d_i$'s in $\D$ is at most one, we compute the $\mathbf{W}$-cohomology of $\Fl(\D)$ as a tower of iterated Grassmannian bundles in Section~\ref{sec:maximal}, using the Leray--Hirsch theorem for $\mathbf{W}$-cohomology, cf.\ Proposition~\ref{prop:leray-hirsch}. 
\item To obtain the general case, we show in Section~\ref{sec:sadykov} how the $\mathbf{W}$-cohomology of $\Fl(\D)$ changes under the operation
  $$\D=(d_1\stb d_m)\mapsto \D'=(d_1\stb d_i-1,d_{i+1}+1\stb d_m)$$
  and the permutation action of the symmetric group $\Sigma_m$ on $\D$. It is elementary to see that any sequence $\D$ can be obtained from a maximal rank sequence by a repeated use of these two operations.
\end{itemize}

\subsection{Witt groups of flag varieties}
  Presumably, similar methods allow to compute the twisted Witt groups of flag varieties (which also do not seem to be known in full generality at present, but see \cite{zibrowius}). Conjecturally, the result would be a decomposition 
  \[
  \op{W}^i\bigl(\op{Fl}(\D),\L\bigr)\xrightarrow{\cong} \bigoplus_{j\equiv i\bmod 4}\op{H}^j\bigl(\op{Fl}(\D),\mathbf{W}(\L)\bigr)
  \]
  as in the Grassmannian case, cf.~\cite{schubert}. In some cases, such a decomposition already follows from our Witt-sheaf cohomology computation in Theorem~\ref{thm:main} using the Gersten--Witt spectral sequence
  \[
  {\rm H}^*(X,{\bf W}(\mathscr{L}))\Rightarrow {\rm W}^*(X,\mathscr{L})
  \]
  of Balmer and Walter \cite{balmer:walter}. Its differentials all have cohomological degrees congruent to 1 mod 4, so as long as ${\rm H}^*(X,{\bf W})$ is concentrated in even degrees, the spectral sequence necessarily degenerates. This is the case for maximal rank flag varieties, i.e., $\Fl(D)$  where $\D=(d_1,\dots,d_m)$ has at most one odd entry. The spectral sequence degenerates for degree reasons in a couple more cases, such as some low-dimensional full flag varieties $\Fl(N)$.
  
  In light of the computations in \cite{zibrowius} (which primarily deals with the full flag variety case in which we only have the exterior algebra part), one should probably expect that the Gersten--Witt spectral sequence always degenerates. We will investigate this and the question of ring structures in future work on Schubert calculus for Chow--Witt rings of flag varieties. 

  The computations of hermitian K-theory of Grassmannians in \cite{huang:xie}, parallel to the Witt-sheaf computations in \cite{schubert,realgrassmannian},  also suggest that the hermitian K-theory of flag varieties (at least in type A) should have an additive decomposition into hermitian summands corresponding to the non-torsion summands in Witt-sheaf cohomology, plus algebraic K-theory summands corresponding to the 2-torsion in ${\bf I}$-cohomology, i.e., the image of the Bockstein.

  \subsection{Chow--Witt rings}
  While the main bulk of the paper is concerned with computations of Witt-sheaf cohomology of flag varieties, we can also deduce a description of the Chow--Witt rings $\widetilde{\op{CH}}^*\bigl(\Fl(\D)\bigr)$ of flag varieties. There are a couple of standard decompositions and reductions known which reduce the computation of Chow--Witt rings to $\mathbf{W}$-cohomology. First, the total Chow--Witt ring is a fiber product of a subring of the Chow ring and the $\mathbf{I}$-cohomology ring, cf.\ Proposition~\ref{prop:fiberprod}. In a second step, the $\mathbf{I}$-cohomology ring can be completely recovered from the $\mathbf{W}$-cohomology ring and the image of the Bockstein maps $\beta_{\L}$ provided the $\mathbf{W}$-cohomology is free as a module over the Witt ring $\op{W}(F)$ of the base field, cf.\ Lemma~\ref{lem:wsplit}. The freeness of the $\mathbf{W}$-cohomology is one of the consequences of the main theorems~\ref{thm:untwisted} and \ref{thm:main} formulated above. The description of the Chow--Witt ring thus reduces to two main inputs: on the one hand, the integral and mod 2 Chow rings of flag varieties together with the action of the Steenrod squares $\op{Sq}^2_{\L}$ on the latter, which is essentially known (albeit complicated); on the other hand, the $\mathbf{W}$-cohomology of flag varieties which is the central computation in our paper. There is one subtlety that makes the above strategy a little less straightforward than the above description might suggest. To describe the Chow--Witt rings, knowledge of the reduction morphism ${\rm H}^n\bigl(\Fl(\D),\mathbf{I}^n\bigr)\to {\rm Ch}^n\bigl(\Fl(\D)\bigr)$ from $\mathbf{I}$-cohomology to mod 2 Chow groups is necessary, cf.\ the computations in Section~\ref{sec:chowwittsingular}. 

\subsection{Application: integer coefficient cohomology of real flag manifolds}
The computation above has a specific topological consequence. The result in Theorem~\ref{thm:main} states in particular that the $\mathbf{W}$-cohomology of the flag varieties $\op{Fl}(\D)$ is free, hence $\mathbf{I}$-cohomology splits as direct sum of the $\mathbf{W}$-cohomology and the image of $\beta$, cf.\ Theorem~\ref{thm:icohomology}. Over the real numbers, we can then use that the real cycle class map 
$$
cy\colon{\rm H}^n\bigl(\Fl(\D), \mathbf{I}^n(\mathcal{L})\bigr)\to {\rm H}^n\bigl(\Fl(\D,\mathbb{R}\bigr), \mathbb{Z}\bigl(\mathcal{L})\bigr)
$$
is an isomorphism for cellular varieties, cf.\ \cite{4real}. As a consequence, we get that all torsion in the integral cohomology of real flag manifolds is 2-torsion, cf.~Theorem~\ref{thm:realflag}:

\begin{theorem}
  \label{thm:main-2torsion}
  All torsion in $\op{H}^*\bigl(\op{Fl}(\D,\mathbb{R}),\mathbb{Z}(\L)\bigr)$ is of order 2, for any $\D=(d_1,\dots,d_m)$ and for any line bundle $\L$ over the real flag manifold $\op{Fl}(\D,\mathbb{R})$.
\end{theorem}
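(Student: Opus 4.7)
The plan is to deduce the theorem from Theorem~\ref{thm:main} by transferring the torsion statement from algebraic $\mathbf{I}$-cohomology to topological integer cohomology via Jacobson's real cycle class map. The first step is to read off from Theorem~\ref{thm:main} that the total (twisted) $\mathbf{W}$-cohomology
\[
\bigoplus_\L \op{H}^*\bigl(\Fl(\D),\mathbf{W}(\L)\bigr)
\]
is a free module over $\op{W}(F)$: the stated presentation as a polynomial ring in even Pontryagin classes, tensored with an exterior algebra on the $\op{R}_l$'s and a free module on Euler-class monomials subject only to explicit Whitney-type relations, exhibits a manifest $\op{W}(F)$-basis. For $F=\mathbb{R}$, where $\op{W}(\mathbb{R})\cong\mathbb{Z}$, freeness means torsion-freeness.

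Next, I would invoke the splitting of $\mathbf{I}$-cohomology established in Lemma~\ref{lem:wsplit} (and summarized in Theorem~\ref{thm:icohomology}): whenever the $\mathbf{W}$-cohomology is free, the twisted $\mathbf{I}$-cohomology decomposes as
\[
\op{H}^*\bigl(\Fl(\D),\mathbf{I}^*(\L)\bigr)\cong \op{H}^*\bigl(\Fl(\D),\mathbf{W}(\L)\bigr)\oplus \op{im}(\beta_\L),
\]
where $\beta_\L$ is the Bockstein coming from reduction mod $2$. The image of such a Bockstein consists of $2$-torsion classes by standard arguments with the short exact sequence relating $\mathbf{I}$-cohomology and mod-$2$ Chow groups. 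Combining with torsion-freeness of the first summand, all torsion in $\op{H}^*\bigl(\Fl(\D),\mathbf{I}^*(\L)\bigr)$ is of order $2$.

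The final step is to apply Jacobson's real cycle class map \cite{4real}. Since partial flag varieties $\Fl(\D)$ are cellular via their Schubert cell decomposition, the map
\[
cy\colon \op{H}^n\bigl(\Fl(\D),\mathbf{I}^n(\L)\bigr)\xrightarrow{\cong} \op{H}^n\bigl(\Fl(\D,\mathbb{R}),\mathbb{Z}(\L)\bigr)
\]
is an isomorphism in each degree, compatibly with the line-bundle twist $\L$. Any group isomorphism transports torsion to torsion of the same order, and so the previous step delivers that every torsion element in $\op{H}^*\bigl(\Fl(\D,\mathbb{R}),\mathbb{Z}(\L)\bigr)$ has order $2$. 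The real content lies entirely in Theorem~\ref{thm:main}; once freeness of the $\mathbf{W}$-cohomology is granted, the Bockstein splitting and the real cycle class isomorphism are formal inputs that combine to give the result, and the only thing to double-check is that the correspondence between algebraic Picard twists and topological local systems $\mathbb{Z}(\L)$ is respected by $cy$, which is built into the construction of \cite{4real}.
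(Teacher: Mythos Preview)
Your proposal is correct and follows essentially the same route as the paper: freeness of $\mathbf{W}$-cohomology from Theorem~\ref{thm:main}, the $\op{Im}\beta_{\L}\oplus\mathbf{W}$ splitting of $\mathbf{I}$-cohomology via Lemma~\ref{lem:wsplit} (packaged as Theorem~\ref{thm:icohomology}), and then transport along the real cycle class isomorphism for cellular varieties from \cite{4real}.
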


This extends known results in the maximal rank case, cf.\ \cite{matszangosz}, and is established in Theorem~\ref{thm:realflag}. For complete real flag varieties, this result was obtained by splitting Milnor--Witt motives by Yang \cite{yang}.  Even in the maximal rank case, the statement for the cohomologies with non-trivial local coefficient systems appears to be new. It is interesting to note that the structure of $\mathbf{W}$-cohomology is not immediately apparent in the topological context. The results also imply a complete description of the Poincar\'e polynomials of the free part and of the 2-torsion part of complete flag manifolds, cf.~Theorem~\ref{thm:completeflag}:

\begin{theorem}
  Let ${\rm P}_2(X,t)$, ${\rm P}_0(X,t)$ and ${\rm P}_{\op{Tor}}(X,t)$ denote the Poincar\'e polynomials of $\op{H}^*(X;\mathbb F_2)$, $\op{H}^*(X;\mathbb \Q)$ and $\op{Tor}\bigl(\op{H}^*(X;\mathbb Z)\bigr)$, respectively. Denoting by $\Fl(N)$ the variety of complete flags in $\mathbb{A}^N$, we have
  \begin{eqnarray*}
    {\rm P}_2\bigl(\Fl(N),t\bigr)&=&\frac{\prod_{j=1}^{N} (1-t^j)}{(1-t)^{N}},\\
    {\rm P}_0\bigl(\Fl(N),t\bigr)&=&(1+t^{n})\cdot \prod_{i=1}^{\lfloor\frac{N}{2}\rfloor-1} (1+t^{4i-1}),\\
    {\rm P}_{\op{Tor}}\bigl(\Fl(N),t\bigr)&=&\frac{t}{t+1}(P_2-P_0),
    \end{eqnarray*} 
  where $n=N-1$ if $N$ is even and $n=2N-3$ if $N$ is odd.
\end{theorem}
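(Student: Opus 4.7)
The plan is to derive the three formulas from the main theorems already proved (Theorems~\ref{thm:untwisted}, \ref{thm:main} and \ref{thm:main-2torsion}) combined with the real cycle class map isomorphism of \cite{4real} and universal coefficients; the formula for ${\rm P}_{\op{Tor}}$ will be a formal consequence of the first two combined with the 2-torsion statement. First, the mod 2 Poincar\'e polynomial is classical: Borel's presentation identifies $\op{H}^*\bigl(\Fl(N,\mathbb{R}),\mathbb{F}_2\bigr)$ with the coinvariant algebra of the $\Sigma_N$-action on $\mathbb{F}_2[x_1,\dots,x_N]$ generated by the first Stiefel--Whitney classes of the tautological line bundles, whose Poincar\'e polynomial is the Coxeter quotient $\prod_{j=1}^N(1-t^j)/(1-t)^N$. (Equivalently, one reads this off from the Bruhat cell decomposition of $\Fl(N,\mathbb{R})$.)

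Second, I would compute ${\rm P}_0\bigl(\Fl(N),t\bigr)$ by specialising Theorem~\ref{thm:untwisted} to $\D=(1,\dots,1)$ over $F=\mathbb{R}$. Since each $d_i=1$ is odd we have $q=0$ and all subquotient bundles $\DD_i$ have rank one, so all the Pontryagin classes $\op{p}_{2j}(\DD_i)$ vanish; the formula collapses to a pure exterior algebra
\[
\op{H}^*\bigl(\Fl(N),\mathbf{W}\bigr)\;\cong\;\bigwedge\nolimits_{l=1}^{\lfloor N/2\rfloor}\langle \op{R}_l\rangle
\]
over $\op{W}(\mathbb{R})=\mathbb{Z}$, with $\deg \op{R}_l=4l-1$ except that $\deg \op{R}_{N/2}=N-1$ when $N$ is even. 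Isolating the (possibly exceptional) top generator, the Poincar\'e polynomial of this exterior algebra is exactly $(1+t^n)\prod_{l=1}^{\lfloor N/2\rfloor-1}(1+t^{4l-1})$ with the stated $n$. To identify this rank with the rational Betti numbers of $\Fl(N,\mathbb{R})$ I combine three ingredients: the freeness of $\mathbf{W}$-cohomology over $\op{W}(\mathbb{R})=\mathbb{Z}$ (part of Theorem~\ref{thm:main}), the splitting $\op{H}^n(X,\mathbf{I}^n)\cong\op{H}^n(X,\mathbf{W})\oplus\op{im}\beta$ of Theorem~\ref{thm:icohomology}, and the real cycle class map isomorphism $\op{H}^n(X,\mathbf{I}^n)\xrightarrow{\cong}\op{H}^n(X(\mathbb{R}),\mathbb{Z})$ of \cite{4real}. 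Since $\op{im}\beta$ is 2-torsion, the free part of $\op{H}^n\bigl(\Fl(N,\mathbb{R}),\mathbb{Z}\bigr)$ has $\mathbb{Z}$-rank equal to that of $\op{H}^n\bigl(\Fl(N),\mathbf{W}\bigr)$, which yields ${\rm P}_0$.

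Third, for ${\rm P}_{\op{Tor}}$ I invoke Theorem~\ref{thm:main-2torsion}: writing $\op{H}^n\bigl(\Fl(N,\mathbb{R}),\mathbb{Z}\bigr)=\mathbb{Z}^{a_n}\oplus(\mathbb{Z}/2)^{b_n}$, universal coefficients gives $\dim_{\mathbb{F}_2}\op{H}^n(-;\mathbb{F}_2)=a_n+b_n+b_{n+1}$; multiplying by $t^n$, summing, and using $b_0=0$ produces $(1+t^{-1}){\rm P}_{\op{Tor}}(t)={\rm P}_2(t)-{\rm P}_0(t)$, i.e.,\ ${\rm P}_{\op{Tor}}=\tfrac{t}{t+1}({\rm P}_2-{\rm P}_0)$. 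The main obstacle is step two: one must simultaneously invoke freeness, the $\mathbf{I}$-cohomology splitting, and the real cycle class map in order to translate the $\op{W}(\mathbb{R})$-rank of $\mathbf{W}$-cohomology into the rational Betti numbers of $\Fl(N,\mathbb{R})$. Once this bridge between algebraic $\mathbf{W}$-cohomology and topological integral cohomology is in place, the rest is bookkeeping with exterior algebras and universal coefficients.
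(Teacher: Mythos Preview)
Your proposal is correct and follows essentially the same route as the paper: the paper obtains ${\rm P}_2$ from Proposition~\ref{prop:poincare-mod2}, reads off ${\rm P}_0$ from the exterior-algebra description of $\mathbf{W}$-cohomology in Theorems~\ref{thm:untwisted}/\ref{thm:main} (transported to integral cohomology via the real cycle class map as in Theorem~\ref{thm:realflag}), and then cites \cite[Lemma~24.1]{Borel} for the identity ${\rm P}_{\op{Tor}}=\tfrac{t}{t+1}({\rm P}_2-{\rm P}_0)$ valid whenever all torsion is 2-torsion. Your third step is in fact a short proof of that Borel lemma via universal coefficients, so if anything your argument is slightly more self-contained.
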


\subsection{Application: refined enumerative geometry}
A classical problem in enumerative geometry is counting the number of lines on a smooth cubic surface. This problem can be formulated over the complex field as computation of the top Chern class of $\op{Sym}^3(S^\vee)$ over the Grassmannian $\Gr_2(4)$, whose integral is 27, the number of lines. Over the reals, the number of lines depends on the surface. However, a similar characteristic class computation can be formulated in the real case, namely computing the integral of the Euler class of the same bundle over the real Grassmannian, which is 3. This number can be interpreted as a signed count of the lines as shown by \cite{finashinkharlamov1}, \cite{okonekteleman}. These results can be extended to the arithmetic setting, as was demonstrated recently by \cite{kasswickelgren}.

The above problems can be formulated on Grassmannians, since they ask about numbers of subspaces incident to hypersurfaces. Extending this, we can consider the following flag variety analogue of the above problem. 

\begin{question}
  Given a partition $\D=(d_1,\dots,d_m)$ of $N=\sum_{i=1}^md_i$ and general hypersurfaces $H_1,\dots,H_m$ in $\mathbb{P}^{N-1}$, how many flags $V_\bullet$ in $\mathbb{A}^{N}$ of type $\D$ exist, so that $\mathbb{P}(V_1)\se H_1$, $\mathbb{P}(V_2)\se H_2\stb \mathbb{P}(V_m)\se H_m$?
\end{question}

In order to obtain a lower bound, one has to compute the Euler class of an even rank orientable bundle over an orientable flag variety. We discovered that finding such a bundle is less immediate than in the case of Grassmannians: the orientability conditions impose severe numerical conditions. In particular, the smallest example we found was the following one:
\begin{question}
  Given a degree $3$ hypersurface $H_1$ and a degree $5$ hypersurface $H_2$ in $\mathbb{P}^{17}$, how many two-step flags $V_1\se V_2\se \mathbb{A}^{18}$ with $\dim V_1=2, \dim V_2=4$ exist, s.t.\ $\mathbb{P}(V_1)\subseteq H_1$ and $\mathbb{P}(V_2)\subseteq H_2$?
\end{question}
Using our computation of the $\mathbf{W}$-cohomology of flag varieties, we show that there are at least $24681637575$ such flags, cf.\ Theorem~\ref{thm:enumerativeexample}.

\subsection{Structure of the paper} In Section~\ref{sec:notation} we collect some of the notation we use in the paper. The first part of the paper is used to set up the computational tools used to compute the Witt-sheaf cohomology of flag manifolds.  An overview of the relevant standard basics of Chow--Witt theory and related cohomology theories is provided in Section~\ref{sec:prelims}, with a particular focus on various compatibilities and the Gysin sequence for sphere bundles. Then Section~\ref{sec:kuenneth} establishes a K\"unneth formula and Leray--Hirsch theorem for $\mathbf{W}$-cohomology of cellular varieties. In the second half of the paper, we describe the characteristic class picture for Witt-sheaf cohomology of flag varieties. In Section~\ref{sec:flags} we recall some classical information on geometry and Chow rings of flag varieties and introduce some notation. Then we turn to the description of the ring structure of $\mathbf{W}$-cohomology. In Section~\ref{sec:ann-euler} we provide a detailed study of the algebraic structure of the cohomology rings, establishing a description of annihilators of Euler classes necessary for the subsequent computations. In Section~\ref{sec:maximal} we deal with the maximal rank case using the Leray--Hirsch theorem, and in Section~\ref{sec:sadykov} we provide a spherical bundle argument extending the results to the general case.  The final two sections provide applications of our results on Witt-sheaf cohomology of flag varieties: In Section~\ref{sec:chowwittsingular} we deduce structure results for Chow--Witt rings as well as consequences for the singular cohomology of flag manifolds. We conclude the paper by sketching an enumerative application in Section~\ref{sec:enumerative}.

\subsection{Acknowledgements} We would like to thank the anonymous referee at J.~Topol. for their careful reading of the paper and many helpful comments which improved the presentation of the paper. MW would like to thank Jan Hennig for comments on a preliminary version of Section~\ref{sec:kuenneth}. \'AKM would like to thank L\'aszl\'o M.\ Feh\'er for several discussions concerning the cohomology of real flag manifolds. 

\section{Notation and conventions}\label{sec:notation}

\subsection{Notation}

For convenience, we collect some of the common notation used in this paper.

\begin{itemize}
\item $\D$ denotes a sequence of nonnegative integers $(d_1\stb d_m)$, i.e., a partition of $N=\sum_{i=1}^md_i$, 
\item $\Fl(\D)$ denotes a flag variety for the partition $\D$, cf.\ Section~\ref{sec:flags}, 
\item $\EE, \SS,\QQ, \DD$ denote vector bundles, $\L$ usually line bundles, $\mathscr{O}$ the trivial line bundle,
\item for two vector bundles $\EE\to X$ and $\FF\to Y$, $\EE\boxtimes \FF:=\pi_1^*\EE\otimes \pi_2^*\FF$ denotes their exterior product over $X\times Y$,
\item for $X$ a smooth scheme, $\op{CH}^*(X)$ and $\op{Ch}^*(X)$ denote the Chow rings of $X$ with integral and mod 2 coefficients, respectively,
\item $\widetilde{\op{CH}}^*(X, \L)$ denotes the Chow--Witt groups of $X$ with twist $\L/X$,
\item $\op{H}^*\bigl(X,\mathbf{I}^q(\L)\bigr)$, $\op{H}^*\bigl(X,\mathbf{W}(\L)\bigr)$ the $\mathbf{I}^q$ and $\mathbf{W}$-cohomology of $X$ with twist $\L/X$,
\item in certain proofs to simplify notation, for a vector bundle $\EE$ we will denote, cf.~ \eqref{eq:notation}:
  \[
  \op{H}_{\EE}^*(X):=\op{H}^*\bigl(X;\mathbf{W}(\det \EE)\bigr),\quad
  \op{H}_\oplus^*(X):= \bigoplus_{\L}\op{H}_{\L}^\bullet(X),\quad
  \op{H}^*(X):=\op{H}_{\mathscr{O}}^*(X).
  \]
\end{itemize}

Twisting is always with line bundles. In some situations, like for the normal bundle $\mathscr{N}$ of a closed immersion, it is convenient to write ${\bf W}(\mathscr{N})$ for ${\bf W}(\det\mathscr{N})$.

\subsection{Conventions}

Throughout the paper, we consider schemes over a perfect base field $F$ of characteristic $\neq 2$. We always consider finite type separated schemes over $F$ which we will call varieties. Most of the time, varieties will be smooth. Our indexing of Pontryagin classes used here follows the unconventional conventions set out in \cite{chowwitt,realgrassmannian} and includes the odd Pontryagin classes, without reindexing or additional signs, cf.~Remark~\ref{rem:pontryagin}.

\subsection{Remark on perfect base fields}
Throughout the paper, we will work over a perfect base field as required by a couple of the computational tools from motivic homotopy or generic smoothness results.

For the end result, this restriction is, however, not necessary. By results of Elmanto and Khan \cite{elmanto:khan:perfect}, for a field $F$ of characteristic $p>0$ with perfect closure $F^{\rm perf}$, the pullback functor $\mathcal{SH}(F)[p^{-1}]\to \mathcal{SH}(F^{\rm perf})[p^{-1}]$ on stable homotopy categories is an equivalence. Note that Chow--Witt groups as well as $\mathbf{I}$- and $\mathbf{W}$-cohomology are all representable in the stable homotopy category over a perfect base field. In particular, there is up to stable weak equivalence a unique spectrum over $F$ which base-changes to the spectrum representing the respective cohomology over $F^{\rm perf}$. With this as definition, the relevant cohomology of the flag variety over $F$ is isomorphic to the cohomology of the flag variety over $F^{\rm perf}$ after inverting $p$. In particular, all results concerning 2-torsion phenomena in the present paper hold without the restriction to perfect base fields. 

We would like to point out that our assumption on perfect base fields is only due our use of motivic homotopy tools for the computation of Witt-sheaf cohomology. In a previous version of our argument for the Leray--Hirsch theorem in Proposition~\ref{prop:leray-hirsch}, we used some generic smoothness argument, which we can now replace by an argument which doesn't use perfect base field assumptions any more. In conclusion, while the results in Section~\ref{sec:kuenneth} are stated with the assumption of perfect base fields, the reader can remove these assumptions if they believe that the assumption of perfectness is unnecessary for the motivic homotopy tools used.

\section{Recollection on Chow--Witt groups and \texorpdfstring{$\mathbf{W}$}{W}-cohomology}
\label{sec:prelims}

In this section, we provide a basic recollection on cohomology theories related to Chow--Witt groups, such as $\mathbf{I}$- or $\mathbf{W}$-cohomology. Similar such recollections with varying focus can be found in \cite{chowwitt} or \cite{realgrassmannian}. 
\subsection{Key diagram and standard reductions} 

In this paper we consider Nisnevich cohomology of various sheaves on smooth varieties. The relevant sheaves appearing are the Milnor--Witt K-theory sheaves $\mathbf{K}^{\op{MW}}_n$, the sheaf of Witt rings $\mathbf{W}$ and the sheaves of powers of fundamental ideals $\mathbf{I}^n\subseteq \mathbf{W}$. 

Recall that the Chow--Witt groups can be defined as sheaf cohomology groups (in the Nisnevich topology) of the Milnor--Witt K-theory sheaves $\mathbf{K}^{\op{MW}}_n$ which sit in a pullback square
\[
\xymatrix{
\mathbf{K}^{\op{MW}}_n\ar[r] \ar[d] & \mathbf{K}^{\op{M}}_n \ar[d] \\
\mathbf{I}^n\ar[r] & \mathbf{K}^{\op{M}}_n/2.
}
\]
Taking cohomology induces the following key diagram, which appeared in \cite{chowwitt} and \cite{realgrassmannian} in its untwisted and twisted versions.
\[
  \label{keydiagram}
\xymatrix{
&\op{CH}^n(X)\ar[r]^= \ar[d] & \op{CH}^n(X)\ar[d]^2 \\
\op{H}^n\bigl(X,\mathbf{I}^{n+1}(\L)\bigr)\ar[r]\ar[d]_=& \widetilde{\op{CH}}^n(X,\L)\ar[r]\ar[d]&\op{CH}^n(X)\ar[r]^(.4){\partial_{\L}}\ar[d]^{\bmod 2}& \op{H}^{n+1}\bigl(X,\mathbf{I}^{n+1}(\L)\bigr)\ar[d]^=\\
\op{H}^n\bigl(X,\mathbf{I}^{n+1}(\L)\bigr)\ar[r]_\eta& \op{H}^{n}\bigl(X,\mathbf{I}^{n}(\L)\bigr)\ar[r]_\rho\ar[d]& \op{Ch}^n(X)\ar[r]^(.4){\beta_{\L}}\ar[rd]_{\op{Sq}^2_{\L}}\ar[d]& \op{H}^{n+1}\bigl(X,\mathbf{I}^{n+1}(\L)\bigr)\ar[d]^\rho \\
&0\ar[r]&0&\op{Ch}^{n+1}(X)
}
\]
In this diagram, the two middle rows and the two middle columns are exact, they are pieces of the long exact sequences associated to short exact sequences of coefficient sheaves from the pullback presentation of Milnor--Witt K-theory. The lower horizontal sequence is the B\"ar sequence which is an analogue of the Bockstein sequence in singular cohomology. The commutativity of the lower-right triangle was established for $\L=\mathscr{O}$ by Totaro and extended to the twisted case by Asok and Fasel \cite{AsokFaselEuler}. For more details, cf.\ \cite{chowwitt} or \cite{realgrassmannian}. 

The key point relevant for computations of Chow--Witt rings is that for a perfect field $F$ of characteristic unequal to 2 and a smooth $F$-scheme $X$ the canonical map 
\[
\widetilde{\op{CH}}^*(X;\L)\to \op{H}^*\bigl(X,\mathbf{I}^*(\L)\bigr)\times_{\op{Ch}^*(X)} \ker\partial_{\L}
\]
is an isomorphism if $\op{CH}^*(X)$ has no non-trivial 2-torsion, cf.\ \cite[Proposition 2.11]{chowwitt}. Under this assumption, the computation of the total Chow--Witt ring $\bigoplus_{q,\L}\widetilde{\op{CH}}^q(X,\L)$ is reduced to the knowledge of the Chow ring $\op{CH}^*(X)$ together with the (twisted) Steenrod square $\op{Sq}^2_{\L}$ on $\op{Ch}^*(X)$, and the knowledge of the $\mathbf{I}$-cohomology ring $\bigoplus_{q,\L}\op{H}^q\bigl(X,\mathbf{I}^q(\L)\bigr)$. As is well-known, the projective homogeneous varieties for split reductive groups are cellular, hence their Chow rings are torsion-free, and we get the following result: 

\begin{proposition}
  \label{prop:fiberprod}
  Let $F$ be a field of characteristic $\neq 2$, and let $G/P$ be a projective homogeneous variety for a split reductive group $G$. For any line bundle $\L$ on $G/P$, there is a cartesian square
\[
\xymatrix{
&\widetilde{\op{CH}}^q(G/P,\L) \ar[r] \ar[d] 
& \ker \partial_{\L}\subseteq \op{CH}^q(G/P)\ar[d] &
\\
&\op{H}^q\bigl(G/P,\mathbf{I}^q(\L)\bigr) \ar[r] &
\op{Ch}^q(G/P)&\phantom{a}\hspace{-2 cm},
}
\]
where $\op{Ch}:=\op{CH}/2$ denotes mod 2 Chow groups and the morphism 
\[
\partial_{\L}\colon \op{CH}^q(G/P) \to \op{Ch}^q(G/P)\to \op{H}^{q+1}\bigl(G/P,\mathbf{I}^{q+1}(\L)\bigr)
\]
is the twisted integral Bockstein operation.

If we consider the total Chow--Witt ring $\bigoplus_{q,\L}\widetilde{\op{CH}}^q(G/P,\L)$, the above is a fiber product diagram in the category of $\langle-1\rangle$-graded-commutative $\op{GW}(F)$-algebras. In particular, the multiplicative structure of the  total Chow--Witt ring is known once we know the multiplication on the Chow ring and the total $\mathbf{I}$-cohomology ring.
\end{proposition}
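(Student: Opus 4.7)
My plan is that this proposition is essentially a direct application of the two tools assembled in the preceding discussion: the key diagram and the generalized fiber-product criterion of \cite[Proposition~2.11]{chowwitt}. So the proof would not require any genuinely new input, and the main task is to verify that the hypotheses of that criterion are satisfied for projective homogeneous varieties and then to upgrade the set-theoretic fiber product to a fiber product of graded algebras.

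First, I would recall from the key diagram that the fiber product square is obtained as follows: the middle horizontal row expresses $\widetilde{\op{CH}}^q(X,\L)$ as an extension of $\ker\partial_{\L}\subseteq \op{CH}^q(X)$ by (the image of) $\op{H}^q(X,\mathbf{I}^{n+1}(\L))$, while the middle vertical column expresses $\op{H}^q(X,\mathbf{I}^q(\L))$ as an extension of $\ker(\beta_{\L})$ by the same image of $\op{H}^q(X,\mathbf{I}^{n+1}(\L))$. The obstruction to the natural map
\[
\widetilde{\op{CH}}^q(X,\L) \longrightarrow \op{H}^q\bigl(X,\mathbf{I}^q(\L)\bigr)\times_{\op{Ch}^q(X)}\ker\partial_{\L}
\]
being an isomorphism is precisely controlled by the 2-torsion in $\op{CH}^q(X)$ (through the difference between $\operatorname{im}(\mathrm{mod}\ 2)$ and $\ker(\cdot 2)$ in the rightmost column of the key diagram). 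This is exactly \cite[Proposition~2.11]{chowwitt}.

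The next step is to verify that $\op{CH}^*(G/P)$ is torsion-free. Here I would invoke the Bruhat decomposition: for a split reductive group $G$ and parabolic $P$, the variety $G/P$ admits a cellular decomposition into affine Bialynicki-Birula cells indexed by $W_P\backslash W$. Standard results (e.g.~\cite{Fulton}) then yield that $\op{CH}^*(G/P)$ is a free abelian group with basis given by the Schubert classes; in particular it is torsion-free, a fortiori 2-torsion-free. Therefore the hypotheses of the fiber-product criterion apply, and the first assertion of the proposition follows in each bidegree $(q,\L)$.

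For the multiplicative enhancement, I would observe that each of the maps in the square is a morphism of $\Z\oplus\op{Pic}(G/P)/2$-graded $\op{GW}(F)$-algebras: the multiplicative structure on Chow--Witt cohomology is the one induced from Milnor--Witt K-theory, and the projections to $\mathbf{I}$-cohomology, Chow cohomology and mod~2 Chow cohomology are ring maps (compatibly with twists, via the $\langle-1\rangle$-graded-commutative convention). The Bockstein $\partial_\L$ is a derivation, so $\ker\partial_\L$ is a subring of $\op{CH}^*(G/P)$. Hence taking the direct sum over all $q$ and all line bundle twists on the already-established fiber product square of abelian groups yields a fiber product square in the category of $\langle-1\rangle$-graded-commutative $\op{GW}(F)$-algebras, as claimed. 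I do not anticipate a major obstacle here: the only mild point is to be careful with signs in the graded-commutativity, but this is subsumed in the general formalism set up in \cite{chowwitt}.
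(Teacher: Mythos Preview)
Your proposal is correct and matches the paper's approach exactly: the paper states this proposition as an immediate consequence of the preceding discussion, observing that projective homogeneous varieties for split reductive groups are cellular (via the Bruhat decomposition), hence have torsion-free Chow rings, so that \cite[Proposition~2.11]{chowwitt} applies. Your additional remarks on the multiplicative enhancement are a reasonable elaboration of what the paper leaves implicit.
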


\subsection{$\mathbf{W}$-cohomology and the $\op{Im}\beta$-$\mathbf{W}$-decomposition}

In sufficiently nice instances, the portion 
\[
\op{Ch}^{n-1}(X)\cong \op{H}^{n-1}(X,\mathbf{K}^{\op{M}}_{n-1}/2)\xrightarrow{\beta_{\L}} \op{H}^n\bigl(X,\mathbf{I}^n(\L)\bigr)\to \op{H}^n\bigl(X,\mathbf{W}(\L)\bigr)\to 0
\]
of the B\"ar sequence will split, which leads to the following result concerning splitting of $\mathbf{I}$-cohomology, cf.~\cite[Lemma 2.3]{realgrassmannian}.

\begin{lemma}
  \label{lem:wsplit}
  Let $X$ be a smooth scheme over a field $F$ of characteristic $\neq 2$, and let $\L$ be a line bundle on $X$. If $\op{H}^n\bigl(X,\mathbf{W}(\L)\bigr)$ is free as a $\op{W}(F)$-module, then we have a splitting
  \[
  \op{H}^n\bigl(X,\mathbf{I}^n(\L)\bigr)\cong \op{Im}\beta_{\L}\oplus \op{H}^n\bigl(X,\mathbf{W}(\L)\bigr).
  \]
  In this case, the reduction morphism $\rho\colon \op{H}^n\bigl(X,\mathbf{I}^n(\L)\bigr)\to \op{Ch}^n(X)$ is injective on the image of $\beta_{\L}$. 
\end{lemma}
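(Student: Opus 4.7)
The plan is to combine the B\"ar exact sequence with the middle-column exactness of the key diagram; the two claims are handled separately.

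For the splitting, I would invoke the B\"ar exact sequence displayed just before the lemma, which provides the short exact sequence
\[
0 \to \op{Im}\beta_{\L} \to \op{H}^n\bigl(X,\mathbf{I}^n(\L)\bigr) \xrightarrow{\pi} \op{H}^n\bigl(X,\mathbf{W}(\L)\bigr) \to 0
\]
of $\op{W}(F)$-modules. Here $\pi$ is induced by the sheaf inclusion $\mathbf{I}^n\hookrightarrow\mathbf{W}$, hence is $\op{W}(F)$-linear, and exactness identifies $\op{Im}\beta_{\L}=\ker\pi$. By the freeness hypothesis, the quotient $\op{H}^n\bigl(X,\mathbf{W}(\L)\bigr)$ is a free $\op{W}(F)$-module, so $\pi$ admits a $\op{W}(F)$-linear section, giving the desired direct sum decomposition.

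For the injectivity of $\rho$ on $\op{Im}\beta_{\L}$, I would use the middle-column exact sequence of the key diagram, which yields $\ker\rho=\op{Im}\eta$ for $\eta\colon \op{H}^n\bigl(X,\mathbf{I}^{n+1}(\L)\bigr)\to\op{H}^n\bigl(X,\mathbf{I}^n(\L)\bigr)$. The task thereby reduces to verifying $\op{Im}\beta_{\L}\cap\op{Im}\eta=0$. The essential observation is that the composition $\pi\circ\eta$ coincides with the natural map $\iota\colon \op{H}^n\bigl(X,\mathbf{I}^{n+1}(\L)\bigr)\to \op{H}^n\bigl(X,\mathbf{W}(\L)\bigr)$ induced by $\mathbf{I}^{n+1}\subset\mathbf{W}$. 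Thus any intersection element $\eta w\in\op{Im}\beta_{\L}=\ker\pi$ yields $\iota(w)=\pi(\eta w)=0$; invoking the long exact sequence associated to $0\to\mathbf{I}^{n+1}(\L)\to\mathbf{W}(\L)\to\mathbf{W}/\mathbf{I}^{n+1}(\L)\to 0$ and the filtration structure of $\mathbf{W}/\mathbf{I}^{n+1}$ by its subquotients $\mathbf{K}^{\op{M}}_k/2$ then allows one to conclude $\eta w=0$.

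The main obstacle is this second step. The splitting itself is a formal consequence of the freeness assumption, but the injectivity of $\rho$ on $\op{Im}\beta_{\L}$ is equivalent to the non-trivial identification $\ker\op{Sq}^2_{\L}=\ker\beta_{\L}$ on $\op{Ch}^{n-1}(X)$, which requires coordinating two B\"ar-type long exact sequences (for $\mathbf{I}^n\subset\mathbf{W}$ and $\mathbf{I}^{n+1}\subset\mathbf{W}$) and tracking how $\eta$-multiplication interacts with the Bockstein on the mod~$2$ Chow groups.
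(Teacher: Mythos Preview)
Your argument for the splitting is correct and is exactly the standard one: the displayed portion of the B\"ar sequence before the lemma is a short exact sequence of $\op{W}(F)$-modules with free quotient, hence splits. This matches how the paper (and its source \cite[Lemma~2.3]{realgrassmannian}) sets things up.

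Your approach to the injectivity of $\rho$ on $\op{Im}\beta_{\L}$, however, has a genuine gap. The reduction to $\op{Im}\beta_{\L}\cap\op{Im}\eta=0$, equivalently $\ker\iota\subseteq\ker\eta$, is correct. But the final move---``invoking the long exact sequence for $0\to\mathbf{I}^{n+1}\to\mathbf{W}\to\mathbf{W}/\mathbf{I}^{n+1}\to 0$ and the filtration by $\mathbf{K}^{\op{M}}_k/2$''---is circular as stated. Concretely: $\iota(w)=0$ gives $w=\partial(v)$ with $v\in\op{H}^{n-1}\bigl(X,\mathbf{W}/\mathbf{I}^{n+1}(\L)\bigr)$; naturality of boundary maps for the morphism of short exact sequences $(\mathbf{I}^{n+1}\subset\mathbf{W})\to(\mathbf{I}^n\subset\mathbf{W})$ then yields $\eta(w)=\eta\,\partial(v)=\partial'(qv)$, where $\partial'\colon\op{H}^{n-1}\bigl(X,\mathbf{W}/\mathbf{I}^n(\L)\bigr)\to\op{H}^n\bigl(X,\mathbf{I}^n(\L)\bigr)$ and $q$ is induced by $\mathbf{W}/\mathbf{I}^{n+1}\to\mathbf{W}/\mathbf{I}^n$. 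But $\op{Im}\partial'=\ker\pi=\op{Im}\beta_{\L}$, so all you recover is $\eta(w)\in\op{Im}\beta_{\L}\cap\op{Im}\eta$---the very group you are trying to kill. The filtration of $\mathbf{W}/\mathbf{I}^{n+1}$ does not by itself force $\eta(w)=0$, and nowhere in this chain have you used the freeness hypothesis. You correctly flag this as the main obstacle in your last paragraph, but the proposal does not supply the missing idea.

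The paper itself does not prove this lemma; it is imported from \cite[Lemma~2.3]{realgrassmannian}. To close the gap one needs an additional input beyond the diagram chase you outline---for instance, an argument that under freeness the natural map $\op{H}^n\bigl(X,\mathbf{I}^{n+1}(\L)\bigr)\to\op{H}^n\bigl(X,\mathbf{W}(\L)\bigr)$ is itself injective, or a direct identification $\ker\beta_{\L}=\ker\op{Sq}^2_{\L}$ on $\op{Ch}^{n-1}(X)$ via some other route. Consulting the cited reference for the precise mechanism would be the right next step.
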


\begin{remark}
  The freeness of $\mathbf{W}$-cohomology in this lemma  plays the role of an algebraic replacement of the classical statement that ``all torsion in the cohomology of the Grassmannians is 2-torsion'', as is true e.g.\ for maximal rank flag varieties of type A, cf.\ \cite[Theorem 6.1]{matszangosz}. The above lemma will allow us to give, via an algebraic approach, a generalization of the torsion result in loc.~cit. 
\end{remark}

When applicable, the above lemma reduces the computation of $\mathbf{I}$-cohomology to the computation of $\mathbf{W}$-cohomology and a description of the image of $\beta$. For the latter, we only need a description of the action of $\op{Sq}^2$ on the mod 2 Chow ring, cf.~e.g.~\cite{lenart}, \cite{duan:zhao}, \cite[Theorem 4.2]{schubert}, \cite[Proposition 6.3]{matszangosz}.  

\subsection{$\mathbf{W}$-cohomology and its formal properties}

We provide a short recollection on $\mathbf{W}$-cohomology and the formal properties that we will use in the course of our computations. All other cohomology theories we will consider have similar formal properties which are detailed e.g.\ in \cite{chowwitt} or \cite{realgrassmannian}. The reader should consult these references (or the original literature cited in these references) for further details as well as facts on (twisted) cohomology operations like $\beta_{\L}$ or $\op{Sq}^2_{\L}$. The following is a quick recollection of definitions and basic properties, mostly from the detailed source \cite{4real}. 

For a smooth scheme $X$ over a field $F$ of characteristic $\neq 2$, \emph{the sheaf $\mathbf{W}$ of unramified Witt groups} is the Zariski sheafification of the presheaf $U\mapsto{\rm W}(U)$ mapping each open $U\subseteq X$ to the Witt group ${\rm W}(U)$ of quadratic modules over $U$. For a line bundle $\L$ on $X$, we can consider the corresponding twisted sheaf $\mathbf{W}(\L)$ which is defined as the sheafification of the presheaf
\[
U\mapsto {\rm W}(U)\otimes_{\mathbb{Z}[\mathscr{O}_X^\times(U)]}\mathbb{Z}[\L(U)^\times]. 
\]
The tensor product is over the group ring of invertible functions on $U$, the action of invertible functions $f\in\mathscr{O}_X^\times(U)$ on the Witt ring is via multiplication by rank one forms $\langle f\rangle$, and $\mathbb{Z}[\L(U)^\times]$ is the free abelian group of invertible sections of $\L$ over $U$. Alternatively, the sheaf $\mathbf{W}(\L)$ can be described as the sheafification of the presheaf $U\mapsto{\rm W}(U,\L)$ of Witt groups of quadratic modules for the duality $\mathscr{H}om_{\mathscr{O}_U}(-,\L|_U)$, cf.\ the discussion in \cite[Section 2.B]{4real}.

\emph{The (twisted) Witt-sheaf cohomology} ${\rm H}^i\bigl(X,\mathbf{W}(\L)\bigr)$ is then the sheaf cohomology (for Zariski- or equivalently Nisnevich-topology) of $\mathbf{W}(\L)$ on $X$. There are particularly nice and useful Gersten-type resolutions for $\mathbf{W}(\L)$, giving rise to the so-called \emph{Rost--Schmid-complexes} computing the Witt-sheaf cohomology: for a smooth scheme $X$ of Krull dimension $n$ and a line bundle $\L$ over $X$, the complex is concentrated in degrees $[0,n]$, with the degree $i$ part given by
\[
C_{\rm RS}^i\bigl(X,\mathbf{W}(\L)\bigr)=\bigoplus_{x\in X^{(i)}}{\rm W}\bigl(\kappa(x),\omega_{\kappa(x)/F}\otimes\L\bigr).
\]
The direct sum runs over all codimension $i$ points $x$ of $X$, and the coefficients are Witt groups of the residue field $\kappa(x)$ twisted by the tensor product of the relative canonical bundle of $\kappa(x)/F$ and the global line bundle $\L$. For a discretely valued field $L$ with valuation ring $\mathcal{O}$, residue field $K$ and chosen uniformizer $\pi$, there is a residue morphism $\partial^\pi\colon{\rm W}(L)\to {\rm W}(K)$ of Witt groups essentially induced by mapping
\[
\langle\pi,u_1,\dots,u_n\rangle\mapsto \langle \overline{u}_1,\dots,\overline{u}_n\rangle \qquad \textrm{ and }\qquad\langle u_1,\dots,u_n\rangle\mapsto 0
\]
for units $u_i\in\mathcal{O}^\times$. The differential
\[
\bigoplus_{x\in X^{(i)}}{\rm W}\bigl(\kappa(x),\omega_{\kappa(x)/F}\otimes\L\bigr)\xrightarrow{\sum\partial} \bigoplus_{y\in X^{(i+1)}}{\rm W}\bigl(\kappa(y),\omega_{\kappa(y)/F}\otimes\L\bigr)
\]
for the Rost--Schmid-complex is induced from these residue morphisms with the twisting making the resulting morphism independent of the choice of uniformizer. A convenient summary of the Rost--Schmid complex for $\mathbf{W}$-cohomology with more details concerning the differential can be found in \cite[Section A.A]{4real}. The Rost--Schmid-complex computes Witt-sheaf cohomology in the sense that for any natural number $q$, any smooth scheme $X$ and any line bundle $\L$ on $X$ we have natural isomorphisms
\[
{\rm H}^q\bigl(X,\mathbf{W}(\L)\bigr)\cong {\rm H}^q\bigl(C^\ast_{\rm RS}\bigl(X,\mathbf{W}(\L)\bigr)\bigr).
\]

There is also a version of $\mathbf{W}$-cohomology with supports in a closed subscheme $Z\subseteq X$, denoted by ${\rm H}^\ast_Z\bigl(X,\mathbf{W}(\L)\bigr)$. This can be defined either via deriving the global sections with supports, or on the level of Rost--Schmid complexes by only considering cycles which have non-zero coefficients for points contained in the support, cf.~\cite[Sections 2.A.2 and 2.C.2]{4real} for definitions and comparison results.

For a morphism $f\colon X\to Y$ of smooth schemes with closed subsets $V\subseteq X$ and $W\subseteq Y$ such that $f^{-1}(W)\subseteq V$ and a line bundle $\L$ over $Y$, there are functorial pullback morphisms in $\mathbf{W}$-cohomology with supports
\[
{\rm H}^\ast_W\bigl(Y,\mathbf{W}(\L)\bigr)\to {\rm H}^\ast_V\bigl(X,\mathbf{W}(f^\ast\L)\bigr),
\]
which can be defined as restriction morphisms in sheaf cohomology or via Rost--Schmid complexes. Note that the definition of pullback via Rost--Schmid-complexes is straightforward for smooth morphisms but involves intersection with Cartier divisors and deformation to the normal cone for regular embeddings. For definitions and comparison results, we again refer to \cite[Section 2.C.3]{4real}. 

For a closed immersion $\iota\colon Z\hookrightarrow X$ of codimension $c$ of smooth schemes with normal bundle $\mathscr{N}$, there are pushforward maps
\[
\iota_\ast\colon {\rm H}^q\bigl(Z,\mathbf{W}(\det\mathscr{N}\otimes \iota^\ast\L)\bigr)\to {\rm H}^{q+c}_Z\bigl(X,\mathbf{W}(\L)\bigr)\to {\rm H}^{q+c}\bigl(X,\mathbf{W}(\L)\bigr)
\]
obtained by composing a Thom isomorphism for the normal bundle, an isomorphism obtained from deformation to the normal cone and the forgetting-of-support. Again, this can be defined explicitly via a d\'evissage isomorphism on the level of Rost--Schmid complexes. For definitions and comparison, cf.\ \cite[Theorem 2.41 and Section 2.C.5]{4real}. 

\subsection{Product structures}

The cup product (or intersection product) on $\mathbf{W}$-cohomology is a product of the form
\[
{\rm H}^i\bigl(X,\mathbf{W}(\L_1)\bigr)\times{\rm H}^j\bigl(X,\mathbf{W}(\L_2)\bigr)\to {\rm H}^{i+j}\bigl(X,\mathbf{W}(\L_1\otimes\L_2)\bigr).
\]
With this product structure, the total $\mathbf{W}$-cohomology ring
\[
\bigoplus_{\L\in\op{Pic}(X)/2, q\in\mathbb{N}}{\rm H}^q\bigl(X,\mathbf{W}(\L)\bigr) 
\]
is a $(-1)$-graded-commutative associative algebra over the Witt ring ${\rm W}(F)$ of the base field, cf.\ e.g.\ \cite[Section 2.C.4]{4real}. There are technical coherence problems that arise because the identifications ${\rm H}^\ast\bigl(X,\mathbf{W}(\L)\bigr)\cong{\rm H}^\ast\bigl(X,\mathbf{W}(\L')\bigr)$ depend on the choice of isomorphisms $\L\cong\L'$. These can be dealt with by the methods of Balmer--Calm\`es in \cite{balmer:calmes}. 

There is also an exterior product structure on $\mathbf{W}$-cohomology which plays a role in the K\"unneth formula and Leray--Hirsch theorem established below in Section~\ref{sec:kuenneth}. For a further discussion of exterior products, cf.\ \cite[Section 3.1]{fasel:chow-witt-lectures} and \cite[Sections 2.A.3 and 2.C.4]{4real}. 

\begin{definition}
  \label{def:ext-prod}
  Let $F$ be a perfect field of characteristic $\neq 2$, let $X$ and $Y$ be two smooth $F$-schemes, and let $\L_X$ and $\L_Y$ be line bundles on $X$ and $Y$, respectively. Then there is an exterior product bundle
  \[
  \L_X\boxtimes\L_Y:=\op{pr}_1^\ast(\L_X)\otimes\op{pr}_2^\ast(\L_Y)
  \]
  on $X\times Y$, where $\op{pr}_1\colon X\times Y\to X$ and $\op{pr}_2\colon X\times Y\to Y$ are the respective projections.

  In the above situation, there is an exterior product map in $\mathbf{W}$-cohomology which has the form
  \[
  {\rm H}^p\bigl(X,\mathbf{W}(\L_X)\bigr)\times{\rm H}^q\bigl(Y,\mathbf{W}(\L_Y)\bigr) \to {\rm H}^{p+q}\bigl(X\times Y,\mathbf{W}(\L_X\boxtimes\L_Y)\bigr).
  \]
  The exterior product can be defined from the cup product by mapping $\alpha\in {\rm H}^p\bigl(X,\mathbf{W}(\L_X)\bigr)$ and $\beta\in {\rm H}^q\bigl(Y,\mathbf{W}(\L_Y)\bigr)$ to 
  \[
  \alpha\boxtimes\beta=\op{pr}_1^\ast(\alpha)\cup\op{pr}_2^\ast(\beta)\in {\rm H}^{p+q}\bigl(X\times Y,\mathbf{W}(\L_X\boxtimes\L_Y)\bigr).
  \]
  Conversely, the cup product can be defined in terms of the exterior product by mapping $\alpha\in {\rm H}^p\bigl(X,\mathbf{W}(\L_1)\bigr)$ and $\beta\in {\rm H}^q\bigl(X,\mathbf{W}(\L_2)\bigr)$ to 
  \[
  \alpha\cup\beta=\Delta^\ast(\alpha\boxtimes\beta)\in {\rm H}^{p+q}\bigl(X,\mathbf{W}(\L_1\otimes\L_2)\bigr),
  \]
  i.e., it is obtained from the exterior product on $X\times X$ by restriction along the diagonal $\Delta\colon X\to X\times X$. 
\end{definition}

The exterior product can alternatively be defined on the level of Gersten complexes: for a point $x\in X$ and a point $y\in Y$ the tensor product $\kappa(x)\otimes_F\kappa(y)$ of residue fields decomposes as direct sum of residue fields of finitely many points $u\in X\times Y$, and the exterior product can then be defined using the product structure on Witt groups (coming from tensor product of quadratic forms). This is explained in more detail in  \cite[Section~3.1]{fasel:chow-witt-lectures}. 

For a morphism $f\colon X\to Y$ of smooth schemes, the pullback maps $f^\ast\colon {\rm H}^\ast\bigl(Y,\mathbf{W}(\L)\bigr)\to {\rm H}^\ast\bigl(X,\mathbf{W}\bigl(f^\ast(\L)\bigr)\bigr)$ for line bundles $\L\in\op{Pic}(Y)/2$ assemble into a homomorphism of $\mathbb{Z}\oplus\op{Pic}/2$-graded ${\rm W}(F)$-algebras. As a particular special case, the total $\mathbf{W}$-cohomology ring is an algebra over the Witt ring ${\rm W}(F)$ of the base field. Moreover, the projection formula \cite[Theorem 3.19]{fasel:chow-witt-lectures} and derivation property \cite[Proposition 3.4]{fasel:chow-witt-lectures} imply that these ${\rm W}(F)$-module structures are compatible with pushforwards and boundary maps in localization sequences.

\subsection{Localization sequence}
As for all cohomology theories arising from motivic homotopy, there is a localization sequence for $\mathbf{W}$-cohomology which takes a simplified form (as compared to the localization sequences for, say, Chow--Witt-groups or $\mathbf{I}$-cohomology). To formulate it, assume that $X$ is a smooth scheme, $Z\subseteq X$ a smooth closed subscheme of pure codimension $c$ with open complement $U=X\setminus Z$, and $\L$ is a line bundle on $X$. Denote the inclusions by $i\colon Z\hookrightarrow X$ and $j\colon U\hookrightarrow X$, and denote by $\mathscr{N}$ the normal bundle for $Z$ in $X$. In this situation, there is the following localization sequence for $\mathbf{W}$-cohomology:
\begin{equation}\label{eq:localization}
\cdots\xrightarrow{j^\ast} \op{H}^{i-1}(U,\mathbf{W}(\L))\xrightarrow{\partial} \op{H}^{i-c}(Z,\mathbf{W}(\L\otimes \det\mathscr{N})) \xrightarrow{i_\ast}\op{H}^{i}(X,\mathbf{W}(\L)) \xrightarrow{j^\ast} \cdots
\end{equation}

Compared to the localization sequences for Chow--Witt-groups or $\mathbf{I}$-cohomology, this form has the distinct advantage that there are no index shifts in the coefficients and we really get an honest long exact sequence as opposed to only a piece of a long exact sequence containing the ``geometric bidegrees''. This way, computations of $\mathbf{W}$-cohomology can follow their classical topology counterparts much more closely than is possible for $\mathbf{I}$-cohomology. 

From the motivic homotopy point of view, the localization sequences for representable cohomology theories arise from a cofiber sequence $U\to X\to {\rm Th}(\mathscr{N}_Z)$, where $X$ is a smooth scheme, $\iota\colon Z\hookrightarrow X$ a smooth closed subscheme of $X$ with open complement $U\subseteq X$, and ${\rm Th}(\mathscr{N}_Z)$ the Thom space of the normal bundle of the inclusion $\iota$. This implies that the localization sequences are compatible with all relevant structures of representable cohomology theories such as, in particular, pullbacks and exterior products. For our computations, we will mostly use the $\mathbf{W}$-cohomology, i.e., Nisnevich cohomology with coefficients in the sheaf $\mathbf{W}$ of unramified Witt rings, which is representable in motivic homotopy over a base field of characteristic $\neq 2$ (since it satisfies Nisnevich descent and homotopy invariance). 

For later use we formulate the compatibility of the localization sequence  with pullbacks. Compatibility with exterior product will be discussed in Section~\ref{sec:kuenneth} where it will be relevant for the K\"unneth formula for $\mathbf{W}$-cohomology of cellular varieties.

\begin{lemma}
  \label{lem:compat-locseq-pullback}
  Let $F$ be a perfect field of characteristic $\neq 2$ and let $f\colon Y\to X$ be a morphism between smooth schemes. Let $Z\subseteq X$ be a smooth closed subscheme of codimension $c$ with normal bundle $\mathscr{N}_Z$ and complement $U=X\setminus Z$, and denote by $Y_Z=f^{-1}(Z)$ and $Y_U=f^{-1}(U)$ the respective preimages. Assume that  $Y_Z$ is smooth and that $f\colon Y\to X$ is transverse to $Z$ in the sense of \cite[Theorem 2.12]{AsokFaselEuler}, i.e., that there is an isomorphism $f^*\mathscr{N}_Z\iso \mathscr{N}(Y_Z\to Y)$ of normal bundles.\footnote{Note that this is satisfied for example when $f$ is smooth.} Then for any line bundle $\L$ on $X$ there is a ladder of long exact localization sequences
  \begin{center}
  \scalebox{0.9}{
  \xymatrix{
    \cdots\ar[r] & {\rm H}^{q-c}\bigl(Z,\mathbf{W}(\L\otimes\mathscr{N}_Z)\bigr) \ar[r] \ar[d]& {\rm H}^q\bigl(X,\mathbf{W}(\L)\bigr)\ar[r] \ar[d] & {\rm H}^q\bigl(U,\mathbf{W}(\L)\bigr) \ar[r] \ar[d] & \cdots\ \phantom{.}\\
    \cdots\ar[r] & {\rm H}^{q-c}\bigl(Y_Z,\mathbf{W}\bigl(f^\ast(\L\otimes\mathscr{N}_Z)\bigr)\bigr) \ar[r]& {\rm H}^q\bigl(Y,\mathbf{W}(f^\ast\L)\bigr)\ar[r] & {\rm H}^q\bigl(Y_U,\mathbf{W}(f^\ast\L)\bigr) \ar[r] & \cdots\ .\\
  }
  }
  \end{center}
\end{lemma}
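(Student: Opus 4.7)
The plan is to derive the ladder from the naturality of the localization sequence with respect to pullbacks along transverse maps. Both rows of the ladder arise as long exact sequences of cohomology with supports
\begin{equation*}
  \cdots \to \op{H}^q_Z\bigl(X,\mathbf{W}(\L)\bigr) \to \op{H}^q\bigl(X,\mathbf{W}(\L)\bigr) \to \op{H}^q\bigl(U,\mathbf{W}(\L)\bigr) \to \op{H}^{q+1}_Z\bigl(X,\mathbf{W}(\L)\bigr) \to \cdots,
\end{equation*}
composed with the d\'evissage/purity isomorphisms $\op{H}^q_Z\bigl(X,\mathbf{W}(\L)\bigr)\cong \op{H}^{q-c}\bigl(Z,\mathbf{W}(\L\otimes\det\mathscr{N}_Z)\bigr)$ on the top and similarly for $(Y,Y_Z)$ on the bottom. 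The long exact sequence with supports is strictly functorial for pullback: since $f^{-1}(Z)\subseteq Y_Z$, one gets a restriction morphism $f^{\ast}\colon \op{H}^q_Z\bigl(X,\mathbf{W}(\L)\bigr)\to \op{H}^q_{Y_Z}\bigl(Y,\mathbf{W}(f^{\ast}\L)\bigr)$ which commutes with both the forgetting-of-support and the boundary, by construction of these operations on Rost--Schmid complexes as recalled in Section~\ref{sec:prelims}. This immediately yields commutativity of the squares built out of $j^{\ast}$ (by functoriality) and of the boundary squares (by naturality of the long exact sequence with supports).

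The remaining point is that the two d\'evissage isomorphisms are intertwined by $f^{\ast}$ and $(f|_{Y_Z})^{\ast}$, equivalently that the transverse base-change identity $f^{\ast}\circ i_{\ast}=(i')_{\ast}\circ (f|_{Y_Z})^{\ast}$ holds. This is where the transversality hypothesis is essential. In the motivic homotopy formulation, purity identifies the cofibre of $U\hookrightarrow X$ with the Thom space ${\rm Th}(\mathscr{N}_Z)$ and that of $Y_U\hookrightarrow Y$ with ${\rm Th}\bigl(\mathscr{N}(Y_Z\to Y)\bigr)$; the morphism $f$ induces a map of these cofibre sequences, and the resulting map of Thom spaces is the one coming from the bundle morphism $\mathscr{N}(Y_Z\to Y)\to (f|_{Y_Z})^{\ast}\mathscr{N}_Z$ induced by the differential of $f$ along $Y_Z$. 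The hypothesis $f^{\ast}\mathscr{N}_Z\cong \mathscr{N}(Y_Z\to Y)$ makes this bundle map a fibrewise isomorphism, so by functoriality of Thom isomorphisms the induced map on $\mathbf{W}$-cohomology of the Thom spaces is precisely $(f|_{Y_Z})^{\ast}$ (up to the appropriate determinantal twist).

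Putting these inputs together delivers all squares in the ladder: the outer squares commute by strict functoriality of $f^{\ast}$, the middle square involving $i_{\ast}$ commutes by the transverse base-change identity just established, and the boundary squares commute by naturality of the long exact sequence with supports. The main technical obstacle is the compatibility of the purity isomorphism with transverse pullback; once that is settled, the rest is formal. Alternatively, the same base-change identity can be verified directly on Rost--Schmid complexes using the description of pullback via intersection with Cartier divisors and deformation to the normal cone, where transversality guarantees that the intersections are proper and reproduce the expected normal bundle datum.
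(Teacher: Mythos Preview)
Your argument is correct and follows essentially the same route as the paper's first proof: both obtain the ladder from the map of cofiber sequences $Y_U\to Y\to {\rm Th}(\mathscr{N}_{Y_Z})$ over $U\to X\to {\rm Th}(\mathscr{N}_Z)$ induced by $f$, with transversality ensuring that the map of Thom spaces is the pullback along $f|_{Y_Z}$. The paper additionally records a second, purely sheaf-theoretic argument via the exact sequence $0\to i_\ast i^!\mathbf{W}_X\to \mathbf{W}_X\to j_\ast j^\ast\mathbf{W}_X\to 0$ and the comparison morphisms of coefficient data from \cite{4real}, which you do not give but only allude to in your final sentence about Rost--Schmid complexes.
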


\begin{proof}
  There are various ways to prove this. As mentioned above, one way is via motivic homotopy. For the localization situation on $X$ with closed subscheme $Z$ and open subscheme $U$, we have an associated cofiber sequence $U\to X\to X/(X\setminus Z)$, where the last term can be identified with the Thom space ${\rm Th}(\mathscr{N})$ of the normal bundle for the closed immersion $Z\hookrightarrow X$; that is the homotopy purity theorem of Morel and Voevodsky. The same thing works for $Y$, and the morphism $f$ - being compatible with everything - induces a commutative diagram of cofiber sequences
  \[
  \xymatrix{
    Y_U=f^{-1}(U)\ar[d] \ar[r] & Y \ar[r] \ar[d] & Y/(Y\setminus Y_Z)\ \phantom{.} \ar[d] \\
    U\ar[r] & X \ar[r] & X/(X\setminus Z)\ .
  }
  \]
  The $\mathbf{W}$-cohomology is represented by the Eilenberg--Mac~Lane spectrum $\mathbb{H}\mathbf{W}$ for the homotopy module $\mathbf{W}$. The localization sequence arises from the above cofiber sequence by mapping into the Eilenberg--Mac~Lane spectrum. The above commutative diagram then induces the required ladder of localization sequences. 

  Another possibility is the following. The  $\mathbf{W}$-cohomology localization sequence on $X$ (with open subscheme $U$ and closed subscheme $Z$) can be obtained from the exact sequence
  \[
  0\to i_\ast i^!\mathbf{W}_X\to \mathbf{W}_X\to j_\ast j^\ast\mathbf{W}_X\to 0
  \]
  of sheaves on the small (Zariski or Nisnevich) site of $X$, that follows from \cite[Proposition 2.39]{4real} and the remark on localization sequences after it. There is a similar exact sequence of sheaves on $Y$ yielding the localization sequence for $Y$ with open subscheme $Y_U=f^{-1}(U)$ and $Y_Z=f^{-1}(Z)$. Applying $f_\ast$ to the sequence on $Y$ and using \cite[Lemma 2.7]{4real} yields a commutative diagram of exact sequences
  \[
  \xymatrix{
        0\ar[r] & i_\ast i^!\mathbf{W}_X \ar[r] \ar[d] &  \mathbf{W}_X \ar[r]\ar[d]  & j_\ast j^\ast \mathbf{W}_X\ar[r] \ar[d] & 0 \ \phantom{,}\\
    0\ar[r] & i_\ast i^!f_\ast\mathbf{W}_Y \ar[r] &  f_\ast\mathbf{W}_Y \ar[r] & j_\ast j^\ast f_\ast\mathbf{W}_Y\ar[r] & 0\ ,
    }
  \]
  whose lower horizontal sequence is identified with the exact sequence
  \[
  0\to f_\ast (i_Y)_\ast (i_Y)^!\mathbf{W}_Y\to f_\ast\mathbf{W}_Y\to f_\ast(j_Y)_\ast(j_Y)^\ast\mathbf{W}_Y\to 0.
  \]
  The vertical morphisms are the morphisms of coefficient data yielding the pullback as in \cite[Definition 2.33]{4real}, cf.\ also Proposition 2.45 of loc.~cit. The ladder of exact localization sequences then follows from \cite[Proposition 2.12]{4real}.   
\end{proof}

\subsection{Special case: the Gysin sequence for sphere bundles}
\label{sec:Gysin}

We will use the following shorthand notation for the total $\mathbf{W}$-cohomology ring:
$$\op{H}^*_\oplus(X):=\op \bigoplus_{\mathscr L \in \op{Pic}(X)/2}\op{H}^*\bigl(X,\mathbf{W}(\L)\bigr).$$

Let $\EE$ be a rank $n$ vector bundle with Euler class $e:={\rm e}(\EE)$, and let $$q\colon \EE_0:=\EE\su 0\to X$$ denote its sphere bundle. The localization sequence, cf.~\eqref{eq:localization} on page \pageref{eq:localization}, applied to $X\inj \EE\hookleftarrow \EE_0$ translates to the following \emph{Gysin sequence}:
\begin{equation}\label{eq:gysin}
  \xymatrix{
    \cdots\ar[r]^-{\de}&	\op{H}^{*-n}_{\oplus}(X) \ar[r]^-{(-)\cdot e} &
    \op{H}^{*}_{\oplus}(X) \ar[r]^-{q^*} &
    \op{H}_\oplus^{*}(\EE_0) \ar[r]^-{\de} &
    \op{H}^{* +1-n}_{\oplus}(X)\ar[r]^-{(-)\cdot e} &
    \cdots\ .}
\end{equation}
In this sequence, the map $(-)\cdot e$ is multiplication with the Euler class of $\EE$ and can alternatively be understood as composition of a homotopy invariance isomorphism and the Thom isomorphism with the forgetting of support. The pullback morphism $q^*$ is a homomorphism of the total $\mathbf{W}$-cohomology rings. 

The cohomology of a sphere bundle can often be computed using the following part of the exact sequence
\begin{equation}\label{eq:Gysinkercoker}
  \xymatrix{0 \ar[r] &
    \op{coker} e\ar[r]^-{q^*} &
    \op{H}_\oplus^{*}(\EE_0) \ar[r]^-{\de} &
    \ker e\ar[r] &
    0}.
\end{equation}
Under some further assumptions, we can make some general statements.

\begin{proposition}
  \label{prop:spherebundle}
  Let $\EE$ be a vector bundle over $X$ with Euler class $e={\rm e}(\EE)$ and let $\EE_0$ denote its sphere bundle. 
  \begin{enumerate}
  \item The exact sequence~\eqref{eq:Gysinkercoker} is an exact sequence of right $C$-modules. 
  \item Assume that, with the right $C$-module structure from (1),  $\ker e$ is free of rank 1. Then the sequence~\eqref{eq:Gysinkercoker} splits, i.e., the $\mathbf{W}$-cohomology of $\EE_0$ is
  $$
  {\rm H}^*_\oplus(\EE_0)\cong \op{coker} e \oplus R\cdot \op{coker} e,
  $$
  as $\op{coker}e$-modules, where $R$ is an element satisfying $\de R=x$.
  \end{enumerate}
\end{proposition}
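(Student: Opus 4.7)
My plan is to first upgrade the exact sequence~\eqref{eq:Gysinkercoker} to a short exact sequence of right $\op{coker} e$-modules, which takes care of (1), and then deduce (2) from the freeness hypothesis by the standard splitting argument for a short exact sequence whose right-hand term is a free module.

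For part (1), I would start from the fact that $q^*\colon \op{H}^*_\oplus(X)\to \op{H}^*_\oplus(\EE_0)$ is a ring homomorphism, while exactness of the Gysin sequence~\eqref{eq:gysin} at $\op{H}^*_\oplus(X)$ gives $q^*(e)=0$. Thus $q^*$ factors through the quotient ring $\op{coker} e=\op{H}^*_\oplus(X)\big/e\cdot \op{H}^*_\oplus(X)$, endowing $\op{H}^*_\oplus(\EE_0)$ with a right $\op{coker} e$-module structure for which $q^*$ is tautologically linear. The submodule $\ker e\subseteq \op{H}^*_\oplus(X)$ is a right ideal satisfying $e\cdot \ker e=0$ by $\langle-1\rangle$-graded commutativity, hence inherits a right $\op{coker} e$-module structure. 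The delicate step is verifying that the boundary $\de$ is $\op{coker} e$-linear, i.e.\ that $\de\bigl(\alpha\cdot q^*(c)\bigr)=\de(\alpha)\cdot c$ for $\alpha\in \op{H}^*_\oplus(\EE_0)$ and $c\in \op{H}^*_\oplus(X)$. For this I would combine the derivation/projection-formula property of boundary maps in the localization sequence (as recalled in Section~\ref{sec:prelims}) with the observation that $\de\bigl(q^*(c)\bigr)=0$, since $q^*(c)$ is the restriction of $\pi^*(c)\in \op{H}^*_\oplus(\EE)$ along the open immersion $\EE_0\hookrightarrow \EE$ and therefore already lifts to the total space.

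For part (2), once (1) is established, the freeness hypothesis writes $\ker e=x\cdot \op{coker} e$ with $x$ a free generator. Exactness says that $\de$ surjects onto $\ker e$, so I lift $x$ to some $R\in \op{H}^*_\oplus(\EE_0)$ with $\de R=x$. For an arbitrary $\alpha\in \op{H}^*_\oplus(\EE_0)$ I write $\de(\alpha)=x\cdot a$ with $a\in \op{coker} e$ uniquely determined by freeness; then $\op{coker} e$-linearity of $\de$ yields $\alpha-R\cdot a\in \ker \de=\op{im} q^*$, so $\alpha=q^*(b)+R\cdot a$ for a unique $b$ (using injectivity of $q^*\colon \op{coker} e\hookrightarrow \op{H}^*_\oplus(\EE_0)$, which follows from the exact sequence). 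This produces the desired decomposition $\op{H}^*_\oplus(\EE_0)\cong \op{coker} e\oplus R\cdot \op{coker} e$ as right $\op{coker} e$-modules.

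The main obstacle I anticipate is precisely the $\op{coker} e$-linearity of $\de$: while one naturally expects this from the usual projection-formula yoga, extracting the correct signed formulation for the twisted, $\mathbb{Z}\oplus \op{Pic}/2$-graded $\mathbf{W}$-cohomology Gysin sequence is where one has to be careful. Once this linearity is in place, the rest of the argument is a routine diagram chase on a short exact sequence of modules with free quotient.
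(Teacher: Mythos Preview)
Your proposal is correct and follows essentially the same route as the paper: establish the $\op{coker} e$-module structures via $q^*$ and then prove $\delta$ is $\op{coker} e$-linear by combining the derivation property of the boundary map with $\delta\circ q^*=0$, after which (2) is the standard splitting of a short exact sequence with free quotient. The paper's treatment of (2) is a one-line appeal to freeness, whereas you spell out the lift-and-subtract argument, but the content is identical.
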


\begin{proof}
(1) We first note that all objects in the exact sequence \eqref{eq:Gysinkercoker}  have a module structure over $C:=\coker e$. For ${\rm H}^*_\oplus(\EE_0)$, the module structure is induced from $q^*$, making $q^*$ a $C$-module homomorphism. For $\ker e$ we note more generally that for a ring $R$ and any $y\in R$, $\ker y$ is a $\coker y$-module: an element $z\in \coker y$ acts by scalar multiplication  with a lift $\bar{z}\in R$ of $z$, and this is well-defined since the difference of two lifts is a multiple of $y$ and thus annihilated by $\ker y$.

  Next we want to show that $\de$ is a $C$-module homomorphism. For this, let $\alpha\in {\rm H}^*_\oplus(X)$ and $\beta\in{\rm H}^*_\oplus(\EE_0)$. The image of $\alpha$ in $\coker e$ will still be denoted by $\alpha$. Then the derivation property of the boundary map $\delta$, cf.\ \cite[Proposition 2.6]{chowwitt} implies
  \[
  \delta\bigl(\beta\cup q^*(\alpha)\bigr)=\delta(\beta)\cup q^*(\alpha) + (-1)^{\deg\alpha} \beta\cup \delta\bigl(q^*(\alpha)\bigr)=\delta(\beta)\cup q^*(\alpha). 
  \]
  The left-hand side is given by the $C$-module structure of ${\rm H}^*_\oplus(\EE_0)$ via $q^*$, and the right-hand side is the module structure of $\ker e$ explained above (since $\alpha$ is the lift of its image in $\coker e$). The first equality is the derivation property, and the second equality follows since $\delta\circ q^*=0$ from the exactness of the Gysin sequence. We conclude that the exact sequence \eqref{eq:Gysinkercoker} is an exact sequence of $C$-modules. 

(2) Since $\ker e=C\bra x\ket$ is a free $C$-module on one generator, the exact sequence~\eqref{eq:Gysinkercoker} is a split exact sequence of $C$-modules as claimed. 
\end{proof}

\begin{remark}
  \begin{enumerate}
  \item 
    In the above proposition, we could alternatively consider left module structures. This, however, requires introducing some signs depending on the degrees of elements, to be compatible with the derivation property of the boundary map in step (1) of the above proof.
  \item In later applications of point (2) above, the statement that $\ker e$ is free of rank one will follow from $\ker e$ being a principal ideal plus a Poincar\'e duality statement.
  \end{enumerate}
\end{remark}

\subsection{Characteristic classes in Witt-sheaf cohomology}

In this section, we provide a brief recollection on characteristic classes in Chow--Witt theory and Witt-sheaf cohomology, including some of the basic properties we will need in our later computations. 

For a smooth $F$-scheme $X$ and a rank $n$ vector bundle $p\colon \EE\to X$ with zero section $s\colon X\to \EE$, the Euler class in Chow--Witt theory is defined as follows, cf.~\cite{fasel:memoir}:
\[
  {\rm e}(\EE):=(p^*)^{-1}s_\ast(\langle 1\rangle)\in\widetilde{\rm CH}^n(X,\det \EE^\vee).
\]
This definition extends to all the other cohomology theories we consider; what is needed is a multiplicative cohomology theory with suitable pushforwards (such as MSL-oriented or MSp-oriented representable cohomology theories in motivic stable homotopy).

An orientation of a vector bundle $\EE$ on $X$ is given by a line bundle $\L$ and an isomorphism $\omega\colon\det \EE\xrightarrow{\cong}\L^{\otimes 2}$. Any orientation, i.e., choice of such isomorphism, induces isomorphisms
\[
\widetilde{\rm CH}^n(X,\det \EE^\vee)\cong\widetilde{\rm CH}^n\bigl(X,(\L^\vee)^{\otimes 2}\bigr)\cong\widetilde{\rm CH}^n(X,\mathscr{O}).
\]
It should be noted that the resulting Euler class in $\widetilde{\rm CH}^n(X)$ depends on the choice of orientation. 

The algebraic Euler classes are compatible with their topological counterparts, in the sense that the real cycle class map ${\rm H}^n\bigl(X,\mathbf{I}^n(\det \EE^\vee)\bigr)\to {\rm H}^n\bigl(X(\mathbb{R}),\mathbb{Z}(\det \EE^\vee)\bigr)$ takes the algebraic Euler classes in $\mathbf{I}$-cohomology to the topological Euler classes in singular cohomology, cf.~\cite[Proposition~6.1]{4real}.

Similar to their topological counterparts, the algebraic Euler classes satisfy the classical basic properties:
\begin{itemize}
\item Euler classes are \emph{functorial} for morphisms of smooth schemes, cf.~\cite[Proposition 3.1]{AsokFaselEuler}: for a morphism $f\colon X\to Y$ and a vector bundle $\EE$ over $Y$, we have
  \[
  f^\ast\bigl({\rm e}(\EE)\bigr)={\rm e}\bigl(f^\ast(\EE)\bigr).
  \]
\item There is a \emph{Whitney sum formula}, cf.~\cite[Lemma 2.8]{chowwitt}: for smooth schemes $X_1,X_2$ and vector bundles $\EE_i\to X_i$, we have ${\rm e}(\EE_1\boxplus \EE_2)={\rm e}(\EE_1)\boxtimes{\rm e}(\EE_2)$.
\item If a vector bundle $\EE/X$ has a \emph{nowhere vanishing section}, i.e., splits off a trivial line bundle, then ${\rm e}(\EE)=0$. For smooth affine schemes this is \cite[Corollaire 13.3.3]{fasel:memoir}. Alternatively, it is a consequence of \cite[Theorem 1]{AsokFaselEuler} which compares the above definition with the obstruction-theoretic definition. 
\end{itemize}

As a final property, the topological Euler class changes sign when \emph{switching the orientation} of a vector bundle. The algebraic situation is a bit more complicated, since the set of orientations of an $F$-vector space is a $F^\times/(F^\times)^2$-torsor. However, the behaviour of Euler classes under change of orientation is sort of the expected one which we detail in the following two lemmas. 

\begin{lemma}
  \label{lem:euler-iso}
  Let $X$ be a smooth $F$-scheme, and let $\phi\colon \EE\cong \EE'$ be an isomorphism of rank~$n$ vector bundles over $X$. Then the induced isomorphism
  \[
  \phi\colon \widetilde{\rm CH}^n(X,\det \EE^\vee)\cong \widetilde{\rm CH}^n\bigl(X,\det(\EE')^\vee\bigr)
  \]
  maps ${\rm e}(\EE)$ to ${\rm e}(\EE')$. If the vector bundles are orientable, and we choose orientations such that the diagram
  \[
  \xymatrix{
    \det \EE\ar[rr]^\phi\ar[rd] &&\det \EE' \ar[ld]\\
    &\L^{\otimes 2}&	
  }
  \]
  commutes, then the corresponding Euler classes in $\widetilde{\rm CH}^n(X)$ are equal. 
\end{lemma}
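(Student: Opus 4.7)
My plan is to derive the lemma from naturality of the zero-section pushforward and pullback along the isomorphism $\phi$, combined with careful tracking of the twist line bundles. First I would record the two commuting relations coming from the fact that $\phi$ is a morphism of vector bundles over $X$: namely $p'\circ\phi=p$ and $\phi\circ s=s'$, where $p,p'$ are the bundle projections and $s,s'$ the zero sections. Since $\phi$ is an isomorphism, it induces inverse pullback and pushforward isomorphisms on the relevant twisted $\mathbf{W}$-cohomology groups, and the identification of twists pulled back from $X$ via $p$ versus via $p'\circ\phi$ is controlled by the induced isomorphism $\det\phi\colon\det \EE\cong\det \EE'$ on $X$.

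Next, functoriality of pushforward applied to $s'=\phi\circ s$ gives $s'_\ast(\langle 1\rangle)=\phi_\ast s_\ast(\langle 1\rangle)$, and functoriality of pullback applied to $p=p'\circ\phi$ gives $p^\ast=\phi^\ast\circ(p')^\ast$. Assembling these and using that $(\phi^\ast)^{-1}=\phi_\ast$ because $\phi$ is an isomorphism, the defining formula yields
\[
{\rm e}(\EE)=(p^\ast)^{-1}s_\ast(\langle 1\rangle)=((p')^\ast)^{-1}\phi_\ast s_\ast(\langle 1\rangle)=((p')^\ast)^{-1}s'_\ast(\langle 1\rangle)={\rm e}(\EE'),
\]
with the last identification understood via the induced isomorphism $\widetilde{\op{CH}}^n(X,\det \EE^\vee)\cong\widetilde{\op{CH}}^n\bigl(X,\det(\EE')^\vee\bigr)$ coming from $\det\phi^\vee$. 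This establishes the first assertion.

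For the second part, the commutativity of the triangle involving the two chosen orientations says precisely that the isomorphism $\det\phi$ agrees, under the orientations, with the automorphism of $\L^{\otimes 2}$ produced by going down one leg and back up the other. Consequently, composing the isomorphism on cohomology induced by $\det\phi^\vee$ with the two orientation-induced untwistings to $\widetilde{\op{CH}}^n(X)$ yields the identity map. Combined with the equality of twisted Euler classes established above, this gives the desired equality of the untwisted Euler classes in $\widetilde{\op{CH}}^n(X)$.

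The main technical obstacle, and the only real substance of the proof, is the bookkeeping of twist line bundles and the verification that the naturality of pushforward along the isomorphism $\phi$ (composed with the zero-section closed immersion) genuinely induces, on the cohomology of $X$, the twist isomorphism governed by $\det\phi$. This should follow from the compatibility of the Thom isomorphism and the deformation-to-the-normal-cone construction with isomorphisms of ambient vector bundles, or alternatively can be checked directly on the Rost--Schmid model for $\mathbf{W}$-cohomology recalled in Section~\ref{sec:prelims}.
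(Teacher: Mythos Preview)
Your approach is correct and follows essentially the same logic as the paper's proof: both exploit the compatibility of $\phi$ with the zero sections and projections, then invoke a functoriality property of Chow--Witt groups. The only difference is packaging: where you use functoriality of pushforward along the composition $s'=\phi\circ s$ together with $(\phi^\ast)^{-1}=\phi_\ast$ for isomorphisms, the paper organizes the same step as a base-change statement for the transversal square
\[
\xymatrix{
X \ar[r]^= \ar[d]_{s'} & X \ar[d]^{s}\\
\EE'\ar[r]^\cong_\phi & \EE
}
\]
citing \cite[Theorem 3.18]{fasel:chow-witt-lectures}, and then combines this with a second commutative square coming from pullback functoriality along $p=p'\circ\phi$. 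The advantage of the paper's formulation is that it points to a citable base-change theorem which already encodes the twist bookkeeping you flag as the main technical obstacle; your version is slightly more direct but, as you note, would require unpacking the compatibility of the Thom isomorphism with bundle isomorphisms to be fully rigorous.
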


\begin{proof}
  We denote the projection maps of $\EE$ and $\EE'$ by $p$ and $p'$, and the zero sections by $s_0$ and $s_0'$,  respectively. We have a transversal square
  \[
  \xymatrix{
   & X \ar[r]^= \ar[d]_{s_0'} & X \ar[d]^{s_0}&\\
  & \EE'\ar[r]^\cong_\phi & \EE&
  }
  \]
  By base change for Chow--Witt groups, cf.\ \cite[Theorem 3.18]{fasel:chow-witt-lectures} and the discussion before, we get a commutative square
  \[
  \xymatrix{
    \widetilde{\rm CH}^0(X) \ar[d]_{(s_0')_\ast} \ar[r]^{(s_0)_\ast} & \widetilde{\rm CH}^n\bigl(\EE,p^\ast(\det \EE^\vee)\bigr)    \ar[d]_\cong^{\phi^*}\\  
    \widetilde{\rm CH}^n\bigl(\EE',(p')^\ast\bigl(\det (\EE')^\vee\bigr)\bigr) \ar[r]_\cong & \widetilde{\rm CH}^n\bigl(\EE',(p')^\ast(\det \EE^\vee)\bigr)
  }
  \]
in which the bottom horizontal morphism is induced from the isomorphism $\phi\colon \EE\cong \EE'$ by taking determinant and pulling back along $p'$ (and the isomorphisms $\alpha$ from loc.~cit.~are suppressed).
  Pullback functoriality provides another commutative square in which all maps are isomorphisms
  \[
  \xymatrix{
&    \widetilde{\rm CH}^n\bigl(\EE',(p')^\ast(\det (\EE')^\vee)\bigr) && \widetilde{\rm CH}^n\bigl(\EE,p^\ast(\det \EE^\vee)\bigr) \ar[ll]_{\phi^*} &\\
  &  \widetilde{\rm CH}^n\bigl(X,\det (\EE')^\vee\bigr) \ar[u]^{(p')^*} && \widetilde{\rm CH}^n(X,\det \EE^\vee) \ar[u]_{p^*} \ar[ll]^\phi& 
  }
  \]
  The upper horizontal map is actually the composition of the two isomorphisms in the previous diagram, and the lower horizontal map is again induced from the isomorphism $\phi\colon \EE\xrightarrow{\cong}\EE'$. Combining the two commutative diagrams shows that the bottom isomorphism $\phi\colon\widetilde{\rm CH}^n(X,\det \EE^\vee)\to \widetilde{\rm CH}^n\bigl(X,\det (\EE')^\vee\bigr)$ induced by $\phi$ maps the Euler class ${\rm e}(\EE)$ to the Euler class ${\rm e}(\EE')$.

  In the orientable case, the compatible choices of orientations lead to a commutative triangle of isomorphisms
  \[
  \xymatrix{
  &  \widetilde{\rm CH}^n(X,\det \EE^\vee)\ar[rr]^\phi\ar[rd] &&\widetilde{\rm CH}^n\bigl(X,\det (\EE')^\vee\bigr) \ar[ld]&\\
  &  &\widetilde{\rm CH}^n(X,\mathscr{O})&&
  }
  \]
  which proves the claim.
\end{proof}

\begin{lemma}
  \label{lem:change-orientation}
  Let $X$ be a smooth scheme over $F$ and let $p\colon \EE\to X$ be an orientable vector bundle of rank $n$, equipped with two orientations $\rho_1,\rho_2\colon\det \EE\xrightarrow{\cong} \mathscr{O}_X$. Then the composition $\rho_1\circ\rho_2^{-1}\colon \mathscr{O}_X\to\mathscr{O}_X$ is an isomorphism given by multiplication with a global unit $u\in{\rm H}^0(X,\mathscr{O}_X^\times)$. The corresponding isomorphism
  \[
  \widetilde{\rm CH}^n(X)\xrightarrow{\rho_2^{-1}}\widetilde{\rm CH}^n(X,\det \EE)\xrightarrow{\rho_1}\widetilde{\rm CH}^n(X)
  \]
  of Chow--Witt groups is given by multiplication with the class $\langle u\rangle\in\widetilde{\rm CH}^0(X)$.
\end{lemma}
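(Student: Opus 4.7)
The first claim is standard: an automorphism of the structure sheaf $\mathscr{O}_X$ is given by multiplication by a global section of $\mathscr{O}_X^\times$, so the composition $\rho_1\circ\rho_2^{-1}$ corresponds to some $u\in{\rm H}^0(X,\mathscr{O}_X^\times)$. I would mention this briefly and then turn to the substantive part.

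The plan for the substantive part is to trace the effect of an isomorphism of twisting line bundles through the definitions of the (twisted) coefficient sheaves. Recall that the Milnor--Witt K-theory sheaf $\mathbf{K}^{\op{MW}}_n(\L)$ is defined analogously to $\mathbf{W}(\L)$ as the sheafification of the presheaf
\[
U\mapsto \mathbf{K}^{\op{MW}}_n(U)\otimes_{\mathbb{Z}[\mathscr{O}_X^\times(U)]}\mathbb{Z}[\L(U)^\times],
\]
where $f\in\mathscr{O}_X^\times(U)$ acts on $\mathbf{K}^{\op{MW}}_n$ via multiplication by the rank one form $\langle f\rangle$. An isomorphism $\phi\colon \L\xrightarrow{\cong}\L'$ of line bundles induces a morphism $\mathbb{Z}[\L^\times]\to\mathbb{Z}[(\L')^\times]$ and hence a functorial isomorphism $\phi_\ast\colon \mathbf{K}^{\op{MW}}_n(\L)\to\mathbf{K}^{\op{MW}}_n(\L')$, and passing to Nisnevich cohomology one gets the corresponding isomorphism on Chow--Witt groups. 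This is the isomorphism $\widetilde{\rm CH}^n(X,\det\EE)\cong\widetilde{\rm CH}^n(X)$ induced by an orientation.

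Next I would compute the effect of multiplication by $u$ on $\mathscr{O}_X$. On the level of presheaf sections, an element $\alpha\otimes s\in \mathbf{K}^{\op{MW}}_n(U)\otimes_{\mathbb{Z}[\mathscr{O}_X^\times(U)]}\mathbb{Z}[\mathscr{O}_X^\times(U)]$ is sent to $\alpha\otimes us$, and by the defining relation of the tensor product we have
\[
\alpha\otimes us=\alpha\cdot u\otimes s=\langle u\rangle\cdot\alpha\otimes s.
\]
Thus the induced endomorphism of $\mathbf{K}^{\op{MW}}_n$ (via the canonical identification $\mathbf{K}^{\op{MW}}_n(\mathscr{O}_X)\cong\mathbf{K}^{\op{MW}}_n$) is multiplication by the rank one form $\langle u\rangle$. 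Passing to cohomology and using that the Witt ring ${\rm W}(F)$ (and more generally $\widetilde{\rm CH}^0(X)=\op{GW}(F)$ for $X$ connected) acts compatibly on Chow--Witt groups, I conclude that the composite $\rho_1\circ\rho_2^{-1}$ acts on $\widetilde{\rm CH}^n(X)$ as multiplication by $\langle u\rangle\in\widetilde{\rm CH}^0(X)$.

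The main point that needs a little care is the compatibility of the isomorphism $\phi_\ast$ at the level of sheaves with the identifications used in the definition of the twisted Chow--Witt groups; this is the Balmer--Calm\`es-type coherence issue mentioned earlier in the paper. However, for a single endomorphism of $\mathscr{O}_X$ given by a unit the computation reduces to the tensor product manipulation above, which makes the obstacle essentially bookkeeping rather than something genuinely subtle.
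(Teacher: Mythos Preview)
Your argument is correct and follows essentially the same idea as the paper. The paper's proof is a one-liner via the Rost--Schmid complex, observing that the map changes the coefficients of cycles in ${\rm W}(\kappa(x),\omega_{\kappa(x)/F})$ by multiplication with $\langle u\rangle$; your computation at the level of the presheaf definition $\mathbf{K}^{\op{MW}}_n(U)\otimes_{\mathbb{Z}[\mathscr{O}_X^\times(U)]}\mathbb{Z}[\L(U)^\times]$ is the same observation one step earlier, before passing to the Gersten resolution.
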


\begin{proof}
  This can be seen directly from the Rost--Schmid complex. The map will change the coefficients of cycles in ${\rm W}(\kappa(x),\omega_{\kappa(x)/F})$ by multiplication with $\langle u\rangle$. 
\end{proof}

\begin{remark}
  Note that for the most commonly appearing unit, we have $\langle-1\rangle=-1$ in ${\rm W}(\mathbb{Z})$. 
\end{remark}

In the later computations, we will also need to relate Euler classes of vector bundles and their duals. The corresponding formula can be found in \cite[Theorem 7.1]{levine:aspects}. To state the formula, we note that changing the twist by the square of a line bundle induces isomorphisms. In particular, the isomorphism $\det \EE^\vee\cong \det \EE\otimes\det (\EE^\vee)^{\otimes 2}$ induces an isomorphism of Chow--Witt groups: 
  \[
  \psi\colon \widetilde{\rm CH}^n(X,\det \EE)\xrightarrow{\cong} \widetilde{\rm CH}^n(X,\det \EE^\vee).
  \]

\begin{proposition}
  \label{prop:euler-dual}
  Let $X$ be a smooth quasi-projective $F$-scheme, and let $\EE$ be a rank $n$ vector bundle over $X$. Then
  \[
  \psi\bigl({\rm e}(\EE^\vee)\bigr)=(-1)^n{\rm e}(\EE).
  \]
\end{proposition}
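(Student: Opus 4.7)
The plan is a splitting-principle argument. First I would establish the rank one case: for a line bundle $L$, show $\psi\bigl({\rm e}(L^\vee)\bigr) = -{\rm e}(L)$. Then, to handle general $\EE$, I would reduce to the case of a direct sum of line bundles by pulling back along a flag bundle, and combine the line bundle contributions via the Whitney sum formula recalled just before Lemma~\ref{lem:euler-iso}. Since each of the $n$ summands contributes a single sign, the total sign is $(-1)^n$.

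For the rank one case, I would unravel the definition ${\rm e}(L) = (p^\ast)^{-1} s_\ast \langle 1 \rangle \in \widetilde{\op{CH}}^1(X, L^\vee)$ and the analogous definition for $L^\vee$. The isomorphism $\psi$ is induced by the canonical identification $L^\vee \cong L \otimes (L^\vee)^{\otimes 2}$; the key point is to compare this with the seemingly different isomorphism obtained from the evaluation pairing $L \otimes L^\vee \to \mathscr{O}$ applied in the opposite order. These two identifications differ by the flip automorphism of $L \otimes L^\vee$, which by Lemma~\ref{lem:change-orientation} acts on the twisted Chow--Witt group as multiplication by $\langle -1 \rangle = -1$, producing the desired sign.

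For the general case, I would pull back along the complete flag bundle $\pi\colon \op{Fl}(\EE) \to X$, so that $\pi^\ast\EE = L_1 \oplus \cdots \oplus L_n$. By functoriality of Euler classes, the Whitney sum formula, and the compatibility of $\psi$ with the decomposition $\det \EE \cong L_1 \otimes \cdots \otimes L_n$, applying the rank one case in each factor gives
\[
\pi^\ast\psi\bigl({\rm e}(\EE^\vee)\bigr) = \prod_{i=1}^n \psi\bigl({\rm e}(L_i^\vee)\bigr) = (-1)^n \prod_{i=1}^n {\rm e}(L_i) = (-1)^n \pi^\ast {\rm e}(\EE),
\]
and the identity descends to $X$ as soon as $\pi^\ast$ is injective on the relevant twisted Chow--Witt group.

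The main obstacle is precisely this injectivity. The Leray--Hirsch theorem of Proposition~\ref{prop:leray-hirsch} applies to flag bundles whose fibers have even dimension, but $\dim \op{Fl}(n) = \binom{n}{2}$ is odd whenever $n \equiv 2, 3 \pmod 4$. To bypass this I would decompose $\pi$ into a tower of projective bundles $\PP(\EE_k)$ and argue inductively using the Gysin sequence~\eqref{eq:gysin} at each step, exploiting the fact that for the tautological Euler class $e$ on each projective subbundle, the pullback on $\coker e$ is injective regardless of the parity of the fiber dimension.
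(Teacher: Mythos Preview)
The paper does not give its own proof of this proposition: it simply records the formula and cites \cite[Theorem 7.1]{levine:aspects}. So there is no ``paper's proof'' to compare against beyond that reference, and Levine's argument is not the splitting-principle reduction you describe. What I can assess is whether your argument stands on its own.

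The rank-one step is reasonable in spirit, and the idea of using Lemma~\ref{lem:change-orientation} to account for the sign via the flip automorphism is the right kind of bookkeeping. The real difficulty is exactly the one you flag at the end, and your proposed workaround does not close it. Splitting $\EE$ into line bundles forces you through a tower of projective bundles $\mathbb{P}(\EE_k)$ whose fibres alternate in parity; by Example~\ref{ex:leray-hirsch-pn}, whenever the fibre is an odd-dimensional projective space the Leray--Hirsch decomposition fails unless the relevant Euler class vanishes, and in that case $\pi^\ast$ is \emph{not} injective in general on the $\mathbf{I}$-cohomology (hence Chow--Witt) part. Your suggestion to argue via the Gysin sequence that ``pullback on $\coker e$ is injective regardless of parity'' is not correct as stated: in the odd-dimensional case the cokernel is only half of the cohomology of the total space, the other half being generated by the non-characteristic class $R$, and nothing prevents the class you care about from landing in the kernel of restriction to a fibre.

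If you want to salvage a splitting-type argument within the framework of this paper, the natural move is to split into rank-$2$ bundles rather than line bundles, using Corollary~\ref{cor:splitting} and Remark~\ref{rem:splitting-principle}, which \emph{do} give injective pullback. This reduces you to the rank-$2$ case (where $(-1)^2 = 1$, so you must show $\psi\bigl({\rm e}(\EE^\vee)\bigr) = {\rm e}(\EE)$) plus a separate treatment of odd rank, where the Euler class is hyperbolic in Chow--Witt and the identity is less delicate. Alternatively, consult Levine's direct argument, which proceeds via the Thom class and avoids splitting altogether.
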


Starting with the Euler class, one can define characteristic classes, the Borel classes, for symplectic bundles by stabilization. Pontryagin classes for vector bundles are then obtained as Borel classes of the symplectification of a vector bundle, cf.~\cite{chowwitt}. This is (almost) the algebraic analogue of the definition of topological Pontryagin classes of real vector bundles as Chern classes of the complexified bundles. As a result, for a vector bundle $\EE$  on a smooth $F$-scheme $X$, there are Pontryagin classes
\[
{\rm p}_i(\EE)\in\widetilde{\rm CH}^{2i}(X),
\]
for $i=1,\dots,\op{rk}\EE$. These satisfy a Whitney sum formula and are compatible with stabilization, cf.~\cite[Theorem 4.10]{chowwitt} or \cite[Proposition 3.28]{realgrassmannian}. From the computations in \cite{realgrassmannian}, it follows that the Witt-sheaf cohomology ring of ${\rm B}{\rm GL}_n$ is generated by even Pontryagin classes, and the Euler class in case $n$ is even, cf.~\cite[Theorem 1.1, or Theorem 6.4]{realgrassmannian}.

\begin{remark}
  \label{rem:pontryagin}
  The definition used here is the one used in \cite{chowwitt} and \cite{realgrassmannian} and it differs from the classical topological definition of Pontryagin classes in that it doesn't discard the odd Pontryagin classes by reindexing and doesn't introduce a sign. This makes several formulas such as the Whitney sum formula more readable.

  Because of this, we should include the explicit reminder that, in $\mathbf{W}$-cohomology, the odd Pontryagin classes are actually trivial because they are in the image of the Bockstein map, cf.~\cite{realgrassmannian}, in particular Theorem 3.23.
\end{remark}

\section{K\"unneth formula and Leray--Hirsch theorems for \texorpdfstring{$\mathbf{W}$}{W}-cohomology}
\label{sec:kuenneth}

In this section, we will set up the K\"unneth formula and deduce a Leray--Hirsch-type theorem for $\mathbf{W}$-cohomology, following the arguments for higher Chow groups in \cite{krishna}. This will allow to extend the formulas from \cite{realgrassmannian,schubert} to describe the $\mathbf{W}$-cohomology of Grassmannian bundles. These formulas will be used in the computation of $\mathbf{W}$-cohomology of flag varieties. Moreover, once we have formulas for ${\bf W}$-cohomology of flag varieties, we can use the Leray--Hirsch theorem to easily produce to descriptions of the $\mathbf{W}$-cohomology rings of \emph{flag bundles} (whenever the cohomology of the relevant flag variety is generated by characteristic classes). This has possible relevance to refined enumerative geometry of degeneracy loci. 

\subsection{$\mathbf{W}$-cohomology of cellular spaces}
Both the K\"unneth formula and the Leray--Hirsch theorem involve \emph{cellular spaces}:
\begin{definition}
	\label{def:cellular}
	Let $F$ be a field of characteristic $\neq 2$ and let $X$ be a smooth variety over $F$. We call $X$ \emph{cellular} if there exists a filtration
	\[
	\emptyset=X_{n+1}\subset X_n\subset X_{n-1}\subset\cdots\subset X_{1}\subset X_0=X
	\]
	of $X$ by closed subschemes $X_i$ of codimension $i$ such that for each $0\leq i\leq n$ we have $X_i\setminus X_{i+1}=\bigsqcup_{c_i}\mathbb{A}^{n-i}$, where $c_i$ denotes the number of cells of \emph{codimension} $i$.
\end{definition}

\begin{lemma}
  \label{lem:cell-vanish}
  In the above setup, $\op{H}^q\bigl(X\setminus X_i,\mathbf{W}(\L)\bigr)=0$ for $q\geq i$.
\end{lemma}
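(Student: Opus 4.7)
The plan is to induct on $i$, with the $\mathbf{W}$-cohomology localization sequence \eqref{eq:localization} as the main engine. The base case $i=0$ is trivial, since $X\setminus X_0=\emptyset$. It is instructive to check the first substantive case $i=1$ directly: here $X\setminus X_1=\bigsqcup_{c_0}\mathbb{A}^n$, and since every line bundle on affine space is trivial, homotopy invariance of $\mathbf{W}$-cohomology gives $\op{H}^q\bigl(X\setminus X_1,\mathbf{W}(\L)\bigr)\cong \op{H}^q\bigl(\op{Spec} F,\mathbf{W}\bigr)^{\oplus c_0}=0$ for $q\geq 1$.

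For the inductive step, I would assume the vanishing at stage $i$ and deduce it at stage $i+1$. The scheme $X\setminus X_{i+1}$ is smooth as an open subscheme of $X$, and inside it $Z:=X_i\setminus X_{i+1}$ is a smooth closed subscheme of pure codimension $i$ by the cellular hypothesis (namely $Z=\bigsqcup_{c_i}\mathbb{A}^{n-i}$), with open complement $U:=X\setminus X_i$. The localization sequence \eqref{eq:localization} then yields the exact piece
\[
\op{H}^{q-i}\bigl(Z,\mathbf{W}(\L\otimes \det\mathscr{N})\bigr)\longrightarrow \op{H}^{q}\bigl(X\setminus X_{i+1},\mathbf{W}(\L)\bigr)\longrightarrow \op{H}^{q}\bigl(U,\mathbf{W}(\L)\bigr).
\]
For $q\geq i+1$ the right-hand term vanishes by the induction hypothesis, while the left-hand term is the $\mathbf{W}$-cohomology in degree $q-i\geq 1$ of a disjoint union of affine spaces $\mathbb{A}^{n-i}$, on each of which every line bundle is trivial; homotopy invariance forces it to vanish as well. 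Exactness then gives $\op{H}^q\bigl(X\setminus X_{i+1},\mathbf{W}(\L)\bigr)=0$, closing the induction.

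I do not anticipate any real obstacle. This is the classical ``cellular skeleton'' induction adapted to $\mathbf{W}$-cohomology, and it works cleanly precisely because the $\mathbf{W}$-cohomology localization sequence recorded in \eqref{eq:localization} is a genuine long exact sequence, free of the index shifts in the twist and degree that complicate the analogous arguments for $\mathbf{I}$-cohomology or Chow--Witt groups. The cellularity assumption supplies exactly the two inputs needed at each step: smoothness of the stratum $Z$ (so that the localization sequence applies) and its presentation as a disjoint union of affine spaces (so that homotopy invariance and triviality of line bundles together kill the remaining term).
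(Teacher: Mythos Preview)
Your argument is correct and is essentially the paper's own proof, just with the induction indexed as $i\mapsto i+1$ rather than $i-1\mapsto i$. Both use the same localization sequence for $X\setminus X_{i+1}\supset X\setminus X_i$ with closed complement a disjoint union of affine spaces, and both kill the two outer terms via the induction hypothesis and homotopy invariance respectively.
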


\begin{proof}
  The proof is by induction on $i$. The base case is $i=1$, in which case $X\setminus X_1=\bigsqcup_{c_0}\mathbb{A}^n$. The required vanishing follows from $\mathbb{A}^1$-invariance of $\mathbf{W}$-cohomology. Now for $i>1$, we consider the localization  situation
  \[
  X\setminus X_{i-1}\hookrightarrow X\setminus X_i\hookleftarrow X_{i-1}\setminus X_i=\bigsqcup_{c_{i-1}}\mathbb{A}^{n-i+1}.
  \]
  The associated localization sequence looks as follows
  \[
  \op{H}^{q-i+1}\bigl(X_{i-1}\setminus X_i,\mathbf{W}(\L)\bigr)\to \op{H}^q\bigl(X\setminus X_i,\mathbf{W}(\L)\bigr)\to \op{H}^q\bigl(X\setminus X_{i-1},\mathbf{W}(\L)\bigr).\]
  The first map has degree $i-1$ because this is the codimension of $X_{i-1}$ in $X$. By the induction hypothesis, the last group vanishes for $q\geq i-1$. By $\mathbb{A}^1$-invariance of $\mathbf{W}$-cohomology, the first group vanishes for $q\geq i$. Consequently, the middle group vanishes for $q\geq i$ which is what we wanted to show. 
\end{proof}

\begin{lemma}
  \label{lem:w-free}
  Let $F$ be a perfect field of characteristic $\neq 2$, let $X$ be a smooth cellular $F$-scheme with cellular filtration
  \[
  \emptyset=X_{n+1}\subset X_n\subset X_{n-1}\subset\cdots\subset X_1\subset X_0=X,
  \]
  as in Definition~\ref{def:cellular}. If the total $\mathbf{W}$-cohomology ring  $\bigoplus_{q,\L}{\rm H}^q\bigl(X,\mathbf{W}(\L)\bigr)$ is a free ${\rm W}(F)$-module, then the same is true for the open smooth subschemes $X\setminus X_i$ for $1\leq i\leq n+1$.
\end{lemma}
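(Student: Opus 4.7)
The plan is to proceed by reverse induction on $i$, starting from the base case $i=n+1$ where $X\setminus X_{n+1}=X$ has free total $\mathbf{W}$-cohomology by hypothesis. For the inductive step, assume ${\rm H}^\ast\bigl(X\setminus X_{i+1},\mathbf{W}(\L)\bigr)$ is a free ${\rm W}(F)$-module for every line bundle $\L$, and show the same for $X\setminus X_i$. Setting $W=X\setminus X_{i+1}$, $U=X\setminus X_i$, and $Z=X_i\setminus X_{i+1}=\bigsqcup_{c_i}\mathbb{A}^{n-i}$, the $\mathbb{A}^1$-invariance of $\mathbf{W}$-cohomology gives ${\rm H}^0\bigl(Z,\mathbf{W}(\L|_Z\otimes\det\mathscr{N})\bigr)=\bigoplus_{c_i}{\rm W}(F)$, concentrated in degree zero.

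Combining the localization sequence \eqref{eq:localization} for $Z\hookrightarrow W\hookleftarrow U$ with the vanishing ${\rm H}^q(U,\mathbf{W})=0$ for $q\geq i$ from Lemma~\ref{lem:cell-vanish} and the preceding isomorphisms ${\rm H}^q(W)\cong {\rm H}^q(U)$ for $q<i-1$, everything collapses to the four-term exact sequence
\[
0\to {\rm H}^{i-1}\bigl(W,\mathbf{W}(\L)\bigr)\to {\rm H}^{i-1}\bigl(U,\mathbf{W}(\L)\bigr)\xrightarrow{\partial}\bigoplus_{c_i}{\rm W}(F)\xrightarrow{\iota_\ast}{\rm H}^i\bigl(W,\mathbf{W}(\L)\bigr)\to 0.
\]
By the inductive hypothesis, ${\rm H}^{i-1}(W)$ and ${\rm H}^i(W)$ are free ${\rm W}(F)$-modules. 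Lifting a basis of ${\rm H}^i(W)$ produces a section of the surjection $\iota_\ast$, so $\bigoplus_{c_i}{\rm W}(F)\cong \ker(\iota_\ast)\oplus L$ with $L\cong {\rm H}^i(W)$ free, exhibiting $\ker(\iota_\ast)$ as a direct summand of the free module $\bigoplus_{c_i}{\rm W}(F)$.

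The main obstacle is upgrading this projectivity to genuine freeness of ${\rm H}^{i-1}(U,\mathbf{W}(\L))$. To handle this, I would argue that the pushforward map $\iota_\ast$ respects the natural basis indexed by cells: one orders the cells of codimension $i$ compatibly with the cellular filtration and produces explicit free generators for $\ker(\iota_\ast)$ by iteratively lifting basis elements of ${\rm H}^i(W)$ to cell classes and taking complementary cells. Once $\ker(\iota_\ast)$ is shown to be free, projectivity of $\ker(\iota_\ast)$ splits the short exact sequence $0\to {\rm H}^{i-1}(W)\to {\rm H}^{i-1}(U)\to \ker(\iota_\ast)\to 0$, so ${\rm H}^{i-1}(U)\cong {\rm H}^{i-1}(W)\oplus \ker(\iota_\ast)$ is a direct sum of two free modules, hence free. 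The remaining cohomology groups of $U$ agree with those of $W$ by the isomorphisms above, so freeness for $U$ follows, completing the inductive step.
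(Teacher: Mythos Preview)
Your inductive setup, the use of the localization sequence, and the reduction to the four-term exact sequence are exactly as in the paper. You also correctly isolate the only nontrivial point: upgrading the projectivity of $\ker(\iota_\ast)$ to freeness over ${\rm W}(F)$.

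However, your proposed resolution of this point is not a proof. You write that $\iota_\ast$ ``respects the natural basis indexed by cells'' and that one can ``produce explicit free generators for $\ker(\iota_\ast)$ by iteratively lifting basis elements of ${\rm H}^i(W)$ to cell classes and taking complementary cells.'' But the inductive hypothesis only tells you that ${\rm H}^i(W,\mathbf{W}(\L))$ is free as a ${\rm W}(F)$-module; it gives you no canonical cell basis, and no reason why the images $\iota_\ast(1_c)$ of the cell generators should sit inside such a basis. Over a general Witt ring ${\rm W}(F)$ there is no elementary-divisor theory, so a surjection from a free module to a free module need not have free kernel without further input. Your sketch does not supply that input.

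The paper closes this gap by invoking a structure theorem of Fitzgerald: every finitely generated projective ${\rm W}(F)$-module decomposes as a free module plus an invertible ideal. Since $\ker(\iota_\ast)$ is a direct summand of ${\rm W}(F)^{\oplus c_i}$ with free complement, the invertible-ideal piece is stably free, hence free (a stably free rank-one module over a commutative ring is free via top exterior powers). This is the missing ingredient you need; your cell-lifting argument, as stated, does not substitute for it.
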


\begin{proof}
  We first note that for Chow groups, the restriction maps along open immersions are surjective. This applies in particular to ${\rm Ch}^1(X)\to {\rm Ch}^1(X\setminus X_i)$, hence any mod 2 line bundle class $[\L]\in{\rm Ch}^1(X\setminus X_i)$ extends to $X$. This implies that for all open smooth subschemes $X\setminus X_i$, all relevant line bundle twists for $\mathbf{W}$-cohomology are restricted from $X$. Hence, the inductive argument below actually captures the total $\mathbf{W}$-cohomology rings of $X\setminus X_i$.
  
  The proof now proceeds by induction on the dimension of $X_i$. The base case is $i={n+1}$ where $X_{n+1}=\emptyset$ and we have the freeness of the total $\mathbf{W}$-cohomology ring by assumption. For the inductive step, assume we have the freeness for $X\setminus X_{i+1}$ and we want to prove it for $X\setminus X_i$. We consider the localization sequence for the scheme $U_{i+1}=X\setminus X_{i+1}$ with open subscheme $U_{i}=X\setminus X_{i}$ and closed subscheme $Z=X_i\setminus X_{i+1}\cong \bigsqcup_{c_i}\mathbb{A}^{n-i}$. Since 
  \[
    {\rm H}^{p}_Z\bigl(U_{i+1},\mathbf{W}(\L)\bigr)\cong{\rm H}^{p-i}\bigl(Z,\mathbf{W}(\L\otimes\mathscr{N}_Z)\bigr)\cong\left\{\begin{array}{ll}{\rm W}(F)^{\oplus c_i} & p=i,\\0&\textrm{otherwise},\end{array}\right.
  \]
  the localization sequence yields isomorphisms ${\rm H}^j\bigl(U_{i+1},\mathbf{W}(\L)\bigr) \to {\rm H}^j\bigl(U_i,\mathbf{W}(\L)\bigr)$ for $j\neq i-1,i$ and all line bundles $\L$ on $U_{i+1}$. By the inductive assumption, we only need to deal with freeness in degrees $i-1$ and $i$. The relevant part of the localization sequences for these degrees has the following form:
  \[
  0 \to {\rm H}^{i-1}\bigl(U_{i+1},\mathbf{W}(\L)\bigr)\to {\rm H}^{i-1}\bigl(U_i,\mathbf{W}(\L)\bigr) \to {\rm W}(F)^{\oplus c_i}\to
  \]
  \[
  \to {\rm H}^i\bigl(U_{i+1},\mathbf{W}(\L)\bigr) \to {\rm H}^i\bigl(U_i,\mathbf{W}(\L)\bigr)\to 0.
  \]
  By Lemma~\ref{lem:cell-vanish}, we have ${\rm H}^i\bigl(U_i,\mathbf{W}(\L)\bigr)=0$. Since ${\rm H}^i\bigl(U_{i+1},\mathbf{W}(\L)\bigr)$ is free by the inductive assumption, ${\rm W}(F)^{\oplus c_i}$ splits as direct sum of ${\rm H}^i\bigl(U_{i+1},\mathbf{W}(\L)\bigr)$ and a projective module $B:=\op{Im}\left({\rm H}^{i-1}\bigl(U_i,\mathbf{W}(\L)\bigr) \to {\rm W}(F)^{\oplus c_i}\right)$. By \cite{fitzgerald}, projective ${\rm W}(F)$-modules split as direct sums of a free module and an invertible ideal. Since in the decomposition
  \[
    {\rm W}(F)^{\oplus c_i}\cong {\rm H}^i\bigl(U_{i+1},\mathbf{W}(\L)\bigr)\oplus B
  \]
  the first summand is free and the direct sum is also free, the summand $B$ must also be free. Consequently, the short exact sequence
  \[
  0 \to {\rm H}^{i-1}\bigl(U_{i+1},\mathbf{W}(\L)\bigr)\to {\rm H}^{i-1}\bigl(U_i,\mathbf{W}(\L)\bigr) \to B\to 0
  \]
  splits and ${\rm H}^{i-1}\bigl(U_i,\mathbf{W}(\L)\bigr)$ is the direct sum of two free modules, hence free.
\end{proof}

\begin{remark}
  The conclusion of the lemma can also be obtained for Grassmannians directly: in this case, we have a basis of the $\mathbf{W}$-cohomology in terms of the even Schubert cells -- removing cells then only possibly removes generators of the $\mathbf{W}$-cohomology.
\end{remark}

\subsection{The ladder lemma}
The proof of the K\"unneth formula proceeds in rather close analogy to the topological case as proved e.g.\ in \cite[Theorem 3.15]{hatcher}. The following Lemma will be used both in the proof of the K\"unneth theorem and of the Leray--Hirsch theorem, it is a variation on the compatibility of localization sequences, cf.~Lemma~\ref{lem:compat-locseq-pullback}.

\begin{lemma}\label{lem:ladder}
  Let $F$ be a perfect field of characteristic $\neq 2$, and consider the following diagram of schemes:
  \[\xymatrix{
  &  Y_Z\ar[r]^-{j_Z}\ar[d]_-{f_Z}&	Y\ar[d]^-{f}&		Y_U\ar[l]_-{j_U}\ar[d]^-{f_U}&\\
   & Z\ar[r]_-{i_Z}&	X&		U\ar[l]^-{i_U}& \phantom{a} \hspace{-2 cm}.
  }
  \]
  Here $Z\se X$ is a closed smooth subscheme with complement $U=X\su Z$, and $f\colon Y\to X$ is a morphism of smooth $F$-schemes. Denote by $Y_Z=f^{-1}(Z)$ and $Y_U=f^{-1}(U)$ the preimages, and assume that $Y_Z$ is smooth and $f\colon Y\to X$ is transversal to $Z$.
  Let $\L_X$ and $\L_Y$ be line bundles on $X$ and $Y$, respectively.
  Let $s\colon \La\to {\rm H}^*\bigl(Y, \mathbf{W}(\L_Y)\bigr)$ be a homomorphism of ${\rm W}(F)$-modules. 
  Then the following ladder of $\rm{W}(F)$-module homomorphisms commutes:
  \[
    \xymatrix{
&    \vdots \ar[d]&& \vdots\ar[d] &\\
  &  \La\otimes_{{\rm W}(F)} \op{H}^*\bigl(U,\mathbf{W}(\L_X)\bigr) \ar[rr]^-{\cup(s_U\otimes f_U^*)} \ar[d]_-{\id\otimes \partial} &&\op{H}^*\bigl(Y_U, \mathbf{W}(\L)\bigr) \ar[d]^-{\partial} &\\
   & \La\otimes_{{\rm W}(F)} \op{H}^*\bigl(Z,\mathbf{W}(\L_X\otimes \mathscr{N}_Z)\bigr) \ar[rr]^-{\cup(s_Z\otimes f_Z^*)} \ar[d]_-{\id\otimes (i_Z)_*}&& \op{H}^*\bigl(Y_Z, \mathbf{W}(\L\otimes f^*\mathscr{N}_Z)\bigr) \ar[d]^-{(j_Z)_*}& \\
  &  \La\otimes_{{\rm W}(F)} \op{H}^*\bigl(X,\mathbf{W}(\L_X)\bigr) \ar[d]_-{\id\otimes i_U^*} \ar[rr]^-{\cup(s\otimes f^*)} && \op{H}^*\bigl(Y, \mathbf{W}(\L)\bigr)\ar[d]^-{j_U^*} &\\
&	\vdots &&\vdots& 
}  
\]
Here $s_U=j_U^*\circ s$, $s_Z=j_Z^*\circ s$, $\L=f^*\L_X\otimes \L_Y$ and $\cup\colon {\rm H}^*(\cdot)\otimes {\rm H}^*(\cdot)\to {\rm H}^*(\cdot)$ denotes the intersection/cup product multiplication. If $\La$ is a free ${\rm W}(F)$-module, or if all the $\mathbf{W}$-cohomologies of $U, Z$ and $X$ are free ${\rm W}(F)$-modules then both vertical sequences are exact.
\end{lemma}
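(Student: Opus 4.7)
The plan is to identify the two columns with localization sequences and then check commutativity of each of the three kinds of squares separately. The right vertical column is nothing but the Gysin/localization sequence~\eqref{eq:localization} for $Y$ with smooth closed subscheme $Y_Z=f^{-1}(Z)$ (smooth by the transversality assumption) and open complement $Y_U$, with coefficients $\mathbf{W}(\L)$ where $\L=f^\ast\L_X\otimes\L_Y$. This is exact by the general formalism recalled in Section~\ref{sec:prelims}. The left vertical column is obtained by tensoring the analogous localization sequence for $X$ (with closed subscheme $Z$ and complement $U$, coefficients $\mathbf{W}(\L_X)$) with $\Lambda$ over ${\rm W}(F)$. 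If $\Lambda$ is free, the tensor product preserves exactness trivially; if instead $\op{H}^\ast$ of $X$, $U$, $Z$ are free ${\rm W}(F)$-modules, one splits the long exact sequence into short exact sequences of free modules (using that subquotients and extensions of free ${\rm W}(F)$-modules occurring in such a sequence are themselves free whenever the three named cohomologies are free), which then remain exact under $\otimes_{{\rm W}(F)}\Lambda$.

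For the \emph{lower square}, the composite going right then down sends $\lambda\otimes\alpha\mapsto s(\lambda)\cup f^\ast(\alpha)\mapsto j_U^\ast\bigl(s(\lambda)\cup f^\ast(\alpha)\bigr)$, which by multiplicativity of pullback and the equality $j_U^\ast\circ f^\ast=f_U^\ast\circ i_U^\ast$ equals $j_U^\ast(s(\lambda))\cup f_U^\ast(i_U^\ast\alpha)=s_U(\lambda)\cup f_U^\ast(i_U^\ast\alpha)$, which is the composite going down then right. For the \emph{middle square}, the composite going right then down starts with $\lambda\otimes\beta\mapsto s_Z(\lambda)\cup f_Z^\ast(\beta)=j_Z^\ast(s(\lambda))\cup f_Z^\ast(\beta)$, and by the projection formula for $(j_Z)_\ast$ followed by base change for pushforwards across the transversal square consisting of $Y_Z\hookrightarrow Y$ and $Z\hookrightarrow X$ (cf.\ Lemma~\ref{lem:compat-locseq-pullback}), we get $(j_Z)_\ast\bigl(j_Z^\ast(s(\lambda))\cup f_Z^\ast(\beta)\bigr)=s(\lambda)\cup (j_Z)_\ast f_Z^\ast(\beta)=s(\lambda)\cup f^\ast(i_Z)_\ast(\beta)$.

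The \emph{upper square} is the most delicate, as it involves the boundary $\partial$ of the localization sequence. Applying the derivation property of $\partial$ to $s_U(\lambda)\cup f_U^\ast(\gamma)$, we obtain schematically
\[
\partial\bigl(s_U(\lambda)\cup f_U^\ast(\gamma)\bigr)=\partial\bigl(s_U(\lambda)\bigr)\cup (\textrm{restriction}) \pm s_Z(\lambda)\cup \partial\bigl(f_U^\ast(\gamma)\bigr).
\]
The first term vanishes: $s_U(\lambda)=j_U^\ast s(\lambda)$ lies in the image of restriction from $Y$, and the localization sequence for $Y$ gives $\partial\circ j_U^\ast=0$. For the second term, the compatibility of the boundary with pullback from Lemma~\ref{lem:compat-locseq-pullback} (using transversality of $f$ to $Z$) identifies $\partial f_U^\ast(\gamma)=f_Z^\ast(\partial\gamma)$, producing $s_Z(\lambda)\cup f_Z^\ast(\partial\gamma)$ as required.

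The main obstacle I anticipate is bookkeeping the signs arising from the graded derivation property of $\partial$: one must verify that the convention used in drawing the ladder (no explicit sign in the horizontal maps) is consistent with $(-1)^{\deg s(\lambda)}$ under the total $\mathbb{Z}\oplus\op{Pic}/2$-grading, or else absorb the sign into the ${\rm W}(F)$-module map $s$. A secondary point to be checked carefully is the second exactness alternative: one must argue that the long exact localization sequence for $X$, assumed to consist of free ${\rm W}(F)$-modules in each degree, breaks into split short exact sequences so that $\otimes_{{\rm W}(F)}\Lambda$ preserves exactness regardless of the flatness of $\Lambda$; this uses the structure of projective ${\rm W}(F)$-modules (Fitzgerald) together with the assumed freeness to propagate freeness across images and kernels.
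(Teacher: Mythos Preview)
Your proposal is correct and follows essentially the same route as the paper: identify the two columns as (tensored) localization sequences and verify the three squares via multiplicativity of pullback, the projection formula together with transversal base change, and the derivation property of $\partial$ combined with its naturality under pullback (Lemma~\ref{lem:compat-locseq-pullback}).

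One simplification worth noting: for the second exactness alternative you do not need Fitzgerald's structure theorem. The paper argues more directly that a \emph{bounded} acyclic complex of free ${\rm W}(F)$-modules is automatically split exact, hence remains acyclic after applying $-\otimes_{{\rm W}(F)}\Lambda$ for arbitrary $\Lambda$; no analysis of intermediate kernels and images is required. Your sign concern is legitimate, and the paper handles it exactly as you anticipate: since $s_U(\lambda)=j_U^\ast s(\lambda)$ is restricted from $Y$, the form of the derivation property cited (\cite[Proposition~3.4]{fasel:chow-witt-lectures}) yields $\partial\bigl(j_U^\ast(x)\cdot y\bigr)=j_Z^\ast(x)\cdot\partial y$ directly, with the vanishing term absorbing any sign.
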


\begin{proof}
  The left column arises from tensoring the localization sequence (for the scheme $X$ with open subscheme $U$ and closed subscheme $Z$) with $\La$.  The right column is the localization sequence for the scheme $Y$ with closed subscheme $Y_Z$ and open subscheme $Y_U$. 
The horizontal maps all arise from compositions of the form
\[\xymatrix@C+1em
{
		\La\otimes_{{\rm W}(F)}{\rm H}^*(R, \L_X)\ar[r]^-{s_R\otimes f_R^*}&   {\rm H}^*(Y_R,\L_Y)\otimes_{{\rm W}(F)}{\rm H}^*(Y_R,\L_X)\ar[r]^-{\cup}&{\rm H}^*(Y_R,\L)
	}
\]
for $R=X, U$ or $Z$ (and we omitted $\mathbf{W}$ for typographical reasons). 

The commutativity of the ladder consists of the commutativity of three squares. The square involving pullbacks follows from $s_U=j_U^*\circ s$, $i_U\circ f_U=f\circ j_U$ and naturality of the cup product.\footnote{Alternatively, it could be deduced on the level of Gersten complexes: the contributions for points $x\in U$ are unchanged by restriction, and the contributions for points $x\not\in U$ map to $0$.}  The square involving pushforwards states that for all $v\in \La$ and $z\in {\rm H}^*(Z)$:
\[(j_Z)_*\bigl(j_Z^*s(v)\cdot f_Z^*(z)\bigr)=s(v)\cdot (j_Z)_*f_Z^*(z)=s(v)\cdot f^*(i_Z)_*(z),\]
where the first equality follows from the projection formula.\footnote{Again, this can also be done using Gersten complexes. Points on $Z$ and $Y_Z$ can be considered as points on $X$ and $Y_X$, respectively, and there is only some adjustment necessary because of the change of twist with the normal bundle.}  
The second equality holds by base change, using transversality of $f$ to $Z$, i.e., $f^*\mathscr{N}_Z\iso \mathscr{N}_{Y_Z}$, cf.\ \cite[Theorem 2.12]{AsokFaselEuler}.

Finally, commutativity of the square involving boundary maps states that for all $v\in \La$ and $u\in {\rm H}^*(U)$:
\[  \partial\left(j_U^*s(v)\cdot f_U^*(u)\right)=j_Z^*s(v)\cdot \partial f_U^*(u)=s_Z(v)\cdot f_Z^*\partial(u),
\]
where the first equality follows from the derivation property $\partial (j_U^*(x)\cdot y)=j_Z^*(x)\cdot \partial y$ of the boundary map, cf.~\cite[Proposition 3.4]{fasel:chow-witt-lectures} or \cite[Proposition 2.6]{chowwitt}.  The second equality is naturality of the boundary map in the localization sequence, see Lemma~\ref{lem:compat-locseq-pullback}.

Finally, concerning exactness: if $\La$ is a free ${\rm W}(F)$-module, then tensoring the exact localization sequence with $\La$ preserves exactness, which proves the claim in this case. If on the other hand, all the $\mathbf{W}$-cohomology groups of $U$, $Z$ and $X$ are free ${\rm W}(F)$, then the exact localization sequence is a bounded acyclic complex of free ${\rm W}(F)$-modules. In this case, the tensor product with any $\La$ will preserve the acyclicity. 
\end{proof}

\begin{remark}
  We will use this lemma when $f\colon Y\to X$ is a fiber bundle, or more simply a product projection. There are two typical choices for $s$: when $Y$ is a product $X\times X'$, then the ${\rm W}(F)$-module homomorphism $s\colon {\rm H}^*(X')\to {\rm H}^*(Y)$ is the pullback via projection to the second factor $s=p_{X'}^*$. In the situation of the Leray--Hirsch theorem, the ${\rm W}(F)$-module homomorphism $s\colon {\rm H}^*(X')\to {\rm H}^*(Y)$ is a section of $j_{X'}^*$, where $j_{X'}\colon X'\se Y$ is the inclusion of the fiber of the projection map $f$.
\end{remark}

\subsection{K\"unneth formula}

\begin{proposition}[K\"unneth formula]
  \label{prop:kuenneth}
  Let $F$ be a perfect field of characteristic $\neq 2$, let $X, Y$ be smooth $F$-schemes, and let $\L_X$ and $\L_Y$ be line bundles over $X$ and $Y$, respectively. Assume that $X$ is cellular. Assume moreover that for $X$ or $Y$ all $\mathbf{W}$-cohomology groups ${\rm H}^q\bigl(X,\mathbf{W}(\L)\bigr)$ are free ${\rm W}(F)$-modules. Then the exterior product map
  \[
    {\rm H}^*\bigl(X,\mathbf{W}(\L_X)\bigr)\otimes_{{\rm W}(F)}{\rm H}^*\bigl(Y,\mathbf{W}(\L_Y)\bigr)\to {\rm H}^*\bigl(X\times Y,\mathbf{W}(\L_X\boxtimes \L_Y)\bigr)
    \]
    from Definition~\ref{def:ext-prod} is an isomorphism.
\end{proposition}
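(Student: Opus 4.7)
The plan is to follow the standard topological argument (e.g.\ \cite[Theorem~3.15]{hatcher}) by induction along the cellular filtration of $X$, with the ladder of Lemma~\ref{lem:ladder} supplying the requisite naturality and exactness to run a five-lemma at each stage. Write the filtration as $\emptyset = X_{n+1} \subset X_n \subset \cdots \subset X_0 = X$ and set $U_i := X \setminus X_i$ and $Z_i := X_i \setminus X_{i+1} \cong \bigsqcup_{c_i} \mathbb{A}^{n-i}$. I will show by induction on $i$ that the exterior product map
\[
\op{H}^*\bigl(U_i,\mathbf{W}(\L_X|_{U_i})\bigr) \otimes_{{\rm W}(F)} \op{H}^*\bigl(Y,\mathbf{W}(\L_Y)\bigr) \to \op{H}^*\bigl(U_i \times Y,\mathbf{W}(\L_X \boxtimes \L_Y)\bigr)
\]
is an isomorphism, the case $i = n+1$ being the proposition.

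The base case $i = 0$ is vacuous since $U_0 = \emptyset$. For the inductive step, I apply Lemma~\ref{lem:ladder} to the localization triple $U_i \hookrightarrow U_{i+1} \hookleftarrow Z_i$, taking $f$ to be the projection $U_{i+1} \times Y \to U_{i+1}$ (which is smooth, hence automatically transversal to $Z_i$), $\Lambda := \op{H}^*\bigl(Y,\mathbf{W}(\L_Y)\bigr)$ and $s := p_Y^*$. With these choices, the horizontal maps in the ladder unwind to $\alpha \otimes \beta \mapsto p_Y^*(\alpha) \cup p_{(-)}^*(\beta)$, which is precisely the exterior product for the appropriate factor in $\{U_i, Z_i, U_{i+1}\}$. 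Exactness of both columns of the ladder is guaranteed by the hypothesis: if $Y$ has free $\mathbf{W}$-cohomology, then $\Lambda$ is free; otherwise $X$ has free $\mathbf{W}$-cohomology, and then Lemma~\ref{lem:w-free} combined with the fact that $Z_i$ is a disjoint union of affine spaces (so that $\op{H}^*(Z_i,\mathbf{W}(\L)) \cong {\rm W}(F)^{\oplus c_i}$ by $\mathbb{A}^1$-invariance) ensures that $U_i$, $U_{i+1}$ and $Z_i$ all have free $\mathbf{W}$-cohomology.

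To conclude the inductive step, the rung corresponding to $U_i$ is an isomorphism by the inductive hypothesis, and the rung for $Z_i$ is an isomorphism by $\mathbb{A}^1$-invariance and additivity over disjoint unions, which reduce it to the identity on $\Lambda^{\oplus c_i}$. The five-lemma applied to the ladder then forces the rung for $U_{i+1}$ to be an isomorphism in every degree. Iterating up to $i = n+1$ yields the K\"unneth isomorphism for $X \times Y$.

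The only non-routine steps are the bookkeeping of twisting line bundles (checking that $\L_X \boxtimes \L_Y$ restricts correctly along each $U_i \times Y \hookrightarrow U_{i+1} \times Y$ and along $Z_i \times Y$), and the verification that the horizontal maps produced by Lemma~\ref{lem:ladder} really coincide with the exterior products; both of these are built into the formulation of the ladder lemma. The main conceptual hurdle is simply ensuring the freeness hypothesis needed for the left column to remain exact after tensoring with $\Lambda$, which is why Lemma~\ref{lem:w-free} is the essential preparatory ingredient.
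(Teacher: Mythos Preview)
Your proof is correct and follows essentially the same approach as the paper's: induction along the cellular filtration of $X$, the ladder of Lemma~\ref{lem:ladder} applied to the projection $U_{i+1}\times Y\to U_{i+1}$ with $s=p_Y^*$, and the five-lemma with the $Z_i$-rung handled by $\mathbb{A}^1$-invariance. If anything, you are slightly more explicit than the paper in invoking Lemma~\ref{lem:w-free} to propagate freeness to the open pieces $U_i$ when it is $X$ rather than $Y$ that is assumed to have free $\mathbf{W}$-cohomology.
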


\begin{proof}
  Denote by
  \[
  \emptyset=X_{n+1}\subset X_n\subset X_{n-1}\subset\cdots\subset X_1\subset X_0=X
  \]
  the cellular filtration of $X$ as in Definition~\ref{def:cellular}.
  
  We apply Lemma~\ref{lem:ladder} for the scheme $U_{j+1}=X\su X_{j+1}$ with open subscheme $U_j=X\su X_j$ and closed subscheme $Z=X_j\su X_{j+1}$ and the map $f\colon U_j\times Y\to U_j$. Take the ${\rm W}(F)$-module homomorphism $s$ to be the pullback
  \[
  s:=p^*\colon \La^q={\rm H}^q\bigl(Y, \mathbf{W}(\L_Y)\bigr)\to {\rm H}^q\bigl(U_j\times Y, \mathbf{W}(\L)\bigr),
  \]
  where $\L=\L_X\boxtimes\L_Y$. 
  Taking direct sums for all $q$ with $p+q=i$, we obtain from Lemma~\ref{lem:ladder} a commutative ladder of long exact sequences:
  \[
  \xymatrix{
    &		\vdots \ar[d]& \vdots\ar[d]& \\
    &		\bigoplus_{p+q=i}\op{H}^{p-j}\bigl(Z,\mathbf{W}(\L_X\otimes\mathscr{N}_Z)\bigr)\otimes \La^q \ar[r] \ar[d] &\op{H}^{i-j}\bigl(Z\times Y,\mathbf{W}(\L\otimes\op{pr}_1^*\mathscr{N}_Z)\bigr) \ar[d]& \\
    &		\bigoplus_{p+q=i}\op{H}^p\bigl(U_{j+1},\mathbf{W}(\L_X)\bigr)\otimes \La^q \ar[r] \ar[d]& \op{H}^i\bigl(U_{j+1}\times Y,\mathbf{W}(\L)\bigr) \ar[d] &\\
    &		\bigoplus_{p+q=i}\op{H}^p\bigl(U_{j},\mathbf{W}(\L_X)\bigr)\otimes \La^q\ar[d] \ar[r] & \op{H}^i\bigl(U_{j}\times Y,\mathbf{W}(\L)\bigr)\ar[d] &\\
    &		\vdots &\vdots &\phantom{a} 
  }
  \]
  
  Now we prove the claim by induction on $j$. The base case $j=0$ is $U_0=X\setminus X_0=\emptyset$ is empty, the case $U_1=X\setminus X_1$ follows from homotopy invariance since $X\setminus X_1$ is a disjoint union of copies of affine space. For the inductive step, assume that we have the claim for $U_j$, we want to prove it for $U_{j+1}$. By induction, the morphisms
  \[
  \bigoplus_{p+q=i}\op{H}^p\bigl(U_{j},\mathbf{W}(\L_X)\bigr)\otimes \La^q\to \op{H}^i\bigl(U_{j}\times Y,\mathbf{W}(\L)\bigr)
  \]
  are isomorphisms. 
  Since $Z=X_j\setminus X_{j+1}$ is a disjoint union of $c_j$ copies of affine spaces $\mathbb{A}^{n-j}$, we have
  \[
    {\rm H}^{p-j}\bigl(Z,\mathbf{W}(\L_X\otimes\mathscr{N}_Z)\bigr)=\left\{\begin{array}{ll}0 & p\neq j \\ {\rm W}(F)^{\oplus c_j} & p=j,\end{array}\right.
    \]
    noting that the twisting bundle on affine spaces will always be trivial. Moreover, by homotopy invariance, the maps
    \[
    \bigoplus_{p+q=i}\op{H}^{p-j}\bigl(Z,\mathbf{W}(\L_X\otimes\mathscr{N}_Z)\bigr)\otimes \La^q\to \op{H}^{i-j}\bigl(Z\times Y,\mathbf{W}(\L\otimes\op{pr}_1^*\mathscr{N}_Z)\bigr)
    \]
    are also isomorphisms. The claim for $U_{j+1}$ then follows from the 5-lemma.
\end{proof}

\subsection{Leray--Hirsch theorem}
We now turn to the proof of the Leray--Hirsch theorem for $\mathbf{W}$-cohomology. In a nutshell, the theorem states that (under suitable cellularity and freeness conditions) the cohomology of a fiber bundle looks like the cohomology of a product if the generators of the cohomology of the fiber can be lifted to the cohomology of the total space.

Before we state the theorem we recall the appropriate notion of fiber bundle for our setting: A \emph{Zariski-locally trivial fibration} with fiber $X$ is a morphism $p\colon E\to B$ such that for any scheme-theoretic point $b\in B$ there exists a Zariski-open neighbourhood $U$ of $b$ and an isomorphism $E\times_B U\to X\times U$ making the following diagram commutative:
\[
\xymatrix{
  E\times_BU \ar[rr]^\cong \ar[rd]_{p|_U} &&  X\times U \ar[ld]^{{\rm pr}_1} \\
  &U
}
\]

\begin{proposition}[Leray--Hirsch theorem]
  \label{prop:leray-hirsch}
  Let $F$ be a perfect field of characteristic $\neq 2$, let $B$ be a smooth $F$-scheme and let $p\colon E\to B$ be a Zariski-locally trivial fibration with smooth cellular fiber $X$. Assume that there exists a line bundle $\mathscr{L}_E$ on $E$ and a Zariski-covering $\{U_i\}_i$ of $B$ trivializing $p$ such that the following are satisfied:
  \begin{enumerate}
  \item There exists some line bundle $\mathscr{K}$ on $X$ such that ${\rm H}^*(X,{\bf W}(\mathscr{K}))$ is a rank $r$ free ${\rm W}(F)$-module with generators $f_1,\dots,f_r$, and for each trivializing subset $U_i\subseteq B$ there is an isomorphism $\mathscr{L}_E|_{p^{-1}(U_i)}\cong q^*\mathscr{K}$ with $q\colon p^{-1}(U_i)\cong U_i\times X\to X$ the natural projection. 
  \item There exist classes $e_1,\dots,e_r$ in ${\rm H}^*(E,{\bf W}(\mathscr{L}_E))$, such that for each trivializing subset $U_i\subseteq B$ the restrictions of $e_1,\dots,e_r$ along the inclusion  $i\colon p^{-1}(U_i)\hookrightarrow E$ satisfy
    \[i^*(e_1)=q^*(f_1),\dots,i^*(e_r)=q^*(f_r),
    \]
    i.e., these classes form a free basis of the Witt-sheaf cohomology
    \[
      {\rm H}^*(p^{-1}(U_i),{\bf W}(\mathscr{L}_E|_{p^{-1}(U_i)}))\cong {\rm H}^*(p^{-1}(U_i),{\bf W}(q^*\mathscr{K}))
    \]
    as a module over ${\rm H}^*(U_i,{\bf W}(\mathscr{L}))$.  
  \end{enumerate}
  Then for every line bundle $\mathscr{L}$ on $B$, the map
  \[\xymatrix@R-2pc{
  	  \Phi\colon {\rm H}^*\bigl(X,\mathbf{W}(\mathscr{K})\bigr)\otimes_{{\rm W}(F)}{\rm H}^*\bigl(B,\mathbf{W}(\L)\bigr)\ar[r] &{\rm H}^*\bigl(E,\mathbf{W}(\L_E\otimes p^\ast\L)\bigr)\\
  	  f_i\otimes b\ar@{}[r] ^(.53){}="a"^(.7){}="b" \ar@{|->} "a";"b" &p^\ast(b)e_i
  }
  \]
  is an isomorphism of ${\rm H}^*\left(B,\mathbf{W}\right)$-modules. In particular, we have an isomorphism
  \[
    {\rm H}^*\bigl(E,\mathbf{W}(\L_E\otimes p^\ast\L)\bigr)\cong\bigoplus_{i=1}^r{\rm H}^*\bigl(B,\mathbf{W}(\L)\bigr)\cdot e_i
  \]
  of ${\rm H}^*(B,\mathbf{W})$-modules. 
\end{proposition}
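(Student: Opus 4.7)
The plan is to reduce to the K\"unneth formula (Proposition~\ref{prop:kuenneth}) via an induction on the size of the trivializing cover, globalizing by means of a Mayer--Vietoris argument. The key observations are that hypothesis (2) guarantees that over any trivializing open the classes $e_i$ are pullbacks of the $f_i$'s, and that hypothesis (1) ensures the freeness needed to commute tensor products past exact sequences.

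The base case is when the trivializing cover consists of a single open, i.e., when $p\colon E\to B$ is globally trivial, $E\cong B\times X$. Then by hypothesis (1) the twist $\mathscr{L}_E$ is isomorphic to $q^*\mathscr{K}$ globally, and by hypothesis (2) each $e_i$ agrees with $q^*(f_i)$. Under these identifications, $\Phi(f_i\otimes b)=p^*(b)\cdot q^*(f_i)$ is exactly the exterior product $b\otimes f_i\mapsto b\boxtimes f_i$ of Definition~\ref{def:ext-prod}, which is an isomorphism by the K\"unneth formula, using cellularity of $X$ and freeness of ${\rm H}^*\bigl(X,\mathbf{W}(\mathscr{K})\bigr)$ over ${\rm W}(F)$.

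For the inductive step with a trivializing cover $\{U_1,\dots,U_n\}$ of $B$ of size $n\geq 2$, I would set $V=U_1\cup\cdots\cup U_{n-1}$ and $W=U_n$. Each of $V$, $W$, and $V\cap W=\bigcup_{i<n}(U_i\cap U_n)$ then inherits a trivializing cover of cardinality strictly less than $n$, together with all the data of the proposition obtained by restriction. By the induction hypothesis, $\Phi$ is an isomorphism on each of these. The Mayer--Vietoris long exact sequences for the Zariski sheaves $\mathbf{W}(\mathscr{L})$ on $B$ and $\mathbf{W}(\mathscr{L}_E\otimes p^*\mathscr{L})$ on $E$, associated to the respective two-element open covers, fit into a commutative ladder via the naturality of $p^*$ and the cup product. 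Since ${\rm H}^*\bigl(X,\mathbf{W}(\mathscr{K})\bigr)$ is ${\rm W}(F)$-free, tensoring the base Mayer--Vietoris sequence with it preserves exactness, so the 5-lemma then forces $\Phi$ to be an isomorphism on $B$.

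The main obstacle is ensuring compatibility of all the restriction data under the two-element open cover: the classes $e_i$ restrict to classes on $V$, $W$, and $V\cap W$ still satisfying the fiberwise-basis condition (2), and the local identifications $\mathscr{L}_E|_{p^{-1}(U_i)}\cong q^*\mathscr{K}$ pass to these opens. This is essentially formal but requires careful bookkeeping of the twisted coefficients and of the ${\rm H}^*(B,\mathbf{W})$-module structures on both sides. As an alternative to the Mayer--Vietoris argument, one could phrase the induction using the closed-open ladder of Lemma~\ref{lem:ladder} applied to a stratification of $B$ by smooth locally closed strata each contained in some trivializing $U_i$, exploiting that $p$, being smooth, is automatically transverse to every smooth closed subscheme; this hews closer to the toolkit built up in the paper but requires producing the stratification as an extra step.
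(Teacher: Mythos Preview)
Your proposal is correct and matches the paper's argument (B) almost verbatim: induction on the size of the trivializing cover, base case via the K\"unneth formula, inductive step via a Mayer--Vietoris ladder together with freeness of ${\rm H}^*(X,\mathbf{W}(\mathscr{K}))$ and the 5-lemma. Your ``alternative'' via a stratification and Lemma~\ref{lem:ladder} is exactly the paper's argument (A); one minor simplification you could make is to note that $V\cap W\subseteq U_n$ already trivializes $p$, so that step is the base case directly rather than another instance of the induction hypothesis.
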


\begin{proof}
  We offer two arguments.

  (A) The first argument essentially follows the argument for higher Chow groups in \cite[Theorem~6.3]{krishna}, but it exploits the perfectness of $k$: Since $k$ is perfect, there exists a filtration
  \[
  \emptyset=B_{n+1}\subset B_n\subset\cdots\subset B_1\subset B_0=B
  \]
  of the base scheme $B$ by closed subschemes such that for all $0\leq i\leq n$ the scheme $B_i\setminus B_{i+1}$ is smooth and the given fibration is trivial over it. Then we have the smooth subschemes $U_i=B\setminus B_i$ and $V_i=U_i\setminus U_{i-1}=B_{i-1}\setminus B_i$ and we define $E_i=p^{-1}(U_i)$ and $W_i=p^{-1}(V_i)$. We want to prove by induction on $i$ that the following map is an isomorphism (omitting line bundle decorations for simplicity here):
  \[
  \Phi\colon {\rm H}^*(X,\mathbf{W})\otimes_{{\rm W}(F)}{\rm H}^*(U_i,\mathbf{W})\to {\rm H}^*(E_i,\mathbf{W})
  \]
  The base case is $i=1$, where $E_1=U_1\times X$ and the result follows from the K\"unneth formula in Proposition~\ref{prop:kuenneth} and condition (2) above. In general, by Lemma~\ref{lem:ladder}, we have a commutative diagram:
  \[
  \xymatrix{
    \vdots \ar[d]& \vdots\ar[d] &\\
    \op{H}^*\bigl(X,\mathbf{W}(\L_E)\bigr)\otimes_{{\rm W}(F)} \op{H}^*\bigl(U_i,\mathbf{W}(\L_B)\bigr) \ar[r] \ar[d] &\op{H}^*\bigl(E_i,\mathbf{W}(\L)\bigr) \ar[d]& \\
    \op{H}^*\bigl(X,\mathbf{W}(\L_E)\bigr)\otimes_{{\rm W}(F)} \op{H}^*\bigl(V_{i+1},\mathbf{W}(\L_B\otimes\mathscr{N}_{V_{i+1}})\bigr) \ar[r] \ar[d]& \op{H}^*\bigl(W_{i+1},\mathbf{W}(\L\otimes p^*\mathscr{N}_{V_{i+1}})\bigr) \ar[d] &\\
    \op{H}^*\bigl(X,\mathbf{W}(\L_E)\bigr)\otimes_{{\rm W}(F)} \op{H}^*\bigl(U_{i+1},\mathbf{W}(\L_B)\bigr) \ar[d] \ar[r] & \op{H}^*\bigl(E_{i+1},\mathbf{W}(\L)\bigr)\ar[d] &\\
    \vdots &\vdots& 
  }  
  \]
  Then the claim follows from the 5-lemma with horizontal isomorphisms from K\"unneth formula (for the lines with cohomology of $V_{i+1}$) and induction assumption (for the lines with cohomology of $U_i$).

  (B) The other argument allows to remove the perfectness hypothesis. Instead of the filtration from \cite{krishna}, we use the trivializing covering $\{U_i\}_i$, and the proof proceeds by an induction on the number of trivializing opens $U_i$ in the covering. In a nutshell, the K\"unneth formula proves the claim for trivial fibrations, and the assumptions (1) and (2) of the Leray--Hirsch theorem imply that these K\"unneth isomorphisms glue to the global Leray--Hirsch isomorphism.
  
  More precisely: in the base case of a trivial fibration, the condition in (2) implies the claim directly by the K\"unneth formula of Proposition~\ref{prop:kuenneth}. In the general case of a covering by $n$ subsets, we use a commutative ladder of Mayer--Vietoris sequences:
  \[
  \xymatrix{
    \vdots \ar[d]& \vdots\ar[d] &\\
    \op{H}^*\bigl(X,\mathbf{W}(\L_E)\bigr)\otimes_{{\rm W}(F)} \op{H}^*\bigl(B,\mathbf{W}(\L_B)\bigr) \ar[r] \ar[d] &\op{H}^*\bigl(E,\mathbf{W}(\L)\bigr) \ar[d]& \\
    \op{H}^*\bigl(X,\mathbf{W}(\L_E)\bigr)\otimes_{{\rm W}(F)} \op{H}^*\bigl(U_n\sqcup V,\mathbf{W}(\L_B)\bigr) \ar[r]_-{\cong} \ar[d]& \op{H}^*\bigl(p^{-1}(U_n)\sqcup p^{-1}(V),\mathbf{W}(\L)\bigr) \ar[d] &\\
    \op{H}^*\bigl(X,\mathbf{W}(\L_E)\bigr)\otimes_{{\rm W}(F)} \op{H}^*\bigl(U_n\cap V,\mathbf{W}(\L_B)\bigr) \ar[d] \ar[r]_-{\cong} & \op{H}^*\bigl(p^{-1}(U_n\cap V),\mathbf{W}(\L)\bigr)\ar[d] &\\
    \vdots &\vdots& 
  }  
  \]
  Here $V=\bigcup_{i=1}^{n-1}U_i$ is the union of $n-1$ trivializing subsets, which satisfies the Leray--Hirsch theorem by the induction assumption. The claim that we actually have a commutative ladder of exact sequences follows by an argument similar to the one proving Lemma~\ref{lem:ladder}, and the freeness assumption for the cohomology of $X$. That the middle vertical map is an isomorphism follows by the inductive assumption (for $V$) and the base case (for $U_n$). That the lower vertical map is an isomorphism follows from the K\"unneth formula and assumption (2), just like in the base case, because $p$ trivializes over $U_n\cap V$. The claim then follows from the 5-lemma. 
\end{proof}

\begin{remark}
  \label{rem:leray-hirsch-pic}
  Assume that for a fibration $X\xrightarrow{i} E\xrightarrow{p} B$ the conditions of Proposition~\ref{prop:leray-hirsch} above are satisfied, and that in addition the conditions for the Leray--Hirsch theorem for Chow groups, cf.\ \cite[Theorem 6.3]{krishna} are also satisfied. The latter implies in particular that the natural map
  \[
  \op{Pic}(X)\oplus\op{Pic}(B)\to \op{Pic}(E)\colon \left(i^\ast(a_j), b\right)\mapsto a_j\otimes p^\ast(b)
  \]
  is an isomorphism, where the $a_j\in \op{Pic}(E)$ restrict to a basis $i^\ast(a_j)$ of $\op{Pic}(X)$. This means in particular, that every line bundle in $\op{Pic}(E)$ is isomorphic to one of the form $\L_{E}\otimes p^\ast\L$ for suitable $\L_{E}\in\op{Pic}(E)$ and $\L\in\op{Pic}(B)$. This means that under these assumptions, the Leray--Hirsch theorem in Proposition~\ref{prop:leray-hirsch} above can be used to describe the total $\mathbf{W}$-cohomology ring. 
\end{remark}

\begin{remark}
  For the present paper, the situation in which we want to apply the Leray--Hirsch theorem above is the case of iterated Grassmannian bundles. Note that if $X$ is a cellular variety, $\EE$ is a rank $n$ vector bundle over $X$, and $0<k<n$ is a natural number, the Grassmannian bundle $\mathscr{G}(k,n,\EE)$ is itself cellular. For any cell $\iota\colon\mathbb{A}^d\to X$, the restriction of the vector bundle $\EE$ along $\iota$ is trivial, hence $\mathscr{G}(k,n,\EE)\times_X\mathbb{A}^d\cong  {\rm Gr}(k,n)\times\mathbb{A}^d$. The cells for $\mathscr{G}(k,n,\EE)$ are then the products of cells in the base with the cells of ${\rm Gr}(k,n)$. In the case of maximal rank flag varieties, these can also be obtained via the Bia\l{}ynicki-Birula method from torus actions on the flag varieties. For a Grassmannian bundle over a cellular variety $B$, we can also choose the cellular filtration for $B$ in the above proof (version A) of the Leray--Hirsch theorem. The reason is again that the restriction of the Grassmannian bundle to the cells of the cellular filtration will be trivial. 
\end{remark}

\begin{remark}
One could probably also formulate the Leray--Hirsch theorem above more generally for $\eta$-invertible theories representable in the stable motivic homotopy category. Parts of the method can already be found in \cite{ananyevskiy} for bundles with oriented Grassmannian fibers. Theorem 4 of loc.~cit.\ is a K\"unneth formula for products with oriented Grassmannians ${\rm SGr}(2,n)$,  and Theorem 5 is the corresponding version for general special linear bundles. Finally, \cite[Theorem 6]{ananyevskiy} is then a description of the cohomology of maximal rank oriented flag varieties for $\eta$-invertible theories, comparable to our Theorem~\ref{thm:maxrank}. 
\end{remark}

\begin{example}
  \label{ex:leray-hirsch-pn}
  We briefly discuss the projective bundle formula (inasmuch as it holds for $\mathbf{W}$-cohomology) as a special case of the above Leray--Hirsch theorem and compare to the results in \cite{fasel:ij}.

  So consider a smooth scheme  $X$ and a rank $r$ vector bundle $\EE$ on $X$ and denote by $\mathbb{P}(\EE):=\op{Proj}\bigl(\op{Sym}(\EE^\vee)\bigr)$ the associated projective bundle. There is a dichotomy in the application of the Leray--Hirsch theorem above to projective bundles.
  \begin{description}
  \item[even-dimensional projective spaces] In this case, the total $\mathbf{W}$-cohomology ring of $\mathbb{P}^{2n}$ is isomorphic to ${\rm W}(F)[{\rm e}_{2n}]/({\rm e}_{2n}^2)$ with ${\rm e}_{2n}$ the Euler class of the tautological rank $2n$-bundle in the twisted $\mathbf{W}$-cohomology. For a vector bundle $\EE$ of rank ${2n+1}$ over $X$, the Euler class of the associated tautological quotient bundle $\QQ$ on $\mathbb{P}(\EE)$ restricts to ${\rm e}_{2n}$ in each fiber, and therefore the Leray--Hirsch theorem is applicable.
  \item[odd-dimensional projective spaces] In this case, the total $\mathbf{W}$-cohomology ring of $\mathbb{P}^{2n-1}$ is isomorphic to ${\rm W}(F)[{\rm R}]/({\rm R}^2)$ with ${\rm R}$ the class of a point. This class fails to be a characteristic class of a vector bundle, making this case more problematic. Indeed, for a vector bundle $\EE$ of rank $2n$ over $X$, the Euler class of $\EE$ is an obstruction for the existence of a cohomology class on $\mathbb{P}(\EE)$ which restricts to the class ${\rm R}$ in each fiber. In a sense, having a cohomology class on $\mathbb{P}(\EE)$ which restricts to the class of a point in each fiber is essentially equivalent to the existence of a non-vanishing section. Therefore, the Leray--Hirsch theorem is only applicable if the Euler class of $\EE$ is trivial. For $n=1$, this means that the projective bundle is already a product. 
  \end{description}
  We will therefore only consider the case of odd rank vector bundles in which the Leray--Hirsch theorem is generally applicable. Note, however, that \cite[Theorems 9.2 and 9.4]{fasel:ij} provide partial descriptions of the cohomology of $\mathbb{P}(\EE)$ in the even-rank case. In particular, the long exact sequence of Theorem 9.4, loc.~cit., shows that the decomposition in the Leray--Hirsch theorem will generally fail in the even-rank case (unless again the Euler class of $\EE$ is trivial).

  In the odd-rank bundle case, we already mentioned that the Euler class
  \[
    {\rm e}(\QQ)\in {\rm H}^{2n}\bigl(\mathbb{P}(\EE),\mathbf{W}(\omega_{\mathbb{P}(\EE)/X})\bigr)
  \]
  of the rank $2n$ quotient bundle $\QQ$ restricts to the Euler class generator for each fiber of $\mathbb{P}(\EE)$. Additionally, we of course need the class $1\in{\rm H}^0\bigl(\mathbb{P}(\EE),\mathbf{W}\bigr)$ which also restricts to a generator of the $\mathbf{W}$-cohomology in degree $0$ of each fiber. For each line bundle $\L$ on $X$, the Leray--Hirsch theorem then provides isomorphisms
  \begin{eqnarray*}
  {\rm H}^i\bigl(X,\mathbf{W}(\L)\bigr)&\xrightarrow{p^\ast}&{\rm H}^i\bigl(\mathbb{P}(\EE),\mathbf{W}(p^\ast\L)\bigr)\\
  {\rm H}^i\bigl(X,\mathbf{W}(\L)\bigr)&\xrightarrow{{\rm e}(\QQ)}&{\rm H}^{i+2n}\bigl(\mathbb{P}(\EE),\mathbf{W}(p^\ast\L\otimes\omega_{\mathbb{P}(\EE)/X})\bigr)
  \end{eqnarray*}
  where the second map is multiplication by the Euler class ${\rm e}(\QQ)$. This recovers exactly the statement of \cite[Theorem 9.1]{fasel:ij}.
\end{example}

\begin{remark}
  A small remark concerning the bundle notation: The bundle denoted by $\mathcal{G}_{\EE}$ in \cite{fasel:ij} is related to the relative bundle of differential forms via $\Omega_{\mathbb{P}(\EE)/X}(1)\cong \mathcal{G}_{\EE}$. For the top exterior power, we then get
\[
\Lambda^r\left(\Omega^1_{\mathbb{P}(\EE)/X}(1)\right)\cong \omega_{\mathbb{P}(\EE)/X}(r) \cong \Lambda^r\mathcal{G}_{\EE}.
\]
A convenient summary of the relevant canonical bundles can be found in \cite[Section 1.1]{fasel:ij}.
\end{remark}

\section{Flag varieties: geometry and Chow rings}
\label{sec:flags}

After setting up the tools for computing Witt-sheaf cohomology, we now turn to computing the specific case of flag varieties (of type A). In the current section we recall some geometric properties of flag varieties relevant to our discussion, such as characteristic classes in Chow rings and Poincar\'e polynomials, while also introducing some notation. In the subsequent three sections, we carry out the main computation of the paper, namely, we compute the $\mathbf{W}$-cohomology of flag varieties.

The computation proceeds in two steps. The first step in Section~\ref{sec:maximal} provides a computation of the maximal rank cases. For this, we use a description of flag varieties as iterated Grassmannian bundles recalled below, and then inductively use the Leray--Hirsch theorem to compute the cohomology. The second step in Section~\ref{sec:sadykov} extends the computation from the maximal rank case to the general case. The key step there is to relate different flag varieties via sphere bundles associated to the tautological subquotient bundles. This requires in particular descriptions of kernels of multiplication by characteristic classes on cohomology rings which we establish in Section~\ref{sec:ann-euler}.

\subsection{Definition and geometry of flag varieties}
Given a partition $\D=(d_1,\dots,d_m)$ of $N=\sum d_i$, denote by $\op{Fl}(\D)$  the variety of flags 
\begin{equation}\label{eq:flagD}
V_\bullet=(0=V_0\subset V_1\subset\cdots\subset V_m=V)
\end{equation}
in a fixed vector space $V$ of dimension $N$, where $\op{rk} V_i/V_{i-1}=d_i$. One way to make more explicit the structure as algebraic variety is the description of the partial flag variety $\Fl(\D)$ as a projective homogeneous variety ${\rm GL}_N/P_\D$, where $P_\D$ is the parabolic subgroup of upper block triangular matrices in which the blocks have the sizes prescribed by the partition $\D$. The parabolic subgroup $P_\D$ is a semidirect product of the Levi subgroup $\prod_{i=1}^m{\rm GL}_{d_i}$ by a unipotent group of strictly upper block triangular matrices.

\begin{remark}
  Since it will appear frequently in Section~\ref{sec:maximal}, we want to briefly explain the terminology ``maximal rank'' for flag varieties. The terminology comes from considering the real flag varieties as homogeneous spaces ${\rm O}(N)/\bigl({\rm O}(d_1)\times\cdots\times{\rm O}(d_m)\bigr)$. Considered as semisimple Lie groups, the rank is $n$ for the orthogonal groups ${\rm O}(2n+1)$ in type $B_n$ and the orthogonal groups ${\rm O}(2n)$ in type $D_n$. We can then consider the discrepancy between the rank of the product ${\rm O}(d_1)\times\cdots\times{\rm O}(d_m)$ and the rank of the full group ${\rm O}(N)$. The difference
  \[
  \op{rk}{\rm O}(N)-\op{rk}\bigl({\rm O}(d_1)\times\cdots\times{\rm O}(d_m)\bigr)
  \]
  is $0$ (and hence the rank of the subgroup is maximal relative to the containing orthogonal group) precisely if at most one of the $d_i$ is odd. Each additional odd $d_i$ increases the difference resp. decreases the rank of the subgroup. This is some justification for the terminology ``maximal rank''.
\end{remark}

\subsection{Tautological vector bundles}

The flag variety $\Fl(\D)$ comes with defining tautological subbundles $\SS_i$, whose fiber over the flag $V_\bullet\in \Fl(\D)$ is $V_i$. There are two natural exact sequences to consider:
\begin{equation}\label{SES}
  \begin{split}
    \xymatrix{			
      0\ar@{->}[r]& \SS_i\ar@{->}[r]^-{}&  \mathbb{A}^N\ar@{->}[r]&
      \QQ_i\ar@{->}[r]&0,}\\
    \xymatrix{
      0\ar@{->}[r]& \SS_{i-1}\ar@{->}[r]^-{}&  \SS_i\ar@{->}[r]&
      \DD_i\ar@{->}[r]&0,}
  \end{split}
\end{equation}
which give rise to the \emph{tautological quotient bundles} $\QQ_i$ and the \emph{tautological subquotient bundles} $\DD_i$.
The ranks of these bundles are as follows:
$$ \op{rk}\DD_i=d_i, \qquad \op{rk}\SS_i=\sum_{j=1}^i d_j,\qquad \op{rk}\QQ_i=\sum_{j=i+1}^m d_j.$$

\subsection{Flag varieties as iterated Grassmannian bundles}

For later use, we want to recall the well-known description of partial flag varieties as iterated Grassmannian bundles. As before, we use the terminology $\mathscr{G}(k,\EE)$ to denote the Grassmannian bundle of rank $k$ subbundles for a rank $n$ vector bundle $\EE$ over some smooth variety $X$. 

Fix $\D=(d_1,\dots,d_m)$, and let $\Fl(\D)$ be the corresponding flag variety. For given $0<k<d_i$, the flag variety $\Fl(\D')$ with $\D'=(d_1,\dots,d_{i-1},k,d_i-k,\dots,d_m)$ can be described as a Grassmannian bundle over $\Fl(\D)$ via an isomorphism
$$ \mathscr{G}(k,\DD_i) \iso \Fl(\D').$$
Here the projection from the Grassmannian bundle to $\Fl(\D)$ is given by forgetting the $k+s_{i-1}$-dimensional subspace $V_i'$, where  $s_i=\sum_{j=1}^{i}d_j$. The fiber of $\Fl(\D')$ over a point of $\Fl(\D)$ given by a flag $0=V_0\subset V_1\subset \cdots\subset V_m=V$ is given by the Grassmannian of $k+s_{i-1}$-dimensional subspaces $V_{i-1}\subset W\subset V_i$, or equivalently, the Grassmannian of $k$-dimensional subspaces of $V_i/V_{i-1}$. 

Using this correspondence, we obtain $\Fl(\D)$ for $\D=(d_1\stb d_m)$ as an $m$-step iterated Grassmannian bundle, starting from $\Gr(d_1,V)$ as follows.  Denote
\[\D^{(i)}=(d_1\stb d_i, \sum_{j=i+1}^m d_j).\]
Then $\Fl(\D^{(1)})=\Gr(d_1,V)$ and $\D^{(m-1)}=\D$. In the $(i-1)$th step, let $\DD_j$ denote the $j$th tautological subquotient bundle over $\Fl(\D^{(i-1)})$. Then 
$$\mathscr{G}(d_{i},\DD_{i})\iso \Fl(\D^{(i)}).$$

By this identification the tautological subquotient bundles $\DD_i'$ and $\DD_{i+1}'$ over $\Fl(\D^{(i)})$ are exactly $\SS_{\DD_i}$ and $\QQ_{\DD_i}$ over $\mathscr{G}(d_{i},\DD_{i})$.

\subsection{Chow rings of partial flag varieties}

The Chow ring of a flag variety has two well-known presentations - one by generators and relations (characteristic classes) and another through an additive basis with structure constants (Schubert classes). We will mostly need the characteristic class picture for the present paper. 

The Chow ring of the partial flag variety $\Fl(\D)$ for $\D=(d_1,\dots,d_m)$ is generated by characteristic classes of the tautological subquotient bundles $\DD_1,\dots,\DD_m$ as follows, see e.g.\ \cite[\S 21]{BottTu} or \cite[Remark 3.6.16]{Manivel}:
\begin{equation}\label{eq:chowringFlD}
 \CH^*\bigl(\Fl(\D)\bigr)=\frac{\Z[\op{c}_j(\DD_i)\mid i=1,\dots,m, \,j=1, \dots, d_i]}{\prod_{i=1}^m \op{c}(\DD_i)-1},
\end{equation}
This presentation is \emph{as graded ring}, with the Chern classes ${\rm c}_j(\DD_i)$ in degree $j$, and the ideal of relations is the \emph{graded ideal} generated by $\prod_{i=1}^m\op{c}(\DD_i)-1$, where $\op{c}(\DD_i)=1+{\rm c}_1(\DD_i)+\cdots+{\rm c}_{d_i}(\DD_i)$ is the total Chern class of $\DD_i$. Note that the Chow ring of the flag varieties is independent of the base field, i.e., the field of definition of the flags. 

Recall that
$$\op{Pic}\bigl(\Fl(\D)\bigr)/ 2=\mathbb{F}_2\bra \ell_1\stb \ell_m\ket/ \sum \ell_i,$$
where $\ell_i$ is the class of the determinant bundle $\det\DD_i$. Using the relation, any element $\L$ of $\op{Pic}\bigl(\Fl(\D)\bigr)/ 2$ can be represented by a subset $S\se \{1\stb m-1\}$ as 
\begin{equation}\label{eq:linebundle}
	\L_S=\otimes_{j\in S}\ell_j.
\end{equation}

For the computation of the kernel and cokernel of multiplication with Euler classes we will use the Poincar\'e polynomials of the Chow rings $\CH^*(\op{Fl}(\D))$ of flag varieties.

\begin{proposition}
  \label{prop:poincare-mod2}
  Let $\D=(d_1,\dots,d_m)$, $N=\sum_{i=1}^m d_i$. Then the Poincar{\'e} polynomial for the Chow ring of the flag variety $\op{Fl}(\D)$ is given by
  \[
  \op{P}\bigl(\CH^*\bigl(\op{Fl}(\D)\bigr),t\bigr)=\frac{\prod_{j=1}^N(1-t^j)}{\prod_{i=1}^m\prod_{j=1}^{d_i}(1-t^j)}.
  \]
\end{proposition}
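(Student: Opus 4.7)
The plan is to exploit the iterated Grassmannian bundle description of $\op{Fl}(\D)$ recalled in the preceding subsection, together with the projective bundle / Leray--Hirsch formula for Chow rings of cellular fibrations. The key ingredient is the classical fact that for $\op{Gr}(k,n)$ the Poincar\'e polynomial of the Chow ring is the Gaussian binomial coefficient
\[
\op{P}\bigl(\CH^*(\op{Gr}(k,n)),t\bigr)=\binom{n}{k}_t=\frac{\prod_{j=1}^{n}(1-t^j)}{\prod_{j=1}^{k}(1-t^j)\cdot\prod_{j=1}^{n-k}(1-t^j)},
\]
which can be seen either directly from the Schubert cell decomposition (the number of codimension $p$ Schubert cells equals the number of partitions fitting in a $k\times (n-k)$ box of size $p$), or by induction using the tautological projective bundle formula.

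Given this, I proceed by induction on the number of steps $m$. Recall the flags $\D^{(i)}=(d_1,\dots,d_i,\sum_{j=i+1}^m d_j)$ from the preceding subsection, so $\op{Fl}(\D^{(1)})=\op{Gr}(d_1,N)$ and $\op{Fl}(\D^{(m-1)})=\op{Fl}(\D)$. Writing $n_i=\sum_{j=i}^{m}d_j$, the map $\op{Fl}(\D^{(i)})\to\op{Fl}(\D^{(i-1)})$ is the Grassmannian bundle $\mathscr{G}(d_i,\DD_i)\to\op{Fl}(\D^{(i-1)})$, whose fiber is $\op{Gr}(d_i,n_i)$. Since Grassmannian bundles over cellular bases are themselves cellular (and the Chow ring of the fiber is freely generated by Chern classes of the tautological subbundle, which extend to the total space), the Leray--Hirsch theorem for Chow rings gives an additive isomorphism
\[
\CH^*\bigl(\op{Fl}(\D^{(i)})\bigr)\cong\CH^*\bigl(\op{Fl}(\D^{(i-1)})\bigr)\otimes_{\mathbb{Z}}\CH^*\bigl(\op{Gr}(d_i,n_i)\bigr),
\]
and hence the Poincar\'e polynomial multiplies:
\[
\op{P}\bigl(\CH^*(\op{Fl}(\D^{(i)})),t\bigr)=\op{P}\bigl(\CH^*(\op{Fl}(\D^{(i-1)})),t\bigr)\cdot\binom{n_i}{d_i}_t.
\]

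Iterating this from $i=1$ (base case the Grassmannian $\op{Gr}(d_1,N)$) yields
\[
\op{P}\bigl(\CH^*(\op{Fl}(\D)),t\bigr)=\prod_{i=1}^{m-1}\binom{n_i}{d_i}_t=\prod_{i=1}^{m-1}\frac{\prod_{j=1}^{n_i}(1-t^j)}{\prod_{j=1}^{d_i}(1-t^j)\cdot\prod_{j=1}^{n_{i+1}}(1-t^j)}.
\]
The factors $\prod_{j=1}^{n_i}(1-t^j)$ for $i\geq 2$ telescope, leaving $\prod_{j=1}^{N}(1-t^j)$ in the numerator and $\prod_{j=1}^{n_m}(1-t^j)=\prod_{j=1}^{d_m}(1-t^j)$ as an extra factor in the denominator, giving exactly the claimed formula.

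This is essentially routine combinatorics; there is no real obstacle. The only potential pitfall is bookkeeping with the indexing conventions for the tower $\D^{(i)}$, and verifying that the Leray--Hirsch hypothesis for Chow groups is satisfied at each step (which holds because the Grassmannian bundles are Zariski-locally trivial over cellular bases and the Chern classes of the fiberwise tautological subbundles extend globally as Chern classes of $\SS_{\DD_i}$). As an alternative, a completely combinatorial proof via the Schubert cell decomposition of $\op{Fl}(\D)$ indexed by minimal length coset representatives of $S_N/(S_{d_1}\times\cdots\times S_{d_m})$ directly yields the Gaussian multinomial coefficient on the right-hand side.
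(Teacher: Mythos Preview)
Your proof is correct and follows essentially the same approach as the paper, which simply invokes the iterated Grassmannian bundle description and refers to \cite{BottTu}. You have merely spelled out the telescoping of Gaussian binomial coefficients explicitly.
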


\begin{proof}
  The flag variety $\Fl(\D)$ can be written as a sequence of iterated Grassmannians (as we recalled above). Then the proof proceeds as in \cite{BottTu}. 
\end{proof}

\begin{remark}\label{rmk:CHfree}
  The Chow ring $\CH^*\bigl(\Fl(\D)\bigr)$ is free as a $\Z$-module, since it has a stratification by affine spaces, the Schubert cells, cf.\ e.g.\ \cite[Examples 1.9.1 and 14.7.16]{Fulton}. In particular, a $\Z$-module basis is given by the Schubert cycles $[\si_I]$, which are indexed by $I\in S_N/(S_{d_1}\times \ldots \times S_{d_m})$.
\end{remark}

\subsection{Poincar\'e duality and class of a point}

For our later computations, we will need some more precise structural statements about Chow rings of flag varieties, in particular concerning Poincar\'e duality and the generator of the top Chow group, the class of a point.

Let $H$ be a graded ring, which is a finitely generated free $\Z$-module on the homogeneous elements $s_\la$: \[H=\Z\bra s_\la: \la\in \La\ket.\] We denote the degree of $s_\la$ by $|\la|$ . We say that $s_\la$ is a \emph{Poincar\'e dual basis of dimension $n$}, if the top degree part of $H$ is generated by a single class $\om_H:=s_{\la_0}$ for some $\la_0\in \La$, and there is a bijection $\iota\colon\La\to \La$  with the property that if $|\la|+|\mu|=n$ (of complementary degree), then
\[
s_\la\cdot s_\mu=\begin{cases}
\om_H\qquad &\text{if } \mu=\iota(\la),\\
0,\qquad &\text{else.}
\end{cases}
\]
\begin{remark}
  \label{rem:ChowPDbasis}
  The most important example of a Poincar\'e dual basis is given by the Schubert cycles in the singular cohomology of complex flag varieties. This can be seen via the Bruhat decomposition of flag varieties: Schubert varieties are the $B^+$-orbit closures whose fundamental classes generate cohomology. The duality $\iota$ is given by taking the classes of the corresponding $B^-$-orbits. For Grassmannians, this is discussed in \cite[Section~4.2.2]{3264}, the general case of flag varieties is in \cite[Example~14.7.16]{Fulton}.
  
  Similarly, for an arbitrary base field $F$ and coefficient field $k$, the Chow ring ${\rm CH}^*(\Fl(\D))\otimes_{\Z}k$ of the flag variety $\Fl(\D)$ over $F$ has the same Poincar\'e dual basis as singular cohomology. One way to see this is to compare the Chow ring of $\Fl(\D)$ over $\mathbb{C}$ to singular cohomology of $\Fl(\D;\mathbb{C})$ via the cycle class map ${\rm CH}^*(\Fl(\D))\to {\rm H}^{2*}(\Fl(\D;\mathbb{C}),\mathbb{Z})$, which is an isomorphism since $\Fl(\D)$ is cellular.  
  Then Poincar\'e duality holds for ${\rm CH}^*(\Fl(\D))$ over arbitrary base fields $F$ because the Chow ring is independent of the base field. Alternatively, that Schubert cycles form a Poincar\'e dual basis can be established by purely algebraic means, cf.~\cite[Example~14.7.16]{Fulton}.
\end{remark}

The following Proposition will allow us to determine the annihilator of a certain element in the Chow ring.

\begin{proposition}
  \label{prop:PDbasis}
  Let $H=\Z\bra s_\la:\la \in \Lambda \ket$ be a graded ring which is a finitely generated free  $\Z$-module on the homogeneous elements $s_\la$. Let $I$ be a homogeneous ideal and let $C=H/I$, $K=\Ann_H(I)$. Assume that
  \begin{itemize}
  \item[i)] $(s_\la,\la\in \La)$ is a Poincar\'e dual basis for $H$ with top degree class $\om_H$,
  \item[ii)] $I=\Z \bra s_\la:\la\in \De\ket $ is a free $\Z$-module on a subset $\De\se \La$,
  \item[iii)] the reductions $(\overline{s}_\ga, \ga\in \La\su \De)$ are a Poincar\'e dual basis for $C$ with top degree class $\om_C$.
  \end{itemize}
  Then there exists a homogeneous element $x\in K$, such that $x\cdot \om_C=\om_H$ and $K$ is a free $C$-module generated by $x$, i.e.,  $K=C\bra x\ket$.
\end{proposition}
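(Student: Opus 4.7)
The plan is to identify $K$ with the orthogonal complement $I^\perp$ of $I$ under the Poincar\'e duality pairing on $H$, construct a distinguished element $x\in K$ from the Poincar\'e duality of $C$, and then check that multiplication by $x$ realizes an isomorphism $C\to K$ of graded $C$-modules.

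First, I would reinterpret the annihilator. Assumption (i) yields a perfect $\Z$-bilinear pairing $\langle a,b\rangle_H=$ coefficient of $\omega_H$ in $ab$, under which $(s_\lambda)$ and $(s_{\iota(\lambda)})$ are dual bases. Since $I$ is a homogeneous ideal with $\Z$-basis $(s_\delta)_{\delta\in\Delta}$, it equals the ideal $\sum_{\delta\in\Delta} s_\delta H$ generated by these basis elements. Hence the condition $y\cdot s_\delta=0$ in $H$ for every $\delta\in\Delta$ is equivalent, by perfection of the pairing, to $\langle y,s_\delta h\rangle_H=0$ for all $h\in H$ and $\delta\in\Delta$, i.e., to $y\in I^\perp$. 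This gives $K=I^\perp$, so multiplication in $H$ descends to a perfect $\Z$-bilinear pairing
\[p_H\colon K\times C\to\Z,\qquad p_H(k,c)=\text{coefficient of }\omega_H\text{ in }k\cdot\tilde c,\]
which is well-defined because $K$ annihilates $I$.

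Writing $\omega_C=\bar s_{\gamma_0}$ for some $\gamma_0\in\Lambda\setminus\Delta$ and setting $d:=|\omega_H|-|\omega_C|$, let $x\in K_d$ be the unique element with $p_H(x,\omega_C)=1$, equivalently $x\cdot s_{\gamma_0}=\omega_H$ in $H$; existence and uniqueness follow from perfection of $p_H$. I would then define
\[\phi\colon C\to K,\qquad \phi(c)=x\cdot\tilde c,\]
which is independent of the lift $\tilde c\in H$ because $x$ annihilates $I$, and is $C$-linear by associativity of multiplication in $H$.

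To see that $\phi$ is an isomorphism, I would compare the two Poincar\'e pairings. For $c\in C_j$ and $c'\in C_{|\omega_C|-j}$ with $cc'=\alpha\omega_C$ in $C$, any lift of $cc'$ to $H$ differs from $\alpha s_{\gamma_0}$ by an element of $I$, which $x$ kills, so
\[\phi(c)\cdot\tilde c'=x\cdot\widetilde{cc'}=\alpha\cdot x\cdot s_{\gamma_0}=\alpha\omega_H.\]
Hence $p_H(\phi(c),c')=\alpha=p_C(c,c')$, where $p_C$ denotes the Poincar\'e pairing on $C$ provided by (iii). Thus the composite $C_j\xrightarrow{\phi}K_{j+d}\to C_{|\omega_C|-j}^\vee$ induced by $p_H$ agrees with the isomorphism induced by $p_C$; since $p_H$ is perfect, $\phi$ is an isomorphism of graded $\Z$-modules, hence of $C$-modules. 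In particular $x\cdot\omega_C=\omega_H$ and $K=C\langle x\rangle$ is free of rank one over $C$. The main obstacle in this argument is the identification $K=I^\perp$, which depends crucially on $I$ being generated as an ideal by its $\Z$-basis $(s_\delta)$, so that the annihilator condition can be tested on ideal-theoretic generators; once this is established, the rest is a formal consequence of Poincar\'e duality in $H$ and $C$.
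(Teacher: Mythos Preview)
Your proof is correct and follows the same overall strategy as the paper: both take $x$ to be the Poincar\'e dual in $H$ of a lift $s_{\gamma_0}$ of $\omega_C$, and both show that multiplication by $x$ gives an isomorphism $C\to K$ of $C$-modules. The difference is mainly one of packaging. You begin with the structural identification $K=I^\perp$ and the resulting perfect pairing $p_H\colon K\times C\to\Z$, and then deduce that $\phi$ is an isomorphism formally by factoring the Poincar\'e duality isomorphism $C_j\xrightarrow{\sim}(C_{|\omega_C|-j})^\vee$ through $\phi$ and the perfect pairing $p_H$. The paper instead argues injectivity and surjectivity of $\mu_x$ directly: injectivity via the existence of Poincar\'e duals in $C$, and surjectivity by explicitly writing any $\xi\in K$ as $(\sum_\gamma\beta_\gamma s_\gamma)x$ where the $\beta_\gamma$ are determined by pairing $\xi$ against basis elements of $C$. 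Your route is more conceptual and makes the role of the two dualities transparent; the paper's route is more elementary and avoids invoking the standard but unproven fact that $I^\perp\times(H/I)\to\Z$ is perfect when $I$ is a direct summand. Note that your identification $K=I^\perp$ also immediately verifies $x\in K$, a point the paper leaves implicit.
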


\begin{proof}
  Let $\Ga:=\La \su \De$, and denote by $\iota_C\colon\Ga\to\Ga$ and $\iota_H\colon\La\to\La$ the Poincar\'e duality pairings corresponding to $C$ and $H$. Let $C$ be of dimension $m$ and $H$ be of dimension $n$. If $\om_C=\overline s_{\ga_0}$, then  $x:=s_{\iota_H(\ga_0)}$ has degree $n-m$, and $x\cdot \om_C=\om_H$. We will show that the multiplication-by-$x$ map $\mu_x\colon C\to K$ is an isomorphism.
	
  Assume that $\mu_x(y)=x\cdot y=0$ for some $y\in C$. If $y$ is nonzero, then via the Poincar\'e dual basis in $C$, there exists $y^\vee\in C$, such that $y\cdot y^\vee=m\cdot \om_C$ for some nonzero $m\in \Z$. 
  Then we have
  \[
  x\cdot (y\cdot y^\vee)=x\cdot (m\om_C)=m\cdot \om_H,
  \]
  which would imply that $\mu_x(y)=x\cdot y$ is also nonzero, contradicting the assumption $\mu_x(y)=0$. So $y$ is zero, which proves injectivity of $\mu_x$.
	
  To show surjectivity of $\mu_x$, let $\xi\in K$ be an arbitrary homogeneous element of degree $d$. We will show that it is in the ideal $(x)$. For each $\ga\in \Ga$ of degree $d-(n-m)$, we have $|\iota_C(\ga)|=n-d$ and can define $\be_\ga\in \Z$ by \[\xi\cdot s_{\iota_C(\ga)}=\be_\ga\cdot\om_H.\]
We claim that
\[
\xi=\left(\sum_{\ga\in\Ga} \be_\ga \cdot s_\ga\right)\cdot x.
\]
Indeed, it is enough to verify that for all $\la\in \La$ of degree $|\la|=n-d$, i.e., complementary to $|\xi|$, we have
\begin{equation}\label{eq:basis_expansion}
	\xi\cdot s_{\la}=\left(\sum_{\ga\in\Ga} \be_\ga \cdot s_\ga\right)\cdot x\cdot s_\la.
\end{equation}
For $\mu\in \Ga$, this holds by Poincar\'e duality in $C$ and by definition of $\be_\mu$:
\[s_{\iota_C(\mu)}\cdot \left(\sum_{\ga\in\Ga} \be_\ga \cdot s_\ga\right)\cdot x=\be_\mu\cdot \om_C\cdot x=\be_\mu\cdot \om_H=\xi\cdot s_{\iota_C(\mu)}\]
For $\de\in \De$, $s_\de\in I$, so $\xi\cdot s_\de=0$, and so is $x\cdot s_\de=0$. Since $\La=\De\cup \Ga$, \eqref{eq:basis_expansion} holds.
\end{proof}

\begin{proposition}
	\label{prop:kernel-top-chern}
	Let $\D=(d_1,\dots,d_m)$, and let ${\rm c}:={\rm c}_{d_m}(\DD_m)$ be the top Chern class of the last subquotient bundle $\DD_m$ over $\Fl(\D)$. Then the annihilator of $\rm c$ in $\CH^*\bigl(\Fl(\D)\bigr)$ is a principal ideal
	\[\Ann_{\CH^*(\Fl(\D))} ({\rm c})=(x),\]
	where $x=\prod_{j\neq m}{\rm c}_{d_j}(\DD_j)$ is the product of the top Chern classes of all other subquotient bundles.
	
	Moreover, using the notation from Proposition~\ref{prop:PDbasis}, 
	denoting $K:=\ker({\rm c})$ and $C=\coker({\rm c})$, we have that $K$ is a free $C$-module of rank 1.
\end{proposition}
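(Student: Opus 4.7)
The plan is to apply Proposition~\ref{prop:PDbasis} to $H:=\CH^*\bigl(\Fl(\D)\bigr)$ with the homogeneous ideal $I:=\bigl({\rm c}_{d_m}(\DD_m)\bigr)$, so that its conclusion directly supplies the freeness of $K$ as a rank-one $C$-module, leaving the identification of the resulting generator with $x=\prod_{j\ne m}{\rm c}_{d_j}(\DD_j)$. Hypothesis (i) of Proposition~\ref{prop:PDbasis} is supplied by Remark~\ref{rem:ChowPDbasis}: the Schubert cycles of $\Fl(\D)$ form a Poincar\'e dual basis for $H$, with top class $\omega_H$ the class of a point.

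The first key step is to identify $C=H/I$. I will read this off directly from the presentation~\eqref{eq:chowringFlD}: killing ${\rm c}_{d_m}(\DD_m)$ amounts to truncating the total Chern class ${\rm c}(\DD_m)$ to degree $d_m-1$, and the resulting presentation matches exactly that of $\CH^*\bigl(\Fl(\D')\bigr)$ with $\D'=(d_1,\dots,d_{m-1},d_m-1)$ viewed as a partition of $N-1$, via ${\rm c}_j(\DD_i)\mapsto{\rm c}_j(\DD'_i)$. Under this identification $C$ acquires its own Poincar\'e dual Schubert basis with top class $\omega_C$, and the degree of the expected generator equals $\dim\Fl(\D)-\dim\Fl(\D')=\sum_{i<m}d_i=N-d_m=\deg x$.

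The central technical step is to verify hypotheses (ii), (iii) of Proposition~\ref{prop:PDbasis}: to produce a partition $\Lambda=\Delta\sqcup\Gamma$ of the Schubert index set $\Lambda=S_N/W_\D$ such that $(\sigma_\lambda)_{\lambda\in\Delta}$ is a $\Z$-basis of $I$ and the reductions $(\bar\sigma_\gamma)_{\gamma\in\Gamma}$ constitute the Schubert basis of $C\cong\CH^*(\Fl(\D'))$. I plan to realize the ring isomorphism geometrically by the closed embedding $\iota\colon\Fl(\D')\hookrightarrow\Fl(\D)$ obtained by fixing a hyperplane $W=\mathbb{A}^{N-1}\subset\mathbb{A}^N$ and passing to the sub-flag variety $\{V_\bullet : V_{m-1}\subseteq W\}$. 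The pullback $\iota^\ast$ factors through the quotient $H\twoheadrightarrow C$ because $\iota^\ast\DD_m$ splits off a trivial line bundle (from $\mathbb{A}^N/W$), forcing $\iota^\ast({\rm c}_{d_m}(\DD_m))=0$; moreover the resulting map $C\to\CH^*(\Fl(\D'))$ sends generators to generators and thus realizes the ring isomorphism above. The remaining combinatorial claim is that Schubert cycles on $\Fl(\D)$ split according to whether their indexing coset lies in the image of the natural injection $S_{N-1}/W_{\D'}\hookrightarrow S_N/W_\D$ fixing $N$, those in the image pulling back to the Schubert cycles of $\Fl(\D')$ and those outside lying in $\ker\iota^\ast=I$. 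This is the main obstacle; I expect it to follow from standard compatibilities between Schubert stratifications under closed embeddings of partial flag varieties coming from a fixed hyperplane, with compatibly chosen Borel subgroups, though some bookkeeping is needed for general $\D$.

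For the identification of the generator with $x$: first, extracting the degree-$N$ component of the defining relation $\prod_i{\rm c}(\DD_i)=1$ forces $\prod_i{\rm c}_{d_i}(\DD_i)=0$, so $x\in K$. Second, $\iota\bigl(\Fl(\D')\bigr)$ is cut out in $\Fl(\D)$ as the zero locus of the section of $\SS_{m-1}^\vee$ induced by any linear form vanishing on $W$, hence $\iota_\ast(1)={\rm c}_{N-d_m}(\SS_{m-1}^\vee)=\pm{\rm c}_{N-d_m}(\SS_{m-1})$; expanding the top Chern class of $\SS_{m-1}$ through the short exact sequences $0\to\SS_{i-1}\to\SS_i\to\DD_i\to 0$ yields ${\rm c}_{N-d_m}(\SS_{m-1})=\prod_{j\ne m}{\rm c}_{d_j}(\DD_j)=x$. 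The projection formula then gives, for any lift $\tilde\omega_C\in H$ of $\omega_C$ (with $\iota^\ast\tilde\omega_C=\omega_{\Fl(\D')}$), the identity $x\cdot\tilde\omega_C=\pm\iota_\ast(\omega_{\Fl(\D')})=\pm\omega_H$, using that the class of a point pushes forward to the class of a point under the inclusion. Thus $x$ agrees with the abstract generator produced by Proposition~\ref{prop:PDbasis} up to a unit sign, and both statements of the Proposition follow.
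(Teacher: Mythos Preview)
Your approach is correct and close to the paper's, but with a dual twist worth noting. Both proofs apply Proposition~\ref{prop:PDbasis}, identify $C\cong\CH^*\bigl(\Fl(\D')\bigr)$ with $\D'=(d_1,\dots,d_m-1)$, and need the fact that the restriction $\iota^*$ sends Schubert classes to Schubert classes. The difference lies in how condition~(ii) is obtained: the paper identifies ${\rm c}$ itself as the class of the smooth Schubert variety $\sigma=\{E_\bullet:F_1\subseteq E_{m-1}\}$ and argues via the projection formula that $({\rm c})=\operatorname{Im}\bigl((\sigma\hookrightarrow\Fl(\D))_*\bigr)$ is spanned by the Schubert cycles lying in $\sigma$, whereas you obtain (ii) from the complementary (Poincar\'e--dual) Schubert variety $\iota(\Fl(\D'))=\{V_\bullet:V_{m-1}\subseteq W\}$ by showing that Schubert classes outside the image of $S_{N-1}/W_{\D'}$ restrict to zero. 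The paper's route makes (ii) more immediate but still needs your ``combinatorial claim'' for~(iii); your route bundles (ii) and (iii) into one restriction statement. Your final paragraph---computing $\iota_*(1)=\pm\,{\rm c}_{\rm top}(\SS_{m-1}^\vee)=\pm\prod_{j<m}{\rm c}_{d_j}(\DD_j)$ and using the projection formula to verify $x\cdot\omega_C=\pm\omega_H$---is a genuine addition: the paper's proof does not carry out this identification of the abstract generator with the explicit product of top Chern classes, so your argument actually closes a step the paper leaves implicit.
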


\begin{proof}
	We will apply Proposition \ref{prop:PDbasis} with $I=({\rm c})$. By Remark \ref{rem:ChowPDbasis}, condition i) is satisfied for the fundamental classes of Schubert varieties. Since restriction \[{\rm CH}^*(\Fl(D))\to {\rm CH}^*\bigl(\Fl(D')\bigr)\iso C\] with $D'=(d_1\stb d_m-1)$ maps Schubert classes to Schubert classes, condition iii) is satisfied if condition ii) is, again by Remark \ref{rem:ChowPDbasis}.
	
	To show ii), we use a geometric interpretation of ${\rm c}$: since $\DD_m=\mathbb{A}^N/\SS_{m-1}$, \[{\rm c}=e(\Hom(F_1,\mathbb{A}^N/\SS_{m-1}))=[\si(F_\bullet)],\]
	where $F_1$ denotes the one-dimensional subspace of the reference flag $F_\bullet$ and $[\si(F_\bullet)]$ denotes the Schubert cycle of flags \[\si(F_\bullet):=\{E_\bullet \in \Fl(\D):F_1\se E_{m-1}\}.\] It is smooth, since it can be written as a sequence of Grassmannian bundles over the smooth Schubert cycle of planes containing $F_1$ in the Grassmannian $\Gr_{s_{m-1}}(\mathbb{A}^N)$. Therefore the ideal $I=([\si(F_\bullet)])$ is additively generated by the Schubert cycles contained in $\si(F_\bullet)$, cf.~the proof of Proposition~\ref{prop:ranks}.
\end{proof}

We now recall the standard computation of the class of a point in the Chow ring of a partial flag variety $\Fl(\D)$, cf.~e.g.~\cite[(1.5)]{Fulton92}, which we will need for the annihilator computation in Section~\ref{sec:ann-euler} as well as the enumerative applications in Section~\ref{sec:enumerative}. A given point $x\in \Fl(\D)$, corresponding to a flag $F_\bullet$,  can be considered as an algebraic $0$-cycle $[x]\in{\rm CH}^{d}\bigl(\Fl(\D)\bigr)$ with $d=\dim \Fl(\D)$. Alternatively, this can be considered the fundamental class of the subvariety $\{x\}$ in $\Fl(\D)$. In the statements below, we will use the notation ${\rm c}_{\rm top}(-)$ for the top Chern class of a vector bundle.

\begin{proposition}
  \label{prop:classofpoint}
  Fix a flag $F_\bullet\in \Fl(\D)$. Then the fundamental class of the corresponding point $F_\bullet \in \Fl(\D)$ in ${\rm CH}^{\dim}\bigl(\Fl(\D)\bigr)$ is
  \begin{equation}
    \label{eq:classofpoint}
    [F_\bullet]=\prod_{i=1}^{m-1}{\rm c}_{\rm top}\bigl(\Hom(\DD_i,\A^N/F_i)\bigr),
  \end{equation}
where $\A^N/F_i$ is the quotient of the trivial bundles $\A^N$ and $F_i\inj \A^N$ over $\Fl(\D)$.
\end{proposition}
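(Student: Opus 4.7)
The plan is to induct on $m$, using the Zariski-locally trivial Grassmannian fibration
\[
\pi\colon \Fl(\D) \to \Gr(d_1, N), \qquad V_\bullet \mapsto V_1,
\]
whose fiber over $F_1\in\Gr(d_1,N)$ is the relative flag variety $\Fl(\D';\A^N/F_1)$ for $\D'=(d_2,\ldots,d_m)$. The base case $m\le 2$ reduces to the classical formula for the class of a point in a Grassmannian: the natural composition $\SS\hookrightarrow \A^N\twoheadrightarrow \A^N/F_1$ defines a global section of $\Hom(\SS,\A^N/F_1)$ whose zero locus is set-theoretically $\{F_1\}$, and transversality at $F_1$ is immediate from the canonical identification $T_{F_1}\Gr(d_1,N)\cong \Hom(F_1,\A^N/F_1)$. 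This yields $[F_1]={\rm c}_{\rm top}\bigl(\Hom(\SS,\A^N/F_1)\bigr)$.

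For the inductive step, I first identify the fiber $\pi^{-1}(F_1)$ with $\Fl(\D';\A^N/F_1)$ via $V_\bullet\mapsto (V_2/F_1\se\cdots\se V_{m-1}/F_1)$. Under this identification, the point $F_\bullet$ corresponds to $F'_\bullet=(F_2/F_1\se\cdots\se F_{m-1}/F_1)$, the restriction of $\DD_i$ matches the $(i-1)$-th tautological subquotient bundle on $\Fl(\D';\A^N/F_1)$ for $i\ge 2$, and the trivial quotient $\A^N/F_i$ agrees with $(\A^N/F_1)/(F_i/F_1)$. Consequently the class
\[
\alpha := \prod_{i=2}^{m-1} {\rm c}_{\rm top}\bigl(\Hom(\DD_i,\A^N/F_i)\bigr)\in \CH^*\bigl(\Fl(\D)\bigr)
\]
restricts along $\iota\colon\pi^{-1}(F_1)\hookrightarrow\Fl(\D)$ to the analogous class on $\Fl(\D';\A^N/F_1)$, which by the inductive hypothesis equals $[F'_\bullet]$.

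The proof then closes with the projection formula. Since $\pi$ is smooth of relative dimension $d_1(N-d_1)$ and $F_1$ is a smooth point, $\pi^*[F_1]=[\pi^{-1}(F_1)]=\iota_*(1)$, so
\[
[F_\bullet]=\iota_*[F'_\bullet]=\iota_*(\iota^*\alpha)=\iota_*(1)\cdot\alpha=\pi^*[F_1]\cdot\alpha.
\]
Applying the base case to $\Gr(d_1,N)$ and using $\SS_1=\DD_1$ gives $\pi^*[F_1]={\rm c}_{\rm top}\bigl(\Hom(\DD_1,\A^N/F_1)\bigr)$, which combined with the previous equation yields the full product. The only real bookkeeping step is verifying the match of $\DD_i$ and $\A^N/F_i$ under the fiber identification; given that, the argument is a formal consequence of the projection formula, and I do not expect any substantial obstacle.
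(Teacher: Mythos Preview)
Your proof is correct, but it takes a different route from the paper's. The paper constructs explicit global sections: the tautological composite $\SS_i\hookrightarrow\A^N\twoheadrightarrow\A^N/F_i$ gives a section of $\Hom(\SS_i,\A^N/F_i)$ with zero locus the incidence set $I_{i,i}=\{V_\bullet:V_i\subseteq F_i\}$; over $I_{i,i}$ this descends to a section of $\Hom(\DD_{i+1},\A^N/F_{i+1})$, and one checks the intersection of all these zero loci is the single point $F_\bullet$ with expected dimension, yielding the product of top Chern classes directly.

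Your argument instead factors through the Grassmannian fibration $\Fl(\D)\to\Gr(d_1,N)$ and uses induction plus the projection formula. This is cleaner bookkeeping-wise: you only ever invoke the Grassmannian base case and the formal identity $\iota_*(\iota^*\alpha)=\iota_*(1)\cdot\alpha$, and you never have to argue about transversality or expected dimension beyond the Grassmannian case. The paper's approach, by contrast, makes the geometry visible in one shot (the sections $\psi_{i,i}$ are exactly what cuts out the successive incidence conditions) and avoids the fibration machinery. Both are short; yours is a bit more structural, theirs a bit more hands-on.
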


\begin{proof}
  The bundles $\Hom(\SS_i,\A^N/F_j)$ have tautological sections:
  $$ \varphi_{i,j}\colon \SS_i\to \A^N\to \A^N/F_j.$$
  The zero set of this section is the incidence set $I_{i,j}:=\{V_\bullet\in \Fl(\D):V_i\se F_j\}$. 
  Over $I_{i,j}$, the section $\varphi_{i+1,j+1}$ of the bundle $\Hom(\SS_{i+1},\A^N/F_{j+1})$ descends to a section of $\Hom(\SS_{i+1}/\SS_i,\A^N/F_{j+1})$:
  $$ \psi_{i+1, j+1}\colon \SS_{i+1}/\SS_i\to \A^N\to \A^N/F_{j+1},$$
  since $\SS_i\se F_j\se F_{j+1}$. Note that $\varphi_{1,j}=\psi_{1,j}$ since $\SS_0=(0)$. The zero sets of the sections $\psi_{i+1,i+1}|_{I_{i,i}}$ can be shown to have the expected dimension and 
  $$ [F_\bullet]=\left[\bigcap_{i=1}^{m-1}I_{i,i}\right]=\prod_{i=1}^{m-1} {\rm c}_{\rm top}\bigl(\Hom(\DD_i,\A^N/F_i)\bigr).$$
\end{proof}

\section{Presentation of \texorpdfstring{$\mathbf{W}$}{W}-cohomology and annihilators of Euler classes}
\label{sec:ann-euler}

In this section, we will define rings $W_\D$ which we will in later sections show to be isomorphic to the $\mathbf{W}$-cohomology rings of type A partial flag varieties. For now, we will discuss some of the algebraic properties of these rings. Most importantly, we will describe in Theorem~\ref{thm:annihilator} the annihilators of certain elements that will later correspond to Euler classes of subquotient bundles in the $\mathbf W$-cohomology of $\Fl(\D)$. To establish the algebraic statements, we describe $W_\D$ by iteratively adjoining square roots to elements of a corresponding Chow ring, thus reducing the annihilator computation to the corresponding statement for Chow rings in Proposition~\ref{prop:kernel-top-chern}. From these computations, we will also deduce in Proposition~\ref{prop:WDfree} that the ring $W_\D$ is free as a ${\rm W}(F)$-module.

\subsection{Algebraic presentation of $\mathbf{W}$-cohomology}

The relevant rings $W_\D$ will be defined by generators and relations. For our purposes, we define the rings as algebras over a commutative coefficient ring $R$. Most of the algebraic statements will be established first in the case where $R=\mathbb{Z}$, deducing the general case from that. In the applications to $\mathbf{W}$-cohomology, these coefficient rings will be Witt rings ${\rm W}(F)$ of the base field. 

\begin{definition}
  \label{def:WD}
  Let $R$ be a commutative ring.   Given a partition $\D=(d_1\stb d_m)$ of $N=\sum_{i=1}^md_i$, let $\Si_\D$ denote the commutative graded $R$-algebra given by generators and relations as follows:
  \[
  \Si_\D:= R\Bigg [\op{p}_{2j}(\DD_i),\op e_i \mid i=1\stb m, \;j=1\stb \Bigl\lfloor \frac{d_i}{2}\Bigr\rfloor\Bigg ]\Bigg / \sim.
  \]
  The names for the generators already indicate which cohomology classes they will relate to: the Pontryagin classes ${\rm p}_{2j}(\DD_i)$ in degree $4j$ and Euler classes ${\rm e}_i$ in degree $i$ of the subquotient bundles $\DD_i$ on $\Fl(\D)$.

  The generators are subject to the following relations $\sim$:
  \begin{align}\label{eq:p}
    \prod_{i=1}^m \op{p}(\DD_i)=0,\qquad &\\\label{eq:eodd}
    \op e_i=0, \qquad &\text{if } d_i \text{ is odd, and}\\\label{eq:eeven}
    \op e_i^2=\op{p}_{d_i}(\DD_i), \qquad &\text{if } d_i \text{ is even,}\\
    \prod_{i=1}^m\op{e}_i=0,\qquad &\text{if all }d_i\text{ are even}\label{eq:ealleven}
  \end{align}
  Here ${\rm p}(\DD_i)=1+{\rm p}_2(\DD_i)+\cdots+{\rm p}_{\rm top}(\DD_i)$ is the analogue of the total Pontryagin class of $\DD_i$, and we use the notation
  \[
    {\rm p}_{\rm top}(\DD_i):={\rm p}_{2\lfloor \frac{d_i}{2}\rfloor}(\DD_i)
  \]
  for the top Pontryagin class.
  
    Finally, let 
    \[\La:=\bigwedge\nolimits_{l=q+1}^n\langle \op{R}_l\rangle\]
    be the exterior algebra generated by the classes $\op{R}_{q+1},\dots,\op{R}_n$ with $q=\sum_{i=1}^m\lfloor \frac{d_i}{2}\rfloor$ and $n=\lfloor \frac N 2\rfloor$. The degree of $\op{R}_l$ is $4l-1$ except for the class $\op{R}_n$, which has degree $N-1$. Then we set
    \[
    W_{\D}:=\Si_\D\otimes_{R}\La.
    \]
\end{definition}

\begin{remark}
  There are several coefficient rings $R$ of interest in the above definition. In the case $R=\mathbb{Z}[1/2]$, we recover the known presentation of the half-integer coefficient singular cohomology of the real flag manifolds $\Fl(\D;\mathbb{R})$, cf.~\cite{he} or \cite{matszangosz}; similarly for $R=\mathbb{F}_p$ with an odd prime $p$. In the case $R=\mathbb{Z}$, $\Sigma_\D$ describes the torsion-free part of the singular cohomology of $\Fl(\D;\mathbb{R})$. We will also concentrate on the case $R=\mathbb{Z}$ in the analysis of the annihilators of Euler classes below. However, the freeness result we will prove in the case $R=\mathbb{Z}$, cf.~Proposition~\ref{prop:WDfree}, will imply that our results (about annihilators of ${\rm e}_i$ and freeness as $R$-module) will be true for arbitrary coefficient rings. For our applications, the case $R={\rm W}(F)$ of Witt rings will be most relevant. We will show that the $\mathbf{W}$-cohomology of flag varieties $\Fl(\D)$ is described by $\Si_\D$ with ${\rm W}(F)$-coefficients, cf.~Section~\ref{sec:maximal} and in particular Theorem~\ref{thm:maxrank} for the maximal rank case and Section~\ref{sec:sadykov} and in particular Proposition~\ref{prop:proof-main-thm} for the general case. Our methods also allow to provide new proofs of the known topological cases.
\end{remark}

\begin{remark}
  The grading defined above will correspond to the grading by cohomological degree. In fact, the $R$-algebra has a finer grading by the mod 2 Picard group $\op{Pic}\bigl(\Fl(\D)\bigr)/2\cong \mathbb{Z}/2\mathbb{Z}^{\oplus (m-1)}$. In this grading, the classes ${\rm p}_{2j}(\DD_i)$ have degree $0$, and the classes ${\rm e}_i$ (whenever they are nonzero) have degree $\det\DD_i^\vee$. 
\end{remark}

\begin{remark}\label{rmk:Sigmaodd}
  Let $R=\mathbb{Z}$. For any $D=(d_1,\dots,d_m)$, the subalgebra of $\Si_\D$ generated by the classes ${\rm p}_{2j}(\DD_i)$ (alternatively, the degree $0$ part for the Picard-group grading of the previous remark) is isomorphic to the Chow ring of another flag variety $\Fl(\D')$, for the partition $\D'=\bigl(\lfloor d_1/2\rfloor\stb \lfloor d_m/2\rfloor\bigr)$. The isomorphism $\CH^j\bigl(\Fl(\D')\bigr)\to \Si^{4j}_\D$ sends ${\rm c}_j(\DD_i)$ to ${\rm p}_{2j}(\DD_i)$ and multiplies cohomological  degrees by 4. This follows directly from the presentation of the Chow ring, cf.~\eqref{eq:chowringFlD}. In the case where all $d_i$ are odd, there are no classes ${\rm e}_i$, hence the $\mathbb{Z}$-algebra $\Si_\D$  is isomorphic to the Chow ring of $\Fl(\D')$ with $\D'=\bigl(\lfloor d_1/2\rfloor\stb \lfloor d_m/2\rfloor\bigr)$.

  Note however, that in general $\Si_\D\otimes_{\mathbb{Z}}{\rm W}(F)$ is not isomorphic to the $\mathbf{W}$-cohomology ring of $\Fl(\D)$, unless $\La\cong R$. This happens in the maximal rank cases where at most one $d_i$ is odd. 
\end{remark}

\subsection{Annihilators of Euler classes}

The crucial algebraic input for our computation of ${\rm H}^*(\Fl(\D),\mathbf{W})$ is a description of the kernel and cokernel of multiplication by the Euler class of a subquotient bundle. This section is devoted to proving the following theorem, which describes the annihilator of ${\rm e}_m$ in $\Si_\D$.

In this and the following three subsections, the coefficient ring is $R=\mathbb{Z}$. We will discuss in Section~\ref{sec:freeness} that the main result, Theorem~\ref{thm:annihilator}, is in fact true for arbitrary coefficient rings.

\begin{theorem}\label{thm:annihilator}
  Let $\D=(d_1\stb d_m)$  with $d_m$  even. Then $\Ann {\rm e}_m\se W_{\D}$ is a principal ideal $(x)$, with a parity case distinction for the generator $x$ as follows:
  \begin{itemize}
  \item[(even)] $x=\prod_{j\neq m}\op{e}_j$ if all $d_j$ are even, and
  \item[(odd)] $x=\op{e}_m\cdot\prod_{j\neq m}\op{p}_{\rm top}(\DD_j)$ otherwise.
  \end{itemize}
  In the even case, $x$ is an element of degree $N-d_m$ and in the odd case $x$ is an element of degree $4q-d_m$, where $N=\sum_{i=1}^m d_i$ and $q=\sum_j \lfloor \frac{d_j}{2}\rfloor$.
\end{theorem}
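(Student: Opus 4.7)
The plan is to reduce the annihilator computation to Proposition~\ref{prop:kernel-top-chern} via an iterated adjunction of square roots, starting from the Pontryagin subring of $\Si_\D$ which is itself a Chow ring of a smaller flag variety.

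Since $W_\D=\Si_\D\otimes_R\La$ and $\op{e}_m$ lies in the $\Si_\D$-factor, multiplication by $\op{e}_m$ is $\La$-linear, so $\Ann_{W_\D}(\op{e}_m)=\Ann_{\Si_\D}(\op{e}_m)\otimes_R\La$, and principality of one ideal (with a given generator) is equivalent to that of the other. It therefore suffices to compute $\Ann_{\Si_\D}(\op{e}_m)$. I work over $R=\Z$; the statement over arbitrary $R$ then follows from the freeness result of Proposition~\ref{prop:WDfree}, which is established independently.

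Write $T=\{i\mid d_i\text{ even}\}$. By Remark~\ref{rmk:Sigmaodd} the Pontryagin subring $R_0\se\Si_\D$ is isomorphic (with degrees rescaled by $4$) to the Chow ring $\CH^*\bigl(\Fl(\D')\bigr)$ for $\D'=(\lfloor d_1/2\rfloor\stb\lfloor d_m/2\rfloor)$, via $\op{p}_{2j}(\DD_i)\leftrightarrow{\rm c}_j(\DD_i')$. Set
\[
\widetilde\Si_\D:=R_0[\op{e}_i\mid i\in T]\Big/\bigl(\op{e}_i^2-\op{p}_{d_i}(\DD_i)\bigr),
\]
so that $\Si_\D=\widetilde\Si_\D$ in case (odd), while $\Si_\D=\widetilde\Si_\D/(P)$ with $P=\prod_{i\in T}\op{e}_i$ in case (even). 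A straightforward induction based on the elementary identity $\Ann_{R[x]/(x^2-a)}(x)=x\cdot\Ann_R(a)$ gives
\[
\Ann_{\widetilde\Si_\D}(\op{e}_m)\;=\;\op{e}_m\cdot\Ann_{R_0}\bigl(\op{p}_{d_m}(\DD_m)\bigr)\cdot\widetilde\Si_\D,
\]
and Proposition~\ref{prop:kernel-top-chern} applied to $R_0\iso\CH^*\bigl(\Fl(\D')\bigr)$ identifies the inner annihilator with the principal ideal generated by $\prod_{j\neq m}\op{p}_{\rm top}(\DD_j)$. This settles case (odd).

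For case (even) I analyze the colon ideal $(P:\op{e}_m)\se\widetilde\Si_\D$. An element $f\in\widetilde\Si_\D$ with $\op{e}_m f\in(P)$ satisfies $\op{e}_m\bigl(f-g\prod_{j\neq m}\op{e}_j\bigr)=0$ for some $g$, so by the previous step $f=g\prod_{j\neq m}\op{e}_j+h\cdot\op{e}_m\prod_{j\neq m}\op{p}_{\rm top}(\DD_j)$ for some $h\in\widetilde\Si_\D$. The crucial observation is that the second summand already lies in $(P)$:
\[
\op{e}_m\prod_{j\neq m}\op{p}_{\rm top}(\DD_j)=\op{e}_m\prod_{j\neq m}\op{e}_j^2=P\cdot\prod_{j\neq m}\op{e}_j.
\]
Hence $\Ann_{\Si_\D}(\op{e}_m)=\bigl(\prod_{j\neq m}\op{e}_j\bigr)$. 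The asserted degrees are immediate: $\prod_{j\neq m}\op{e}_j$ has degree $\sum_{j\neq m}d_j=N-d_m$, and $\op{e}_m\prod_{j\neq m}\op{p}_{\rm top}(\DD_j)$ has degree $d_m+4\sum_{j\neq m}\lfloor d_j/2\rfloor=4q-d_m$. The main obstacle is the (even) case: one must recognize that the Pontryagin-type generator inherited from case (odd) is absorbed by the relation $P=0$, so that the annihilator degenerates to the smaller Euler-class generator $\prod_{j\neq m}\op{e}_j$.
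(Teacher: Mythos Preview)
Your proof is correct. For the (odd) case it is essentially the paper's argument (Proposition~\ref{prop:kernel-top-pontryagin}, Lemma~\ref{lem:adjoiningeuler}, Corollary~\ref{cor:oddcase}), but for the (even) case you take a genuinely shorter route.

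The paper handles the (even) case by developing the \emph{principal ideal--quotient property} (Definition~\ref{def:PIQP}): it shows that the top Chern classes in $\CH^*(\Fl(\D'))$ satisfy this property via Schubert calculus computations (Propositions~\ref{prop:ChernPIQP} and~\ref{prop:PIQPFln}), then deduces that the $\e_i$'s inherit it (Lemma~\ref{lem:piqp}), and finally reads off the colon ideal $(P:\e_m)=(\prod_{j\neq m}\e_j)$ from this (Corollary~\ref{cor:PIQP}). You bypass all of this by observing directly that the (odd)-case generator $\e_m\prod_{j\neq m}\p_{\rm top}(\DD_j)$ equals $P\cdot\prod_{j\neq m}\e_j$ when every $d_j$ is even, so that $\Ann_{\widetilde\Si_\D}(\e_m)\subseteq(P)$ and the passage to the quotient $\Si_\D=\widetilde\Si_\D/(P)$ collapses the colon ideal to $(\prod_{j\neq m}\e_j)$. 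This is cleaner for the purpose of Theorem~\ref{thm:annihilator} itself. On the other hand, the paper's PIQP machinery is not wasted: it also yields the direct-sum decomposition of Corollary~\ref{cor:quotientdecomposition}, which is what the paper uses to prove freeness of $\Si_\D$ in the all-even case (Proposition~\ref{prop:WDfree}). So your remark that freeness is ``established independently'' is slightly optimistic---the paper's proof of freeness still goes through the PIQP route---but this does not affect the validity of your argument over $R=\Z$, which is what Theorem~\ref{thm:annihilator} asserts.
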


\begin{proof}[Overview of proof]
  Recall that $W_\D$ is defined as tensor product $\Si_\D\otimes_{\Z}\La$. Since $\Lambda$ is a free module and multiplication by $\op{e}_m$ on $W_\D$ is multiplication by $\op{e}_m\otimes 1$ in $\Si_\D\otimes_{\Z}\La$, it is enough to determine $\ker \op{e}_m\se \Si_\D$.
  
  The proof for $\Si_\D$ consists of three steps. In the first step, when all $d_i\in\D$ are odd, we show that \[\Ann_{\Si_\D}\bigl({\rm p}_{\rm top} (\DD_m)\bigr)=(x),\textrm{ for } x=\prod_{j\neq m}{\rm p}_{\rm top}(\DD_j),\] 
  see Proposition~\ref{prop:kernel-top-pontryagin}.

  In the second step, we reduce the (odd) case to the situation without Euler classes, using that we get Euler classes by adjoining square roots of top Pontryagin classes. More precisely, for $\D=(d_1,\dots,d_m)$ and $\D'=(d_1\stb d_{i-1},d_i-1, d_{i+1}\stb d_m)$ for some odd $d_i\geq 3$, we have
  \begin{equation}\label{eq:recursion}
  	\Si_{\D'}=\Si_\D[\e_i]/\bigl(\e_i^2-\p_{\rm top}(\DD_i)\bigr),
  \end{equation}
  by the presentation in Definition~\ref{def:WD}. We then show in Lemma~\ref{lem:adjoiningeuler} that in such a situation we have
  \[\Ann_{\Si_{\D'}}(\e_m)=\e_m\cdot \Ann_{\Si_{\D}}\hspace{-0.1cm}\bigl(\p_{\rm top}(\DD_m)\bigr),\]
  which covers the (odd) case, see Corollary~\ref{cor:oddcase}. 
  
  For the final step, assume that $\D''=(d_1\stb d_m)$ is such that all $d_i$ are even. Then setting $\D=(d_1+1,d_2+1\stb d_m+1)$ and 
  \begin{equation}\label{eq:evenrecursion1}
    A':=\Si_{\D}[\e_1\stb \e_m]/\bigl(\e_i^2-\p_{\rm top}(\DD_i)\mid i=1\stb m\bigr)
  \end{equation}
  we have 
  \begin{equation}\label{eq:evenrecursion2}
    \Si_{\D''}=A'/(\e_1\cdot\ldots\cdot \e_m).
  \end{equation}
  We show in Proposition~\ref{prop:ChernPIQP} that the elements $\p_{\rm top}(\DD_i)\in \Si_{\D}$ satisfy a principal ideal-quotient property (see Definition~\ref{def:PIQP}), which guarantees the equality 
  \[\bigl((\e_1\cdot \ldots \cdot \e_m):(\e_m)\bigr)=(\e_1\cdot \ldots \cdot \e_{m-1})\]
  of ideal in $A'$, with the left-hand side the ideal quotient of two principal ideals in $A'$. The image of the left-hand side under $A'\to \Si_{\D''}$ is mapped to the annihilator, cf.\ Corollary~\ref{cor:PIQP}, proving the (even) case, see Corollary~\ref{cor:evencase}. 
\end{proof}

\begin{remark}
  By symmetry, the statement of Theorem~\ref{thm:annihilator} also holds for other classes ${\rm e}_i$. For every $i$ such that $d_i$ is even (so that ${\rm e}_i$ is nonzero in $\Si_\D$), we have $\ker {\rm e}_i=(x)$, with the obvious modifications for the generator $x$.
\end{remark}

The results necessary for the above proof are now established in the following three subsections.

\subsection{The odd case}

\begin{proposition}\label{prop:kernel-top-pontryagin}
  Let $\D=(d_1\stb d_m)$, with all $d_i$ odd. Then
  \[\Ann_{\Si_\D}\bigl({\rm p}_{\rm top} (\DD_m)\bigr)=(x),\] 
  where $x=\prod_{j\neq m}{\rm p}_{\rm top}(\DD_j)$.
\end{proposition}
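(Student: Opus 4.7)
The plan is to reduce this statement to the already-established Chow-ring annihilator computation in Proposition~\ref{prop:kernel-top-chern}, using the presentation collapse indicated in Remark~\ref{rmk:Sigmaodd}. The key observation is that when every $d_i$ is odd, the ring $\Si_\D$ contains no nonzero Euler-class generators, so its presentation becomes formally identical to the Chow ring of the smaller flag variety $\Fl(\D')$ with $\D' = \bigl((d_1-1)/2\stb (d_m-1)/2\bigr)$, with the cohomological degree doubled.

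First I would apply the relation \eqref{eq:eodd}: since each $d_i$ is odd, all the $\e_i$ vanish, and Definition~\ref{def:WD} simplifies to
$$\Si_\D \;\iso\; \Z\bigl[\op{p}_{2j}(\DD_i)\mid 1\le i\le m,\ 1\le j\le (d_i-1)/2\bigr]\Big/\Bigl({\textstyle\prod_{i=1}^m}\op{p}(\DD_i)=1\Bigr),$$
where the relation is read graded-piece by graded-piece. Matching $\op{p}_{2j}(\DD_i)$ (in degree $4j$) with $\op{c}_j(\DD_i')$ (in degree $j$) in the standard presentation \eqref{eq:chowringFlD} of $\CH^*\bigl(\Fl(\D')\bigr)$ then yields a graded ring isomorphism $\Si_\D \iso \CH^*\bigl(\Fl(\D')\bigr)$ (with a global degree doubling), exactly as claimed in Remark~\ref{rmk:Sigmaodd}. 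Under this isomorphism, $\op{p}_{\rm top}(\DD_m)=\op{p}_{d_m-1}(\DD_m)$ is sent to $\op{c}_{(d_m-1)/2}(\DD_m')=\op{c}_{\rm top}(\DD_m')$, and the candidate generator $x=\prod_{j\neq m}\op{p}_{\rm top}(\DD_j)$ is sent to $\prod_{j\neq m}\op{c}_{\rm top}(\DD_j')$.

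With the identification in hand, the statement of the proposition becomes precisely Proposition~\ref{prop:kernel-top-chern} applied to $\Fl(\D')$: the annihilator of $\op{c}_{\rm top}(\DD_m')$ in $\CH^*\bigl(\Fl(\D')\bigr)$ is the principal ideal generated by $\prod_{j\neq m}\op{c}_{\rm top}(\DD_j')$. Transporting this statement back through the isomorphism gives the desired description of $\Ann_{\Si_\D}\bigl(\op{p}_{\rm top}(\DD_m)\bigr)$. The main (essentially only) step is the matching of presentations, which follows mechanically from killing the $\e_i$; the sole mild subtlety is that a factor with $d_i=1$ gives $\op{p}_{\rm top}(\DD_i)=1=\op{c}_{\rm top}(\DD_i')$ and a rank-$0$ bundle $\DD_i'$, which is consistent with both presentations and so requires no separate treatment.
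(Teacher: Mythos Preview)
Your proof is correct and follows essentially the same route as the paper: both reduce to Proposition~\ref{prop:kernel-top-chern} via the isomorphism $\Si_\D\cong\CH^*\bigl(\Fl(\D')\bigr)$ of Remark~\ref{rmk:Sigmaodd}, matching $\op{p}_{\rm top}(\DD_m)$ with the top Chern class and transporting the annihilator back. One minor slip: the degree rescaling under the isomorphism is by a factor of $4$ (since $\op{c}_j$ lives in degree $j$ and $\op{p}_{2j}$ in degree $4j$), not $2$ as you wrote; this does not affect the argument.
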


\begin{proof}
  Let $\D_0=\bigl(\lfloor d_1/2\rfloor\stb \lfloor d_m/2\rfloor\bigr)$. As mentioned before, the coefficient ring is $R=\mathbb{Z}$. By Remark~\ref{rmk:Sigmaodd}, there is an isomorphism of graded rings multiplying degrees by 4
  \begin{equation}\label{eq:kappa}
  	  \ka\colon\CH^*\bigl(\Fl(\D_0)\bigr)\to \Si_{\D},
  \end{equation}
  which maps  $\op{c}_{j}(\DD_i)$ to $\p_{2j}(\DD_i)$ for all $i,j$ and in particular maps  $\op c:=\op{c}_{\lfloor d_m/2\rfloor}(\DD_m)$ to $\p_{d_m-1}(\DD_m)$. The annihilator $\Ann_{\CH^*(\Fl(\D_0))}(\op c)$ is described by Proposition~\ref{prop:kernel-top-chern} which via the isomorphism $\ka$ gives the desired statement.
\end{proof}

Since the ring $\Si_\D$ for $\D$ odd is obtained from the Chow-ring by adjoining square roots of top Chern classes (via the isomorphism $\ka$ of \eqref{eq:kappa}), we describe how this operation modifies the annihilator.

\begin{lemma}
  \label{lem:adjoiningeuler} 
  Let $A$ be a commutative graded ring. Let $\p\in A_{2k}$ be an even, positive degree element and let $A'=A[\e]/(\e^2-\p)$ be obtained by adjoining a square root $\e$ of $\p$ (of degree $k$). Then
  \[
  \Ann_{A'}(\e)=\Ann_A(\p)\cdot \e,\qquad \Ann_{A'}(y)=\bigl(\Ann_{A}(y)\bigr)_{A'}=\Ann_{A}(y)\oplus \Ann_{A}(y)\cdot \e
  \]
  for all $y\in A$. If $A$ is a free $\Z$-module, then $A'$ is also a free $\Z$-module.
\end{lemma}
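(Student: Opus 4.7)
The plan is to reduce everything to the observation that $A'$ is a free $A$-module with basis $\{1,\e\}$. Since $\e^2-\p$ is a monic polynomial of degree $2$ in $\e$ with coefficients in $A$, the polynomial division algorithm in the graded polynomial ring $A[\e]$ shows that every element of $A'$ admits a unique representative of the form $a+b\e$ with $a,b\in A$. This yields an internal direct sum decomposition $A'=A\cdot 1\oplus A\cdot\e$ of $A$-modules.

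The freeness statement is then immediate: if $A$ is free as a $\Z$-module on a basis $\{a_i\}_{i\in I}$, then $A'$ is free as a $\Z$-module on the combined set $\{a_i\}_{i\in I}\cup\{a_i\e\}_{i\in I}$.

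For the annihilator identities I would simply compute using the uniqueness of the decomposition above. For the annihilator of $\e$, we have $(a+b\e)\cdot\e=b\p+a\e$, and by uniqueness of the $1$- and $\e$-coefficients this vanishes if and only if $a=0$ and $b\in\Ann_A(\p)$; this gives $\Ann_{A'}(\e)=\Ann_A(\p)\cdot\e$. Similarly, for $y\in A$ we have $(a+b\e)\cdot y=ay+(by)\e$, which vanishes if and only if both $a$ and $b$ lie in $\Ann_A(y)$; this gives $\Ann_{A'}(y)=\Ann_A(y)\oplus\Ann_A(y)\cdot\e$, which is the extension of scalars description $\bigl(\Ann_A(y)\bigr)_{A'}$.

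There is no real obstacle here --- the entire lemma is essentially the statement that $A'$ is a free $A$-module of rank $2$ on $\{1,\e\}$, after which all the required identities follow by matching coefficients in the direct sum decomposition. The only mild subtlety is to observe that the degree assignment $|\e|=k$ is forced by $\e^2=\p$ and makes the decomposition $A'=A\oplus A\e$ a decomposition of graded $A$-modules, so that the computation respects the grading and the annihilator statements hold in the graded sense.
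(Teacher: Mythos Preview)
Your proof is correct and follows essentially the same approach as the paper: both arguments rest on the free $A$-module decomposition $A'=A\oplus A\cdot\e$ and then compute annihilators by matching coefficients. You even spell out the second annihilator identity $\Ann_{A'}(y)=\Ann_A(y)\oplus\Ann_A(y)\cdot\e$ more explicitly than the paper does.
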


\begin{proof}
  Write $A'=A\oplus A\bra \e\ket,$ as a direct sum of $A$-modules, where $A\bra \e\ket=\{a\cdot \e\mid a\in A\}$. In particular, $A'$ is a free $A$-module, which shows the last claim. To understand $\Ann_{A'}(\e)$, let $x=a+b\cdot \e\in A'$. Then $x\cdot\e=(a+b\cdot \e)\cdot\e=a\cdot \e+b\cdot\p$, so $x\in\Ann_{A'}(\e)$ if and only $a=0$ and $b\in\Ann_A(\p)$. As a consequence, we get a direct sum decomposition of $\Ann_{A'}(\e)$ into the terms
  \[\Ann_{A'}(\e)\cap A=(0),\qquad \text{ and }\qquad \Ann_{A'}(\e)\cap A\bra \e\ket=\Ann_A(\p)\cdot \e.\qedhere\]
\end{proof}

\begin{corollary}\label{cor:oddcase}
  Let $\D=(d_1\stb d_m)$ be such that at least one $d_i$ is odd and $d_m$ is even. Then the annihilator of $\e_m$ is a principal ideal
	\[
	\Ann_{\Si_\D}(\e_m)=(x),
	\]
	where $x=\e_m\cdot\prod_{j\neq m}\p_{\rm top}(\DD_j)$.
\end{corollary}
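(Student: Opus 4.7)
The strategy is to bootstrap from the all-odd case (Proposition~\ref{prop:kernel-top-pontryagin}) by iterating the square-root adjunction recipe of Lemma~\ref{lem:adjoiningeuler}. Let $S=\{i\in\{1,\ldots,m\}:d_i\text{ even}\}$, which by hypothesis contains $m$, and let $\tilde{\D}=(\tilde d_1,\ldots,\tilde d_m)$ be defined by $\tilde d_i=d_i+1$ for $i\in S$ and $\tilde d_i=d_i$ otherwise, so that all entries of $\tilde{\D}$ are odd. The first step is to note that a direct comparison of the generators and relations in Definition~\ref{def:WD} yields a presentation
\[
\Si_\D \;\cong\; \Si_{\tilde{\D}}[\e_i\mid i\in S]\big/\bigl(\e_i^2-\p_{\rm top}(\DD_i)\mid i\in S\bigr),
\]
because for $i\in S$ the top Pontryagin class $\p_{\rm top}(\DD_i)=\p_{d_i}(\DD_i)=\p_{\tilde d_i-1}(\DD_i)$ is the top class of $\DD_i$ already present in $\Si_{\tilde{\D}}$, and the only additional datum in $\Si_\D$ is, for each such $i$, an Euler class $\e_i$ together with the relation $\e_i^2=\p_{\rm top}(\DD_i)$. (If all $d_i$ are odd then $S=\emptyset$ and there is nothing to add; that case falls under Proposition~\ref{prop:kernel-top-pontryagin} rather than the present corollary, so we may assume $S\neq\emptyset$.)

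Next I would order the elements of $S$ as $i_1<i_2<\cdots<i_k$ with $i_k=m$, and build the tower
\[
A_0=\Si_{\tilde{\D}},\qquad A_j=A_{j-1}[\e_{i_j}]\big/\bigl(\e_{i_j}^2-\p_{\rm top}(\DD_{i_j})\bigr)\quad (1\le j\le k),
\]
so that $A_k=\Si_\D$. Applying the first formula of Lemma~\ref{lem:adjoiningeuler} to the final step $A_{k-1}\leadsto A_k$ (with $\e=\e_m$ and $\p=\p_{\rm top}(\DD_m)$) gives
\[
\Ann_{\Si_\D}(\e_m)=\e_m\cdot\Ann_{A_{k-1}}\bigl(\p_{\rm top}(\DD_m)\bigr).
\]
Since $\p_{\rm top}(\DD_m)$ lies in $A_0=\Si_{\tilde{\D}}$, I would then apply the second formula of Lemma~\ref{lem:adjoiningeuler} inductively to the tower $A_0\subset A_1\subset\cdots\subset A_{k-1}$, obtaining at each stage
\[
\Ann_{A_j}\bigl(\p_{\rm top}(\DD_m)\bigr)=\Ann_{A_{j-1}}\bigl(\p_{\rm top}(\DD_m)\bigr)\cdot A_j,
\]
so that ultimately $\Ann_{A_{k-1}}\bigl(\p_{\rm top}(\DD_m)\bigr)$ is the extension to $A_{k-1}$ of $\Ann_{\Si_{\tilde{\D}}}\bigl(\p_{\rm top}(\DD_m)\bigr)$.

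Now Proposition~\ref{prop:kernel-top-pontryagin}, applied to the all-odd partition $\tilde{\D}$, identifies
\[
\Ann_{\Si_{\tilde{\D}}}\bigl(\p_{\rm top}(\DD_m)\bigr)=\Bigl(\prod_{j\neq m}\p_{\rm top}(\DD_j)\Bigr).
\]
Substituting back yields
\[
\Ann_{\Si_\D}(\e_m)=\Bigl(\e_m\cdot\prod_{j\neq m}\p_{\rm top}(\DD_j)\Bigr),
\]
which is exactly the claim. Finally, to conclude the corollary for $W_\D=\Si_\D\otimes_\Z\La$ rather than only for $\Si_\D$, I would use the freeness of $\La$ as a $\Z$-module to conclude that multiplication by $\e_m\otimes 1$ on $\Si_\D\otimes_\Z\La$ has annihilator $\bigl(\Ann_{\Si_\D}(\e_m)\bigr)\otimes_\Z\La$, i.e.\ the principal ideal of $W_\D$ generated by the same element $x$.

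The only real obstacle is the bookkeeping around the presentation in the first paragraph: one must check that adjoining the Euler classes $\e_i$ for $i\in S$ to $\Si_{\tilde{\D}}$ modulo the single quadratic relation per index really does reproduce all of the defining relations \eqref{eq:p}--\eqref{eq:eeven} of $\Si_\D$ (and no extras). Relation \eqref{eq:ealleven} does not intervene because $\D$ has an odd entry by hypothesis. Once this presentation is in place, the rest is a mechanical iteration of Lemma~\ref{lem:adjoiningeuler}, and the only subtlety is to put $m$ last in the ordering of $S$ so that the final adjunction step is precisely the one producing $\e_m$.
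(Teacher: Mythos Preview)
Your proposal is correct and follows essentially the same approach as the paper: both pass to the all-odd partition $\tilde{\D}$, invoke Proposition~\ref{prop:kernel-top-pontryagin} there, and then iterate Lemma~\ref{lem:adjoiningeuler} through the tower of square-root adjunctions. The only cosmetic difference is the order of adjunctions --- the paper adjoins $\e_m$ first and then the remaining $\e_i$'s (applying the second formula of Lemma~\ref{lem:adjoiningeuler} with $y=\e_m$), whereas you adjoin $\e_m$ last (applying the second formula with $y=\p_{\rm top}(\DD_m)$) --- but either order works, and your final paragraph about $W_\D$ is superfluous since the corollary is stated only for $\Si_\D$.
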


\begin{proof}
  Let $\D_0=(d_1'\stb d_m')$, where $d_i'=d_i+1$ if $d_i$ is even, and $d_i'=d_i$ if $d_i$ is odd (this has the effect of removing all the classes $\e_i$). Then by Definition~\ref{def:WD},
  \[
  \Si_{\D}=\Si_{\D_0}[\e_i]_{i\in E}/\bigl(\e_i^2-\p_{\rm top}(\DD_i)\bigr),
  \]
  where $E=\{i\mid d_i \text{ even}\}$. Let $\D_1=(d_1'\stb d_{m-1}',d_m)$. Then by Lemma~\ref{lem:adjoiningeuler} and Proposition~\ref{prop:kernel-top-chern}:
  \[\Ann_{\Si_{\D_1}}(\e_m)=\Ann_{\Si_{\D_0}}\bigl(\p_{\rm top}(\DD_m)\bigr)\cdot \e_m=\left(x\right),\]
  where $x=\e_m\cdot\prod_{j\neq m}\p_{\rm top}(\DD_j)$. Let $\D_2=(d_1'\stb d_{m-2}',d_{m-1}, d_m)$. Assume that $d_{m-1}'=d_{m-1}+1$, otherwise $\Si_{\D_1}$ and $\Si_{\D_2}$ are equal. Then again, \[\Si_{\D_2}=\Si_{\D_1}[\e_{m-1}]/\bigl(\e_{m-1}^2-\p_{\rm top}(\DD_{m-1})\bigr),\]
  and by Lemma~\ref{lem:adjoiningeuler}
  \[\Ann_{\Si_{\D_2}}(\e_m)=\bigl(\Ann_{\Si_{\D_1}}(\e_m)\bigr)=(x).\]
  as a principal ideal in $\Si_{\D_2}$ now.
  Repeating this process, replacing each $d_i'$ with $d_i$ concludes the proof of the corollary. Note that in all steps of the process there is at least one odd $d_i$ by assumption (and the fact that the modifications leave the odd $d_i$'s unchanged), so that the relation~\eqref{eq:ealleven} never appears.
\end{proof}

\subsection{Interlude: principal ideal--quotient property}

We consider the following situation: let $A$ be a commutative unital ring, $\p_i\in A$, $i=1\stb m$ be elements such that $\Ann_A(\p_m)=(\p_1\cdots\p_{m-1})$, and let
\[
A':=A[\e_1\stb \e_m]/\bigl(\e_i^2-\p_i\mid i=1\stb m\bigr),
\]
and
\[
A''=A'/(\e_1\cdot\ldots\cdot \e_m).
\]
Without additional assumptions on $\p_i$, it is not true that $\Ann_{A''}(\e_m)=(\e_1\cdot \ldots \cdot \e_{m-1})$; for instance if $\p_m=0$, then $\e_m\in \Ann_{A''}(\e_m)$.

In this section, we discuss a property, the \emph{principal ideal--quotient property},  which ensures that an annihilator computation as above holds. This will reduce the proof of Theorem~\ref{thm:annihilator} to checking the principal--ideal quotient property for Chow rings of full flag varieties, which we will then do in the next subsection. The property is formulated in terms of ideal quotients. Recall that for ideals $I,J\subseteq A$, the ideal quotient is defined as
\[
(I:J):=\{a\in A\mid aJ\subseteq I\}.
\]

\begin{definition}\label{def:PIQP}
  A set of elements $x_1\stb x_n$ in a commutative ring satisfies the \emph{principal ideal--quotient property} if 
  \[\bigl((x_I):(x_J)\bigr)=(x_{I\su J})\]
  for all index sets $J\se I\se\{1,\dots,n\}$, where $x_I=\prod_{i\in I}x_i$ with the convention $x_{\emptyset}=1$.
\end{definition}
For principal ideals, we will alleviate notation by writing $(a:b)$ instead of $\bigl((a):(b)\bigr)$.

\begin{lemma}
  \label{lem:piqp}
  Let $A$ be a commutative ring and let
  \[
  A'=A[\e_1\stb \e_m]/\bigl(\e_i^2-\p_i\mid i=1\stb m\bigr)
  \]
  for some elements $\p_i\in A$. Assume that in $A$ the elements $\p_1\stb \p_m$ satisfy the principal ideal--quotient property. Then in $A'$, the elements $\e_1\stb \e_m$ also satisfy the principal ideal--quotient property.
\end{lemma}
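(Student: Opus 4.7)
The plan is to exploit that $A'$ is a free $A$-module with basis $\{\e_S : S \se \{1,\dots,m\}\}$, where $\e_S := \prod_{i \in S} \e_i$ and $\e_\emptyset := 1$. The relations $\e_i^2 = \p_i$ imply the multiplication formula
\[
\e_S \cdot \e_T \;=\; \p_{S \cap T} \cdot \e_{S \symmdiff T},
\]
where we abbreviate $\p_R := \prod_{i \in R} \p_i$. Hence any computation in $A'$ reduces to tracking subsets of $\{1,\dots,m\}$ together with coefficients in $A$.

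The inclusion $(\e_{I \setminus J}) \subseteq (\e_I : \e_J)$ is trivial since $(I \setminus J) \cap J = \emptyset$ yields $\e_{I \setminus J} \cdot \e_J = \e_I$. For the converse, set $K := I \setminus J$ and take $x = \sum_S a_S \e_S$ with $x \e_J = z \e_I$ for some $z = \sum_T b_T \e_T$. Expanding both sides by the multiplication formula and matching the coefficient of $\e_U$, then re-indexing via $S := U \symmdiff J$ (which, using $J \se I$, yields $U \symmdiff I = S \symmdiff K$, $J \setminus U = S \cap J$, and $I \setminus U = (S \cap J) \sqcup (K \setminus S)$), produces the family of equations in $A$
\[
a_S \cdot \p_{S \cap J} \;=\; b_{S \symmdiff K} \cdot \p_{S \cap J} \cdot \p_{K \setminus S}, \qquad S \se \{1,\dots,m\}.
\]

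For each such $S$, set $I' := (S \cap J) \sqcup (K \setminus S)$ and $J' := S \cap J \se I'$ (the disjointness is automatic since $J \cap K = \emptyset$). The equation above says $a_S \cdot \p_{J'} \in (\p_{I'})$, so the PIQP hypothesis for $\p_1,\dots,\p_m$ in $A$ gives
\[
a_S \;\in\; \bigl((\p_{I'}) : (\p_{J'})\bigr) \;=\; (\p_{I' \setminus J'}) \;=\; (\p_{K \setminus S}),
\]
i.e.\ $a_S = c_S' \cdot \p_{K \setminus S}$ for some $c_S' \in A$. Defining $w := \sum_S c_S' \e_{S \symmdiff K}$ and expanding $w \e_K$ by the multiplication formula (using $R \cap K = K \setminus S$ when $R = S \symmdiff K$) recovers $\sum_S c_S' \p_{K \setminus S} \e_S = \sum_S a_S \e_S = x$, so $x = w \e_K \in (\e_K) = (\e_{I \setminus J})$.

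The only non-combinatorial step is a single coefficient-wise application of the PIQP in $A$; no interaction between different values of $S$ occurs. The main thing to be careful about is the set-theoretic bookkeeping of symmetric differences under $J \se I$ and the disjoint decomposition $I = J \sqcup K$, but this is routine once the correct re-indexing is chosen, and the PIQP is tailored exactly to solve each coefficient equation.
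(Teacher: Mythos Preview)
Your proof is correct and follows essentially the same approach as the paper: both use the free $A$-module decomposition $A' = \bigoplus_S A\langle \e_S\rangle$ with the multiplication rule $\e_S\cdot\e_T = \p_{S\cap T}\,\e_{S\symmdiff T}$, then reduce the ideal-quotient question to a single application of the PIQP for the $\p_i$'s in $A$ on each coefficient. The only organizational difference is that the paper observes this decomposition makes $A'$ into a $G$-graded $A$-algebra (with $G$ the group of subsets under $\symmdiff$), so the ideal quotient $(\e_I:\e_J)$ is homogeneous and it suffices to treat a single term $a\e_K$; you instead keep a general $x=\sum_S a_S\e_S$ and match coefficients directly, which amounts to the same thing.
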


\begin{proof}
  The ring $A'$ decomposes as a free $A$-module as
  \begin{equation}\label{eq:directsum}
  A'=\bigoplus_{I\se \{1\stb m\}}A\left\bra \e_I\right\ket,  	
  \end{equation}
  where $\e_I=\prod_{i\in I} \e_i$. Notice that 
  \begin{equation}\label{eq:ringstructure}
    \e_I\cdot \e_J=\e_{I\symmdiff J}\cdot \p_{I\cap J},
  \end{equation}
  where $\symmdiff$ denotes symmetric difference. This way, the ring $A'$ has the structure of a $G$-graded $A$-algebra, where $G$ is the abelian group whose elements are subsets $I\subseteq\{1\stb m\}$ with group structure given by symmetric difference. For this grading, the element $e_I$ has degree $I$. 
  
  Fix $J\se I\se \{1\stb m\}$; the ideal quotient $E:=(\e_I:\e_J)$ in $A'$ is a $G$-homogeneous ideal and therefore decomposes under the direct sum \eqref{eq:directsum}. Let $a\e_K$ be a $G$-homogeneous element in $E$ for some $a\in A$ and $K\se \{1\stb m\}$. To show the principal ideal--quotient property we want to show that $a\e_K\in (\e_{I\su J})$. By definition of $E$, and the decomposition \eqref{eq:directsum}, there exists some $b\in A$ and $L\se \{1\stb m\}$ such that
  \[
  \e_J\cdot a \e_K=b\e_L\cdot \e_I.\]
  By using equation~\eqref{eq:ringstructure} describing the multiplication of Euler class products, this is equal to
  \[a\cdot \p_{J\cap K}\cdot \e_{J\symmdiff K}=b\cdot \p_{L\cap I}\cdot \e_{L\symmdiff I}\]
  From the direct sum decomposition of $A'$, we get that
  \[
  J\symmdiff K=L\symmdiff I,\qquad a\cdot \p_{J\cap K}=b\cdot \p_{L\cap I}
  \]
  which using $J\se I$ implies $J\cap K\se I\cap L$. Using the principal ideal--quotient property for the $\p_i$'s, this implies that $a\in (\p_{L\cap I\su J\cap K})\se (\e_{L\cap I\su J\cap K})$. A simple verification shows that
  \[I\su J\se K\cup (L\cap I\su J\cap K),\] 
  so since $a\in(\e_{L\cap I\su J\cap K})$, we get
  $a\e_K\in (e_{K\cup (L\cap I\su J\cap K)})\se  (\e_{I\su J})$, which is exactly what we wanted to prove.	
\end{proof}

\begin{corollary}\label{cor:PIQP}
  Let $A$ be a ring and let
  \[
  A'=A[\e_1\stb \e_m]/\bigl(\e_i^2=\p_i\bigr)
  \]
  for some elements $\p_i\in A$ satisfying the principal ideal--quotient property. Let $A''=A'/(\e_1\cdot \e_2\cdot \ldots \cdot \e_m)$. Then
  \[\Ann_{A''}(\e_m)=(\e_1\cdot \e_2\cdot \ldots \cdot \e_{m-1})_{A''}\]	
\end{corollary}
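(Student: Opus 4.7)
The plan is to deduce this directly from Lemma~\ref{lem:piqp} by translating an annihilator in the quotient into an ideal quotient in $A'$.

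First I would observe the general fact: if $\pi\colon A'\to A''=A'/(\e_1\cdots\e_m)$ is the quotient map, then for any $x\in A'$ we have $\pi(x)\cdot\e_m=0$ in $A''$ if and only if $x\cdot\e_m\in(\e_1\cdots\e_m)$ in $A'$, i.e., if and only if $x\in\bigl((\e_1\cdots\e_m):(\e_m)\bigr)$. Hence
\[
\Ann_{A''}(\e_m)=\pi\Bigl(\bigl((\e_1\cdots\e_m):(\e_m)\bigr)\Bigr).
\]

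Next, since the $\p_i$ satisfy the principal ideal--quotient property in $A$ by assumption, Lemma~\ref{lem:piqp} gives that the $\e_i$ satisfy the principal ideal--quotient property in $A'$. Applying this with $I=\{1,\dots,m\}$ and $J=\{m\}$ (so that $I\setminus J=\{1,\dots,m-1\}$) yields
\[
\bigl((\e_1\cdots\e_m):(\e_m)\bigr)=(\e_1\cdots\e_{m-1})
\]
as ideals in $A'$. Passing to $A''$ via $\pi$ gives exactly $\Ann_{A''}(\e_m)=(\e_1\cdots\e_{m-1})_{A''}$, which is the claim.

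So the whole argument is a one-line translation once Lemma~\ref{lem:piqp} is available; there is no real obstacle, since the combinatorial heavy lifting (the stability of the PIQP under the square-root-adjunction construction) was already accomplished in Lemma~\ref{lem:piqp}. The only thing to be careful about is the direction of the translation between ``annihilator in quotient'' and ``ideal quotient in the ambient ring'', which is standard commutative algebra.
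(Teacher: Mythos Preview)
Your proof is correct and essentially identical to the paper's own argument: both translate $\Ann_{A''}(\e_m)$ into the ideal quotient $\bigl((\e_1\cdots\e_m):(\e_m)\bigr)$ in $A'$, compute the latter via Lemma~\ref{lem:piqp} as $(\e_1\cdots\e_{m-1})$, and push forward to $A''$. The only cosmetic difference is that the paper records the PIQP computation first and the annihilator--ideal-quotient translation second, whereas you reverse the order.
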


\begin{proof}
  In $A'$, by Lemma~\ref{lem:piqp} and the definition of the principal ideal--quotient property, we have
  \[\bigl((\e_1\cdot \ldots \cdot \e_m):(\e_m)\bigr)_{A'}=(\e_1\cdot \ldots\cdot \e_{m-1})_{A'}\]
  The ideal quotient $\bigl(0:(\e_m)\bigr)_{A''}$ is by definition the image of the ideal quotient $\bigl((\e_1\cdot \ldots \cdot \e_m):(\e_m)\bigr)_{A'}$:
  \begin{eqnarray*}
    \{x\in A''\mid x\e_m=0\}&=&\Big\{q(y)\in A''\mid y\in A', y\cdot \e_m\in (\e_1\cdot \ldots \cdot \e_m)\Big\}\\
    &=&q\bigl((\e_1\cdot \ldots \cdot \e_m):\e_m\bigr)_{A'}
  \end{eqnarray*}
  where $q\colon A'\to A''$ is the natural quotient map.
\end{proof}

\begin{proposition}\label{prop:idealintersection}
  Let $A$ be a ring and let $A'=A[\e_1\stb \e_m]/(\e_i^2-\p_i)$ for some elements $\p_i\in A$ satisfying the principal ideal--quotient property. Then 
  \[A\bra \e_I\ket\cap (\e_J)_{A'}=(\p_{J\su I})_A\,\bigl\bra \e_I\bigr\ket.\]
\end{proposition}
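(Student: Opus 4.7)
The plan is to work directly with the free $A$-module structure on $A'$. By the direct sum decomposition~\eqref{eq:directsum}, every element $x\in A'$ has a unique expansion $x = \sum_{K\se\{1\stb m\}} b_K \e_K$ with $b_K \in A$, and the multiplication rule~\eqref{eq:ringstructure} reads $\e_K\cdot\e_L = \p_{K\cap L}\e_{K\symmdiff L}$. In particular, right multiplication by $\e_J$ sends the summand $A\bra\e_K\ket$ into the summand $A\bra\e_{K\symmdiff J}\ket$ via multiplication by the element $\p_{K\cap J}\in A$. This observation reduces the membership test ``$a\e_I\in(\e_J)_{A'}$'' to a one-summand calculation, and the proposition should follow from comparing coefficients.

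First I would verify the inclusion $(\p_{J\su I})_A\bra\e_I\ket \se A\bra\e_I\ket \cap (\e_J)_{A'}$. Given $c\in A$, the product $c\e_{I\symmdiff J}\cdot\e_J$ equals $c\p_{(I\symmdiff J)\cap J}\e_{(I\symmdiff J)\symmdiff J}$, and the set-theoretic identities $(I\symmdiff J)\cap J = J\su I$ and $(I\symmdiff J)\symmdiff J = I$ simplify this expression to $c\p_{J\su I}\e_I$. So every element $c\p_{J\su I}\e_I$ of the left-hand side lies in the ideal $(\e_J)_{A'}$.

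For the reverse inclusion, I would start with an equation $a\e_I = x\e_J$ for some $x = \sum_K b_K\e_K\in A'$, expand $x\e_J = \sum_K b_K\p_{K\cap J}\e_{K\symmdiff J}$, and compare coefficients of $\e_I$ in the direct sum decomposition. Since the unique index $K$ satisfying $K\symmdiff J = I$ is $K = I\symmdiff J$, this forces
$a = b_{I\symmdiff J}\,\p_{(I\symmdiff J)\cap J} = b_{I\symmdiff J}\,\p_{J\su I}$, so $a\in(\p_{J\su I})_A$, as required. The only genuinely non-trivial step is the set-theoretic bookkeeping with symmetric differences and intersections; the rest is a direct application of the free $A$-module structure of $A'$ already exploited in the proof of Lemma~\ref{lem:piqp}.
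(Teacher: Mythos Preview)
Your proof is correct and follows essentially the same approach as the paper: both use the direct sum decomposition~\eqref{eq:directsum} and the multiplication rule~\eqref{eq:ringstructure} to reduce the question to identifying the unique $K$ with $K\symmdiff J=I$ and reading off the coefficient. Your version is slightly more explicit in writing out the general element $x=\sum_K b_K\e_K$, whereas the paper jumps directly to a single term $c\e_K$, but the argument is the same (and in neither case is the principal ideal--quotient hypothesis actually used).
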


\begin{proof}
  It is enough to prove the containment $\se$, since the other containment is immediate from $\p_{J\su I}\e_I=\e_{J\su I}^2\e_I\in (\e_J)_{A'}$.  Assume that $c\e_K\cdot \e_J=b\e_I$ for some $b,c\in A$, $K\se \{1,\dots, m\}$; we will show that $b\in (\p_{J\su I})_A$. Using the ring structure description of equation~\eqref{eq:ringstructure}, we get $c\p_{K\cap J} \e_{J\symmdiff K}=b\e_I$, and therefore $c\p_{K\cap J}=b$ and $J\symmdiff K=I$. From this we obtain $\p_{J\cap K}=\p_{J\su I}$, proving the claim.
\end{proof}

\begin{corollary}
  \label{cor:quotientdecomposition}
  In the situation of Proposition~\ref{prop:idealintersection}, the ring $A''=A'/(\e_1\cdots \e_m)$ decomposes as an $A$-module as
	\[
	A''=\bigoplus_{I\se \{1,\dots,m\}}\bigl(A/(\p_{I^{\rm c}})_A\bigr)\bra \e_I\ket,
	\]
	where $I^{\rm c}$ denotes the complement of $I\se \{1,\dots,m\}$.
\end{corollary}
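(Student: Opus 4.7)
The plan is to quotient the direct sum decomposition of $A'$ as a free $A$-module by the principal ideal generated by $\e_1\cdots\e_m$, using Proposition~\ref{prop:idealintersection} to identify each summand. By construction of $A'$ in the proof of Lemma~\ref{lem:piqp}, we have the free $A$-module decomposition
\[
A'=\bigoplus_{I\subseteq\{1,\ldots,m\}}A\langle \e_I\rangle,
\]
and this is in fact a $G$-grading, where $G$ is the abelian group of subsets of $\{1,\ldots,m\}$ under symmetric difference. Since the element $\e_1\cdots\e_m=\e_{\{1,\ldots,m\}}$ is $G$-homogeneous, the principal ideal $(\e_1\cdots\e_m)_{A'}$ is itself $G$-homogeneous, and therefore decomposes as
\[
(\e_1\cdots\e_m)_{A'}=\bigoplus_{I\subseteq\{1,\ldots,m\}}\bigl(A\langle\e_I\rangle\cap (\e_1\cdots\e_m)_{A'}\bigr).
\]

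Next I would apply Proposition~\ref{prop:idealintersection} with $J=\{1,\ldots,m\}$ to identify each summand: for every $I\subseteq\{1,\ldots,m\}$ we obtain
\[
A\langle\e_I\rangle\cap (\e_{\{1,\ldots,m\}})_{A'}=(\p_{\{1,\ldots,m\}\setminus I})_A\langle\e_I\rangle=(\p_{I^{\rm c}})_A\langle\e_I\rangle.
\]
Taking the quotient $A''=A'/(\e_1\cdots\e_m)_{A'}$ termwise in the direct sum then yields
\[
A''=\bigoplus_{I\subseteq\{1,\ldots,m\}}A\langle\e_I\rangle\big/(\p_{I^{\rm c}})_A\langle\e_I\rangle=\bigoplus_{I\subseteq\{1,\ldots,m\}}\bigl(A/(\p_{I^{\rm c}})_A\bigr)\langle\e_I\rangle,
\]
which is the claimed decomposition as an $A$-module.

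There is no real obstacle here: the whole corollary is a direct consequence of the $G$-homogeneity of the ideal $(\e_1\cdots\e_m)_{A'}$ together with the intersection formula of Proposition~\ref{prop:idealintersection}. The only point worth double-checking is that the $G$-grading is truly compatible with the $A$-module structure (so that $A$ sits in degree $\emptyset$ and quotienting by the homogeneous ideal really respects the direct sum), which is immediate from the multiplication rule $\e_I\cdot\e_J=\p_{I\cap J}\,\e_{I\triangle J}$ recorded in equation~\eqref{eq:ringstructure}.
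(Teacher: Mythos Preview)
Your proof is correct and follows essentially the same approach as the paper: use the $G$-graded direct sum decomposition of $A'$, observe that the principal ideal $(\e_1\cdots\e_m)_{A'}$ is $G$-homogeneous, apply Proposition~\ref{prop:idealintersection} with $J=\{1,\ldots,m\}$ to identify each graded piece, and quotient termwise. The paper's version is slightly terser but the logic is identical.
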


\begin{proof}
  Consider the quotient map $q\colon A'\to A''=A'/(\e_1\cdots \e_m)_{A'}$. Using the decomposition \eqref{eq:directsum}, $q$ decomposes as a graded homogeneous map over the subsets $I\se \{1\stb m\}$ as follows:
  \[
  A\bra \e_I\ket \to A\bra \e_I\ket / \bigl( A\bra \e_I\ket \cap (\e_1\cdots \e_m)_{A'}\bigr)
  =
  A\bra \e_I\ket \Bigl/ (\p_{I^c})_A\,\bra \e_I\ket=\bigl(A/(\p_{I^c})_A\bigr)\bra \e_I\ket,
  \]
  where in the first equality we use Proposition \ref{prop:idealintersection}.
\end{proof}

\subsection{Principal ideal--quotient property in Chow rings, and proof of the even case}

By Corollary~\ref{cor:PIQP} it is enough to prove that the top Chern classes satisfy the principal ideal--quotient property to conclude the proof of the even case in Theorem~\ref{thm:annihilator}. To prove the statement, we use a splitting-principle-type argument to reduce to the case of complete flag varieties, cf.\ Proposition~\ref{prop:ChernPIQP}. In the case of complete flag varieties, the principal ideal--quotient property will be established by explicit Schubert calculus computations, cf.~Proposition~\ref{prop:PIQPFln}. 

First, we establish one step of the splitting-principle-type reduction for the principal ideal--quotients, which is the setting of the projective bundle theorem.

\begin{lemma}\label{lemma:projective}
  Let $A$ be a commutative ring with unit and \[B=A[x]\bigg/\left(\sum_{i=0}^n c_ix^{n-i}\right)\]
  with $c_i\in A$, $c_0=1$. If $I$ and $J$ are ideals in $A$, then the ideal quotients satisfy
  \[
  (I:J)B=(IB:JB).
  \]
\end{lemma}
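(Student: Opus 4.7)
The plan is to exploit the hypothesis $c_0=1$, which makes the defining relation monic in $x$ and hence makes $B$ a free $A$-module with basis $1,x,x^2,\ldots,x^{n-1}$. Once an explicit $A$-basis is available, both ideal quotients reduce to coordinate-wise computations and the equality can be read off directly.

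The first step I would carry out is to verify that for any ideal $K\subseteq A$, the extended ideal decomposes as $KB=\bigoplus_{i=0}^{n-1} Kx^i$ as $A$-submodules of $B$. The containment $\supseteq$ is immediate, and $\subseteq$ follows because a generator $k\cdot b$ of $KB$ with $k\in K$ and $b=\sum a_i x^i$ has all its coordinates $ka_i$ back in $K$. This decomposition is the one place where monicness is essential: without $A$-freeness of $B$ the coefficients of an element of $KB$ need not individually lie in $K$, and the coordinate-wise reduction would break down.

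With this decomposition in hand, the main claim follows directly. Since $B$ is commutative and $JB$ is the ideal generated by $J\subseteq A$, an element $b\in B$ lies in $(IB:JB)$ if and only if $bj\in IB$ for every $j\in J$. Writing $b=\sum_{i=0}^{n-1} a_i x^i$ uniquely, the condition $bj\in IB$ translates via the decomposition of $IB$ into $a_i j\in I$ for every $i$. Demanding this for every $j\in J$ says precisely that each $a_i$ lies in $(I:J)$, which is in turn equivalent to $b\in\bigoplus_{i=0}^{n-1}(I:J)x^i=(I:J)B$. Both inclusions drop out simultaneously, so no separate argument for $\subseteq$ and $\supseteq$ is needed. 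There is no real obstacle beyond unpacking the decomposition of $KB$; the only point where the hypothesis $c_0=1$ is used is in securing freeness of $B$ over $A$.
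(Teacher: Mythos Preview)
Your proof is correct and follows essentially the same approach as the paper: both use that $c_0=1$ makes $B$ a free $A$-module on $1,x,\ldots,x^{n-1}$, then reduce the ideal-quotient condition to a coordinate-wise check. Your version is slightly cleaner in that you make the decomposition $KB=\bigoplus_i Kx^i$ explicit upfront and treat both inclusions at once, whereas the paper argues the two containments separately.
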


\begin{proof}
  If $\al J\se I$ for $\al\in A$, then $\al JB\se \al IB$, which proves the containment $\subseteq$.	

  For the other direction, note that the condition $c_0=1$ implies that $B$ is a free $A$-module of rank $n$ on $1\stb x^{n-1}$. Assume that $\beta\in (IB:JB)$, i.e., $\be J\se IB$ for some $\be=\al_ix^i+\cdots +\al_{n-1}x^{n-1}$, with $\alpha_j\in A$ and $\al_i\neq 0$. We compare coefficients. For $\lambda\in J$ we have $\beta\lambda=\alpha_i\lambda x^i+\cdots+ \alpha_{n-1}\lambda x^{n-1}$, while $\beta\lambda\in IB$ means that $\alpha_j\lambda\in I$ for all $j$ because of the freeness of $B$ as $A$-module. We get that for every $\lambda\in J$, $\alpha_j\lambda\in I$ for all $j$ and so $\alpha_j\in(I:J)$ for all $j$. But this means that $\beta\in (I:J)B$.
\end{proof}

\begin{proposition}\label{prop:ChernPIQP}
  Fix $\D=(d_1,\dots,d_m)$. The top Chern classes $\cc_{d_i}(\DD_i)$ satisfy the principal ideal--quotient property in $\CH^*\bigl(\Fl(\D)\bigr)$.
\end{proposition}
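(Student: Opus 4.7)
The plan is to reduce the statement to the complete flag variety case via the splitting principle using Lemma~\ref{lemma:projective}, and then establish the principal ideal--quotient property on $\Fl(N)$ by iterating the single-element annihilator computation of Proposition~\ref{prop:kernel-top-chern}.

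\emph{Reduction via splitting principle.} For any $i$ with $d_i\geq 2$, let $\D' = (d_1,\dots,d_{i-1},1,d_i-1,d_{i+1},\dots,d_m)$. The forgetful map $\Fl(\D') = \mathbb{P}(\DD_i)\to \Fl(\D)$ is a projective bundle presenting $B := \CH^*(\Fl(\D'))$ as a free finite extension of $A := \CH^*(\Fl(\D))$ of the form considered in Lemma~\ref{lemma:projective}. The Whitney sum formula applied to $0\to \SS_{\DD_i}\to \DD_i\to \QQ_{\DD_i}\to 0$ on $\mathbb{P}(\DD_i)$ gives $\cc_{d_i}(\DD_i) = \cc_1(\SS_{\DD_i})\cdot \cc_{d_i-1}(\QQ_{\DD_i})$, so every product $c_K$ of top Chern classes in $A$ indexed by $K\se\{1,\dots,m\}$ pulls back to the product $c_{K'}$ of top Chern classes in $B$ indexed by the subset $K'\se\{1,\dots,m+1\}$ obtained from $K$ by splitting $i$ into $\{i,i+1\}$ whenever $i\in K$ and shifting higher indices by one. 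A direct case check gives $(I\setminus J)' = I'\setminus J'$ for all $J\se I$. Combining Lemma~\ref{lemma:projective}, which yields $(c_I:c_J)_A\cdot B = (c_I B : c_J B)_B$, with PIQP on $B$ then gives $(c_I:c_J)_A\cdot B = (c_{I\setminus J})_A\cdot B$; faithful flatness of the free extension $B/A$ permits contraction, producing $(c_I:c_J)_A = (c_{I\setminus J})_A$. Iterating, PIQP on $\Fl(\D)$ follows from PIQP on the complete flag variety $\Fl(N) = \Fl(1^N)$.

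\emph{Annihilators on $\Fl(N)$.} Writing $x_j = \cc_1(\DD_j)$ and $x_K = \prod_{k\in K}x_k$ for $K\se\{1,\dots,N\}$, I claim that
\[
\Ann_{\CH^*(\Fl(N))}(x_J) = (x_{J^c}), \qquad J^c := \{1,\dots,N\}\setminus J,
\]
for every $J\se\{1,\dots,N\}$. The inclusion $\supseteq$ follows from $x_J x_{J^c} = \cc_N(\A^N) = 0$. For the reverse I induct on $|J|$: the base case $|J| = 1$ is Proposition~\ref{prop:kernel-top-chern}, whose proof applies symmetrically to any index $j$ of the partition $(1^N)$ and yields $\Ann(x_j) = (x_{\{j\}^c})$. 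For the inductive step write $J = J_0\sqcup\{j\}$; if $\xi x_{J_0}x_j = 0$, then $\xi x_{J_0}\in \Ann(x_j) = (x_{\{j\}^c}) = (x_{J^c}\cdot x_{J_0})$, so $\xi x_{J_0} = \eta x_{J^c} x_{J_0}$ for some $\eta$, and $(\xi - \eta x_{J^c})x_{J_0} = 0$. The induction hypothesis applied to $J_0$ gives $\xi - \eta x_{J^c}\in (x_{J_0^c}) = (x_j\cdot x_{J^c})\se (x_{J^c})$, whence $\xi\in (x_{J^c})$. The PIQP on $\Fl(N)$ is then immediate: for $J\se I\se\{1,\dots,N\}$, the inclusion $(x_{I\setminus J})\se (x_I:x_J)$ follows from $x_{I\setminus J}\cdot x_J = x_I$, while $\xi x_J = \eta x_{I\setminus J} x_J$ forces $\xi - \eta x_{I\setminus J}\in \Ann(x_J) = (x_{J^c})\se (x_{I\setminus J})$ (using $I\setminus J\se J^c$), so $\xi\in (x_{I\setminus J})$.

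\emph{Expected main obstacle.} The delicate point is the reduction step: verifying that the index correspondence $K\leftrightarrow K'$ is compatible with set difference, and that Lemma~\ref{lemma:projective} combined with faithful flatness of projective bundle extensions genuinely upgrades PIQP from the refined flag variety to the coarser one. The inductive annihilator computation on $\Fl(N)$ is routine modulo the symmetric applicability of Proposition~\ref{prop:kernel-top-chern} to each of the tautological line bundles $\DD_j$.
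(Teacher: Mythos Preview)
Your proof is correct. The reduction step via Lemma~\ref{lemma:projective} and faithful flatness of the projective-bundle extension is essentially the same as the paper's, which likewise pushes the problem to $\Fl(N)$ by iterated projective bundles. The genuine difference is in how the principal ideal--quotient property is established on $\Fl(N)$. The paper proceeds through Schubert geometry: it identifies the principal ideals $(x_{[i]})$ with pushforwards from smooth special Schubert varieties $\sigma_i\hookrightarrow\Fl(N)$, computes their ranks via Bruhat combinatorics (Proposition~\ref{prop:ranks}), and deduces the annihilator statement $\Ann(x_{[i]})=(x_{[N]\setminus[i]})$ from a rank comparison over all primes (Proposition~\ref{prop:FlnAnn}); the ideal-quotient equality then comes from a projection-formula argument on the nested inclusions $\sigma_i\hookrightarrow\sigma_j\hookrightarrow\Fl(N)$ (Proposition~\ref{prop:PIQPFln} and Corollary~\ref{cor:AnnFln}). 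Your route is more elementary and more general at the intermediate stage: you prove $\Ann(x_J)=(x_{J^c})$ for \emph{arbitrary} subsets $J$ by a short induction on $|J|$, using only Proposition~\ref{prop:kernel-top-chern} (plus the evident $S_N$-symmetry of the presentation of $\CH^*(\Fl(N))$) for the base case, and then read off PIQP in two lines. This bypasses the entire Schubert-calculus subsection and works uniformly for all $J$, not just initial intervals. The trade-off is that the paper's argument yields additional structural information along the way (freeness of the ideals on Schubert classes, explicit ranks), which it reuses elsewhere, e.g.\ in Corollary~\ref{cor:quotientfree}.
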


\begin{proof}
  Without loss of generality, it is enough to show that
\begin{equation}\label{eq:chern_quotient}
  \left(\prod_{j=1}^a \cc_{d_j}(\DD_j): \prod_{j=1}^b \cc_{d_j}(\DD_j)\right)=\left(\prod_{j=b+1}^a \cc_{d_j}(\DD_j)\right)
\end{equation}
  for any $1\leq b<a\leq m$. Indeed, permutations of $\D$ yield isomorphic rings.

  For $N=\sum_{i=1}^md_i$, we use $\Fl(N):=\Fl(\D')$ to denote the complete flag variety, corresponding to the partition $\D'=(1,\dots,1)$ of $N$. By the classical splitting principle, the natural projection $\pi\colon \Fl(N) \to\Fl(\D)$ (forgetting the relevant subspaces of a flag) induces an inclusion on the Chow rings
  \[
  \pi^*\colon\CH^*\bigl(\Fl(\D)\bigr)\inj \CH^*\bigl(\Fl(N)\bigr)=\Z[x_1\stb x_N]\bigg/\left(\prod_{i=1}^N(1+x_i)=1\right),
  \]
  where $x_i=\cc_1(\DD_i')$ is the class of the $i$th subquotient line bundle $\DD_i'$ over $\Fl(N)$. In fact, this projection can be written as a sequence of projective bundles over the base space, each satisfying a projective bundle theorem. The induced pullback maps the top Chern classes to
  \[\pi^*\cc_{d_i}(\DD_i)=\prod_{j=s_{i-1}+1}^{s_i} x_j,\]
  where $s_i=\sum_{j=1}^{i}d_i$ with $s_0=0$. 
  
  Since $\pi^*$ is injective and by the above remarks $\pi$ can be written as a sequence of projective bundles, by repeatedly using Lemma \ref{lemma:projective}, it is enough to show that the $\pi^*$-image of \eqref{eq:chern_quotient} holds, namely
\begin{equation}\label{eq:piqpfln}
  \left(\prod_{j=1}^{s_a} x_j: \prod_{j=1}^{s_b} x_j\right)=\left(\prod_{j=s_b+1}^{s_a} x_j\right).
\end{equation}
  This follows from Proposition~\ref{prop:PIQPFln} below.
\end{proof}

As a consequence, we get the description of the annihilator of Euler classes in the even case:

\begin{corollary}
  \label{cor:evencase}
  Let $\D=(d_1\stb d_m)$ with all $d_i$ even. Then the annihilator of $\e_m$ is a principal ideal
  \[
  \Ann_{\Si_\D}(\e_m)=(x),
  \]
  where $x=\prod_{j\neq m}\e_j$.
\end{corollary}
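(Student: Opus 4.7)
The plan is to assemble the pieces already in place. Starting from $\D'' := \D = (d_1,\ldots,d_m)$ with all $d_i$ even, I would introduce the all-odd partition $\D_0 = (d_1+1, d_2+1, \ldots, d_m+1)$, so that by the presentation in Definition~\ref{def:WD} there are isomorphisms
\[
A' := \Si_{\D_0}[\e_1,\ldots,\e_m]\bigl/\bigl(\e_i^2 - \p_{\rm top}(\DD_i) \mid i=1,\ldots,m\bigr), \qquad \Si_{\D} \cong A'\bigl/(\e_1\cdots\e_m),
\]
as in \eqref{eq:evenrecursion1} and \eqref{eq:evenrecursion2} of the overview. This is exactly the setup where Corollary~\ref{cor:PIQP} applies, so the task reduces to verifying its hypothesis: that the top Pontryagin classes $\p_{\rm top}(\DD_i)$ satisfy the principal ideal--quotient property in $\Si_{\D_0}$.

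Next I would use the isomorphism $\kappa$ from \eqref{eq:kappa} (available since all entries of $\D_0$ are odd, cf.~Remark~\ref{rmk:Sigmaodd}) to identify $\Si_{\D_0}$ with $\CH^*\bigl(\Fl(\D_1)\bigr)$ for $\D_1 = (\lfloor d_i/2\rfloor + \tfrac{1}{2})_i = ((d_i+1)/2)_i$, carrying $\p_{\rm top}(\DD_i)$ to the top Chern class of the corresponding subquotient bundle (up to degree rescaling by $4$). Under this identification, Proposition~\ref{prop:ChernPIQP} gives the principal ideal--quotient property for these top Chern classes in the Chow ring, hence also for the $\p_{\rm top}(\DD_i)$ in $\Si_{\D_0}$.

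With the hypothesis of Corollary~\ref{cor:PIQP} now verified, it applies directly to our $A$, $A'$, $A'' = \Si_\D$, yielding
\[
\Ann_{\Si_\D}(\e_m) = (\e_1\cdot\e_2\cdots\e_{m-1})_{\Si_\D} = (x),
\]
which is exactly the claim. The degree count $|x| = \sum_{j\neq m} d_j = N - d_m$ is then immediate from the degree conventions on the generators $\e_j$.

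I do not foresee any genuine obstacle here, since all the real work has been done in the preceding subsections; the main conceptual point, namely that ``quotienting by $\e_1\cdots\e_m$ does not produce any unexpected annihilators of $\e_m$ beyond those coming from ideal quotients,'' is precisely what the principal ideal--quotient property encodes and what was verified for the Pontryagin generators. The only thing to be careful about is tracking the two-step construction $\Si_{\D_0} \leadsto A' \leadsto \Si_\D$ so that the annihilator statement in $A'$ really descends to $\Si_\D$ as an equality of ideals (not merely a containment), but this is exactly the content of the ideal-quotient description in Corollary~\ref{cor:PIQP}.
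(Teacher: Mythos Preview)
Your approach is correct and essentially identical to the paper's: both reduce to Corollary~\ref{cor:PIQP} with base ring the Chow ring of a half-size flag variety, invoking Proposition~\ref{prop:ChernPIQP} for the principal ideal--quotient property. One minor arithmetic slip: since each $d_i$ is even, the entries of $\D_0=(d_1+1,\ldots,d_m+1)$ are odd, so Remark~\ref{rmk:Sigmaodd} gives $\D_1=\bigl(\lfloor(d_i+1)/2\rfloor\bigr)_i=(d_i/2)_i=\D/2$, not $\bigl((d_i+1)/2\bigr)_i$ as you wrote (which would not even be an integer partition).
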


\begin{proof}
  This follows from Corollary~\ref{cor:PIQP}. The base ring $A$ is $\CH^*\bigl(\Fl(\D/2)\bigr)$ with $\D/2=(d_1/2\stb d_m/2)$, and the ${\rm p}_i$ are the top Chern classes $\op{c}_{d_i}(\DD_i)$ in $\CH^*\bigl(\Fl(\D)\bigr)$. The principal ideal--quotient property for the Chow ring follows from Proposition~\ref{prop:ChernPIQP}.
\end{proof}

To conclude the proof of Proposition~\ref{prop:ChernPIQP}, it remains to prove \eqref{eq:piqpfln}, which will finally complete the proof of the (even) case.

\subsubsection{Computations in $\Fl(N)$}\label{sec:piqpfln}

The rest of this section consists of proving Proposition~\ref{prop:PIQPFln}, which establishes the relevant case of the principal ideal--quotient property for
\begin{equation}\label{eq:CHFln}
  \CH^*\bigl(\Fl(N)\bigr)\cong\Z[x_1\stb x_N]\bigg/\left(\prod_{i=1}^N(1+x_i)=1\right).
\end{equation}

We will use the notation $[i]:=\{1\stb i\}$ for initial subsets of $[N]$.\footnote{And we are aware that this conflicts with the usual use of $[i]$ to denote the ordinal $0<1<\cdots<i$.} Using the product notation from Definition~\ref{def:PIQP}, we have $x_{[i]}=\prod_{j=1}^ix_j$.

To establish the principal ideal--quotient property, we will consider the special Schubert varieties $\si_i:=\si_{w_i}$ in the complete flag variety $\Fl(N)$, corresponding to the word $w_i=t_1t_2\cdots t_i\in S_N$, with $t_i$ the elementary transpositions. These Schubert varieties are smooth. Note that the class of $\si_i$ in $\CH^*\bigl(\Fl(N)\bigr)$ is $x_{[i]}$, cf.\ \cite{Fulton92}. 

\begin{proposition}
  \label{prop:PIQPFln}
With the above notation, for every $1\leq j<i\leq m$, the ideal quotient is a principal ideal:
  \[
  \left(x_{[i]}: x_{[j]}\right) = \left( x_{[i]\su [j]}\right).
  \]
\end{proposition}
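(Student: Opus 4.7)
The containment $(x_{[i]\setminus[j]}) \subseteq (x_{[i]} : x_{[j]})$ is immediate from the factorization $x_{[i]} = x_{[j]} \cdot x_{[i]\setminus[j]}$.

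For the reverse containment, the plan is to first establish the special case
\[
\Ann_{\CH^*(\Fl(N))}\bigl(x_{[j]}\bigr) = \bigl(x_{[N]\setminus[j]}\bigr),
\]
and then deduce the general statement from it. To prove this annihilator identity, I will exploit the natural projection $\pi \colon \Fl(N) \to \Gr(j,N) = \Fl(j,N-j)$ forgetting all subspaces of a flag except $V_j$. The map $\pi$ is an iterated projective bundle, since its fibers are $\Fl(j) \times \Fl(N-j)$, each of which admits a standard realization as a tower of projective bundles. Hence Lemma~\ref{lemma:projective} applies iteratively; specialized with $I = 0$ it yields
\[
\Ann_{\CH^*(\Gr(j,N))}(\alpha)\cdot\CH^*\bigl(\Fl(N)\bigr) = \Ann_{\CH^*(\Fl(N))}(\pi^{\ast}\alpha)
\]
for every $\alpha \in \CH^*(\Gr(j,N))$. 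Taking $\alpha = {\rm c}_j(\SS_j)$, whose pullback is $x_{[j]}$, reduces the problem to computing the annihilator of ${\rm c}_j(\SS_j)$ in $\CH^*(\Gr(j,N))$.

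This last annihilator is given directly by Proposition~\ref{prop:kernel-top-chern}, modulo the observation that although the proposition is stated for the \emph{last} subquotient bundle, the presentation \eqref{eq:chowringFlD} of $\CH^*(\Fl(\D))$ is manifestly symmetric in the parts of $\D$; hence the formula $\Ann\bigl({\rm c}_{d_k}(\DD_k)\bigr) = \bigl(\prod_{\ell \neq k} {\rm c}_{d_\ell}(\DD_\ell)\bigr)$ is valid for every index $k$ (equivalently, one may invoke the duality isomorphism $\Gr(j,N) \cong \Gr(N-j,N)$, which swaps the roles of $\SS_j$ and $\QQ_j^\vee$ up to signs that do not affect the generated ideal). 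This gives $\Ann\bigl({\rm c}_j(\SS_j)\bigr) = \bigl({\rm c}_{N-j}(\QQ_j)\bigr)$ on $\Gr(j,N)$, which pulls back via $\pi$ to establish the claimed identity $\Ann(x_{[j]}) = (x_{[N]\setminus[j]})$ on $\Fl(N)$.

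With this in hand, the general case is routine. Given $f \in \CH^*(\Fl(N))$ with $f \cdot x_{[j]} = g\cdot x_{[i]} = g\cdot x_{[j]}\cdot x_{[i]\setminus[j]}$, we have $(f - g\cdot x_{[i]\setminus[j]})\cdot x_{[j]} = 0$, whence
\[
f - g\cdot x_{[i]\setminus[j]} \;\in\; \Ann\bigl(x_{[j]}\bigr) = \bigl(x_{[N]\setminus[j]}\bigr) \;\subseteq\; \bigl(x_{[i]\setminus[j]}\bigr),
\]
the last inclusion because $x_{[N]\setminus[j]} = x_{[i]\setminus[j]}\cdot x_{i+1}\cdots x_N$. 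Consequently $f \in (x_{[i]\setminus[j]})$, as required. The only delicate point in this plan is the symmetric application of Proposition~\ref{prop:kernel-top-chern} to a non-last subquotient; this, however, is automatic from the manifestly symmetric Borel-style presentation of the Chow ring.
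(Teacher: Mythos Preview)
Your proof is correct. The overall architecture---first establish the annihilator identity $\Ann(x_{[j]})=(x_{[N]\setminus[j]})$, then deduce the ideal quotient from it via $f-g\,x_{[i]\setminus[j]}\in\Ann(x_{[j]})\subseteq(x_{[i]\setminus[j]})$---is the same as the paper's (its Corollary~\ref{cor:AnnFln} plays exactly the role of your final paragraph). The genuine difference lies in how the annihilator identity itself is obtained. The paper argues geometrically: Proposition~\ref{prop:ranks} counts the Schubert cells in the ideal $(x_{[j]})$ to get its $\mathbb{Z}$-rank, and Proposition~\ref{prop:FlnAnn} then matches ranks of $(x_{[N]\setminus[j]})\subseteq\Ann(x_{[j]})$ modulo every prime to force equality. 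You instead run Lemma~\ref{lemma:projective} in the \emph{opposite} direction---from $\Fl(N)$ down to $\Gr(j,N)$ via the iterated projective bundle $\pi$---and invoke the already-established Proposition~\ref{prop:kernel-top-chern} on the Grassmannian. This is more economical: it recycles a result proved earlier by independent means (Proposition~\ref{prop:PDbasis}) and avoids the separate rank computation and mod-$p$ argument. There is no circularity, since Proposition~\ref{prop:kernel-top-chern} is proved directly via Poincar\'e dual bases, not via Proposition~\ref{prop:ChernPIQP}. The paper's route, on the other hand, keeps the argument self-contained within $\Fl(N)$ and makes the Schubert-cell structure of the ideals explicit, which is of some independent interest (e.g.\ Corollary~\ref{cor:quotientfree}).
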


\begin{proof}
  The containment $\left(x_{[i]}: x_{[j]}\right)\supseteq  \left( x_{[i]\su [j]}\right)$ clearly holds.

  For the other containment, we need some preparation. We consider the inclusions $f\colon \sigma_i\hookrightarrow \sigma_j$ and $g\colon \sigma_j\hookrightarrow \Fl(N)$ of special Schubert varieties in $\Fl(N)$. These are proper morphisms of smooth varieties. The induced pushforward maps $f_*$ and $g_*$ are injective (the basis of Schubert cells is mapped to a part of the Schubert cell basis), and we have $x_{[j]}=g_*(1)$ and $x_{[i]}=(g\circ f)_*(1)$. Moreover, for any $\alpha\in\CH^*(\sigma_i)$, the projection formula implies
  \[
  g_*\bigl(g^*(g_*(\alpha)^\vee)\cdot \alpha\bigr)=g_*(\alpha)^\vee\cdot g_*(\alpha)=1,
  \]
  and injectivity of $g_*$ implies $g^*\bigl(g_*(\alpha)^\vee\bigr)=\alpha^\vee$. In particular, the restriction morphism $g^*\colon\CH^*\bigl(\Fl(N)\bigr)\to\CH^*(\sigma_i)$ is surjective. Analogous statements are true for the morphisms $f$ and $g\circ f$. 

  Now assume that $\al\in (x_{[i]}:x_{[j]})$, i.e., there exists $\beta\in \CH^*\bigl(\Fl(N)\bigr)$ such that $\alpha\cdot g_*(1)=\beta\cdot (g\circ f)_*(1)$. From this, the projection formula implies
  \[
  g_*\bigl(g^*(\alpha)\bigr)=\alpha\cdot g_*(1) = \beta\cdot (g\circ f)_*(1) = g_*\bigl(g^*(\beta)\cdot f_*(1)\bigr).
  \]
  By definition of $x_{[i]\setminus[j]}$, we have
  \[
  g_*\bigl(g^*(x_{[i]\setminus[j]})\bigr)=x_{[i]\setminus[j]}\cdot g_*(1)=x_{[i]\setminus[j]}\cdot x_{[j]}=x_{[i]}=(g\circ f)_*(1),
  \]
  where in the first equality we have used the projection formula. From the injectivity of $g_*$, we get 
  $g^*(x_{[i]\setminus[j]})=f_*(1)$, and we can conclude $g_*\bigl(g^*(\alpha)\bigr)=g_*\bigl(g^*(\beta\cdot x_{[i]\setminus[j]})\bigr)$. Since $g_*$ is injective (as noted above), we get $g^*(\alpha)=g^*(\beta\cdot x_{[i]\setminus[j]})$. In particular,
  \[
  \alpha-\beta x_{[i]\setminus[j]}\in \ker g^*.\]
  In Corollary~\ref{cor:AnnFln} below, we will show $\ker g^*\se(x_{[i]\su[j]})$, which finally implies $\alpha\in (x_{[i]\su[j]})$, as required. 
\end{proof}

First, we use Schubert calculus to identify the ranks of the relevant ideals as $\Z$-modules.

\begin{proposition}\label{prop:ranks}
  Denote by $\si_i:=\si_{w_i}$ the special Schubert variety corresponding to $w_i=t_1 t_2\cdots t_i\in S_{N}$. Then the principal ideal of $\CH^*\bigl(\Fl(N)\bigr)$ generated by the class $[\si_i]$ has a $\Z$-module basis:
  \[
  \bigl([\si_i]\bigr)=\Z\bra [\si_w]\mid w\leq w_i \ket.
  \]
  Consequently, $\bigl([\si_i]\bigr)$ is a free $\Z$-module of rank $(N-i)\cdot (N-1)!$.
\end{proposition}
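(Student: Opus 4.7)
The plan is to identify the principal ideal $\bigl([\si_i]\bigr)\se \CH^*\bigl(\Fl(N)\bigr)$ with the image of the pushforward from $\CH^*(\si_{w_i})$ along the closed embedding $g\colon \si_{w_i}\inj \Fl(N)$. As noted in the paragraph preceding Proposition~\ref{prop:PIQPFln}, $\si_{w_i}$ is smooth, projective, and Bruhat-cellular, so $\CH^*(\si_{w_i})$ is a free $\Z$-module on its Schubert classes $[\si_w]_{\si_{w_i}}$ for $w\leq w_i$ (those indexing Schubert cells contained in $\si_{w_i}$), and by Remark~\ref{rem:ChowPDbasis} these form a Poincar\'e-dual basis. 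The pushforward $g_*$ sends $[\si_w]_{\si_{w_i}}$ to $[\si_w]_{\Fl(N)}$, hence is injective since it carries the Schubert basis of $\si_{w_i}$ into the Schubert basis of $\Fl(N)$.

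By the projection formula, $\al\cdot [\si_i]=\al\cdot g_*(1) = g_*(g^*\al)$ for all $\al\in\CH^*\bigl(\Fl(N)\bigr)$, so the ideal equals $g_*\bigl(\op{Im}(g^*)\bigr)$. To show it equals $g_*\bigl(\CH^*(\si_{w_i})\bigr)$, I will verify that $g^*$ is surjective by the same Poincar\'e-duality argument that appears in the proof of Proposition~\ref{prop:PIQPFln}: for any PD-basis element $\al=[\si_w]_{\si_{w_i}}$, the pushforward $g_*(\al)=[\si_w]_{\Fl(N)}$ is itself a PD-basis element of $\CH^*\bigl(\Fl(N)\bigr)$, so its PD-dual $g_*(\al)^\vee$ is well defined, and the projection formula gives
\[
g_*\bigl(g^*(g_*(\al)^\vee)\cdot \al\bigr) \;=\; g_*(\al)^\vee\cdot g_*(\al) \;=\; [\op{pt}]_{\Fl(N)}.
\]
Since $g_*$ sends $[\op{pt}]_{\si_{w_i}}$ to $[\op{pt}]_{\Fl(N)}$ isomorphically in top degree, this forces $g^*\bigl(g_*(\al)^\vee\bigr)\cdot \al = [\op{pt}]_{\si_{w_i}}$, whence $g^*\bigl(g_*(\al)^\vee\bigr) = \al^\vee$ by Poincar\'e duality on $\si_{w_i}$. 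As $\al$ ranges over the PD basis, the duals $\al^\vee$ also span $\CH^*(\si_{w_i})$, so $g^*$ is surjective. Combining, $\bigl([\si_i]\bigr) = g_*\bigl(\CH^*(\si_{w_i})\bigr) = \Z\bra [\si_w]\mid w\leq w_i\ket$, and freeness follows from injectivity of $g_*$ on the Schubert basis.

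For the rank, I realize $\si_{w_i}$ as the preimage under the projection $\pi\colon\Fl(N)\to\Gr(i,N)$ of the smooth Grassmannian Schubert variety $\si_{1^i}\se\Gr(i,N)$, since the class $x_{[i]}$ is the $\pi$-pullback of the special Schubert class $[\si_{1^i}]$ on $\Gr(i,N)$ (up to the usual sign conventions on the tautological bundle). The Schubert cells in $\si_{1^i}$ correspond to partitions fitting in the $i\times(N-i)$ rectangle that contain the column $(1^i)$; subtracting $(1^i)$ gives a bijection with partitions in an $i\times(N-i-1)$ rectangle, which are counted by $\binom{N-1}{i}$. Since the fibers of $\pi$ are complete flag varieties $\Fl(i)\times \Fl(N-i)$ with $i!(N-i)!$ Schubert cells each, the total number of Schubert cells in $\si_{w_i}$ is
\[
\binom{N-1}{i}\cdot i!\,(N-i)! \;=\; (N-1)!\,(N-i),
\]
which matches the claimed rank.

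The main technical step is the surjectivity of $g^*$, which rests on Poincar\'e duality for the smooth cellular Schubert variety $\si_{w_i}$ together with the projection formula; this is exactly the argument used for the analogous inclusions in the proof of Proposition~\ref{prop:PIQPFln}, and is non-circular here because $\si_{w_i}$ admits a Schubert PD basis by Remark~\ref{rem:ChowPDbasis} independently of Proposition~\ref{prop:PIQPFln}.
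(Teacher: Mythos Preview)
Your argument for the first part is correct and essentially identical to the paper's: both identify the principal ideal with $\op{Im}(g_*)$ via the projection formula and surjectivity of $g^*$, and both appeal to the Poincar\'e-duality argument from the proof of Proposition~\ref{prop:PIQPFln} for the latter (you reproduce it, the paper simply cites it).

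For the rank count you take a genuinely different route. The paper argues combinatorially: writing $w_i$ in one-line notation, it characterizes the Bruhat interval $\{v : \sigma_v\subseteq\sigma_{w_i}\}$ as exactly the permutations with $v^{-1}(1)$ in a prescribed range, and reads off $(N-i)(N-1)!$ directly. Your approach is geometric, using that $\sigma_{w_i}$ is the $\pi$-preimage of a Grassmannian Schubert variety and counting cells via the fiber-bundle structure. Both are short and correct; the paper's is more self-contained (no auxiliary variety needed), while yours makes the structure of $\sigma_{w_i}$ as an iterated Grassmannian bundle transparent.

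One small point: your justification of $\sigma_{w_i}=\pi^{-1}(\sigma_{1^i})$ via the equality of classes $x_{[i]}=\pi^*[\sigma_{1^i}]$ is not by itself a proof, since distinct subvarieties can share a cohomology class. The clean justification is that $w_i$ has a unique descent, at position $i$ (as one checks from $w_i(j)=j+1$ for $j\le i$ and $w_i(i+1)=1$), hence is a minimal-length coset representative for $S_i\times S_{N-i}$; for such permutations it is standard that $\sigma_{w_i}=\pi^{-1}(\sigma_{\lambda(w_i)})$, and comparing lengths gives $\lambda(w_i)=(1^i)$. With that in place your cell count goes through.
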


\begin{proof}
  The special Schubert variety $\si_i\subseteq\Fl(N)$ consists of those flags $\{F_\bullet\mid L\se F_i\}$ whose $i$th term contains a given line $L$. The variety $\si_i$ is smooth and has a cellular structure given by the Schubert cells contained in it. In particular, its Chow ring is generated by the cell closures. 

We denote by $f\colon \si_i\hookrightarrow \Fl(N)$ the inclusion, which is a proper morphism of smooth varieties (in particular $f_*$ and $f_!$ coincide). The restriction $f^*\colon\CH^*\bigl(\Fl(N)\bigr)\to \CH^*(\si_i)$ along $f$ is surjective, which was noted previously in the proof of Proposition~\ref{prop:PIQPFln}. Using the projection formula, we can write elements of the ideal $\bigl([\si_i]\bigr)$ as $\la\cdot[\si_i]=f_*\bigl(f^*(\lambda)\bigr)$ for $\la\in\CH^*\bigl(\Fl(N)\bigr)$. Since $f^*$ is surjective, the ideal $\bigl([\si_i]\bigr)$ is equal to $\op{Im} f_*$, which is exactly the $\Z$-module of Schubert cycles contained in $\si_i$.
  
  For the second part, note that in one-line notation $v=[v(1)\stb v(N)]$, we can write $w_i=[2,3\stb i, 1, i+1\stb N]$, cf.\ e.g.\ Fulton \cite{Fulton92}. Using the Bruhat order, one obtains that $v\leq w_i$ if and only if $v^{-1}(1)\geq i$. The number of these permutations is easily seen to be $(N-i)\cdot (N-1)!$.
\end{proof}

The following corollary will be used later on, to prove that $W_\D$ is free.
\begin{corollary}
  \label{cor:quotientfree}
  For a subset $I\subseteq[m]$ define ${\rm c}_I=\prod_{i\in I}\op{c}_{d_i}(\DD_i)$. Then the quotient $\CH^*\bigl(\Fl(\D)\bigr)/({\rm c}_I)$ is a free $\Z$-module.
\end{corollary}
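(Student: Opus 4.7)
The plan is to argue by induction on $|I|$, leveraging two ingredients already at our disposal: the annihilator computation for a single top Chern class (Proposition~\ref{prop:kernel-top-chern}, which by symmetry of the partition applies to any ${\rm c}_{d_i}(\DD_i)$) and the principal ideal–quotient property (Proposition~\ref{prop:ChernPIQP}). The base cases are straightforward: when $|I|=0$, the quotient is $\CH^*\bigl(\Fl(\D)\bigr)$ itself, which is $\Z$-free by Remark~\ref{rmk:CHfree}; when $|I|=1$, the ideal $\bigl({\rm c}_{d_i}(\DD_i)\bigr)$ is the $\Z$-span of the Schubert cycles contained in the smooth Schubert subvariety representing it (as used in the proof of Proposition~\ref{prop:kernel-top-chern}), so the quotient is freely generated by the complementary Schubert cycles.

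Before doing the inductive step, I would first record the subsidiary fact that $\Ann({\rm c}_I) = ({\rm c}_{I^{\rm c}})$ for every $I\subseteq[m]$, where $I^{\rm c}=[m]\setminus I$. For $|I|=1$ this is Proposition~\ref{prop:kernel-top-chern}. For the inductive step, writing $I=I'\sqcup\{j\}$, an element $a$ lies in $\Ann({\rm c}_I)$ iff $a\,{\rm c}_{I'}\in\Ann({\rm c}_j)=({\rm c}_{\{j\}^{\rm c}})$, i.e.\ iff $a\in\bigl(({\rm c}_{\{j\}^{\rm c}}):({\rm c}_{I'})\bigr)$. Since $I'\subseteq\{j\}^{\rm c}$, the PIQ property gives this ideal quotient as $({\rm c}_{\{j\}^{\rm c}\setminus I'})=({\rm c}_{I^{\rm c}})$, as desired.

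For the inductive step proper on $|I|$, fix $j\in I$ and let $I'=I\setminus\{j\}$. I would consider the short exact sequence of $\Z$-modules
\[
0\longrightarrow ({\rm c}_{I'})\bigl/({\rm c}_I)\longrightarrow \CH^*\bigl(\Fl(\D)\bigr)\bigl/({\rm c}_I)\longrightarrow \CH^*\bigl(\Fl(\D)\bigr)\bigl/({\rm c}_{I'})\longrightarrow 0.
\]
The rightmost term is $\Z$-free by the inductive hypothesis, and the image of this sequence will be $\Z$-free provided the leftmost term is. Using multiplication by ${\rm c}_{I'}$ together with the annihilator identification $\Ann({\rm c}_{I'})=({\rm c}_{(I')^{\rm c}})$, there is an isomorphism $({\rm c}_{I'})\cong \CH^*\bigl(\Fl(\D)\bigr)/({\rm c}_{(I')^{\rm c}})$ under which the submodule $({\rm c}_I)=({\rm c}_j)\cdot({\rm c}_{I'})$ corresponds to $\bigl({\rm c}_j,{\rm c}_{(I')^{\rm c}}\bigr)/({\rm c}_{(I')^{\rm c}})$. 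Since $j\in I$, one has $(I')^{\rm c}=I^{\rm c}\sqcup\{j\}$, so ${\rm c}_{(I')^{\rm c}}={\rm c}_j\cdot{\rm c}_{I^{\rm c}}\in({\rm c}_j)$, and hence $\bigl({\rm c}_j,{\rm c}_{(I')^{\rm c}}\bigr)=({\rm c}_j)$. Consequently $({\rm c}_{I'})/({\rm c}_I)\cong \CH^*\bigl(\Fl(\D)\bigr)/({\rm c}_j)$, which is $\Z$-free by the $|I|=1$ base case. The short exact sequence therefore splits as abelian groups, giving $\Z$-freeness of the middle term.

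I do not anticipate any serious obstacle beyond bookkeeping: the only nontrivial ingredients are Proposition~\ref{prop:kernel-top-chern} and Proposition~\ref{prop:ChernPIQP}, both available, and the subsidiary identity $\Ann({\rm c}_I)=({\rm c}_{I^{\rm c}})$ falls out of a direct application of the PIQ property, since at each step one has the required containment $I'\subseteq\{j\}^{\rm c}$ to trigger it.
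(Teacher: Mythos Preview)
Your proof is correct and takes a genuinely different route from the paper. The paper's argument passes to the complete flag variety: it embeds $\CH^*\bigl(\Fl(\D)\bigr)$ into $\CH^*\bigl(\Fl(N)\bigr)$ via the splitting-principle pullback $\pi^*$, uses that $\CH^*\bigl(\Fl(N)\bigr)$ is free over $\CH^*\bigl(\Fl(\D)\bigr)$ to identify $({\rm c}_I)$ with the contraction of the ideal $(x_1\cdots x_j)$, and then invokes Proposition~\ref{prop:ranks} to see that the latter is $\Z$-spanned by Schubert classes, whence its quotient is free. Your approach instead stays entirely inside $\CH^*\bigl(\Fl(\D)\bigr)$: you induct on $|I|$, and the key step is the identification $({\rm c}_{I'})/({\rm c}_I)\cong \CH^*\bigl(\Fl(\D)\bigr)/({\rm c}_j)$ via the annihilator computation $\Ann({\rm c}_{I'})=({\rm c}_{(I')^{\rm c}})$, which you derive cleanly from the PIQ property. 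This is more self-contained and more in the spirit of the surrounding algebraic manipulations, relying only on Propositions~\ref{prop:kernel-top-chern} and~\ref{prop:ChernPIQP}. The paper's route is shorter once the Schubert machinery for $\Fl(N)$ is in place, and has the conceptual virtue of making the freeness visible via an explicit basis; your route extracts a bit more mileage from the PIQ property and avoids the passage to $\Fl(N)$.
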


\begin{proof}
  Using the pullback 
  \[
  \pi^*\colon\CH^*\bigl(\Fl(\D)\bigr)\inj \CH^*\bigl(\Fl(N)\bigr)\cong\Z[x_1\stb x_N]\bigg/\left(\prod_{i=1}^N(1+x_i)=1\right)
  \]
  induced by $\pi\colon \Fl(N)\to \Fl(\D)$, it is enough to prove that $\CH^*\bigl(\Fl(N)\bigr)/(x_1\cdots x_j)$ is a free $\Z$-module. More precisely, since $\bigl(\prod_{i=1}^m\op{c}_{d_i}(\DD_i)\bigr)=(x_1\cdots x_j)\cap\op{Im}(\pi^*)$, the pullback map $\pi^*$ induces an injective morphism
  \[
  \CH^*\bigl(\Fl(\D)\bigr)\bigg/\left(\prod_{i=1}^m\op{c}_{d_i}(\DD_i)\right)\inj \CH^*\bigl(\Fl(N)\bigr)/(x_1\cdots x_j),
  \]
  hence freeness of the submodule follows from freeness of $\CH^*\bigl(\Fl(N)\bigr)/(x_1\cdots x_j)$. We can identify $(x_1\cdots x_j)=\bigl([\sigma_j]\bigr)$. Since $\CH^*\bigl(\Fl(N)\bigr)$ has a $\Z$-module basis given by Schubert classes, by Proposition~\ref{prop:ranks} and $([\si_j])$ is a submodule generated by Schubert classes, the quotient $\CH^*\bigl(\Fl(N)\bigr)/\bigl([\si_j]\bigr)$ is also a free $\Z$-module.
\end{proof}

To conclude the proof of Proposition~\ref{prop:PIQPFln}, it remains to show that $\ker g^*\se (x_{[i]\su [j]})$. In the next proposition and its corollary, we show that $\ker g^*=(x_{[N]\su [j]})$, immediately implying this statement. This concludes the proof of Propositions~\ref{prop:PIQPFln} and \ref{prop:ChernPIQP}, and thus the proof of Theorem~\ref{thm:annihilator}.

\begin{proposition}
  \label{prop:FlnAnn}
  Using the notation introduced before Proposition~\ref{prop:PIQPFln}, we have the following equality in the Chow ring of $\Fl(N)$:
  \[
  \Ann(x_{[i]})=(x_{[N]\su [i]}).
  \]
\end{proposition}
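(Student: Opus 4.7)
The plan is to establish the easy inclusion $(x_{[N]\setminus [i]}) \subseteq \Ann(x_{[i]})$ by direct computation, and then obtain the reverse inclusion via a rank count combined with the freeness statement of Corollary~\ref{cor:quotientfree}.

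For the easy inclusion, the presentation \eqref{eq:CHFln} of $\CH^*(\Fl(N))$ as a graded ring with relations coming from $\prod_{j=1}^N (1+x_j) = 1$ yields, degree-by-degree, the relations $e_j(x_1,\ldots,x_N) = 0$ for $j=1,\ldots,N$. In particular the top elementary symmetric polynomial $e_N = x_1 x_2 \cdots x_N$ vanishes, so $x_{[i]} \cdot x_{[N]\setminus [i]} = 0$ in $\CH^*(\Fl(N))$ and hence $(x_{[N]\setminus [i]}) \subseteq \Ann(x_{[i]})$.

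For the reverse inclusion, I would compare ranks as free $\Z$-modules. By Remark~\ref{rmk:CHfree}, $\CH^*(\Fl(N))$ is a free $\Z$-module of rank $N!$, and by Proposition~\ref{prop:ranks} the principal ideal $(x_{[i]}) = ([\sigma_{w_i}])$ has $\Z$-rank $(N-i)(N-1)!$. The short exact sequence
\[
0 \to \Ann(x_{[i]}) \to \CH^*(\Fl(N)) \xrightarrow{\cdot x_{[i]}} (x_{[i]}) \to 0
\]
then shows that $\Ann(x_{[i]})$ is free over $\Z$ of rank $N! - (N-i)(N-1)! = i\cdot (N-1)!$. On the other side, I would exploit the symmetry of the defining relations: since the elementary symmetric polynomials $e_1,\ldots,e_N$ are $S_N$-invariant, the order-reversing involution $\tau\colon x_j \mapsto x_{N+1-j}$ on $\Z[x_1,\ldots,x_N]$ descends to a ring automorphism of $\CH^*(\Fl(N))$. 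Since $\tau(x_{[N-i]}) = x_N x_{N-1}\cdots x_{i+1} = x_{[N]\setminus [i]}$, the ideal $(x_{[N]\setminus[i]})$ is the $\tau$-image of $(x_{[N-i]})$ and therefore shares its $\Z$-rank, which by Proposition~\ref{prop:ranks} (applied with $i$ replaced by $N-i$) equals $(N-(N-i))(N-1)! = i\cdot(N-1)!$.

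To upgrade the equality of ranks to equality of submodules, I would apply Corollary~\ref{cor:quotientfree} with $\D = (1,\ldots,1)$ and the index set $\{i+1,\ldots,N\}$, which gives that $\CH^*(\Fl(N))/(x_{[N]\setminus[i]})$ is a free $\Z$-module. The intermediate quotient $\Ann(x_{[i]})/(x_{[N]\setminus[i]})$ then injects into this free $\Z$-module, hence is itself a finitely generated free $\Z$-module, and since it has rank $i(N-1)! - i(N-1)! = 0$ it must be trivial. This forces $\Ann(x_{[i]}) = (x_{[N]\setminus[i]})$. The only step that really needs thought is the rank computation for $(x_{[N]\setminus [i]})$: identifying $x_{[N]\setminus [i]}$ directly as a Schubert class and applying Proposition~\ref{prop:ranks} to it would require some permutation bookkeeping, but the involution $\tau$ sidesteps this by transporting the problem back to the already-computed ideal $(x_{[N-i]})$.
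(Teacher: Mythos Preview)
Your proof is correct and follows essentially the same strategy as the paper: establish the easy inclusion, compute ranks of both sides via Proposition~\ref{prop:ranks}, and use a torsion-freeness argument to upgrade the rank equality to an honest equality of submodules.

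There are two small differences worth noting. First, you make the symmetry argument explicit via the automorphism $\tau$ to compute the rank of $(x_{[N]\setminus[i]})$, whereas the paper simply cites Proposition~\ref{prop:ranks} for this rank as well (leaving the reduction to an initial segment implicit). Second, for the final step the paper argues that the inclusion becomes an equality modulo every prime and then invokes a general principle about inclusions of finitely generated $\mathbb{Z}$-modules, while you instead appeal directly to Corollary~\ref{cor:quotientfree} to see that $\Ann(x_{[i]})/(x_{[N]\setminus[i]})$ embeds in a free module and hence must be the zero module. Your route is arguably cleaner here, since the freeness of the quotient is exactly what makes the paper's mod-$p$ argument work in the first place.
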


\begin{proof}
	Since $x_{[N]}=\prod_{i=1}^n x_i=0$, $(x_{[N]\su [i]})\se \Ann(x_{[i]})$. To show that we have equality, we will use the following general fact: if $M\se N$ is an inclusion of submodules of a finitely generated $\mathbb{Z}$-module, and if for all primes $p\in \Z$, the mod $p$ reductions $M\otimes_\Z \mathbb{F}_p$ and $N\otimes_\Z\mathbb{F}_p$ have the same (finite) dimension over $\mathbb{F}_p$, then $M=N$.

  Consider the multiplication 
  \[
  x_{[i]}\colon\CH^*\bigl(\Fl(N)\bigr)\to \CH^*\bigl(\Fl(N)\bigr).
  \]
  From Proposition~\ref{prop:ranks}, we get that the image has rank $(N-i)\cdot (N-1)!$, so the kernel $\Ann(x_{[i]})$ has rank $i\cdot (N-1)!$. Proposition~\ref{prop:ranks} also implies that the ideal $(x_{[N]\su [i]})$ is contained in $\Ann(x_{[i]})$ and has rank $i\cdot (N-1)!$. This means, in particular, that the claimed equality is true in $\CH^*\bigl(\Fl(N)\bigr)/p$, for every prime $p$. Consequently, the inclusion $(x_{[N]\su [i]})\subseteq\Ann(x_{[i]})$ is an equality integrally, as claimed.
\end{proof}

\begin{corollary}
  \label{cor:AnnFln}
  Let $g\circ f\colon\si_i\inj \si_j\inj \Fl(n)$ be the inclusion of the special Schubert varieties for some $i>j$. Then 
  \[\ker g^*\se (x_{[i]\su [j]}).\]
\end{corollary}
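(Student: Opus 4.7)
The plan is to identify $\ker g^\ast$ with the annihilator $\Ann(x_{[j]})$ in $\CH^\ast\bigl(\Fl(N)\bigr)$ and then read off the containment from Proposition~\ref{prop:FlnAnn}. The main ingredients are already at hand: the projection formula, the injectivity of $g_\ast$, and the annihilator formula just proved.

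First I would argue $\ker g^\ast = \Ann(x_{[j]})$. For any $\alpha \in \CH^\ast\bigl(\Fl(N)\bigr)$, the projection formula gives $g_\ast\bigl(g^\ast(\alpha)\bigr) = \alpha \cdot g_\ast(1) = \alpha \cdot x_{[j]}$, since $[\si_j] = x_{[j]}$. The pushforward $g_\ast$ is injective because the Schubert basis of $\CH^\ast(\si_j)$ maps to a subset of the Schubert basis of $\CH^\ast\bigl(\Fl(N)\bigr)$, a fact already exploited repeatedly in the proof of Proposition~\ref{prop:PIQPFln}. Consequently $g^\ast(\alpha) = 0$ if and only if $\alpha \cdot x_{[j]} = 0$, which proves the identification.

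Next, Proposition~\ref{prop:FlnAnn} applied with the index $j$ gives $\Ann(x_{[j]}) = (x_{[N] \su [j]})$. Since $j < i \leq N$, the disjoint decomposition $[N] \su [j] = ([i] \su [j]) \sqcup ([N] \su [i])$ yields the factorisation $x_{[N]\su [j]} = x_{[i]\su [j]} \cdot x_{[N]\su [i]}$, which exhibits the generator of $(x_{[N]\su [j]})$ as a multiple of $x_{[i]\su [j]}$. Putting these together gives $\ker g^\ast = (x_{[N]\su [j]}) \se (x_{[i]\su [j]})$ as claimed. There is no genuine obstacle to overcome here; all the combinatorial Schubert-calculus content has been absorbed into Proposition~\ref{prop:FlnAnn}, and the passage from that proposition to this corollary is purely formal, relying only on standard properties of pushforward along closed immersions of smooth cellular subvarieties.
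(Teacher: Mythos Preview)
Your proof is correct and follows essentially the same approach as the paper's own proof: both identify $\ker g^\ast = \Ann(x_{[j]})$ via the projection formula and injectivity of $g_\ast$, then invoke Proposition~\ref{prop:FlnAnn} to obtain $(x_{[N]\su[j]})$, and finish by the obvious divisibility $x_{[i]\su[j]} \mid x_{[N]\su[j]}$.
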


\begin{proof}
  We note that all morphisms of varieties are proper morphisms of smooth varieties. First, $g_*(1)=x_{[j]}$, see e.g.\ Fulton \cite{Fulton92}, and $g_*$ is injective. Second, 
  \[\ker g^*=\Ann\bigl(g_*(1)\bigr)=\Ann(x_{[j]})=(x_{[N]\su [j]}),\] 
  where we use projection formula $g_*\bigl(g^*(y)\bigr)=g_*(1)\cdot y$ and Proposition~\ref{prop:FlnAnn}. Finally, $(x_{[N]\su [j]})\se(x_{[i]\su [j]})$ is obvious, since $x_{[i]\su[j]}$ divides $x_{[N]\su[j]}$.
\end{proof}

\subsection{Freeness of \texorpdfstring{$W_\D$}{WD}}
\label{sec:freeness}

From this proof we also obtain that $W_\D$ is free for arbitrary coefficient rings $R$. We begin with the special case of integral coefficients. 

\begin{proposition}
  \label{prop:WDfree}
  For the coefficient ring $R=\mathbb{Z}$ and any $\D=(d_1\dots,d_m)$, the ring $W_\D$ defined in Definition~\ref{def:WD} is a free $\Z$-module.
\end{proposition}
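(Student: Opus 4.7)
The plan is to reduce the question to freeness of $\Si_\D$ (since $\La$ is a finitely generated free $\Z$-module of rank $2^{n-q}$, so $W_\D = \Si_\D\otimes_\Z\La$ is free as soon as $\Si_\D$ is). Then I would handle $\Si_\D$ by a three-case analysis keyed on the parities of the entries of $\D$, building up from the all-odd base case through two applications of the technology developed earlier in the section.

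The base case is when all $d_i$ are odd. By Remark~\ref{rmk:Sigmaodd} there is an isomorphism of graded rings $\ka\colon \CH^*\bigl(\Fl(\D')\bigr)\xrightarrow{\cong}\Si_\D$ with $\D' = (\lfloor d_1/2\rfloor,\dots,\lfloor d_m/2\rfloor)$, so freeness follows from Remark~\ref{rmk:CHfree} (Chow rings of flag varieties are free $\Z$-modules on Schubert cycles). For the mixed-parity case (some but not all $d_i$ even), let $\D_0$ be obtained from $\D$ by replacing each even $d_i$ with $d_i+1$; then $\D_0$ is all-odd, and by inspection of Definition~\ref{def:WD} we have
\[
\Si_\D = \Si_{\D_0}[\e_i\mid d_i\text{ even}]\bigl/\bigl(\e_i^2-\p_{\rm top}(\DD_i)\bigr),
\]
with no $\prod\e_i$ relation appearing. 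Introducing the $\e_i$ one at a time and applying the last sentence of Lemma~\ref{lem:adjoiningeuler} at each step preserves the $\Z$-freeness inherited from the base case.

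The main obstacle, and the only genuinely new input, is the all-even case, where after adjoining the $\e_i$ we additionally quotient by $\e_1\cdots\e_m$. Here I take $\D_0=(d_1+1,\dots,d_m+1)$ (all odd), set $A:=\Si_{\D_0}$, form
\[
A' = A[\e_1,\dots,\e_m]\bigl/\bigl(\e_i^2-\p_{\rm top}(\DD_i)\bigr),\qquad \Si_\D = A'/(\e_1\cdots\e_m),
\]
cf.\ \eqref{eq:evenrecursion1}--\eqref{eq:evenrecursion2}. By Proposition~\ref{prop:ChernPIQP} transported through $\ka$, the classes $\p_{\rm top}(\DD_i)$ satisfy the principal ideal--quotient property in $A$, so Corollary~\ref{cor:quotientdecomposition} applies and gives a direct sum decomposition of $A$-modules
\[
\Si_\D \;\cong\; \bigoplus_{I\se [m]} \bigl(A/(\p_{I^{\rm c}})_A\bigr)\bra \e_I\ket,
\]
where $\p_{I^{\rm c}} = \prod_{i\notin I}\p_{\rm top}(\DD_i)$. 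Under the isomorphism $\ka$, each summand $A/(\p_{I^{\rm c}})_A$ is identified with $\CH^*\bigl(\Fl(\D'')\bigr)/({\rm c}_{I^{\rm c}})$ where $\D''=(d_1/2,\dots,d_m/2)$ and ${\rm c}_{I^{\rm c}}$ is the product of the corresponding top Chern classes. By Corollary~\ref{cor:quotientfree} each such quotient is a free $\Z$-module, so $\Si_\D$ is a direct sum of free $\Z$-modules and hence free. This exhausts the three cases and establishes the proposition; note that exactly the same decomposition argument shows freeness of $W_\D$ over an arbitrary coefficient ring $R$, since all the quotient and base-change operations commute with $-\otimes_\Z R$.
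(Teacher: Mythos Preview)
Your proof is correct and follows essentially the same approach as the paper: reduce to freeness of $\Si_\D$, then treat the all-odd case via Remark~\ref{rmk:Sigmaodd} and Remark~\ref{rmk:CHfree}, the mixed-parity case by iterated use of Lemma~\ref{lem:adjoiningeuler}, and the all-even case via the decomposition of Corollary~\ref{cor:quotientdecomposition} together with Corollary~\ref{cor:quotientfree}. The only minor difference is cosmetic (you phrase the middle case as ``mixed parity'' rather than ``at least one odd $d_i$''), and your closing remark about arbitrary $R$ anticipates Theorem~\ref{thm:WDfree}.
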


\begin{proof}
  Since $W_\D=\Si_\D\otimes_\Z\La$, and $\La$ is free, cf.\ the discussion in Appendix A2.3 of \cite{eisenbud:commalg}, it is enough to prove that $\Si_\D$ is free for any $\D=(d_1,\dots,d_m)$.
	
  We first consider the case where all $d_i\in \D$ are odd. By Remark~\ref{rmk:Sigmaodd}, $\Si_\D\cong\CH^*(\Fl_{\D'})$ for $\D'=(\lfloor d_1/2\rfloor\stb \lfloor d_m/2\rfloor)$, and by Remark~\ref{rmk:CHfree}, $\CH^*(\Fl_{\D'})$ is free, which settles this case. Now we consider the case where there is at least one odd $d_i$. Every such $\D$ is obtained by adjoining square roots for some of the top Pontryagin classes. Therefore, we can use the all-odd case above, combined with a repeated application of Lemma~\ref{lem:adjoiningeuler} (as described in the proof of Corollary~\ref{cor:oddcase}). This shows that $\Si_\D$ is free whenever there is at least one odd $d_i$.
	
	For the case where all $d_i$ are even, using earlier notation, let $A$ be the Chow ring $\CH^*\bigl(\Fl(\D')\bigr)$ for $D'=(d_1/2\stb d_m/2)$, let $A'$ be the ring with all the Euler classes adjoined, and $A''=A'/(\e_1\ldots \e_m)$. Then, as an $A$-module, by Corollary~\ref{cor:quotientdecomposition}, 

\[
A''=\bigoplus_{I\se [m]}\bigl(A/(\p_{I^{\rm c}})\bigr)\bra \e_I\ket,
\]
where $I^{\rm c}$ denotes the complement of $I\se [m]$. Since $\CH^*(\Fl_\D)/(\op{c}_{I^{\rm c}})$ is torsion-free for all $I\se [m]$ by Corollary~\ref{cor:quotientfree}, we get that $A''$ is also a free $\Z$-module.
\end{proof}

With the freeness established in the case $R=\mathbb{Z}$, we can now generalize the freeness statement to arbitrary coefficient rings. 

\begin{theorem}
  \label{thm:WDfree}
  For any commutative coefficient ring $R$ and any $\D=(d_1,\dots,d_m)$, the $R$-algebra $W_\D$ defined in Definition~\ref{def:WD} is free as an $R$-module. The statement of Theorem~\ref{thm:annihilator} remains true for arbitrary coefficient rings $R$. 
\end{theorem}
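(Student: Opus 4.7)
The plan is to derive both assertions by base change from the integral case already established in Proposition~\ref{prop:WDfree} (freeness) and Theorem~\ref{thm:annihilator} (annihilator of $\e_m$). Writing $W_\D^R$ for the $R$-algebra of Definition~\ref{def:WD} to emphasize the coefficient ring, the preliminary observation is that all relations defining $\Si_\D$ and $\La$ have integer coefficients, so for any commutative ring $R$ there is a canonical isomorphism of graded $R$-algebras $W_\D^R \cong W_\D^\Z \otimes_\Z R$. The first assertion is then immediate: since $W_\D^\Z$ is a free $\Z$-module by Proposition~\ref{prop:WDfree}, the base change $W_\D^R$ is a free $R$-module.

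For the annihilator assertion, I would analyse the short exact sequence of graded $\Z$-modules
\[
0 \longrightarrow \op{Ann}_{W_\D^\Z}(\e_m) \longrightarrow W_\D^\Z \xrightarrow{\;\e_m\cdot\;} \e_m\cdot W_\D^\Z \longrightarrow 0.
\]
In each cohomological degree, $W_\D^\Z$ is a finitely generated free $\Z$-module (being a graded piece of a quotient of a finitely generated polynomial algebra tensored with the finite-dimensional exterior algebra $\La$), so the submodule $\e_m\cdot W_\D^\Z$ is finitely generated free in each degree, hence flat over $\Z$. Tensoring the sequence with $R$ therefore preserves exactness and identifies $\op{Ann}_{W_\D^R}(\e_m)$ with $\op{Ann}_{W_\D^\Z}(\e_m)\otimes_\Z R$. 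Combined with Theorem~\ref{thm:annihilator} in the case $R=\Z$, which gives $\op{Ann}_{W_\D^\Z}(\e_m) = x\cdot W_\D^\Z$, together with the fact that principal ideals commute with base change (from the right exactness of $(-)\otimes_\Z R$ applied to the surjection $W_\D^\Z \twoheadrightarrow x\cdot W_\D^\Z$), this yields $\op{Ann}_{W_\D^R}(\e_m) = x\cdot W_\D^R$ with the same generator $x$ as in Theorem~\ref{thm:annihilator}.

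The only step requiring care is the degree-by-degree freeness that allows the annihilator to commute with base change; this reduces to the elementary fact that submodules of finitely generated free $\Z$-modules are free. Beyond that, no new algebra is needed, and the full strength of Theorem~\ref{thm:WDfree} follows formally from the integral case via the exact functor $(-)\otimes_\Z R$ on the relevant flat pieces.
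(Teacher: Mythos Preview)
Your freeness argument is correct and matches the paper's. Your annihilator argument, however, has a real gap.

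You correctly observe that the sequence
\[
0 \longrightarrow K \longrightarrow W_\D^\Z \xrightarrow{\;\e_m\;} \e_m W_\D^\Z \longrightarrow 0
\]
stays exact after $-\otimes_\Z R$, since $\e_m W_\D^\Z$ is degreewise free. But this does \emph{not} identify $K\otimes_\Z R$ with $\op{Ann}_{W_\D^R}(\e_m)$. The multiplication-by-$\e_m$ endomorphism of $W_\D^R$ is the composite
\[
W_\D^R \twoheadrightarrow (\e_m W_\D^\Z)\otimes_\Z R \xrightarrow{\ \iota\otimes 1\ } W_\D^R,
\]
and $\op{Ann}_{W_\D^R}(\e_m)$ is the kernel of this composite, which equals $K\otimes_\Z R$ only if $\iota\otimes 1$ is injective. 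That injectivity is equivalent to $\op{Tor}_1^\Z\bigl(W_\D^\Z/(\e_m),R\bigr)=0$, i.e., to the cokernel $C=W_\D^\Z/(\e_m)$ being torsion-free. You have not checked this, and it does not follow from freeness of the image: for instance, $2\Z\subset\Z$ has free image but $\iota\otimes 1$ is zero over $\Z/2$, so the annihilator jumps.

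The paper closes exactly this gap by working with the four-term sequence
\[
0\to K\to W_\D^\Z\xrightarrow{\e_m}W_\D^\Z\to C\to 0
\]
and proving directly that $C$ is a free $\Z$-module. This requires a case analysis (odd versus all-even $d_i$), using the description of $\Si_\D$ over the Pontryagin subalgebra $\CH^*\bigl(\Fl(\D')\bigr)$ and Corollary~\ref{cor:quotientfree}. Once $C$ is free, the image and then $K$ are free, the four-term sequence splits degreewise, and tensoring with $R$ preserves it, giving $\op{Ann}_{W_\D^R}(\e_m)=K\otimes_\Z R=(x)$. Your argument can be repaired by inserting this verification that $C$ is torsion-free; without it the identification step fails.
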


\begin{proof}
By definition $W_{D,\mathbb{Z}}:=\Si_{\D,\mathbb{Z}}\otimes_{\mathbb{Z}}\Lambda_{\mathbb{Z}}$ and $W_{D,R}:=\Si_{\D,R}\otimes_{R}\Lambda_{R}\cong W_{D,\mathbb{Z}}\otimes_{\mathbb{Z}}R$. By Proposition~\ref{prop:WDfree}, $W_{D,\mathbb{Z}}$ is a free $\mathbb{Z}$-module, therefore $W_{D,R}$ is a free $R$-module. 

It remains to describe the annihilators of Euler classes. In the case $R=\mathbb{Z}$, these are described by Theorem~\ref{thm:annihilator}. For the computation in the general case, we make some observations concerning the kernel-cokernel sequence.

Let $H$ be a commutative $\mathbb{Z}$-algebra whose underlying abelian group is free and finitely generated, and let $\e\in H$ be an element. Consider the associated kernel-cokernel exact sequence
\[
0\to K\hookrightarrow  H\xrightarrow{\e} H\twoheadrightarrow C\to 0.
\]
If we assume that $C$ is free as abelian group, then so are the image of $\e$, and $K$. As a consequence, this exact sequence is preserved by arbitrary tensor products, so that for every $\mathbb{Z}$-algebra $R$, we get a similar exact sequence
\[
0\to K\otimes_{\mathbb{Z}}R\hookrightarrow  H\otimes_{\mathbb{Z}}R\xrightarrow{\e\otimes 1} H\otimes_{\mathbb{Z}}R\twoheadrightarrow C\otimes_{\mathbb{Z}}R\to 0.
\]

We now want to apply this argument in the case where $H=W_\D$ and $\e={\rm e}_i(\DD_i)$ is some Euler class for a subquotient bundle of even rank $d_i$. By Proposition~\ref{prop:WDfree}, $H$ is a free $\mathbb{Z}$-module. We still need to show that the module $C=H/(\e)$ is a free $\mathbb{Z}$-module.
\begin{description}
  \item[odd]In the case where some $d_j$ is odd, we write $H=A[\e]/(\e^2-\p)\cong A\oplus A\langle \e\rangle$ for ${\rm p}={\rm p}_{d_i}(\DD_i)$, and see that the quotient $H/(\e)$ is $A/(\p)$. But $A$ is an algebra over its Pontryagin subalgebra, which by Remark~\ref{rmk:Sigmaodd} is isomorphic to $\CH^*\bigl(\Fl(\D')\bigr)$ for $\D'=(\lfloor d_1/2\rfloor,\dots,\lfloor d_m/2\rfloor)$. The latter is free, and so is its quotient mod ${\rm c}$ (where ${\rm c}={\rm c}_{\lfloor d_i/2\rfloor }(\DD_i)$) by  Proposition~\ref{prop:kernel-top-chern}, hence
\[
A/(\p)\cong\bigl(\CH^*\bigl(\Fl(\D')\bigr)/({\rm c})\bigr)\otimes_{\CH^*(\Fl(\D'))}A
\]
is a free $\mathbb{Z}$-module. 
\item[even]In the case where all $d_i$ are even, we use the decomposition
\[
A''=\bigoplus_{I\subseteq[m]}\bigl(A/(\p_{I^{\rm c}})\bigr)\langle\e_I\rangle
\]
as a module over $A=\CH^*\bigl(\Fl(\D')\bigr)$. As above, $A''/(\p)\cong A/(\p)\otimes_AA''$, and we again get that $A''/(\p)$ is a free $\mathbb{Z}$-module.
\end{description}
Consequently, in both cases, all terms in the kernel-cokernel exact sequence are free abelian groups. Therefore, the sequence remains exact after tensoring with $R$, and we get
\[
\op{Ann}_{H\otimes_{\mathbb{Z}}R} (\e\otimes 1)=\op{Ann}_H(\e)\otimes_{\mathbb{Z}}R.
\]
This proves the second claim about the annihilator computation. 
\end{proof}

\section{Witt-sheaf cohomology of Grassmannian bundles and maximal rank flag varieties}
\label{sec:maximal}

In this section we prove  the main theorem in the special situation of maximal rank flag varieties. The proof uses the description of flag varieties as iterated Grassmannian bundles recalled in Section~\ref{sec:flags} in combination with the Leray--Hirsch theorem, cf.\ Proposition~\ref{prop:leray-hirsch}.

\subsection{Witt-sheaf cohomology for Grassmannian bundles}

We first formulate explicitly how the Leray--Hirsch theorem, cf.\ Proposition~\ref{prop:leray-hirsch}, provides formulas for the $\mathbf{W}$-cohomology of Grassmannian bundles. To fix notation, let $X$ be a scheme, $\EE\to X$ be a rank $n$ vector bundle, and denote by $\mathscr{G}(k,\EE)$ the Grassmannian bundle of rank $k$ subbundles of $\EE$. As for the projective spaces discussed in Example~\ref{ex:leray-hirsch-pn}, there are two cases to consider, depending on the parity of the dimension $k(n-k)$ of the fiber ${\rm Gr}(k,n)$ and the resulting difference in cohomology ring structure of the fiber. Recall from ~\cite{realgrassmannian} that for $k(n-k)$ even, the total $\mathbf{W}$-cohomology ring of the Grassmannian ${\rm Gr}(k,n)$ is generated by Pontryagin classes and Euler class of the tautological sub- and quotient bundles (if applicable). In particular, for a rank $n$ vector bundle $\EE$ on $X$, the characteristic classes of the tautological sub- and quotient bundles on the associated Grassmannian bundle $\mathscr{G}(k,\EE)$ restrict to generators of the $\mathbf{W}$-cohomology in each fiber. In these cases, the Leray--Hirsch theorem is applicable and completely describes the $\mathbf{W}$-cohomology of the Grassmannian bundle. This is the case we will discuss in the results below, cf.\ Proposition~\ref{prop:gr-bundle}. On the other hand, if $k(n-k)$ is odd, then the $\mathbf{W}$-cohomology of ${\rm Gr}(k,n)$ has a class ${\rm R}$ in degree $n-1$ which fails to be a characteristic class. In this case, the Leray--Hirsch theorem is not applicable, and likely fails as in the case of odd-dimensional projective bundles discussed in Example~\ref{ex:leray-hirsch-pn}. 

The results below, Propositions~\ref{prop:pic-gr-bundle} and \ref{prop:gr-bundle}, describe the Picard group and $\mathbf{W}$-cohomology for Grassmannian bundles $\mathscr{G}(2k,\EE)$ in the case where $\EE$ has even rank. Similar results are true by similar arguments (and used in the proof of Theorem~\ref{thm:maxrank}) for Grassmannian bundles $\mathscr{G}(2k,\EE)$ and $\mathscr{G}(2k+1,\EE)$ when $\EE$ has odd rank. 

\begin{proposition}
  \label{prop:pic-gr-bundle}
  Let $X$ be a smooth scheme and let $\EE$ be a rank $2n$ vector bundle on $X$. Denote by $\mathscr{G}(2k,\EE)$ the Grassmannian bundle of rank $2k$ subbundles of $\EE$ over $X$, and denote by $\ell_S={\rm c}_1(\SS_{\EE})$ and $\ell_Q={\rm c}_1(\QQ_{\EE})$ the first Chern classes of the tautological sub- and quotient bundles on $\mathscr{G}(2k,\EE)$, respectively. Then the Picard group of the Grassmannian bundle is described as follows:
  $$
  \op{Pic}\bigl(\mathscr{G}({2k},\EE)\bigr)\cong \Bigl(p^*\bigl(\op{Pic}(X)\bigr)\oplus \Z\bra \ell_S,\ell_Q\ket\Bigr)\Big/ \Bigl(\ell_S+\ell_Q=p^*\bigl({\rm c}_1(\EE)\bigr)\Bigr).
  $$
\end{proposition}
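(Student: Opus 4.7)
The starting point is the tautological short exact sequence of vector bundles on $\mathscr{G}(2k,\EE)$
\[
0\to \SS_\EE\to p^*\EE\to \QQ_\EE\to 0,
\]
which upon taking determinants gives $\det\SS_\EE\otimes \det\QQ_\EE\cong p^*\det\EE$ and hence the relation $\ell_S+\ell_Q=p^*{\rm c}_1(\EE)$ in $\op{Pic}\bigl(\mathscr{G}(2k,\EE)\bigr)$. This provides a well-defined homomorphism from the right-hand side of the claimed isomorphism to the Picard group, and the substance of the proof lies in showing it is an isomorphism.

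To do so, I would invoke the Grassmannian bundle theorem (equivalently the Leray--Hirsch theorem for Chow groups, cf.\ \cite{krishna} or Fulton's \emph{Intersection Theory}, Example 14.6.6): the Chow ring $\op{CH}^*\bigl(\mathscr{G}(2k,\EE)\bigr)$ is a free module over $\op{CH}^*(X)$ with basis given by the lifts of the Schubert classes of the fiber $\Gr(2k,2n)$. Restricted to codimension $1$, this yields a direct sum decomposition
\[
\op{CH}^1\bigl(\mathscr{G}(2k,\EE)\bigr)\cong p^*\op{CH}^1(X)\oplus \op{CH}^0(X)\cdot\ell_Q,
\]
since $\op{Pic}\bigl(\Gr(2k,2n)\bigr)\cong\mathbb{Z}$ is generated by the single codimension $1$ Schubert class, which is realized on the total space as $\ell_Q={\rm c}_1(\QQ_\EE)$ (or equivalently $-\ell_S$). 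Using $\op{CH}^1=\op{Pic}$ for smooth varieties, one obtains $\op{Pic}\bigl(\mathscr{G}(2k,\EE)\bigr)=p^*\op{Pic}(X)\oplus\mathbb{Z}\ell_Q$, with $p^*\op{Pic}(X)\to\op{Pic}\bigl(\mathscr{G}(2k,\EE)\bigr)$ in particular injective.

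Rewriting this in the symmetric form of the statement, $\ell_S=p^*{\rm c}_1(\EE)-\ell_Q$ expresses $\ell_S$ in terms of the generators, so enlarging the presentation by adding $\ell_S$ as a generator together with the relation $\ell_S+\ell_Q=p^*{\rm c}_1(\EE)$ produces exactly the quotient described in the statement. Comparing the two presentations, the natural map from the right-hand side is identified with the isomorphism established above.

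The only non-formal input is the Grassmannian bundle formula for Chow groups, which is classical, so no serious obstacle is expected; the main care is simply in keeping track of signs and conventions when switching between $\ell_S$ and $\ell_Q$, and in invoking $\op{Pic}=\op{CH}^1$ (valid since $\mathscr{G}(2k,\EE)$ is smooth over $F$).
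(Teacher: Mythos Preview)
Your proposal is correct and follows essentially the same approach as the paper: both invoke the Leray--Hirsch theorem for Chow groups (the paper cites \cite[Theorem 6.3]{krishna}, you cite the same plus Fulton) to obtain the decomposition $\op{CH}^1\bigl(\mathscr{G}(2k,\EE)\bigr)\cong\op{Pic}\bigl({\rm Gr}(2k,2n)\bigr)\oplus\op{Pic}(X)$, then use the tautological exact sequence to supply the relation $\ell_S+\ell_Q=p^*{\rm c}_1(\EE)$. The only difference is presentational---you derive the relation first and the decomposition second, whereas the paper does the reverse.
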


\begin{proof}
  While the claim can certainly be proved more directly, we deduce it from the Leray--Hirsch theorem for Chow groups, cf.\ \cite[Theorem 6.3]{krishna}, as preparation for our arguments in Witt-sheaf cohomology. The Grassmannian bundle is Zariski-locally trivial with smooth fiber. The well-known computation of ${\rm CH}^\ast\bigl({\rm Gr}(2k,2n)\bigr)$, recalled in Section~\ref{sec:flags}, implies that the Chow ring of the fiber is generated by characteristic classes of tautological sub- and quotient bundles on ${\rm Gr}(2k,2n)$. Since the restrictions of $\SS_\EE$ and $\QQ_\EE$ to any Grassmannian fiber of $\mathscr{G}(2k,\EE)$ are the appropriate tautological sub- and quotient bundles of the fiber, respectively, we find that the characteristic classes of $\SS_{\EE}$ and $\QQ_{\EE}$ restrict to a $\mathbb{Z}$-basis of the Chow ring ${\rm CH}^*\bigl({\rm Gr}(2k,\EE_x)\bigr)$ for every point $x\in X$. In degree 1, the isomorphism
  \[
    {\rm CH}^*\bigl({\rm Gr}(2k,2n)\bigr)\otimes_{\mathbb{Z}}{\rm CH}^*(X)\xrightarrow{\cong}{\rm CH}^*\bigl(\mathscr{G}(2k,\EE)\bigr)
  \]
  of \cite[Theorem 6.3]{krishna} induces an isomorphism $\op{Pic}\bigl({\rm Gr}(2k,2n)\bigr)\oplus\op{Pic}(X)\xrightarrow{\cong}\op{Pic}\bigl(\mathscr{G}(2k,\EE)\bigr)$.

  The claim follows from this, using that the Chern classes of tautological bundles on the fiber and on the Grassmannian bundle correspond under the above isomorphism, and the exact sequence $0\to \SS_\EE\to p^\ast(\EE)\to \QQ_\EE\to 0$ of vector bundles on $\mathscr{G}(2k,\EE)$ implies $\ell_S+\ell_Q=p^\ast{\rm c}_1(\EE)$.
\end{proof}

As discussed in Remark~\ref{rem:leray-hirsch-pic}, every line bundle on $\mathscr{G}(2k,\EE)$ is then a tensor product of a line bundle pulled back from the base $X$ and a power of a determinant bundle $\det \SS_{\EE}$ (or $\det \QQ_{\EE}$). This explains the possible twists available on $\mathscr{G}(2k,\EE)$ and implies that the Leray--Hirsch theorem of Proposition~\ref{prop:leray-hirsch} can be used to compute the total $\mathbf{W}$-cohomology ring for Grassmannian bundles. 

\begin{proposition}
  \label{prop:gr-bundle}
  Let $X$ be a smooth scheme and let $\EE\to X$ be a vector bundle of rank $2n$. Let $\pi\colon\mathscr{G}(2k,\EE)\to X$ denote the Grassmannian bundle associated to $\EE$ with its short exact sequence  $\SS_{\EE}\to \pi^*\EE\to \QQ_{\EE}$ of tautological bundles over $\mathscr{G}(2k,\EE)$. Then the total $\mathbf{W}$-cohomology ring $\bigoplus_\L \op{H}^*\bigl(\mathscr{G}(2k,\EE),\mathbf{W}(\L)\bigr)$ of the Grassmannian bundle is isomorphic to
  \begin{equation}
    \label{eq:Grassmannbundle}
    \bigoplus_\L \op{H}^*\bigl(X,\mathbf{W}(\L)\bigr)\Bigl[{\rm p}_{2i}(\SS_{\EE}),{\rm p}_{2j}(\QQ_{\EE}), {\rm e}(\SS_{\EE}),{\rm e}(\QQ_{\EE})\Bigr]\Big/\sim,
  \end{equation}
where the relations $\sim$ are the following:
\begin{equation}\label{eq:rel1}
	{\rm p}(\SS_{\EE}){\rm p}(\QQ_{\EE})=\pi^*{\rm p}(\EE),\qquad {\rm e}(\SS_{\EE}){\rm e}(\QQ_{\EE})=\pi^*{\rm e}(\EE),
\end{equation}
\begin{equation}\label{eq:rel2}
	{\rm e}(\SS_{\EE})^2={\rm p}_{2k}(\SS_{\EE}),  \qquad {\rm e}(\QQ_{\EE})^2={\rm p}_{2(n-k)}(\QQ_{\EE}).
\end{equation}
  Here ${\rm p}_{2i}(\SS_{\EE})$ with $i=1\stb k$ and ${\rm p}_{2j}(\QQ_{\EE})$ with $j=1\stb n-k$ are the (even) Pontryagin classes of the tautological sub- and quotient bundle, respectively. They are elements of the untwisted cohomology of the appropriate degree, i.e.,   $p_{2i}(\SS_{\EE})\in  \op{H}^{4i}\bigl(\mathscr{G}(2k,\EE),\mathbf{W}\bigr)$ and $p_{2j}(\QQ_{\EE})\in\op{H}^{4j}\bigl(\mathscr{G}(2k,\EE),\mathbf{W}\bigr)$. The classes ${\rm e}(\SS_{\EE})$ and ${\rm e}(\QQ_{\EE})$ are Euler classes of sub- and quotient bundle, and they are elements of the twisted cohomology groups, i.e., ${\rm e}(\SS_{\EE})\in \op{H}^{2k}\bigl(\mathscr{G}(2k,\EE),\mathbf{W}\bigl(\det(\SS_{\EE})^\vee\bigr)\bigr)$ and ${\rm e}(\QQ_\EE)\in \op{H}^{2(n-k)}\bigl(\mathscr{G}(2k,\EE),\mathbf{W}\bigl(\det(\QQ_{\EE})^\vee\bigr)\bigr)$. 
\end{proposition}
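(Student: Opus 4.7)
The proof proceeds by applying the Leray--Hirsch theorem (Proposition~\ref{prop:leray-hirsch}) to the Grassmannian bundle $\pi\colon \mathscr{G}(2k,\EE)\to X$. The fiber ${\rm Gr}(2k,2n)$ is smooth and cellular with even-dimensional cohomology, and the projection is Zariski-locally trivial because $\EE$ can be trivialized on a Zariski cover of $X$. By the Grassmannian computation in~\cite{realgrassmannian}, the total twisted $\mathbf{W}$-cohomology of ${\rm Gr}(2k,2n)$ is a free ${\rm W}(F)$-module whose basis is given by monomials in the Pontryagin and Euler classes of the tautological sub- and quotient bundles; in particular, it is concentrated in even degrees. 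Since the restrictions of $\SS_\EE$ and $\QQ_\EE$ to any fiber are the tautological sub- and quotient bundles of that fiber, the global classes ${\rm p}_{2i}(\SS_\EE)$, ${\rm p}_{2j}(\QQ_\EE)$, ${\rm e}(\SS_\EE)$ and ${\rm e}(\QQ_\EE)$ provide the required lifts of a ${\rm W}(F)$-basis of the fiber cohomology. This verifies the hypotheses of Proposition~\ref{prop:leray-hirsch}.

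Combined with Proposition~\ref{prop:pic-gr-bundle} and Remark~\ref{rem:leray-hirsch-pic}, every line bundle on $\mathscr{G}(2k,\EE)$ is isomorphic to one of the form $\L_E\otimes \pi^*\L$ with $\L_E$ built from $\det\SS_\EE$ (or equivalently $\det\QQ_\EE$) and $\L\in\op{Pic}(X)$. Summing over all twists, Leray--Hirsch then identifies $\bigoplus_\L \op{H}^*\bigl(\mathscr{G}(2k,\EE),\mathbf{W}(\L)\bigr)$ as a free $\op{H}^*_\oplus(X)$-module whose basis is given by the indicated characteristic class monomials in $\SS_\EE$ and $\QQ_\EE$.

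Next I would verify the relations. The equalities in \eqref{eq:rel1} follow from the Whitney sum formula for Pontryagin classes (cf.~\cite[Theorem 4.10]{chowwitt}) and for Euler classes (cf.~\cite[Lemma 2.8]{chowwitt}), applied to the tautological short exact sequence $0\to \SS_\EE\to \pi^*\EE\to \QQ_\EE\to 0$. The equalities in \eqref{eq:rel2} express the standard fact that for a vector bundle of even rank the top Pontryagin class coincides with the square of the Euler class, which in the Witt-sheaf setting follows from the construction of Pontryagin classes via symplectification, cf.~\cite{chowwitt,realgrassmannian}.

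Finally, one has to show that \eqref{eq:rel1} and \eqref{eq:rel2} are all the relations, i.e., that the natural surjection from the algebra on the right of \eqref{eq:Grassmannbundle} onto the total $\mathbf{W}$-cohomology is an isomorphism. Surjectivity is clear from Leray--Hirsch. For injectivity, both sides are free $\op{H}^*_\oplus(X)$-modules and it suffices to match ranks in each degree. This reduces to the universal case $X=\op{Spec} F$ with $\EE$ trivial, where the right-hand side is precisely the presentation of $\op{H}^*_\oplus\bigl({\rm Gr}(2k,2n)\bigr)$ by generators and relations established in~\cite{realgrassmannian}. The main obstacle is this final Poincar\'e series matching in the universal setting, i.e., the purely algebraic verification that the quotient of the polynomial algebra by \eqref{eq:rel1}--\eqref{eq:rel2} has the expected rank as a free ${\rm W}(F)$-module.
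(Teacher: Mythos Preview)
Your proposal is correct and follows essentially the same approach as the paper: verify the Leray--Hirsch hypotheses using the Grassmannian computation from \cite{realgrassmannian}, check the relations via the Whitney sum formula, and conclude by comparing free $\op{H}^*_\oplus(X)$-modules. The only cosmetic difference is that the paper phrases the final step slightly more directly---both sides are free $\op{H}^*_\oplus(X)$-modules on the \emph{same} set of characteristic-class monomials (the ones giving a basis of the fiber), so no separate Poincar\'e-series matching is needed; your ``main obstacle'' is already absorbed into the input from \cite{realgrassmannian}.
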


\begin{proof}
  The argument is a standard one, see e.g.\ \cite[Prop. 23.2]{BottTu}, we adapt it to the setting of $\mathbf{W}$-cohomology with twists. For typographical reasons, we introduce the shorthand notation
  $$\op{H}^*_\oplus(Y):=\bigoplus_\L \op{H}^*\bigl(Y,\mathbf{W}(\L)\bigr).$$
  We first introduce notation to write down a ring homomorphism. We denote by
  \[
  \pi^*\colon {\rm H}^*_\oplus(X)\to {\rm H}^*_\oplus\bigl(\mathscr{G}(2k,\EE)\bigr)
  \]
  the pullback along the projection $\pi$, and we denote by
  \[
  \kappa\colon {\rm H}^*_\oplus(\op{B}{\rm GL}_{2k})\otimes_{{\rm W}(F)}{\rm H}^*_\oplus(\op{B}{\rm GL}_{2(n-k)})\to {\rm H}^*_\oplus\bigl(\mathscr{G}(2k,\EE)\bigr)
  \]
  the pullback along the map classifying the pair  $(\SS_{\EE}, \QQ_{\EE})$ of vector bundles. Recall from \cite[Proposition 4.5]{realgrassmannian} that the Witt-sheaf cohomology of $\op{B}{\rm GL}_m$ is a polynomial ${\rm W}(F)$-algebra generated by even Pontryagin classes, plus an Euler class in case $m$ is even. The map $\kappa$ maps the universal characteristic classes in the cohomology of $\op{B}{\rm GL}_{2k}$ and $\op{B}{\rm GL}_{2(n-k)}$ to the characteristic classes of the bundles $\SS_{\EE}$ and $\QQ_{\EE}$, respectively. Now we consider the homomorphism of ${\rm W}(F)$-algebras
  \[
  f=\pi^*\otimes \ka\colon {\rm H}^*_\oplus(X)\left[{\rm p}_2^{(s)},\dots,{\rm p}_{2(k-2)}^{(s)},{\rm e}_{2k}^{(s)},{\rm p}_2^{(q)},\dots,{\rm p}_{2(n-k)-2}^{(q)},{\rm e}_{2(n-k)}^{(q)}\right]\to {\rm H}^*_\oplus\bigl(\mathscr{G}(2k,\EE)\bigr),
  \]
  where the superscripts denote the characteristic classes the corresponding elements will map to.

  We claim that this homomorphism is surjective, mapping the characteristic classes ${\rm p}_{2i}^{(s)}$ to ${\rm p}_{2i}(\SS_{\EE})$, ${\rm p}_{2j}^{(q)}$ to ${\rm p}_{2j}(\QQ_{\EE})$ (and similarly for the Euler classes), and has kernel exactly described by the equations \eqref{eq:rel1} and \eqref{eq:rel2}.
  
  We first note that these relations actually hold in ${\rm H}^*_\oplus\bigl(\mathscr{G}(2k,\EE)\bigr)$. The first set of relations \eqref{eq:rel1} follows from the Whitney sum formula, and \eqref{eq:rel2} already holds in the cohomology rings of the classifying spaces $\op{B}{\rm GL}_{2k}$ resp. $\op{B}{\rm GL}_{2(n-k)}$. Therefore, $f$ factors through the quotient by these relations, which is the ring described in \eqref{eq:Grassmannbundle}.

  It remains to show that $f$ induces an isomorphism of $\op{H}^*_\oplus(X)$-algebras from the algebra described in \eqref{eq:Grassmannbundle} to ${\rm H}^*_\oplus\bigl(\mathscr{G}(2k,\EE)\bigr)$. Since both the relevant ranks $2k$ and $2n$ are even, the Pontryagin and Euler classes of the tautological bundles $\SS_{\EE}$ and $\QQ_{\EE}$ multiplicatively generate $\op{H}^*_\oplus\bigl(\Gr(2k,2n)\bigr)$ by \cite[Theorem 6.4]{realgrassmannian}. Then by taking suitable monomials of characteristic classes of $\SS_{\EE}$ and $\QQ_{\EE}$, we obtain a set of elements in $\op{H}_\oplus^*\bigl(\mathscr{G}(2k,\EE)\bigr)$ which restricts to a basis in the ${\bf W}$-cohomology of the fibers of $\pi$. By the K\"unneth formula, cf.~Proposition~\ref{prop:kuenneth}, for an open subset $U\subseteq X$ over which $\EE$ trivializes, the characteristic-class monomials in $\op{H}_\oplus^*\bigl(\mathscr{G}(2k,\EE)\bigr)$ induce a ${\rm H}^*_\oplus(U,{\bf W})$-basis of ${\rm H}^*_\oplus(\pi^{-1}(U),{\bf W})$. Thus, we can apply the Leray--Hirsch theorem, cf.\ Proposition~\ref{prop:leray-hirsch}, to see that ${\rm H}^*_\oplus\bigl(\mathscr{G}(2k,E)\bigr)$ is a free $ \op{H}_\oplus^*(X)$-module on the same generators as for $\op{H}^*_\oplus\bigl(\Gr(2k,2n)\bigr)$. Since the domain of $f$ is also a free $\op{H}^*_\oplus(X)$-module on these elements, $f$ is an isomorphism. 
\end{proof}

\subsection{Maximal rank flag varieties}

We are now ready to establish the formula for $\mathbf{W}$-cohomology of maximal rank flag varieties. 

\begin{theorem}
  \label{thm:maxrank}
  For $\D=(2d_1\stb 2d_m)$ and $\D=(2d_1\stb 2d_{m-1},2d_m+1)$, the total $\mathbf{W}$-cohomology ring $\bigoplus_\L \op{H}^*\bigl(\op{Fl}(\D),\mathbf{W}(\L)\bigr)$ is isomorphic to the following commutative $\Z\oplus \op{Pic}\bigl(\Fl(\D)\bigr)/2$-graded ${\rm W}(F)$-algebra:
  $$\frac{\op{W}(F)\big[\op{p}_{2j}(\DD_i)\big]}{\prod_{i=1}^m \op{p}(\DD_i)=1}\bigotimes \op{W}(F)\big[\op{e}(\DD_i)\big]\bigg/\sim
  $$
  The generators are (even) Pontryagin classes ${\rm p}_{2j}(\DD_i)\in {\rm H}^{4j}\bigl(\Fl(\D),\mathbf{W}\bigr)$ (for $j=1\stb d_i$) of the rank $2d_i$ subquotient bundles $\DD_i$ (for $i=1\stb m$), as well as their Euler classes ${\rm e}(\DD_i)\in {\rm H}^{2d_i}\bigl(\Fl(\D),\mathbf{W}(\det \DD_i^\vee)\bigr)$. These classes satisfy the following relations $\sim$: for the odd-rank exception case, we have ${\rm e}(\DD_{2d_m+1})=0$, otherwise we have ${\rm e}(\DD_i)^2=\op{p}_{2d_i}(\DD_i)$, and there is an additional relation in the case when all $d_i$ are even:
  $$ \prod_{i=1}^m\op{e}(\DD_i)=0.$$
\end{theorem}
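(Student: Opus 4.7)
The plan is to use the iterated Grassmannian bundle description of $\Fl(\D)$ recalled in Section~\ref{sec:flags}, and at each step apply the Leray--Hirsch theorem (Proposition~\ref{prop:leray-hirsch}) together with the Grassmannian bundle formula of Proposition~\ref{prop:gr-bundle} (and its analogs for $\mathscr{G}(2k,\EE)$ and $\mathscr{G}(2k+1,\EE)$ when $\EE$ has odd rank).

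First, I would verify that the generators and relations in the theorem statement define a well-defined ring homomorphism $\Phi$ from the displayed ring into the total $\mathbf{W}$-cohomology of $\Fl(\D)$. The Whitney sum relation $\prod_i \op{p}(\DD_i) = 1$ follows from applying the Whitney sum formula to the filtration $0 \subset \SS_1 \subset \cdots \subset \SS_m = \mathbb{A}^N$, together with the triviality of $\op{p}(\mathbb{A}^N)$. The relations $\op{e}(\DD_i)^2 = \op{p}_{2d_i}(\DD_i)$ are universal relations holding for every even-rank vector bundle, cf.~\cite[Theorem~6.4]{realgrassmannian}. In the all-even case, the product relation $\prod_i \op{e}(\DD_i) = 0$ comes analogously from $\op{e}(\mathbb{A}^N) = 0$ via the Whitney sum formula for Euler classes; in the one-odd case no such product relation is needed, since $\op{e}(\DD_m) = 0$ for the odd-rank subquotient bundle.

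Next, I would prove that $\Phi$ is an isomorphism by induction on the number of iteration steps. The base case $\Fl(\D^{(1)}) = \Gr(d_1, \mathbb{A}^N)$ is handled by \cite{realgrassmannian}. For the inductive step, $\Fl(\D^{(i)}) = \mathscr{G}(d_i, \DD_i) \to \Fl(\D^{(i-1)})$ is a Grassmannian bundle; a parity check shows that in the maximal rank case every Grassmannian fiber $\Gr(d_i, d_i + \cdots + d_m)$ appearing in the iteration has even dimension, hence its $\mathbf{W}$-cohomology is generated by characteristic classes of the tautological sub- and quotient bundles. These characteristic classes lift to global characteristic classes on $\Fl(\D^{(i)})$, providing the generators required by Proposition~\ref{prop:leray-hirsch}. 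The Grassmannian bundle formula of Proposition~\ref{prop:gr-bundle} then supplies the new Pontryagin and Euler classes together with their defining relations at each step.

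The main obstacle is the bookkeeping needed to match the output of the iterated computation with the clean presentation in the statement. Each iteration step introduces intermediate tautological sub- and quotient bundles $\SS_{\DD_i}$ and $\QQ_{\DD_i}$, which must be identified with the subquotient bundles $\DD'_i$ and $\DD'_{i+1}$ of the finer flag variety as in Section~\ref{sec:flags}; one must verify that the Whitney sum relations accumulated over all iterations telescope precisely to the single global relation $\prod_i \op{p}(\DD_i) = 1$, and similarly for the Euler-class product in the all-even case, without introducing spurious extra relations. The twisted $\mathbf{W}$-cohomology requires care with the Picard groups: by Proposition~\ref{prop:pic-gr-bundle} and Remark~\ref{rem:leray-hirsch-pic}, the Picard group of each Grassmannian bundle in the iteration decomposes compatibly with the fibration, so that Leray--Hirsch captures the full $\mathbb{Z} \oplus \op{Pic}/2$-graded twisted $\mathbf{W}$-cohomology ring at each step.
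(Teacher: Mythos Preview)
Your proposal is correct and follows essentially the same approach as the paper: induction via the iterated Grassmannian bundle description, with the base case of Grassmannians from \cite{realgrassmannian} and the inductive step handled by Proposition~\ref{prop:gr-bundle} (plus its odd-rank analogs), then telescoping the Whitney-sum relations to obtain the final presentation. The paper organizes the induction slightly differently---merging the last two entries of $\D$ rather than building up via the filtration $\D^{(i)}$---but this is a cosmetic difference, and your identification of the bookkeeping issues (telescoping relations, Picard group compatibility via Remark~\ref{rem:leray-hirsch-pic}) matches exactly what the paper does.
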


\begin{proof}
  We use the description of flag varieties as iterated Grassmannian bundles, together with an induction on $m$. Starting point is the case $m=2$ of Grassmannians in which the claim follows from \cite[Theorem 6.4]{realgrassmannian}.

  For the inductive step, we consider the case where all entries of $\D$ are even. The case where the last entry is odd is similar. Now it suffices to show the claim of the theorem for the case $\D=(2d_1,\dots,2d_m)$ under the assumption that the theorem is known for the case $\D'=\bigl(2d_1,\dots,2d_{m-2},2(d_{m-1}+d_m)\bigr)$. In this case, $\Fl(\D)$ is isomorphic to the Grassmannian bundle $\mathscr{G}(2d_{m-1},\EE)$, where $\EE$ is the quotient bundle of rank $2(d_{m-1}+d_m)$  over $\Fl(\D')$. We can apply Proposition~\ref{prop:gr-bundle} in this situation to get a description of ${\rm H}^*_\oplus\bigl(\Fl(\D)\bigr)$ as algebra over ${\rm H}^*_\oplus\bigl(\Fl(\D')\bigr)$ generated by Pontryagin and Euler classes of the subquotient bundles $\DD_{m-1}$ and $\DD_m$. Using the inductive assumption, we find that the cohomology ring is indeed generated by Pontryagin and Euler classes of all the subquotient bundles. It remains to rewrite the relations. The relations ${\rm e}(\DD_i)^2={\rm p}_{2d_i}(\DD_i)$ follow from relations~\eqref{eq:rel2} of Proposition~\ref{prop:gr-bundle}. Moreover, we have
  \[
  \prod_{i=1}^m{\rm e}(\DD_i)=\pi^\ast\bigl({\rm e}(\EE)\bigr)\prod_{i=1}^{m-2}{\rm e}(\DD_i)=0,
  \]
  where the first equality follows from relation~\eqref{eq:rel1} of Proposition~\ref{prop:gr-bundle}, and the second equality follows from the inductive assumption by pullback along $\pi\colon\Fl(\D)\to\Fl(\D')$. Similarly, the relation for total Pontryagin classes follows
  \[
  \prod_{i=1}^m{\rm p}(\DD_i)=\pi^\ast\bigl({\rm p}(\EE)\bigr)\prod_{i=1}^{m-2}{\rm p}(\DD_i)=1,
  \]
  using the Pontryagin class part of relation~\eqref{eq:rel1} of Proposition~\ref{prop:gr-bundle} and the inductive assumption. This concludes the proof.
\end{proof}

\begin{corollary}
  \label{cor:free-chow-w}
  For $\D=(2d_1\stb 2d_m)$ and  $\D=(2d_1\stb 2d_{m-1},2d_m+1)$, the total $\mathbf{W}$-cohomology ring ${\rm H}^*_\oplus\bigl(\Fl(\D)\bigr)$ is free as a ${\rm W}(F)$-module.

  For $\D=(d_1\stb d_m)$,  there is an isomorphism
  \[
  {\rm CH}^*\bigl(\Fl(\D)\bigr)\otimes_{\mathbb{Z}}{\rm W}(F)\cong {\rm H}^{4*}\bigl(\Fl(2\D),\mathbf{W}\bigr)
  \]
  sending the Chern classes ${\rm c}_i(\DD_j)$ of the subquotient bundle $\DD_j$  (of rank $d_j$) to the Pontryagin classes ${\rm p}_{2i}(\EE_j)$ of the subquotient bundle $\EE_j$ (of rank $2d_j$). 
\end{corollary}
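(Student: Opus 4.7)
The plan is to read both statements directly off the ring presentation in Theorem~\ref{thm:maxrank}, combining it with the algebraic results of Section~\ref{sec:ann-euler} and the identification in Remark~\ref{rmk:Sigmaodd}.

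For the freeness claim, I would first observe that under the maximal rank hypothesis the exterior algebra part of $W_\D$ is trivial. Writing $\D=(2d_1',\dots,2d_m')$ or $\D=(2d_1',\dots,2d_{m-1}',2d_m'+1)$, a direct count gives $q=\sum\lfloor d_i/2\rfloor=\sum d_i'=\lfloor N/2\rfloor=n$ in both cases, so the factor $\La=\bigwedge_{l=q+1}^n\langle R_l\rangle$ of Definition~\ref{def:WD} is empty and the presentation of $W_\D$ with coefficient ring $R={\rm W}(F)$ is literally the presentation of ${\rm H}^*_\oplus\bigl(\Fl(\D)\bigr)$ produced by Theorem~\ref{thm:maxrank}. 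Freeness as a ${\rm W}(F)$-module is then immediate from Theorem~\ref{thm:WDfree}. One can alternatively argue by induction along the iterated Grassmannian bundle description of $\Fl(\D)$ from Section~\ref{sec:flags}: in the maximal rank case every intermediate Grassmannian fiber is even-dimensional, so the Leray--Hirsch theorem of Proposition~\ref{prop:leray-hirsch} applies at each stage, reducing to the known freeness of the Witt-sheaf cohomology of Grassmannians from \cite{realgrassmannian}.

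For the Chow ring identification, I would apply Theorem~\ref{thm:maxrank} to the doubled partition $2\D=(2d_1,\dots,2d_m)$, denoting its subquotient bundles by $\EE_i$ to distinguish them from the subquotient bundles $\DD_i$ on $\Fl(\D)$. The resulting ring is bigraded by $\Z\oplus\op{Pic}\bigl(\Fl(2\D)\bigr)/2$ with $\op{Pic}/2\cong\mathbb{F}_2\langle\ell_1,\dots,\ell_m\rangle/\sum\ell_i$ and $\ell_i=[\det\EE_i]$. The Pontryagin classes ${\rm p}_{2j}(\EE_i)$ carry trivial Picard grading, whereas a monomial $\prod_{i\in S}{\rm e}(\EE_i)$ in distinct Euler classes lies in grading $\sum_{i\in S}\ell_i$, which is trivial in $\op{Pic}/2$ only for $S=\emptyset$ or $S=\{1,\dots,m\}$; the latter contribution is killed by the relation $\prod_i{\rm e}(\EE_i)=0$, which applies precisely because all entries of $2\D$ are even. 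Thus the untwisted subalgebra ${\rm H}^{4*}\bigl(\Fl(2\D),\mathbf{W}\bigr)$ coincides with the Pontryagin subalgebra of $\Sigma_{2\D}\otimes_{\Z}{\rm W}(F)$, which Remark~\ref{rmk:Sigmaodd} identifies canonically with $\CH^*\bigl(\Fl(\D)\bigr)\otimes_{\Z}{\rm W}(F)$ via ${\rm c}_j(\DD_i)\mapsto{\rm p}_{2j}(\EE_i)$, cohomological degrees being multiplied by $4$; this is the asserted isomorphism.

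The only mildly delicate step is the $\op{Pic}/2$-grading bookkeeping in the second part, where one must confirm both that no proper nontrivial Euler class product lies in the untwisted piece and that the one such full product which does is exactly the one killed by the all-even relation. Once this check is in place, the corollary follows immediately from Theorem~\ref{thm:maxrank}, Theorem~\ref{thm:WDfree}, and Remark~\ref{rmk:Sigmaodd}.
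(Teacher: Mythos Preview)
Your proof is correct and follows essentially the same route as the paper: freeness via Theorem~\ref{thm:WDfree} (with the Leray--Hirsch induction as an alternative), and the Chow identification via matching the presentations from Theorem~\ref{thm:maxrank} and Remark~\ref{rmk:Sigmaodd}. Your Picard-grading check that no Euler class monomial survives in the untwisted piece is a useful detail that the paper's terse ``the presentations are the same'' leaves implicit.
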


\begin{proof}
  The second claim follows directly from the fact that the presentations for the Chow rings of the flag variety for $\D$ and the $\mathbf{W}$-cohomology rings of the flag variety for the doubled partition $2\D$ are the same (except for a degree shift and different coefficient rings). The bundle $\DD_j$ of rank $d_j$ has nontrivial Chern classes ${\rm c}_1,\dots,{\rm c}_{d_j}$ while the bundle $\EE_j$ of rank $2d_j$ has nontrivial Pontryagin classes ${\rm p}_2,{\rm p}_4,\dots,{\rm p}_{2d_j}$; and for both rings, the only nontrivial relations are the respective Whitney sum formulas, $\prod{\rm c}(\DD_j)=1$ on the Chow side and $\prod{\rm p}(\EE_j)=1$ on the $\mathbf{W}$-cohomology side.

    The freeness follows from the presentation, cf.~Theorem~\ref{thm:WDfree}, or the isomorphism in the second statement, but can alternatively also be seen inductively: in every application of the Leray--Hirsch theorem for Grassmannian bundles, the total $\mathbf{W}$-cohomology ring is a free module over the $\mathbf{W}$-cohomology of the base, and in the basic case of Grassmannians, the freeness follows from \cite[Theorem 6.4]{realgrassmannian}. 
\end{proof}

\begin{remark}
  It should be pointed out that this method only works in the maximal rank case, because the Leray--Hirsch theorem doesn't apply to arbitrary Grassmannian bundles. The conditions of Proposition~\ref{prop:leray-hirsch} are satisfied generally for Grassmannian bundles with fiber ${\rm Gr}(k,n)$ with $k(n-k)$ even. In the case where $k(n-k)$ is odd, the Leray--Hirsch theorem will only apply under very restrictive hypotheses which will generally not be satisfied. In the maximal rank case of flag varieties ${\rm Fl}(\D)$ with $\D=(d_1,\dots,d_m)$ such that at most one of the $d_i$ is odd, the Grassmannian bundles will have fibers ${\rm Gr}(k,n)$ with $k(n-k)$ even. As soon as at least two of the $d_i$ are odd, there will also be Grassmannian bundles with $k(n-k)$ odd and the Leray--Hirsch theorem will not be applicable.
\end{remark}

\begin{remark}
  The $\mathbf{W}$-cohomology of the oriented flag varieties $\op{SFl}(2,\dots,2)$ and $\op{SFl}(2,\dots,2,1)$ has been worked out by Ananyevskiy \cite{ananyevskiy}, for precise statements see the theorem statement in the introduction and Remark 12 of Section 10 of loc.~cit. These oriented flag varieties can be defined as quotients of $\op{SL}_{2n}$ resp. $\op{SL}_{2n+1}$ by a subgroup (almost a parabolic) containing $\op{SL}_2^{\times n}$. Alternatively, they can be understood as $\mathbb{A}^1$-universal coverings of the respective flag varieties $\op{Fl}(2,\dots,2)$ and $\op{Fl}(2,\dots,2,1)$ considered here. In particular, they have trivial Picard groups and thus there is only untwisted Witt-sheaf cohomology. In \cite{ananyevskiy}, the generators are the Euler classes of the rank 2 subquotient bundles, comparable to our Theorem~\ref{thm:maxrank}. Note, however, that the Euler classes for $\op{Fl}(\D)$ are elements in twisted Witt-sheaf cohomology groups, whereas there are no nontrivial twists in the case $\op{SFl}$. We refer to \cite{ananyevskiy} for a reinterpretation of the presentation of cohomology rings in terms of Weyl group coinvariants. 
\end{remark}

As a consequence of the maximal rank computation in Theorem~\ref{thm:maxrank}, we also obtain a general formula for Witt-sheaf cohomology of maximal rank flag bundles over smooth schemes. We formulate the result for the case where all entries in $\D$ are even, a similar statement holds for the other maximal rank case.

\begin{corollary}\label{cor:splitting}
  Fix $\D=(2d_1,\dots,2d_m)$. Let $X$ be a smooth scheme and let $\EE$ be a vector bundle of rank $2\sum d_i$ over $X$. Let $\pi\colon\mathscr{F}l(\D,\EE)\to X$ denote the $\D$-flag bundle associated to $\EE$. Then the total $\mathbf{W}$-cohomology of $\mathscr{F}l(\D,\EE)$ is the ${\rm H}^*_\oplus(X)$-algebra generated by the even Pontryagin and Euler classes of the tautological subquotient bundles $\DD_{i,\EE}$ subject to the following relations:
  \begin{eqnarray*}
    {\rm e}^2(\DD_{i,E})&=&{\rm p}_{2d_i}(\DD_{i,\EE}),\\
    \prod_{i=1}^m{\rm e}(\DD_{i,\EE})&=&\pi^*{\rm e}(\EE),\\
    \prod_{i=1}^m{\rm p}(\DD_{i,\EE})&=&\pi^*{\rm p}(\EE).
  \end{eqnarray*}
  The total $\mathbf{W}$-cohomology ring ${\rm H}^*_\oplus\bigl(\mathscr{F}l(\D,\EE)\bigr)$ is free as ${\rm H}^*_\oplus(X)$-module.
\end{corollary}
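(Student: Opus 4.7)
The plan is to apply the Leray--Hirsch theorem (Proposition~\ref{prop:leray-hirsch}) to the projection $\pi\colon\mathscr{F}l(\D,\EE)\to X$, whose fiber is $\Fl(\D)$ with $\mathbf{W}$-cohomology completely described by Theorem~\ref{thm:maxrank}. The generators of $\op{H}^*_\oplus\bigl(\Fl(\D)\bigr)$ are the Pontryagin and Euler classes of the tautological subquotient bundles $\DD_i$; these admit canonical globalizations on $\mathscr{F}l(\D,\EE)$ as the characteristic classes of the tautological subquotient bundles $\DD_{i,\EE}$, which will provide the cohomology classes required by the Leray--Hirsch hypothesis.

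First I would verify the hypotheses of Proposition~\ref{prop:leray-hirsch}. Choose a Zariski-covering $\{U_i\}$ of $X$ over which $\EE$ trivializes; then $\pi$ is trivial over each $U_i$. For condition (1), each candidate twist on $\mathscr{F}l(\D,\EE)$ is a tensor product of the $\det(\DD_{j,\EE})^\vee$, and this restricts over any $U_i$ to the pullback along $\Fl(\D)\times U_i\to \Fl(\D)$ of the corresponding twist $\det(\DD_j)^\vee$. For condition (2), by Theorem~\ref{thm:maxrank} together with Corollary~\ref{cor:free-chow-w}, we can fix a $\mathrm{W}(F)$-basis of $\op{H}^*_\oplus\bigl(\Fl(\D)\bigr)$ consisting of monomials in the Pontryagin and Euler classes of the $\DD_j$; the corresponding monomials in the classes of $\DD_{j,\EE}$ lift these to $\op{H}^*_\oplus\bigl(\mathscr{F}l(\D,\EE)\bigr)$ and restrict fiberwise to the chosen basis, which is the required compatibility. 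By Remark~\ref{rem:leray-hirsch-pic}, combined with the classical Leray--Hirsch theorem for Chow groups applied to flag bundles, every line bundle on $\mathscr{F}l(\D,\EE)$ is of the form $\L_E\otimes\pi^*\L$, so the output of Leray--Hirsch indeed describes the total $\mathbf{W}$-cohomology ring.

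With these hypotheses verified, Proposition~\ref{prop:leray-hirsch} yields a decomposition of $\op{H}^*_\oplus\bigl(\mathscr{F}l(\D,\EE)\bigr)$ as a free $\op{H}^*_\oplus(X)$-module with basis the chosen lifted monomials. This simultaneously proves the freeness statement and shows that $\op{H}^*_\oplus\bigl(\mathscr{F}l(\D,\EE)\bigr)$ is generated as an $\op{H}^*_\oplus(X)$-algebra by the Pontryagin and Euler classes of the $\DD_{i,\EE}$. It remains to identify the relations. The two relations involving products arise from the Whitney sum formula applied to the filtration $0=\SS_{0,\EE}\subseteq\SS_{1,\EE}\subseteq\cdots\subseteq\SS_{m,\EE}=\pi^*\EE$ with successive quotients $\DD_{i,\EE}$; this gives $\prod_i\op{p}(\DD_{i,\EE})=\pi^*\op{p}(\EE)$ and $\prod_i{\rm e}(\DD_{i,\EE})=\pi^*{\rm e}(\EE)$. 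The relations ${\rm e}(\DD_{i,\EE})^2=\op{p}_{2d_i}(\DD_{i,\EE})$ are universal relations already in the $\mathbf{W}$-cohomology of $\mathrm{B}\!\op{GL}_{2d_i}$, cf.\ \cite[Theorem~1.1]{realgrassmannian}, and are pulled back to $\mathscr{F}l(\D,\EE)$ via the classifying map of $\DD_{i,\EE}$.

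The main obstacle is confirming that the list of stated relations is complete, i.e., that no further relations appear. The natural surjection from the presented $\op{H}^*_\oplus(X)$-algebra onto $\op{H}^*_\oplus\bigl(\mathscr{F}l(\D,\EE)\bigr)$ has as target a free $\op{H}^*_\oplus(X)$-module whose rank equals the rank of $\op{H}^*_\oplus\bigl(\Fl(\D)\bigr)$ as $\mathrm{W}(F)$-module. On the source side, quotienting by the stated relations yields $\op{H}^*_\oplus(X)\otimes_{\mathrm{W}(F)} \op{H}^*_\oplus\bigl(\Fl(\D)\bigr)$ once one observes that, restricted to a fiber, $\pi^*\op{p}(\EE)$ becomes $1$ and $\pi^*{\rm e}(\EE)$ becomes $0$, which recovers exactly the presentation of Theorem~\ref{thm:maxrank}; moreover the $\op{H}^*_\oplus(X)$-module structure is free on monomials in the Pontryagin and Euler classes of the $\DD_{i,\EE}$ as constrained by those fiberwise relations. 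Matching ranks forces the surjection to be an isomorphism, completing the proof.
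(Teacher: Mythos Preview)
Your proposal is correct and follows essentially the same strategy as the paper. The paper does not give an explicit proof of this corollary, but the implicit argument---parallel to the proofs of Proposition~\ref{prop:gr-bundle} and Theorem~\ref{thm:maxrank}---would proceed by writing $\mathscr{F}l(\D,\EE)\to X$ as an iterated Grassmannian bundle and applying Proposition~\ref{prop:gr-bundle} inductively, exactly as in the proof of Theorem~\ref{thm:maxrank} but over a general base $X$ rather than $\op{Spec}F$. You instead apply Leray--Hirsch in a single step with fiber $\Fl(\D)$, using Theorem~\ref{thm:maxrank} as input; this is a legitimate and slightly more streamlined variant of the same idea.

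One small imprecision worth tightening: you write that the presented algebra ``yields $\op{H}^*_\oplus(X)\otimes_{\mathrm{W}(F)}\op{H}^*_\oplus\bigl(\Fl(\D)\bigr)$'', but this is not a ring isomorphism since the relations involve the nontrivial elements $\pi^*\op{p}(\EE)$ and $\pi^*{\rm e}(\EE)$. What you need (and what you correctly indicate in the next clause) is only that the presented algebra is \emph{generated as an $\op{H}^*_\oplus(X)$-module} by the same set of monomials that form a $\mathrm{W}(F)$-basis of $\op{H}^*_\oplus\bigl(\Fl(\D)\bigr)$; this follows because the relations have the same leading terms as in the fiber presentation, so the same monomial reduction applies. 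The rank comparison with the Leray--Hirsch side then forces the surjection to be an isomorphism, exactly as in the last sentence of the paper's proof of Proposition~\ref{prop:gr-bundle}.
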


\begin{remark}
  \label{rem:splitting-principle}
  This result can be used for splitting-principle arguments. It allows to prove identities for cohomology classes in ${\rm H}^*\bigl(\Fl(\D),\mathbf{W}\bigr)$ for a partition $\D=(2d_1,\dots,2d_m)$ with $N=2\sum_{i=1}^m d_i$ by reduction to the flag variety ${\Fl}(\D_{\rm split})$ for
  \[
  \D_{\rm split}:=(\underbrace{2,\dots,2}_N)
  \]
  Analogous to the description of flag varieties as iterated Grassmannian bundles, the projection $\Fl(\D_{\rm split})\to \Fl(\D)$ is an iterated flag variety bundle with fibers flag varieties for partitions $(2,\dots,2)$ of length $d_1,\dots,d_m$. By Corollary~\ref{cor:splitting}, these flag variety bundles induce injective pullback morphisms in $\mathbf{W}$-cohomology. To prove an identity for cohomology classes, it thus suffices to prove that identity after pullback to $\Fl(\D_{\rm split})$. A similar splitting principle for $\eta$-inverted theories and the oriented flag varieties is discussed in \cite[Section 9]{ananyevskiy}.
\end{remark}

\section{An elementary computation of the \texorpdfstring{$\mathbf{W}$}{W}-cohomology rings}
\label{sec:sadykov}

\newcommand{\DFl}{\widetilde{\Fl}}

In this section, we will now go beyond the maximal rank case and establish formulas for the $\mathbf{W}$-cohomology of general type A flag varieties similar to the Borel--Cartan formulas in topology, cf.~\cite{he}, \cite{matszangosz}. The argument is an elementary one along the lines of \cite{sadykov} for Grassmannians, using spherical bundles associated to subquotients of the tautological filtration which relate different flag varieties via localization sequences. 

The proof proceeds by transforming a maximal rank sequence $\D_0$ to another sequence $\D_1=(d_1\stb d_m)$, by repeatedly applying the following two operations:
\begin{itemize}
	\item take an arbitrary permutation $\D':=\pi.\D$ of $\D=(d_1\stb d_m)$, $\pi\in S_m$,
	\item for $d_1, d_2$ even, replace $\D=(d_1\stb d_m)$ by $\D'=(d_1-1, d_2+1,d_3\stb d_m)$.
\end{itemize}
It is not hard to see that any sequence $\D_1$ can be obtained from a maximal rank sequence $\D_0$ using these two operations. We will describe how the $\mathbf{W}$-cohomology changes under these two operations. For the proof we will consider \emph{decomposition varieties} $\DFl(\D)$, which are $\mathbb{A}^1$-equivalent replacements of the flag varieties $\Fl(\D)$. The upside is that the tautological exact sequences split over $\DFl(\D)$, contrary to the case of flag varieties.
To simplify notation, in the rest of this section, we denote 
\begin{equation}\label{eq:notation}
	\op{H}_{\EE}^*(X):=\op{H}^*\bigl(X;\mathbf{W}(\det \EE)\bigr),\quad
	\op{H}_\oplus^*(X):= \bigoplus_{\L}\op{H}_{\L}^*(X),\quad
	\op{H}^*(X):=\op{H}_{\mathscr{O}}^*(X).
\end{equation}

\subsection{The decomposition variety}
Fix $\D=(d_1,\dots,d_m)$. Let $\widetilde{\Fl}(\D)$ denote the \emph{decomposition variety of $V$} defined as open subscheme  of a product $\prod_{i=1}^m \Gr_{d_i}(V)$ of Grassmannians containing the points
$$ V_\bullet=(V_1,V_2\stb V_m)\in \prod_{i=1}^m \Gr_{d_i}(V),$$
given by $d_i$-dimensional subspaces $V_i\subseteq V$ (for $i=1,\dots,m$) such that $V=\bigoplus V_i$. The decomposition variety is a homogeneous space under the obvious action of ${\rm GL}(V)$, and can be identified as
\[
\widetilde{\Fl}(\D)\cong{\rm GL}_n/({\rm GL}_{d_1}\times\cdots\times{\rm GL}_{d_m}),
\]
since the stabilizer of a decomposition $V=V_1\oplus\cdots\oplus V_m$ is exactly the block-diagonal subgroup ${\rm GL}_{d_1}\times\cdots\times{\rm GL}_{d_m}$ in ${\rm GL}_N$.

This space comes with projections $\pi_i(V_\bullet):=V_i$ to the $i$-th factor ${\rm Gr}_{d_i}(V)$. These projections induce tautological bundles $\DD_i:=\pi_i^*(\SS_i)$ over $\DFl(\D)$ via pulling back the $i$-th tautological subbundle $\SS_i$ along $\pi_i$. There is a map
$$ \pi\colon\widetilde{\Fl}(\D)\to \Fl(\D)\colon (V_1\stb V_m)\mapsto (V^1\stb V^m),$$
where $V^i=\oplus_{j=1}^i V_j$. Note that $\pi^* \DD_i\cong \DD_i$ and $\pi^*\SS_i\cong\oplus_{j=1}^i \DD_j$.

\begin{proposition}
  The projection map $\pi\colon\DFl(\D)\to \Fl(\D)$ is an $\mathbb{A}^1$-homotopy equivalence. In particular, it induces an isomorphism in $\mathbf{W}$-cohomology. 
\end{proposition}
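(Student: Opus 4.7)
The plan is to realize $\pi\colon\DFl(\D)\to\Fl(\D)$ as a composition of Zariski-locally trivial affine bundles and then invoke homotopy invariance of Witt-sheaf cohomology. Both varieties are ${\rm GL}_N$-homogeneous spaces: $\Fl(\D)\cong{\rm GL}_N/P_\D$ for the block-upper-triangular parabolic $P_\D$ of block sizes $d_1,\dots,d_m$, while $\DFl(\D)\cong{\rm GL}_N/L_\D$ for its block-diagonal Levi $L_\D\cong\prod_i{\rm GL}_{d_i}$. Since $P_\D=L_\D\ltimes U_\D$ with $U_\D$ the unipotent radical, $\pi$ is a $U_\D$-torsor; unwinding $U_\D$ as an iterated extension of additive groups yields the sought decomposition of $\pi$ into affine bundles.

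Concretely, I would introduce intermediate varieties $Y_i$ parametrizing tuples $(V^\bullet,s_2,\dots,s_i)$ consisting of a flag $V^\bullet\in\Fl(\D)$ together with splittings $s_j\colon V^j/V^{j-1}\to V^j$ of the quotient maps, for $j=2,\dots,i$. Then $Y_1=\Fl(\D)$, $Y_m\cong\DFl(\D)$ (a compatible decomposition corresponds to a sequence of splittings via $V_j:=\op{im} s_j$, with $V_1:=V^1$), and $\pi$ factors as the composite $Y_m\to Y_{m-1}\to\cdots\to Y_1$ of forgetful maps. Each $Y_{i+1}\to Y_i$ is a torsor under the vector bundle $\Hom(\DD_{i+1},\SS_i)$ on $Y_i$, since any two splittings of a fixed short exact sequence differ by an element of that Hom group. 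Torsors under a vector bundle are classified by $H^1_{\rm Zar}$ of the bundle, which vanishes on affine opens, so each $Y_{i+1}\to Y_i$ is Zariski-locally trivial with affine-space fibers.

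The final step invokes homotopy invariance of Witt-sheaf cohomology: for any Zariski-locally trivial affine bundle $p\colon E\to B$ between smooth schemes and any line bundle $\mathscr{L}\in\op{Pic}(B)$, the pullback
\[
p^\ast\colon\op{H}^\ast\bigl(B,\mathbf{W}(\mathscr{L})\bigr)\xrightarrow{\cong}\op{H}^\ast\bigl(E,\mathbf{W}(p^\ast\mathscr{L})\bigr)
\]
is an isomorphism: via Mayer--Vietoris for a trivializing Zariski cover this reduces to the case of a trivial bundle $B\times\mathbb{A}^n$, which is exactly the $\mathbb{A}^1$-invariance of $\mathbf{W}$-cohomology. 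Iterating along the tower $Y_m\to\cdots\to Y_1$ then yields the claim for $\pi$. The only point requiring real care is the Zariski-local triviality at each stage of the tower; once that is established, the cohomological conclusion follows formally from the properties of $\mathbf{W}$-cohomology recalled in Section~\ref{sec:prelims}.
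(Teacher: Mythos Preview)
Your proposal is correct and follows essentially the same approach as the paper: both identify $\pi$ as the quotient map ${\rm GL}_N/L_\D\to{\rm GL}_N/P_\D$, hence a torsor under the unipotent radical $U_\D$, and then invoke $\mathbb{A}^1$-invariance. Your filtration by the intermediate varieties $Y_i$ makes explicit the decomposition of this $U_\D$-torsor into a tower of vector-bundle torsors, whereas the paper simply observes that the fiber $U_\D$ is an affine space and concludes directly; either way the argument is the same in substance.
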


\begin{proof}
  As mentioned above, $\GL(V)$ acts on both spaces, in a manner compatible with $\pi$. Actually, $\pi$ can be explicitly identified as the quotient map
  $$ \pi\colon\GL(V)\Big/\prod_{i=1}^m\GL(d_i)\to \GL(V)\Big/\GL(d_1\stb d_m),$$
  where $\GL(d_1\stb d_m)$ denotes the parabolic subgroup of block upper triangular matrices of the specified block sizes. The fiber of this map is $\GL(d_1\stb d_m)/\prod_{i=1}^m\GL(d_i)$. This is a unipotent group of strictly upper block triangular matrices whose underlying space is isomorphic to $\mathbb{A}^r$ for suitable $r$ (and therefore $\mathbb{A}^1$-contractible). Consequently, $\pi$ is a vector bundle torsor and therefore an $\mathbb{A}^1$-weak equivalence.

  The second claim follows from $\mathbb{A}^1$-invariance of $\mathbf{W}$-cohomology. 
\end{proof}

The advantage of working with $\DFl(\D)$ as opposed to $\Fl(\D)$ is that the bundles $\DD_i$ are subbundles of the trivial bundle $V$ as opposed to subquotients. This induces geometric isomorphisms as described below.

\begin{lemma}\label{lem:flexchange}
  For any permutation $\sigma\in \Sigma_m$ and $\D=(d_1\stb d_m)$, there are isomorphisms of decomposition varieties
  $$ \widetilde{\Fl}(\sigma\cdot\D)\cong \widetilde{\Fl}(\D).$$
\end{lemma}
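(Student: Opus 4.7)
The plan is to observe that the decomposition variety depends on the unordered data of a direct sum decomposition, so relabeling via $\sigma$ produces the desired isomorphism directly. Concretely, I would construct the isomorphism as a restriction of the obvious permutation-of-factors map on the ambient product of Grassmannians.

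First, I would recall that $\widetilde{\Fl}(\D)$ is by definition the open subscheme of $P_\D:=\prod_{i=1}^m \Gr_{d_i}(V)$ cut out by the condition that the subspaces sum directly to $V$. Given $\sigma\in\Sigma_m$, write $\sigma\cdot\D=(d_{\sigma^{-1}(1)},\dots,d_{\sigma^{-1}(m)})$, so that the $i$-th factor of $P_{\sigma\cdot\D}$ is $\Gr_{d_{\sigma^{-1}(i)}}(V)$. I would then define
\[
\Phi_\sigma\colon P_\D \longrightarrow P_{\sigma\cdot\D},\qquad (V_1,\dots,V_m)\longmapsto (V_{\sigma^{-1}(1)},\dots,V_{\sigma^{-1}(m)}),
\]
which is a well-defined isomorphism of schemes with inverse $\Phi_{\sigma^{-1}}$, since in each factor we are simply identifying $\Gr_{d_j}(V)$ with itself.

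Second, I would check that $\Phi_\sigma$ restricts to an isomorphism between the open subschemes $\widetilde{\Fl}(\D)\subseteq P_\D$ and $\widetilde{\Fl}(\sigma\cdot\D)\subseteq P_{\sigma\cdot\D}$. This is immediate because the direct sum condition $V=V_1\oplus\cdots\oplus V_m$ does not depend on the ordering of the summands: $(V_1,\dots,V_m)$ is a point of $\widetilde{\Fl}(\D)$ if and only if $(V_{\sigma^{-1}(1)},\dots,V_{\sigma^{-1}(m)})$ satisfies the analogous decomposition condition for $\sigma\cdot\D$. Thus $\Phi_\sigma$ maps $\widetilde{\Fl}(\D)$ bijectively onto $\widetilde{\Fl}(\sigma\cdot\D)$, and since it is an isomorphism of ambient schemes it is also an isomorphism of open subschemes.

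There is no real obstacle here; the proof is essentially a tautological relabeling of coordinates. The only thing worth making explicit (and which will be used in the sequel) is how the tautological bundles transform: under $\Phi_\sigma$, the $i$-th tautological bundle $\DD_i$ on $\widetilde{\Fl}(\sigma\cdot\D)$ pulls back to $\DD_{\sigma^{-1}(i)}$ on $\widetilde{\Fl}(\D)$, and consequently the total Pontryagin class product $\prod_i \op{p}(\DD_i)$ and, in the all-even case, the product of Euler classes $\prod_i \op{e}(\DD_i)$, are preserved up to the evident permutation of factors.
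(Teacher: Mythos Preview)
Your proof is correct and follows essentially the same approach as the paper: both construct the isomorphism by permuting the factors, with the paper phrasing this via the homogeneous space description ${\rm GL}_N/\prod_i{\rm GL}_{d_i}$ while you work directly on the ambient product of Grassmannians. Your version is arguably cleaner, since the permutation-of-factors map is manifestly a scheme isomorphism and the restriction to the open locus is immediate.
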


\begin{proof}
  On points, i.e., decompositions $V\cong V_1\oplus\cdots\oplus V_m$, the map is given as follows:
  \[
  V_\bullet\mapsto \sigma\cdot V_\bullet:=\bigl(V_{\sigma(1)},V_{\sigma(2)}\stb V_{\sigma(m)}\bigr).
  \]
  To see that this is a morphism of varieties $\DFl(\D)\to \DFl(\sigma\cdot\D)$, one can use the description as homogeneous spaces. The morphism
  \[
  \widetilde{\Fl}(\D)\cong{\rm GL}_n/\bigl({\rm GL}_{d_1}\times\cdots\times{\rm GL}_{d_m}\bigr)\to{\rm GL}_n/\bigl({\rm GL}_{d_{\sigma(1)}}\times\cdots\times{\rm GL}_{d_{\sigma(m)}}\bigr)\cong \widetilde{\Fl}(\sigma\cdot\D)
  \]
  is then given by sending the standard basis vectors with indices $1+\sum_{i=1}^{j-1} d_i,\dots,\sum_{i=1}^jd_i$ to the standard basis vectors with indices $1+\sum_{i=1}^{\sigma(j-1)} d_{\sigma(i)},\dots,\sum_{i=1}^{\sigma(j)}d_{\sigma(i)}$. This will exactly permute the order of blocks as required and therefore map the subgroup ${\rm GL}_{d_1}\times\cdots\times{\rm GL}_{d_m}$ to the subgroup ${\rm GL}_{d_{\sigma(1)}}\times\cdots\times{\rm GL}_{d_{\sigma(m)}}$. The required morphism is the one induced on the quotients. 

  Its inverse is similarly given by $V_\bullet'\mapsto \sigma^{-1}\cdot V_\bullet'$.
\end{proof}

\begin{proposition}
  \label{prop:move1}
  Fix $\D=(d_1,\dots,d_m)$ and $\D'=(d_1,\dots,d_i-1,d_{i+1}+1,\dots,d_m)$. Let $q\colon\mathscr S\to \DFl(\D)$ and $p\colon\mathscr S'\to \DFl(\D')$ denote the sphere bundles 	
  $$\mathscr{S}:=\DD_{i}^\vee\setminus z\bigl(\DFl(\D)\bigr),\qquad  \mathscr{S}':=\DD_{i+1}'\setminus z\bigl(\DFl(\D')\bigr).$$ 
  where $z$ denotes the zero section. Then there is an isomorphism of varieties:
  $$ f\colon\mathscr{S}\to \mathscr{S}',$$
  whose inverse we denote $g\colon\mathscr{S}'\to \mathscr{S}$. Moreover, under these isomorphisms, we have the following isomorphisms of pullbacks of tautological vector bundles 
  \begin{equation}
    \label{eq:pullback}
    g^\ast q^\ast \DD_{i}\cong p^\ast \DD'_{i}\oplus \mathscr{O},\qquad   
    f^\ast p^\ast \DD'_{i+1}\cong q^\ast \DD_{i+1}\oplus \mathscr{O}.
  \end{equation}
\end{proposition}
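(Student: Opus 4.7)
My plan is to construct $f$ and $g$ as mutually inverse morphisms from the explicit linear-algebra data, and then verify the vector bundle identities by inspecting fibers.

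First I would define $f$ on points. Given $(V_\bullet,\phi) \in \mathscr{S}$, the functional $\phi \in V_i^\vee \setminus 0$ provides a canonical codimension-$1$ hyperplane $V_i' := \ker\phi \subseteq V_i$ and, via the induced isomorphism $V_i/V_i' \xrightarrow{\sim} F$, a canonical trivialization of the resulting quotient line. Using these, set $V_j' := V_j$ for $j \neq i, i+1$, and produce $V_{i+1}' \subseteq V_i \oplus V_{i+1}$ of dimension $d_{i+1}+1$ as the direct sum of $V_{i+1}$ with a canonical complement $L$ of $V_i'$ in $V_i$ determined by $\phi$, with distinguished vector $w \in L$ corresponding to $1 \in F$. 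Define $g$ symmetrically: given $(V_\bullet',w)$, set $V_i := V_i' + Fw$, take $V_{i+1}$ as the canonical complement of $Fw$ in $V_{i+1}'$, and let $\phi \in V_i^\vee$ be determined by $\phi|_{V_i'} = 0$ and $\phi(w) = 1$. Both maps should extend to morphisms of schemes because every construction is functorial in vector-bundle data.

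Next, I would verify $g \circ f = \id_{\mathscr{S}}$ and $f \circ g = \id_{\mathscr{S}'}$ by tracing the compositions: the round trip recovers the hyperplane $V_i' = \ker\phi$, the spanning vector $w$, and the normalization $\phi(w) = 1$, so the canonical structures are preserved on the nose.

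Then I would verify the bundle identities of equation~\eqref{eq:pullback}. The fiber of $g^\ast q^\ast \DD_i$ at $(V_\bullet',w)$ is $V_i = V_i' \oplus F\cdot w$, which is a canonical direct sum of the fiber of $p^\ast\DD_i'$ and the trivial line generated by $w$; globally this produces the isomorphism $g^\ast q^\ast \DD_i \cong p^\ast \DD_i' \oplus \mathscr{O}$. A symmetric argument using the tautological functional $\phi$ on $\mathscr{S}$ provides the companion identity $f^\ast p^\ast \DD_{i+1}' \cong q^\ast \DD_{i+1} \oplus \mathscr{O}$.

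The main obstacle is producing a canonical line in $V_i$ complementary to $V_i' = \ker\phi$ (and, dually, a canonical complement of $Fw$ in $V_{i+1}'$) from the sphere-bundle data alone. The functional $\phi$ by itself determines only the hyperplane $V_i'$ and the trivialization of the quotient line $V_i/V_i'$, not an actual complementary subspace; the required rigidity must come from the full decomposition structure on $\DFl(\D)$ together with compatibility between the constructions of $f$ and $g$. Making this canonical choice work in families is the delicate step I expect to be the technical heart of the proof.
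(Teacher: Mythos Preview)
Your overall strategy matches the paper's: define $f$ and $g$ by explicit linear-algebra constructions on decompositions, check they are mutually inverse, and read off the bundle identities from the fiberwise splittings. You also correctly identify the crux of the argument: one needs a canonical complement of $\ker\phi$ in $V_i$ (and dually a complement of $Fw$ in $V_{i+1}'$), and the functional $\phi$ alone does not provide one.

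However, your proposed resolution of this obstacle is not right. The decomposition structure on $\DFl(\D)$ tells you that $V_i$ sits as a direct summand of $V$, but it gives no internal structure on $V_i$ itself, so it cannot produce a splitting of $0 \to \ker\phi \to V_i \to F \to 0$. The paper's actual fix is much more direct: one simply \emph{fixes once and for all a nondegenerate symmetric bilinear form $B$ on $V$} (equivalently an isomorphism $V \cong V^\vee$). Then the required complement is $L := (\ker\phi)^\perp \cap V_i$, the distinguished vector is $w := \phi^{-1}(1) \cap L$, and for the inverse map the complement of $Fw$ in $V_{i+1}'$ is $w^\perp \cap V_{i+1}'$. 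With this auxiliary choice every step becomes algebraic in the entries and visibly defines morphisms of varieties; the bundle isomorphisms \eqref{eq:pullback} then follow exactly as you describe, with the trivial summands being the line bundles spanned by $w$ and by the section $\phi$-dual to it.
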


\begin{proof}
  Fix a nondegenerate symmetric bilinear form $B$ on $V$, or alternatively an isomorphism $V\cong V^\vee$. We will describe the morphisms $f$ and $g$ by what they do on points. Recall that points of $\DFl(\D)$ are decompositions $V_\bullet=(V_1\oplus\cdots\oplus V_m=V)$, and the bundle $\DD_i^\vee$ restricted to $V_\bullet$ is just the subspace $V_i^\vee$. The points of the sphere bundle $\mathscr{S}$ are then given by pairs of a decomposition $V=V_1\oplus\cdots\oplus V_m$ together with a nonzero vector in the subspace $V_i^\vee$, alternatively a nonzero linear form on $V_i$. 
  
  With this notation, we can now define 
  $$
  f\colon\mathscr S\to \mathscr S'\colon (V_\bullet,\varphi\in V_i^\vee)\mapsto (V_\bullet', w\in V_{i+1}'),
  $$
  where $V_\bullet'$ is the decomposition obtained from $V_\bullet$ by taking 
  $$ V_\bullet'=\Bigl(V_1\stb V_{i-1},\ker \varphi, L\oplus V_{i+1},V_{i+2}\stb V_m\Bigr),$$
  where $L=(\ker \varphi)^\perp \cap V_i$ and $w:=\varphi^{-1}(1)\cap L$.

  The inverse map $g$ is defined as 
  $$
  g\colon \mathscr{S}'\to\mathscr{S}\colon (V_\bullet', w\in V_{i+1}')\mapsto (V_\bullet,\varphi\in V_i^\vee),
  $$ 
  where $V_\bullet$ is the decomposition obtained from $V_\bullet'$ by taking
  $$ V_\bullet=\Bigl(V_1'\stb V_{i-1}', V_i'\oplus \bra w\ket, w^\perp \cap V_{i+1}',V'_{i+2}\stb V_m\Bigr)$$
  and define the form $\varphi\in V_i^\vee$ by the requirements $\varphi(w):=1$ and $\varphi(V_i')\equiv 0$.

  It is then clear from the definitions that these are inverses of each other.

The tautological bundles split as described in \eqref{eq:pullback}, which follows from the form of the maps $f$ and $g$: the bundles $L$ and $\bra w\ket$ are the trivial bundles $\mathcal{O}$ in the statement.
\end{proof}
\begin{remark}
	The previous proof can be translated to a homogeneous space formulation along the lines of \cite[Proposition 6.1]{realgrassmannian}. 
\end{remark}
\begin{remark}
  Over the flag varieties $\Fl(\D)$ and $\Fl(\D')$, isomorphisms $f\colon\PP \DD_{i}^\vee\to \PP \DD_{i+1}'$ exist between the projectivizations of the bundles. 
\end{remark}

\begin{example}
  We want to point out that this result specifically also holds in the case $d_i=1$. The simplest example is given by $\op{Fl}(1,1)\cong\mathbb{P}^1$. The complement of the zero section in the canonical line bundle is $\mathbb{A}^2\setminus\{0\}$, and this can also be interpreted as the complement of the zero section of the trivial rank 2 bundle on $\op{Fl}(0,2)=\op{pt}$.
\end{example}

The induction step for extending the computation to the non-maximal rank case proceeds by a diagram-chase on the Gysin sequences \eqref{eq:gysin} (on page~\pageref{eq:gysin}) of the sphere bundles $\mathscr{S}_i\iso \mathscr{S}'_{i+1}$ via this isomorphism, together with the description of the $\mathbf{W}$-cohomology of the sphere bundle that we gave in Proposition~\ref{prop:spherebundle}. For later, we spell out the form of the Gysin sequences in the following proposition:

\begin{proposition}
  \label{prop:locgrass}
  For every twist $\L$, there are two localization sequences:
  \[
  \xymatrix{
&    \op{H}^{*-1}(\mathscr{S}_{i}) \ar[r]^-{\partial}\ar[d]^{\iso} &
     \op{H}^{*-d_i}_{\DD_i}\bigl(\DFl(\D)\bigr) \ar[r]^-{\op{e}_i} &
     \op{H}^{*}\bigl(\DFl(\D)\bigr) \ar[r]^-{q_i^*} &
     \op{H}^*(\mathscr{S}_{i}) \ar[d]^{\iso}&\\
&   \op{H}^{*-1}(\mathscr{S}'_{i+1}) \ar[r]^-{\partial}&
     \op{H}^{*-d_{i+1}-1}_{\DD_{i+1}'}\bigl(\DFl(\D')\bigr) \ar[r]^-{\op{e}_{i+1}'} &
     \op{H}^*\bigl(\DFl(\D')\bigr) \ar[r]^-{p_{i+1}^*} &
     \op{H}^*(\mathscr{S}'_{i+1}) &\phantom{a} \hspace{-1.8 cm } ,
  }
  \]
  where $\op{H}^*(X)=\op{H}^*\bigl(X;\mathbf{W}(\L)\bigr)$, $\op{H}^*_{\EE}(X)=\op{H}^*\bigl(X;\mathbf{W}\bigl(\L\otimes \det(\EE)\bigr)\bigr)$,  and $\op{e}_i= \op{e}(\DD_i)$, $\op{e}_{i+1}'= \op{e}(\DD_{i+1}')$.
\end{proposition}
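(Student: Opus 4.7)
The plan is to obtain both sequences as direct specializations of the Gysin sequence \eqref{eq:gysin}, then glue them along the isomorphism of sphere bundles from Proposition~\ref{prop:move1}.

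First, I would apply the localization sequence \eqref{eq:localization} to the zero section inclusion $\DFl(\D) \hookrightarrow \DD_i^\vee \hookleftarrow \mathscr{S}_i$. Since the normal bundle of the zero section is $\DD_i^\vee$ itself (of rank $d_i$), the localization sequence specialises to the Gysin sequence for $\DD_i^\vee$, with the middle term involving the twist $\L\otimes \det \DD_i^\vee$ on $\DFl(\D)$ and the Euler class $\op{e}(\DD_i^\vee)$ appearing as the multiplicative connecting map. Invoking Proposition~\ref{prop:euler-dual}, the isomorphism $\psi$ identifies the twist $\mathbf{W}(\det \DD_i^\vee)$ with $\mathbf{W}(\det \DD_i)$ (via multiplication by the square of a line bundle), and sends $\op{e}(\DD_i^\vee)$ to $(-1)^{d_i}\op{e}(\DD_i)$. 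Translating to the shorthand of \eqref{eq:notation}, the middle term becomes $\op{H}^{*-d_i}_{\DD_i}\bigl(\DFl(\D)\bigr)$ and the connecting map becomes multiplication by $\op{e}_i$ (the sign is absorbed without affecting exactness). Applying the identical argument to the zero section of $\DD'_{i+1}$ over $\DFl(\D')$ yields the second Gysin sequence with connecting map $\op{e}'_{i+1}$.

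The vertical isomorphisms on the sphere bundle terms are then furnished by the scheme isomorphism $f\colon \mathscr{S}_i \xrightarrow{\cong}\mathscr{S}'_{i+1}$ of Proposition~\ref{prop:move1}, applied via contravariant functoriality of $\mathbf{W}$-cohomology.

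The main point requiring attention is the compatibility of twists: one must check that, for a fixed line bundle $\L$ on $\DFl(\D)$, the twist $q_i^\ast \L$ on $\mathscr{S}_i$ corresponds, under $f^\ast$, to $p_{i+1}^\ast \L'$ for an appropriate line bundle $\L'$ on $\DFl(\D')$. This is bookkeeping based on the bundle splittings \eqref{eq:pullback} of Proposition~\ref{prop:move1}: the isomorphisms $g^\ast q^\ast \DD_i \cong p^\ast \DD'_i \oplus \mathscr{O}$ and $f^\ast p^\ast \DD'_{i+1}\cong q^\ast \DD_{i+1}\oplus \mathscr{O}$ force $f^\ast p^\ast \det \DD'_j \cong q^\ast \det \DD_j$ for every $j$ (with trivial discrepancy coming from the trivial summands), so any twist constructed from determinants of subquotient bundles matches up on the common sphere bundle. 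Once this twist matching is in place, aligning the two Gysin sequences along the vertical isomorphisms produces precisely the diagram of the proposition.
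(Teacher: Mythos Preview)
Your proposal is correct and matches the paper's approach: the paper states this proposition without proof, treating it as an immediate specialization of the Gysin sequence \eqref{eq:gysin} for the two sphere bundles, linked by the isomorphism $f\colon\mathscr{S}_i\xrightarrow{\cong}\mathscr{S}'_{i+1}$ of Proposition~\ref{prop:move1}. Your write-up simply makes explicit the twist bookkeeping (via Proposition~\ref{prop:euler-dual} and the bundle splittings \eqref{eq:pullback}) that the paper leaves to the reader, noting only that ``the boundary maps and multiplication with the Euler classes change the twists.''
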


It should be noted that the boundary maps and multiplication with the Euler classes change the twists in the Witt-sheaf cohomology.

\subsection{Induction step: $\mathbf{W}$-cohomology of the sphere bundle}

In this section, we discuss the behaviour of $\mathbf{W}$-cohomology under the change from $\D=(d_1,\dots,d_m)$ to $\D'=(d_1-1,d_2+1,d_3,\dots,d_m)$. For the Grassmannian special case, cf.~\cite{sadykov} in a singular cohomology situation or \cite[Section 6.2]{realgrassmannian} in a Witt-sheaf cohomology situation. For readability,  we will use the simplified notation of \eqref{eq:notation}, cf.\ page~\pageref{eq:notation}.

The induction step consists of two parts. Assume that the cohomology of $\Fl(\D)$ is described by the presentation in Definition~\ref{def:WD}, i.e., we have an isomorphism ${\rm H}^*_\oplus\bigl(\Fl(\D)\bigr)\iso W_\D$. The following result then describes the $\mathbf{W}$-cohomology of the sphere bundle $\mathscr{S}_i$ associated to $\DD_i^\vee$ as $\op{coker}\rm{e}_i\otimes \La[R]$, combining Proposition~\ref{prop:spherebundle} and the annihilator computations of Theorem~\ref{thm:annihilator} in Section~\ref{sec:ann-euler}. In the next subsection, we will then deduce from this description of the sphere bundle cohomology that the cohomology $\Fl(\D')$ also satisfies the presentation from Definition~\ref{def:WD}. 

\begin{proposition}
  \label{prop:sphere}
  Fix $\D=(d_1,\dots,d_m)$ with $d_i$ even, and assume that ${\rm H}^*_\oplus\bigl(\Fl(\D)\bigr)\iso W_\D$, with $W_\D$ as in Definition~\ref{def:WD}. Denoting by ${\rm e}_i$ the endomorphism of ${\rm H}^*_\oplus\bigl(\Fl(\D)\bigr)$ given by multiplication with the Euler class ${\rm e}_i={\rm e}(\DD_i^\vee)$ of the bundle $\DD_i^\vee$, we have $\ker {\rm e}_i=(x)$ with $x\in {\rm H}^*_\oplus\bigl(\Fl(\D)\bigr)$ as in Theorem~\ref{thm:annihilator}. Then the $\mathbf{W}$-cohomology of the sphere bundle $\mathscr{S}_i$ associated to $\DD_i^\vee$ is
  $$
  \bigoplus_\L \op{H}^*\bigl(\mathscr{S}_i,\mathbf{W}(\L)\bigr)\cong \op{coker} {\rm e}_i \otimes \Lambda[R],
  $$
  where $R$ is an element satisfying $\partial R=x$. The element $R$ has degree $N-1$ if all $d_i$ are even and $4q-1$ otherwise, where $q=\sum_{i=1}^m \lfloor \frac{d_i}{2}\rfloor$ and $N=\sum_{i=1}^md_i$.
\end{proposition}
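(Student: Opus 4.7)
\medskip

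\noindent\emph{Plan of proof.} The proof will be an application of Proposition~\ref{prop:spherebundle} to the sphere bundle $q\colon \mathscr{S}_i\to \Fl(\D)$ of $\DD_i^\vee$ (passed through the $\mathbb{A}^1$-equivalent replacement $\DFl(\D)$). Since $\DD_i^\vee$ has rank $d_i$, the Gysin sequence (cf.\ Proposition~\ref{prop:locgrass} and the notation of Section~\ref{sec:Gysin}) yields the short exact sequence
\[
0\to \coker \mathrm{e}_i \xrightarrow{q^*} {\rm H}^*_\oplus(\mathscr{S}_i) \xrightarrow{\partial} \ker \mathrm{e}_i \to 0.
\]
By the hypothesis ${\rm H}^*_\oplus\bigl(\Fl(\D)\bigr)\iso W_\D$ together with Theorem~\ref{thm:annihilator}, $\ker \mathrm{e}_i = (x)$ is a principal ideal with $x$ the element specified in the statement. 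To apply Proposition~\ref{prop:spherebundle}(2) it remains to verify that $(x)$ is a \emph{free} $C$-module of rank one, with $C:=\coker \mathrm{e}_i$.

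First I would establish this freeness by showing that $\mathrm{Ann}_{W_\D}(x)=(\mathrm{e}_i)$; equivalently, that multiplication by $x$ induces an isomorphism $C\xrightarrow{\cong}(x)$. The natural way to do this is via Proposition~\ref{prop:PDbasis}: take $H=W_\D$ and $I=(\mathrm{e}_i)$, so that the conclusion is exactly the desired freeness with generator $x$. The hypotheses of Proposition~\ref{prop:PDbasis} demand a Poincar\'e dual basis for $W_\D$ and for $W_\D/(\mathrm{e}_i)$; these can be propagated from the Chow ring of an auxiliary flag variety, where Poincar\'e duality is known by Remark~\ref{rem:ChowPDbasis}, through the constructions of Section~\ref{sec:ann-euler}. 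Specifically, in the (odd) case $W_\D$ is obtained from a Chow ring by iterated adjunction of square roots of top Pontryagin classes (cf.\ \eqref{eq:recursion} and Lemma~\ref{lem:adjoiningeuler}) and tensoring with the free exterior algebra $\Lambda$; in the (even) case it involves the additional quotient $A'/(\mathrm{e}_1\cdots \mathrm{e}_m)$ described by Corollary~\ref{cor:quotientdecomposition}. Each of these operations preserves the Poincar\'e dual basis property, and the ideal $(\mathrm{e}_i)$ is spanned by a subset of the basis, so the hypotheses of Proposition~\ref{prop:PDbasis} are satisfied.

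Once freeness is established, Proposition~\ref{prop:spherebundle}(2) gives the splitting
\[
{\rm H}^*_\oplus(\mathscr{S}_i)\cong \coker \mathrm{e}_i \,\oplus\, R\cdot \coker \mathrm{e}_i
\]
for any choice of $R\in {\rm H}^*_\oplus(\mathscr{S}_i)$ with $\partial R=x$. Since $R$ has odd degree (see below), graded-commutativity forces $R^2$ to be $2$-torsion in $C$, and in fact $R^2$ lies in degrees above the top degree of $C$ and hence vanishes, so this decomposition can be rewritten multiplicatively as $C\otimes\Lambda[R]$. Finally, the degree of $R$ follows from the degree shift of the boundary: from Proposition~\ref{prop:locgrass}, $\partial R=x$ with $\partial$ shifting degree by $-d_i+1$, so $\deg R=\deg x+d_i-1$. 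In the (even) case $\deg x=N-d_i$ gives $\deg R=N-1$, and in the (odd) case $\deg x=4q-d_i$ gives $\deg R=4q-1$, matching the claim.

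The main obstacle is the freeness step: while principality of $\ker \mathrm{e}_i$ is already done in Theorem~\ref{thm:annihilator}, upgrading it to freeness over $C$ requires a Poincar\'e duality argument for $W_\D$ that propagates cleanly through both the square-root adjunction of Lemma~\ref{lem:adjoiningeuler} and the quotient construction behind Corollary~\ref{cor:quotientdecomposition}. Everything else (Gysin sequence, splitting, degree bookkeeping) is essentially formal once that structural input is in place.
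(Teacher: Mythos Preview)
Your overall strategy is correct and matches the paper's: set up the Gysin short exact sequence, establish that $\ker\mathrm{e}_i$ is a free rank-one $C$-module so that Proposition~\ref{prop:spherebundle}(2) applies, and then verify $R^2=0$ to upgrade the $C$-module splitting to $C\otimes\Lambda[R]$. Your freeness approach via Proposition~\ref{prop:PDbasis} is essentially what the paper does as well (it refers back to the structural results of Section~\ref{sec:ann-euler} and the argument of Theorem~\ref{thm:WDfree}), and the degree computation for $R$ is right.

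The gap is in your argument for $R^2=0$. The claim that ``$R^2$ lies in degrees above the top degree of $C$'' is false in general. For example, take $\D=(2,2,2,2)$: then $N=8$, $r=N-1=7$, $2r=14$, whereas the trivial-twist part of $C$ is the Pontryagin subalgebra modulo $\mathrm{p}_{d_i}(\DD_i)$, which is $\CH^*\bigl(\Fl(1,1,1,1)\bigr)/(\mathrm{c}_1(\DD_i))$ with degrees quadrupled, and this has top degree $20>14$. The graded-commutativity observation that $2R^2=0$ does not help either, since ${\rm W}(F)$ can have $2$-torsion.

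What actually forces $R^2=0$ is a twist-and-congruence argument. First, since $\partial R=x$ lands in the $\det\DD_i$-twist, $R$ itself lies in the \emph{trivial} twist. One then shows that in the trivial twist both $C^r=0$ and $C^{2r}=0$. In the (even) case, the trivial-twist part of $C$ is generated by Pontryagin classes, hence concentrated in degrees $\equiv 0\bmod 4$; since $r=N-1$ is odd and $2r\equiv 2\bmod 4$, both vanish. In the (odd) case, $r=4q-1$, and the odd-degree generators are the $R_l$ with $l>q$; one checks $C^r=0$ since $4l-1>4q-1$ for $l>q$, and $C^{2r}=0$ since landing in degree $8q-2$ with residue $2\bmod 4$ would require a product of at least two $R_l$'s, but $\deg R_{q+1}R_{q+2}\geq 8q+10>8q-2$. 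The vanishing $C^r=0$ gives $\partial R^2\in\ker\mathrm{e}_i\cap{\rm H}^{2r-d_i+1}_{\DD_i}=C^r\cdot x=0$, whence $R^2\in q^*(C^{2r})=0$.
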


\begin{proof}
  Introduce the shorthand notation $C:=\coker \op{e}_i$ for the whole cokernel and $C^d:=\coker \op{e}_i\cap \op{H}^d$. Theorem~\ref{thm:annihilator} shows that $\ker\e_i$ is a principal ideal, and an argument as in the proof of Theorem~\ref{thm:WDfree} shows that $\ker\e_i$ is indeed a free $C$-module of rank 1.  Therefore, the conditions of Proposition~\ref{prop:spherebundle} are satisfied, so as a $C$-module for an element $R\in \op{H}^r$ in the trivial twist:
  \[\op{H}^*_{\oplus}(\mathscr{S}_i)=C\bra 1,R\ket,\qquad r:=\deg R=\deg x+d_i-1.\]
  It remains to verify $R^2=0$. We will prove this using the Gysin sequence \eqref{eq:Gysinkercoker} by showing that 
  \begin{equation}\label{eq:partialR2ker}
    \partial R^2\in \ker \op{e}_i\cap \op{H}^{2r-d_i+1}_{\DD_i}=(0)	
  \end{equation} 
  and that in the trivial twist $C^{2r}=(0)$. 
  To show \eqref{eq:partialR2ker}, since $\ker \op{e}_i=C\bra x\ket$ by Theorem~\ref{thm:annihilator} and $\deg x=r+1-d_i$, it is enough to show that in the trivial twist $C^r=(0)$.
	
  If all $d_i$ are even, then $\deg x=N-d_i$ and $r=N-1$. In the trivial twist $C$ is generated by Pontryagin classes of degrees divisible by 4, so since $r$ is odd, $C^r=(0)$ and $C^{2r}=(0)$.
  
  If not all $d_i$ are even, then $\deg x=4q-d_i$ and $r=4q-1$. The list of odd degree generators is $R_{l}$ of degrees $4l-1$, $l=q+1\stb n$. This already implies that $C^{4l-1}=(0)$ for $l\leq q$, so that $C^r=(0)$. To land in degree $2r=8q-2$, by looking at residues modulo 4, one must take the product of at least two $R_l$'s. However 
  $$\deg R_iR_j\geq (4q+4)+(4q+8)-2=8q+10.$$ 
  Therefore $C^{2r}=(0)$ as well.	
\end{proof}

\subsection{End of proof of Theorem~\ref{thm:main}}
We will use the notation
\[
{\rm p}_{\rm top}(\EE):={\rm p}_{2\lfloor \frac{\op{rk} \EE}{2}\rfloor}(\EE)=\left\{\begin{array}{ll}
	{\rm p}_{\op{rk} \EE}(\EE) & \op{rk} \EE \textrm{ even}\\
	{\rm p}_{\op{rk} \EE-1}(\EE) & \op{rk} \EE\textrm{ odd}.
\end{array}\right.
\]
to denote the top Pontryagin class of a vector bundle $\EE$. Note that in $\mathbf{W}$-cohomology the odd Pontryagin classes are zero, cf.~\cite[Proposition 4.5]{realgrassmannian}. 

\begin{proposition}
  \label{prop:proof-main-thm}
  Fix partitions
  \[
  \D=(d_1,\dots,d_m)\quad \textrm{ and } \quad \D'=(d_1,\dots,d_i-1,d_{i+1}+1,\dots,d_m)
  \]
  of $N=\sum d_i$, and assume $d_i,d_{i+1}$ are even. Assume that the total $\mathbf{W}$-cohomology ring for $\op{Fl}(\D)$ satisfies the description of Theorem~\ref{thm:main}. Then the total $\mathbf{W}$-cohomology ring $\bigoplus_{\L}{\rm H}^*\bigl(Fl(\D'),\mathbf{W}(\L)\bigr)$ is also described by the presentation in Definition~\ref{def:WD}, and Theorem~\ref{thm:main} holds for $\op{Fl}(D')$. 
\end{proposition}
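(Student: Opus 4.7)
The plan is to use the sphere bundle isomorphism $\mathscr{S} \cong \mathscr{S}'$ from Proposition~\ref{prop:move1} as a bridge between $\DFl(\D) \simeq \Fl(\D)$ and $\DFl(\D') \simeq \Fl(\D')$, and then to combine this with the two Gysin sequences of Proposition~\ref{prop:locgrass} to read off a presentation of $\op{H}^*_\oplus\bigl(\DFl(\D')\bigr)$ from the inductively known $\op{H}^*_\oplus\bigl(\DFl(\D)\bigr) \cong W_\D$. In brief, the $\mathscr{S}$-side Gysin sequence determines $\op{H}^*_\oplus(\mathscr{S})$, and then the $\mathscr{S}'$-side Gysin sequence identifies $\op{H}^*_\oplus\bigl(\DFl(\D')\bigr)$ as a subring of $\op{H}^*_\oplus(\mathscr{S}')$.

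First I would compute $\op{H}^*_\oplus(\mathscr{S})$. Since $d_i$ is even, Theorem~\ref{thm:annihilator} identifies $\Ann\bigl(\op{e}(\DD_i)\bigr) = (x)$ in $W_\D$, and (as in the proof of Theorem~\ref{thm:WDfree}) the kernel is a free rank-one module over $\coker(\op{e}_i)$. Proposition~\ref{prop:sphere} then gives
\[
\op{H}^*_\oplus(\mathscr{S}) \cong \coker(\op{e}_i) \otimes_{\op{W}(F)} \op{W}(F)[R]/(R^2),
\]
where the degree of $R$ matches the degree ($4q{-}1$ in general, $N{-}1$ in the all-even case) predicted for the new exterior generator $R_q$ of $W_{\D'}$. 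Transporting this description across the isomorphism $\mathscr{S} \cong \mathscr{S}'$ and using the identifications~\eqref{eq:pullback} matches the Pontryagin classes of the $\DD_k$ with those of the $\DD_k'$ and shows that $\op{e}(p^*\DD_{i+1}')$ vanishes on $\mathscr{S}'$ (being the Euler class of a bundle with trivial summand).

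Next, both $d_i' = d_i-1$ and $d_{i+1}' = d_{i+1}+1$ are odd, so $\op{e}(\DD_i')$ and $\op{e}(\DD_{i+1}')$ vanish in $\op{H}^*_\oplus\bigl(\DFl(\D')\bigr)$ by the vanishing of the universal Euler class in the Witt-sheaf cohomology of $\op{B}\op{GL}_{2k+1}$, cf.~\cite[Theorem 6.4]{realgrassmannian}. With $\op{e}(\DD_{i+1}')=0$, the multiplication-by-Euler arrow in Proposition~\ref{prop:locgrass} is zero, so the Gysin sequence for $p\colon\mathscr{S}' \to \DFl(\D')$ degenerates into short exact sequences
\[
0 \to \op{H}^*_\oplus\bigl(\DFl(\D')\bigr) \xrightarrow{p^*} \op{H}^*_\oplus(\mathscr{S}') \xrightarrow{\partial'} \op{H}^{*+1-d_{i+1}'}_{\oplus,\DD_{i+1}'}\bigl(\DFl(\D')\bigr) \to 0,
\]
identifying $\op{H}^*_\oplus\bigl(\DFl(\D')\bigr)$ with the subring $\ker\partial' \subseteq \op{H}^*_\oplus(\mathscr{S}')$.

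Finally, I would define a graded $\op{W}(F)$-algebra map $\Phi\colon W_{\D'} \to \op{H}^*_\oplus\bigl(\DFl(\D')\bigr)$ sending Pontryagin classes and the nonzero Euler classes to their geometric counterparts (with $\op{e}(\DD_i'), \op{e}(\DD_{i+1}') \mapsto 0$), the old exterior classes $R_l$ ($l>q$) to the counterparts inherited from $W_\D$ via the iterated sphere bundle construction, and the new generator $R_q$ to a class on $\DFl(\D')$ built, by analogy with \cite[Section 6.2]{realgrassmannian}, from the Euler class of the tautological quotient bundle $\mathscr{Q} = p^*\DD_{i+1}'/\mathscr{O}$ on $\mathscr{S}'$. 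The relations of Definition~\ref{def:WD} are then verified by pullback to $\mathscr{S}' \cong \mathscr{S}$ (the all-even product relation being vacuous since $d_i', d_{i+1}'$ are odd), injectivity of $\Phi$ follows from injectivity of $p^*$ together with the explicit description of $\op{H}^*_\oplus(\mathscr{S}')$, and surjectivity follows from a $\op{W}(F)$-rank count in which both sides are free of total rank $\dim\coker(\op{e}_i)$, using Proposition~\ref{prop:WDfree} and the Gysin short exact sequence. The hard part will be this last identification of $R_q$ as a genuine class on $\DFl(\D')$: the transgression class $R \in \op{H}^*_\oplus(\mathscr{S}')$ is \emph{not} in $\ker\partial'$ (indeed $\partial R = x \neq 0$ for the $\mathscr{S}$-side boundary), so producing $R_q$ requires a careful comparison of the two Gysin boundaries $\partial$ and $\partial'$ on the common sphere bundle and checking that the resulting class has the correct degree and twist and squares to zero.
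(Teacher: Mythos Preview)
Your overall architecture matches the paper's: compute $\op{H}^*_\oplus(\mathscr{S})$ via Proposition~\ref{prop:sphere}, transport across $\mathscr{S}\cong\mathscr{S}'$, and use that $\op{e}(\DD_{i+1}')=0$ to collapse the $\mathscr{S}'$-side Gysin sequence into short exact sequences identifying $\op{H}^*_\oplus\bigl(\Fl(\D')\bigr)$ with $\ker\partial'$. That part is fine.

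The genuine gap is in your last paragraph. You assert that $R\notin\ker\partial'$, citing $\partial R=x\neq 0$. But $\partial$ is the boundary for the bundle $\DD_i^\vee$ over $\DFl(\D)$, whereas $\partial'$ is the boundary for the bundle $\DD_{i+1}'$ over $\DFl(\D')$; these are entirely different maps on the common sphere bundle, and there is no reason for them to agree. In fact the paper's argument is precisely that $R\in\ker\partial'$. The reason is a twist argument: $R$ lives in the \emph{untwisted} cohomology $\op{H}^*(\mathscr{S}')$, so $\partial' R$ lands in $\op{H}^*_{\DD_{i+1}'}\bigl(\Fl(\D')\bigr)$, and one shows this twisted group vanishes (both $d_i'$ and $d_{i+1}'$ being odd forces the $\DD_{i+1}'$-twisted part of $\coker(\e_i)\otimes\Lambda[R]$ to be exactly $\e_{i+1}\cdot(\text{trivial twist})$, and a degree-by-degree bootstrap through the two coupled short exact sequences then gives $\op{H}^*_{\DD_{i+1}'}\bigl(\Fl(\D')\bigr)=0$). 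The same argument handles the old $R_j$'s. Thus $R$ itself \emph{is} the new exterior generator on $\Fl(\D')$, with no extra construction needed.

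Your proposed workaround does not work as written: the class ``$\op{e}\bigl(p^*\DD_{i+1}'/\mathscr{O}\bigr)$'' is, by \eqref{eq:pullback}, exactly $q^*\op{e}(\DD_{i+1})=\e_{i+1}$, which is the one class \emph{not} in $\ker\partial'$ (the paper shows $\partial'\e_{i+1}=1$ by degree). So this produces the complement of $\op{Im}p^*$, not a lift of $R_q$. The correct picture is: $\op{Im}p^*$ is the subalgebra generated by $\{\op{p}_j(\DD_k),\ \e_l\ (l\neq i,i{+}1),\ R_j,\ R\}$, the complement is $\e_{i+1}\cdot\op{Im}p^*$, and $\partial'(\e_{i+1}\cdot\lambda)=\lambda$ by the derivation property. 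Once you fix the $\partial$ vs.\ $\partial'$ confusion, your map $\Phi$ and the rank count go through without the detour.
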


\begin{proof}
  We use the simplified notation of \eqref{eq:notation}.
  Since $\op e_{i+1}'=0$, the lower exact sequence of Proposition~\ref{prop:locgrass} reduces to the short exact sequence
  \[\xymatrix{
    0 \ar[r]^-{}&
    \op{H}_{\L}^*\bigl(\op{Fl}(\D')\bigr) \ar[r]^-{p_{i+1}^*} &
    \op{H}_{\L}^*(\mathscr{S}'_{i+1}) \ar[r]^-{\partial} &
    \op{H}_{\L\otimes \DD_{i+1}'}^{*-d_{i+1}}\bigl(\op{Fl}(\D')\bigr) \ar[r]^-{} &
    0.
  }\]
  By Propositions~\ref{prop:move1} and \ref{prop:sphere},
  $$\op{H}_\oplus ^*(\mathscr{S}_{i+1}')\simeq\op{H}_\oplus^*(\mathscr{S}_i)\simeq \op{coker} {\rm e}_i \otimes \Lambda[R],$$
  with $R\in \op{H}^{4q-1}$ or $R\in \op{H}^{N-1}$, depending on the parity of the other $d_j$'s. There are three types of generators of $\op{coker} {\rm e}_i$:
  \begin{itemize}
  \item characteristic classes $\op{p}_j(\DD_i)$ and $\op{e}_l$ for $l\neq i, i+1$,
  \item all $R_j$'s,
  \item an additional characteristic class ${\rm e}_{i+1}$, which we treat separately.
  \end{itemize}
  We will show that the subalgebra generated by characteristic classes except $e_{i+1}$, $R$ and the $R_j$'s is the image of $p_{i+1}^*$. Indeed, by \eqref{eq:pullback}, all polynomials in characteristic classes of the bundles $\DD_l$ are in the image of $p_{i+1}^*$; for instance
  \[
p_{i+1}^*{\rm p}_j(\DD_i'\oplus \mathcal{O})=q_i^*{\rm p}_j(\DD_i)
  \] 
  and similarly, this follows for the rest of the Pontryagin classes by naturality of pullback. For Euler classes $p_{i+1}^*{\rm e}(\DD_l)=q_i^*{\rm e}(\DD_l)$ for $l\neq i,i+1$. 
  
  Since $R_j\in \op{H}^*(\mathscr{S}_{i+1}')$, the boundary $\partial R_j$ lives in $\op{H}_{\DD_{i+1}'}^{*}\bigl(\op{Fl}(\D')\bigr)$, which equals $(0)$, since $e_{i+1}'=0$ and $e_{i}'=0$ (both $d_i'$ and $d_{i+1}'$ are odd), similarly for the additional generator $R$. So all $R$'s are also in the image of $p_{i+1}^*$.
  
  By degree considerations, the exceptional characteristic class ${\rm e}_{i+1}$ necessarily maps to 1 via $\partial$. Finally, by the derivation property, ${\rm e}_{i+1}\cdot \la$ maps to $\la$ for any $\la\in \op{Im}p_{i+1}^*$. This allows us to conclude the proof because $\op{Im} p_{i+1}^*$ coincides with the description of $\op{H}_{\oplus}^*\bigl(\op{Fl}(\D')\bigr)$ in Theorem~\ref{thm:main}, and $p_{i+1}^*$ is injective.
\end{proof}

\section{Chow--Witt rings and singular cohomology}
\label{sec:chowwittsingular}

In this section, we draw some consequences from Theorem~\ref{thm:main}. On the one hand, we can give a description of the $\mathbf{I}$-cohomology and Chow--Witt ring of partial flag varieties. On the other hand, we can also obtain some new consequences on the integral singular cohomology of real partial flag manifolds. 

\subsection{Relationship to Chow--Witt rings}
\label{sec:relationship}
We first formulate the description of the total $\mathbf{I}$-cohomology ring of the partial flag varieties, which is a direct consequence of the description of the total $\mathbf{W}$-cohomology ring in Theorem~\ref{thm:main}. 

\begin{theorem}
  \label{thm:icohomology}
  For any partition $\D=(d_1,\dots,d_m)$, there is an isomorphism of graded  $\op{W}(F)$-modules
  \[
  \bigoplus_{q,\L}\op{H}^q\bigl(\op{Fl}(\D),\mathbf{I}^q(\L)\bigr)\cong \bigoplus_{\L}\op{Im}(\beta_{\L})\oplus\bigoplus_{q,\L} \op{H}^q\bigl(\op{Fl}(\D),\mathbf{W}(\L)\bigr).
  \]
  Here $\beta_{\L}\colon {\rm Ch}^q\bigl(\Fl(\D)\bigr)\to {\rm H}^{q+1}\bigl(\Fl(\D),\mathbf{I}^{q+1}(\L)\bigr)$ is the twisted Bockstein map, cf.~Section~\ref{sec:prelims}. 
\end{theorem}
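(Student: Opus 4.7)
The plan is to reduce the statement to a direct application of Lemma~\ref{lem:wsplit}, summed over all twists and degrees. The key input is freeness of $\mathbf{W}$-cohomology as a $\op{W}(F)$-module, which allows us to split off the $\mathbf{W}$-cohomology summand from the B\"ar sequence.

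First, I would invoke Theorem~\ref{thm:main}, which identifies the total $\mathbf{W}$-cohomology ring of $\op{Fl}(\D)$ with the $\op{W}(F)$-algebra $W_\D$ of Definition~\ref{def:WD}. By Theorem~\ref{thm:WDfree}, $W_\D$ is free as a module over the coefficient ring~$R=\op{W}(F)$. In particular, for every line bundle $\L$ on $\op{Fl}(\D)$ and every cohomological degree~$q$, the group $\op{H}^q\bigl(\op{Fl}(\D),\mathbf{W}(\L)\bigr)$ is a free $\op{W}(F)$-module, since it occurs as a homogeneous summand (with respect to the $\Z\oplus\op{Pic}/2$-grading) of the free module $W_\D$.

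With the freeness hypothesis verified, Lemma~\ref{lem:wsplit} applies to give, for each $q$ and each $\L\in\op{Pic}\bigl(\op{Fl}(\D)\bigr)/2$, a splitting
\[
\op{H}^q\bigl(\op{Fl}(\D),\mathbf{I}^q(\L)\bigr)\cong \op{Im}(\beta_\L)\oplus \op{H}^q\bigl(\op{Fl}(\D),\mathbf{W}(\L)\bigr),
\]
where $\op{Im}(\beta_\L)$ denotes the image of the twisted Bockstein $\beta_\L\colon \op{Ch}^{q-1}\bigl(\op{Fl}(\D)\bigr)\to \op{H}^q\bigl(\op{Fl}(\D),\mathbf{I}^q(\L)\bigr)$ in the relevant degree. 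Taking the direct sum of these splittings over all $q$ and all $\L$ yields the desired isomorphism of graded $\op{W}(F)$-modules.

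There is no real obstacle: the work has already been done in establishing Theorem~\ref{thm:main} and the freeness result Theorem~\ref{thm:WDfree}. The only point worth recording is that the splittings provided by Lemma~\ref{lem:wsplit} are not canonical (they depend on choosing a $\op{W}(F)$-module section of the surjection $\op{H}^q\bigl(\op{Fl}(\D),\mathbf{I}^q(\L)\bigr)\twoheadrightarrow \op{H}^q\bigl(\op{Fl}(\D),\mathbf{W}(\L)\bigr)$), so the resulting isomorphism is only an isomorphism of graded $\op{W}(F)$-modules, not of rings. This matches the statement of the theorem.
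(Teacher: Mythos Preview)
Your proof is correct and follows the same route as the paper: freeness of the $\mathbf{W}$-cohomology (from Theorem~\ref{thm:main} via Theorem~\ref{thm:WDfree}) feeds into Lemma~\ref{lem:wsplit}, and summing over $q$ and $\L$ gives the result. The paper's proof is a one-line version of what you wrote.
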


\begin{proof}
  This follows directly from Theorem~\ref{thm:main} (which implies in particular that Witt-sheaf cohomology is a free $\op{W}(F)$-module) using Lemma~\ref{lem:wsplit}. 
\end{proof}

Recall from Proposition~\ref{prop:fiberprod} that the Chow--Witt groups for a projective homogeneous variety for a reductive group have a pullback description:
\[
\xymatrix{
	&\widetilde{\op{CH}}^q\bigl(\Fl(\D),\L\bigr) \ar[r] \ar[d] 
	& \ker \partial_{\L}\subseteq \op{CH}^q\bigl(\Fl(\D)\bigr)\ar[d] &
	\\
	&\op{H}^q\bigl(\Fl(\D),\mathbf{I}^q(\L)\bigr) \ar[r] &
	\op{Ch}^q\bigl(\Fl(\D)\bigr)&\phantom{a} \hspace{-2 cm} .
}
\]
Therefore, Theorem~\ref{thm:icohomology} reduces the computation of the Chow--Witt ring of a flag variety $\Fl(\D)$ to a description of the maps in the pullback diagram. The right-hand vertical morphism is well-understood from classical intersection theory (plus some possibly less well-known information on Steenrod squares as in \cite{duan:zhao}, \cite[Proposition 6.3]{matszangosz}). For the lower vertical morphism, we have the following partial description of the reduction morphism which follows from the case of Grassmannians, cf.~\cite{realgrassmannian}. 

\begin{proposition}
  \label{prop:reduction}
  Fix $\D=(d_1,\dots,d_m)$. 
  Let $\L$ be any line bundle on the flag variety $\op{Fl}(\D)$, and set $N=\sum_{i=1}^md_i$, $n=\sum_{i=1}^m\lfloor\frac{d_i}{2}\rfloor$. As before, denote by $\DD_j$ the subquotient bundle of rank $d_j$ over $\op{Fl}(\D)$. The reduction morphism is given as follows
  \begin{eqnarray*}
    \rho\colon \op{H}^q\bigl(\op{Fl}(\D),\mathbf{I}^q(\L)\bigr)&\to& \op{Ch}^q\bigl(\op{Fl}(\D)\bigr)\colon\\
    \op{p}_i(\DD_j) & \mapsto & \overline{\op{c}}_i(\DD_j)^2 \\
    \op{e}(\DD_j) & \mapsto & \overline{\op{c}}_{d_j}(\DD_j) \\
    \beta_{\L}(x) & \mapsto & \op{Sq}^2_{\L}(x) \hspace{0.9 cm}.
  \end{eqnarray*}
  The reduction morphism is injective on the image of $\beta_{\L}$. In particular, multiplication with classes in $\op{Im}\beta_{\L}$ can be determined by reduction to $\op{Ch}^*\bigl(\op{Fl}(\D)\bigr)$ where computations can be done via classical Schubert calculus.
\end{proposition}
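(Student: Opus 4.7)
The plan is to verify the three reduction formulas on generators by reducing to the universal case on classifying spaces (equivalently to the Grassmannian case treated in \cite{realgrassmannian}), and then to obtain the injectivity on $\op{Im}\beta_{\L}$ as a formal consequence of Theorem~\ref{thm:main}.

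First I would argue for the Pontryagin and Euler class formulas. Since the reduction morphism $\rho\colon\op{H}^*\bigl(-,\mathbf{I}^*(\L)\bigr)\to\op{Ch}^*(-)$ is natural for pullbacks along morphisms of smooth schemes, and since both ${\rm p}_i(\DD_j)$ and ${\rm e}(\DD_j)$ arise by pullback of the corresponding universal classes on $\op{BGL}_{d_j}$ via the classifying map of $\DD_j$ (and similarly $\overline{\rm c}_i(\DD_j)$ is the pullback of the universal mod~2 Chern class), the identities
\[
\rho\bigl({\rm p}_i(\DD_j)\bigr)=\overline{\rm c}_i(\DD_j)^2\qquad\textrm{and}\qquad\rho\bigl({\rm e}(\DD_j)\bigr)=\overline{\rm c}_{d_j}(\DD_j)
\]
follow from the corresponding universal identities on $\op{BGL}_{d_j}$ established in \cite{realgrassmannian}. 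Equivalently, one can pull back along a classifying map into a sufficiently large Grassmannian. The third formula $\rho\circ\beta_{\L}=\op{Sq}^2_{\L}$ is precisely the commutativity of the lower-right triangle of the key diagram recalled in Section~\ref{sec:prelims}; this was proved by Totaro for $\L=\mathscr{O}$ and extended to arbitrary twists by Asok and Fasel \cite{AsokFaselEuler}.

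It remains to see that $\rho$ is injective on $\op{Im}\beta_{\L}$. Theorem~\ref{thm:main} provides an explicit presentation of $\op{H}^*\bigl(\op{Fl}(\D),\mathbf{W}(\L)\bigr)$ from which freeness as a $\op{W}(F)$-module is immediate (this freeness is also recorded in the proof of Theorem~\ref{thm:icohomology}). The hypothesis of Lemma~\ref{lem:wsplit} is thus satisfied, and its conclusion contains exactly the injectivity of the reduction $\rho$ restricted to $\op{Im}\beta_{\L}$. No substantial obstacle arises: once the main presentation in Theorem~\ref{thm:main} is in hand, this proposition is a bookkeeping statement assembling naturality of $\rho$, the universal computation from \cite{realgrassmannian}, the Totaro--Asok--Fasel compatibility $\rho\circ\beta_{\L}=\op{Sq}^2_{\L}$, and the splitting from Lemma~\ref{lem:wsplit}.
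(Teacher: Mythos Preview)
Your proposal is correct and matches the paper's approach: the paper does not give a separate proof of this proposition but states just before it that the description ``follows from the case of Grassmannians, cf.~\cite{realgrassmannian}'', and your argument spells out exactly this reduction via naturality, the universal identities on $\op{BGL}_{d_j}$, the Totaro--Asok--Fasel compatibility $\rho\circ\beta_{\L}=\op{Sq}^2_{\L}$, and Lemma~\ref{lem:wsplit} for injectivity.
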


It remains to describe the reduction of the classes $\op{R}_j$. At this point, these are only well-defined generators in $\mathbf{W}$-cohomology, but the reduction map $\rho\colon{\rm H}^q(X,\mathbf{I}^q)\to {\rm Ch}^q(X)$ doesn't factor through ${\rm H}^q(X,\mathbf{W})$. To compute the reductions of the classes $\op{R}_j$, we therefore first need to lift them along the projection ${\rm H}^q\bigl(\Fl(\D),\mathbf{I}^q\bigr)\to {\rm H}^q\bigl(\Fl(\D),\mathbf{W}\bigr)$. Since the $\op{R}_j$ aren't characteristic classes, it is difficult to determine lifts in a canonical way.

As an alternative, we can define the following extension of the reduction map defined on $\mathbf{W}$-cohomology. To fix notation, recall that the Steenrod square ${\rm Sq}^2$ makes ${\rm Ch}^q(X)$ into a complex
\[
\cdots\xrightarrow{\op{Sq}^2}{\rm Ch}^{q-1}(X)\xrightarrow{\op{Sq}^2}{\rm Ch}^{q}(X)\xrightarrow{\op{Sq}^2}{\rm Ch}^{q+1}(X)\xrightarrow{\op{Sq}^2}\cdots
\]
of $\mathbb{F}_2$-vector spaces, whose cohomology is an algebraic version of Bockstein cohomology which we denote by ${\rm H}^q_{\op{Sq}^2}(X)$.

\begin{proposition}
  For any smooth $F$-scheme $X$,  the reduction map $\rho\colon{\rm H}^q(X,\mathbf{I}^q)\to {\rm Ch}^q(X)$ factors through a well-defined homomorphism 
\[
\overline{\rho}\colon {\rm H}^q(X,\mathbf{W})\to {\rm H}^q_{\op{Sq}^2}(X).
\]
\end{proposition}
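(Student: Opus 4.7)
The plan is to combine the two defining relations of the key diagram, namely $\rho\circ\beta=\op{Sq}^2$ (the Totaro--Asok--Fasel triangle) and $\beta\circ\rho=0$ (exactness of the B\"ar sequence), with an identification of $\op{H}^q(X,\mathbf{W})$ as an explicit quotient of $\op{H}^q(X,\mathbf{I}^q)$.

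First I would identify the kernel of the natural map $\op{H}^q(X,\mathbf{I}^q)\to \op{H}^q(X,\mathbf{W})$ induced by the inclusion of sheaves $\mathbf{I}^q\hookrightarrow\mathbf{W}=\mathbf{I}^0$. Factoring this inclusion as a chain $\mathbf{I}^q\hookrightarrow \mathbf{I}^{q-1}\hookrightarrow\cdots\hookrightarrow \mathbf{I}^0$ and considering the long exact sequence associated to each short exact sequence $0\to\mathbf{I}^k\to\mathbf{I}^{k-1}\to \mathbf{K}^{\op M}_{k-1}/2\to 0$, one uses that the Gersten resolution of $\mathbf{K}^{\op M}_{k-1}/2$ is concentrated in cohomological degrees at most $k-1$ to conclude that $\op{H}^p(X,\mathbf{K}^{\op M}_{k-1}/2)=0$ for $p\geq k$. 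At the first step $k=q$ this produces, using Bloch's formula $\op{H}^{q-1}(X,\mathbf{K}^{\op M}_{q-1}/2)=\op{Ch}^{q-1}(X)$, an exact sequence
$$\op{Ch}^{q-1}(X)\xrightarrow{\beta}\op{H}^q(X,\mathbf{I}^q)\to \op{H}^q(X,\mathbf{I}^{q-1})\to 0,$$
while at every subsequent step $k<q$ both neighbouring cohomology groups vanish by dimension and the induced map in $\op{H}^q$ is an isomorphism. Composing yields a natural identification $\op{H}^q(X,\mathbf{I}^q)/\op{Im}\beta\cong \op{H}^q(X,\mathbf{W})$.

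Next I would verify that $\rho$ lands in $\ker(\op{Sq}^2)$: for any $\alpha\in \op{H}^q(X,\mathbf{I}^q)$, exactness of the B\"ar sequence gives $\beta(\rho(\alpha))=0$, and the Totaro--Asok--Fasel relation then yields
$$\op{Sq}^2(\rho(\alpha))=\rho(\beta(\rho(\alpha)))=0.$$
Moreover $\rho(\op{Im}\beta)=\op{Sq}^2(\op{Ch}^{q-1}(X))\subseteq \op{Im}(\op{Sq}^2)$ directly from $\rho\circ\beta=\op{Sq}^2$. Combining these observations, $\rho$ descends to a well-defined homomorphism
$$\overline{\rho}\colon \op{H}^q(X,\mathbf{W})\cong \op{H}^q(X,\mathbf{I}^q)/\op{Im}\beta\longrightarrow \ker(\op{Sq}^2)/\op{Im}(\op{Sq}^2)=\op{H}^q_{\op{Sq}^2}(X).$$
There is no real obstacle here; the only substantive input is the dimensional vanishing for the Gersten complex of $\mathbf{K}^{\op M}_{k-1}/2$, and the same reasoning applies verbatim with twisted coefficients $\mathbf{I}^q(\L),\mathbf{W}(\L)$ using $\beta_\L$ and $\op{Sq}^2_\L$.
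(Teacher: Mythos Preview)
Your proposal is correct and follows essentially the same approach as the paper: both arguments combine $\rho\circ\beta=\op{Sq}^2$ with the exactness of the B\"ar sequence to show that $\rho$ lands in $\ker\op{Sq}^2$ and sends $\op{Im}\beta$ into $\op{Im}\op{Sq}^2$, then invoke the identification $\op{H}^q(X,\mathbf{W})\cong\op{H}^q(X,\mathbf{I}^q)/\op{Im}\beta$ to descend. The only notable difference is that you spell out this last identification via the filtration $\mathbf{I}^q\subset\cdots\subset\mathbf{I}^0=\mathbf{W}$ and Gersten dimensional vanishing, whereas the paper simply quotes it as part of the B\"ar sequence; your justification that $\op{Im}\rho\subseteq\ker\op{Sq}^2$ via $\beta\circ\rho=0$ is also slightly more transparent than the paper's appeal to $\op{Sq}^2\circ\op{Sq}^2=0$.
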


\begin{proof}
  We first note that the commutative diagram
\[
\xymatrix{
  {\rm Ch}^{q-1}(X) \ar[r]^{\beta} \ar[d]_= & {\rm H}^q(X,\mathbf{I}^q) \ar[d]_\rho \\
  {\rm Ch}^{q-1}(X) \ar[r]_{\op{Sq}^2}  & {\rm Ch}^q(X) 
}
\]
(which appears as a lower-right triangle in the key diagram on p.~\pageref{keydiagram}) implies that the image of the reduction map $\rho\colon{\rm H}^q(X,\mathbf{I}^q)\to {\rm Ch}^q(X)$ is contained in $\ker\left(\op{Sq}^2\colon{\rm Ch}^q(X)\to {\rm Ch}^{q+1}(X)\right)$, using that $\op{Sq}^2\circ\op{Sq}^2=0$. We can now extend the diagram to the right to have exact rows:
\[
\xymatrix{
&  {\rm Ch}^{q-1}(X) \ar[r]^{\beta} \ar[d]_= & {\rm H}^q(X,\mathbf{I}^q) \ar[r] \ar[d]_\rho & {\rm H}^q(X,\mathbf{W}) \ar[r] \ar@{.>}[d]^{\overline{\rho}} & 0&\\
 & {\rm Ch}^{q-1}(X) \ar[r]_{\op{Sq}^2}  & \ker\op{Sq}^2 \ar[r] & {\rm H}^q_{\op{Sq}^2}(X) \ar[r] & 0&\phantom{a}\hspace{-2 cm}.
}
\]
The top row is the B\"ar sequence, and the bottom row is the exact sequence defining ${\rm H}^q_{\op{Sq}^2}(X)$. The exactness of the top row implies the existence of the well-defined map $\overline{\rho}\colon{\rm H}^q(X,\mathbf{W})\to {\rm H}^q_{\op{Sq}^2}(X)$. 
\end{proof}

In the particular case of flag varieties $\Fl(\D)$, the $\mathbf{I}$-cohomology splits as described in Theorem~\ref{thm:icohomology}. The following proposition shows that the reduction morphism is compatible with this splitting, mapping the image of $\beta$ to the image of $\op{Sq}^2$, and Witt-sheaf cohomology to Bockstein cohomology.

\begin{proposition}
  Let $X$ be a smooth $F$-scheme such that the $\mathbf{W}$-cohomology of $X$ is free as a ${\rm W}(F)$-module. Then the reduction morphism $\rho\colon \op{H}^n\bigl(X,\mathbf{I}^n(\L)\bigr)\to \op{Ch}^n(X)$ fits into a commutative diagram of compatible splittings
  \[\xymatrix{
    &	\op{H}^n\bigl(X,\mathbf{I}^n(\L)\bigr)\ar[r]^-{\iso}\ar[d]_{\rho}& \op{Im}\beta_{\L}\oplus \op{H}^n\bigl(X,\mathbf{W}(\L)\bigr)\ar[d]^-{(\rho_I,\overline{\rho})}&\\
    &	\Ch^n(X)\ar[r]_-{\iso}& \op{Im}\Sq^2_{\L}\oplus {\rm H}^n_{\op{Sq}^2_{\L}}(X)
  }
  \]
  where $\rho_I=\rho|_{\op{Im} \be_{\L}}$ is an isomorphism. 
\end{proposition}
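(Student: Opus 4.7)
The plan is to combine Lemma~\ref{lem:wsplit} with the identity $\rho\circ\beta_{\L}=\op{Sq}^2_{\L}$ recorded by the lower-right triangle of the key diagram on p.~\pageref{keydiagram}. First I would apply Lemma~\ref{lem:wsplit} using the freeness hypothesis; this simultaneously produces the top horizontal isomorphism $\op{H}^n(X,\mathbf{I}^n(\L))\cong\op{Im}\beta_{\L}\oplus\op{H}^n(X,\mathbf{W}(\L))$ and the injectivity of $\rho$ on the summand $\op{Im}\beta_{\L}$. Since $\rho\circ\beta_{\L}=\op{Sq}^2_{\L}$ as maps $\op{Ch}^{n-1}(X)\to\op{Ch}^n(X)$, the image of $\rho|_{\op{Im}\beta_{\L}}$ is exactly $\op{Im}\op{Sq}^2_{\L}$, which combined with the injectivity gives the isomorphism $\rho_I\colon\op{Im}\beta_{\L}\xrightarrow{\cong}\op{Im}\op{Sq}^2_{\L}$ claimed in the last sentence of the proposition.

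For the second summand, the map $\overline{\rho}\colon\op{H}^n(X,\mathbf{W}(\L))\to\op{H}^n_{\op{Sq}^2_{\L}}(X)$ is supplied by the proposition stated immediately before the one to be proved. To verify that it fits the diagram I would check three inputs: the surjectivity of the middle-column projection $\op{H}^n(X,\mathbf{I}^n(\L))\twoheadrightarrow\op{H}^n(X,\mathbf{W}(\L))$ of the key diagram, for lifting $w$ to some $\tilde w$; the equality $\rho(\op{Im}\beta_{\L})=\op{Im}\op{Sq}^2_{\L}$ from the previous step, so that the class $[\rho(\tilde w)]\in\op{H}^n_{\op{Sq}^2_{\L}}(X)$ is independent of the lift; and the relation $\beta_{\L}\circ\rho=0$ from B\"ar sequence exactness, which combined with $\op{Sq}^2_{\L}=\rho\circ\beta_{\L}$ forces $\rho(\tilde w)\in\ker\op{Sq}^2_{\L}$ so that taking the class makes sense.

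The commutativity of the square is then formal: a class $(x,w)$ goes to $(\rho_I(x),\overline{\rho}(w))$ along the top-right path, and to $\rho(x)+\rho(\tilde w)=\rho_I(x)+\rho(\tilde w)$ along the left-bottom path, and these agree under the bottom decomposition by construction of the two components. The main obstacle is therefore making the bottom horizontal isomorphism precise and compatible with $\rho$: the subgroup $\op{Im}\op{Sq}^2_{\L}\subseteq\ker\op{Sq}^2_{\L}\subseteq\op{Ch}^n(X)$ is identified by $\rho_I$, and a complementary piece matching $\op{H}^n_{\op{Sq}^2_{\L}}(X)$ is produced by choosing a $\op{W}(F)$-linear section of the top projection (available by the freeness hypothesis) and pushing it through $\rho$, yielding a section to the quotient $\ker\op{Sq}^2_{\L}\twoheadrightarrow\op{H}^n_{\op{Sq}^2_{\L}}(X)$ whose image realises the bottom splitting compatibly with $\overline{\rho}$.
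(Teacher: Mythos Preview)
Your approach is essentially the same as the paper's, and the argument is largely correct. There is one small gap worth noting: in the final paragraph you assert that pushing the section $s$ through $\rho$ ``yields a section to the quotient $\ker\op{Sq}^2_{\L}\twoheadrightarrow\op{H}^n_{\op{Sq}^2_{\L}}(X)$'', but for this you need $\overline{\rho}$ to be surjective, which you have not verified. You have checked only the inclusion $\op{Im}\rho\subseteq\ker\op{Sq}^2_{\L}$ (via $\beta_{\L}\circ\rho=0$ and $\op{Sq}^2_{\L}=\rho\circ\beta_{\L}$), not the reverse.

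The paper fills this gap with a short B\"ar-sequence argument: a class $\sigma\in\op{Ch}^n(X)$ lifts along $\rho$ if and only if $\beta_{\L}(\sigma)=0$, and since $\rho$ is injective on $\op{Im}\beta_{\L}$ (from Lemma~\ref{lem:wsplit}), $\beta_{\L}(\sigma)=0$ is equivalent to $\op{Sq}^2_{\L}(\sigma)=\rho\bigl(\beta_{\L}(\sigma)\bigr)=0$. Hence $\op{Im}\rho=\ker\op{Sq}^2_{\L}$ exactly, which gives the surjectivity of $\overline{\rho}$ and makes your section construction go through. With this one line added, your proof matches the paper's.
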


\begin{proof}
  To see the final isomorphism claim, we note that $\op{Sq}^2_{\L}=\rho\circ\beta_{\L}$, hence $\rho_I$ surjects onto $\op{Im}\op{Sq}^2_{\L}$. The reduction map $\rho\colon{\rm H}^q(X,\mathbf{I}^q)\to {\rm Ch}^q(X)$ is injective on the image of $\beta$, cf.~Lemma~\ref{lem:wsplit}. 

  Combining this with the exact B\"ar sequence
\[
\cdots\to \op{H}^q(X,\mathbf{I}^{q}) \xrightarrow{\rho} \op{Ch}^q(X) \xrightarrow{\beta} \op{H}^{q+1}(X,\mathbf{I}^{q+1})\to\cdots
\]
we see that a class $\sigma\in{\rm Ch}^q(X)$ lifts along $\rho$ to ${\rm H}^q(X,\mathbf{I}^q)$ if and only if $\op{Sq}^2(\sigma)=0$. This means that the image of the reduction map $\rho\colon{\rm H}^q(X,\mathbf{I}^q)\to {\rm Ch}^q(X)$ is exactly the kernel of $\op{Sq}^2\colon{\rm Ch}^q(X)\to {\rm Ch}^{q+1}(X)$. The surjectivity of $\rho$ to $\ker \Sq^2$ also implies that the induced map $\overline{\rho}\colon {\rm H}^q(X,\mathbf{W})\to {\rm H}^q_{\op{Sq}^2}(X)$ is also surjective, which implies that we can choose a splitting $\ker\op{Sq}^2\cong \op{Im}\op{Sq}^2\oplus\op{H}^q_{\op{Sq}^2}(X)$ compatibly with the splitting ${\rm H}^q(X,\mathbf{I}^q)\cong\op{Im}\beta\oplus {\rm H}^q(X,\mathbf{W})$. Using these splittings, the reduction map $\rho\colon{\rm H}^q(X,\mathbf{I}^q)\to\ker\op{Sq}^2$ is the sum of the isomorphism $\rho_I\colon \op{Im}\beta\xrightarrow{\cong}\op{Im}\op{Sq}^2$ and the induced morphism $\overline{\rho}\colon{\rm H}^q(X,\mathbf{W})\to {\rm H}^q_{\op{Sq}^2}(X)$.
\end{proof}

The result applies in particular to $X=\Fl(\D)$ since the $\mathbf{W}$-cohomology of flag varieties is free, cf.~Theorems~\ref{thm:untwisted} and \ref{thm:main}. As a consequence, for the description of the reductions of the classes ${\rm R}_j\in{\rm H}^*\bigl(\Fl(\D),\mathbf{W}\bigr)$, it suffices to determine their images under the induced reduction map $\overline{\rho}\colon{\rm H}^q\bigl(\Fl(\D),\mathbf{W}\bigr)\to {\rm H}^q_{\op{Sq}^2}\bigl(\Fl(\D)\bigr)$ which is the natural reduction map ${\rm W}(F)\to {\rm W}(F)/{\rm I}(F)\cong\mathbb{Z}/2\mathbb{Z}$ on each summand.

When it comes to computing $\overline{\rho}(\op{R}_j)$, given that the ${\rm W}(F)$-rank of ${\rm H}^q\bigl(\Fl(\D),\mathbf{W}\bigr)$ and the $\mathbb{Z}/2\mathbb{Z}$-rank of $\op{H}^q_{\op{Sq}^2}\bigl(\Fl(\D)\bigr)$ are equal, we now see that the possible images in $\op{H}^q_{\op{Sq}^2}\bigl(\Fl(\D)\bigr)$ are very much restricted. We can first divide out the contributions from the ${\rm W}(F)$-subalgebra generated by Pontryagin classes, where the reductions are known from Proposition~\ref{prop:reduction}. Then with the exception of $\op{R}_{N-1}$, the $\op{R}_j$ classes are ordered by strictly increasing cohomological degree.
 So we can determine the reduction of the $\op{R}_{j}$-classes as follows: for every cohomological degree $q$, consider the quotient of $\op{H}^q_{\op{Sq}^2}\bigl(\Fl(\D)\bigr)$ modulo the subgroup of classes in the image of the ${\rm W}(F)$-subalgebra of $\mathbf{W}$-cohomology generated by Pontryagin classes and $\op{R}_j$-classes of smaller index and possibly $\op{R}_{N-1}$. This is necessarily isomorphic to $\mathbb{Z}/2\mathbb{Z}$. Using the images of a ${\rm W}(F)$-basis of ${\rm H}^q\bigl(\Fl(\D),\mathbf{W}\bigr)$ in $\op{H}^q_{\op{Sq}^2}\bigl(\Fl(\D)\bigr)$, the generator of this $\mathbb{Z}/2\mathbb{Z}$ has a unique representative in $\op{H}^q_{\op{Sq}^2}\bigl(\Fl(\D)\bigr)$ which is $\overline{\rho}(\op{R}_j)$. This procedure allows to inductively compute the reductions of the $\op{R}_j$ classes.

\subsection{Relationship to singular cohomology}

Our computations also have consequences for the integral cohomology of real flag manifolds. Recall from \cite{4real} that there is a real cycle class map
\[
\bigoplus_{q,\L}\op{H}^q\bigl(X,\mathbf{I}^q(\L)\bigr)\to \bigoplus_{q,\L}\op{H}^q\bigl(X(\mathbb{R}),\mathbb{Z}(\L)\bigr)
\]
from the $\mathbf{I}$-cohomology of a real variety $X/\mathbb{R}$ to the singular cohomology of the manifold $X(\mathbb{R})$ of real points. Here $\op{H}^q\bigl(X(\mathbb{R}),\mathbb{Z}(\L)\bigr)$ denotes integral singular cohomology with coefficients in the local system corresponding to the line bundle $\L$ on $X$. The real cycle class map is compatible with pushforward, the ring structure and characteristic classes. Moreover, for cellular real varieties, the real cycle class map is an isomorphism. As a direct consequence, we get the following result: 

\begin{theorem}
  \label{thm:realflag}
  Let $\D=(d_1,\dots,d_m)$. For all line bundles $\L$ over the real flag manifold $\op{Fl}(\D,\mathbb{R})$, all torsion in $\op{H}^*\bigl(\op{Fl}(\D,\mathbb{R}),\mathbb{Z}(\L)\bigr)$ is of order 2. The torsion is given by the image of the twisted Bockstein map $\beta_{\L}$. The torsion-free quotient of the integral cohomology is isomorphic as graded-commutative $\mathbb{Z}$-algebra to $\op{H}^*\bigl(\op{Fl}(\D),\mathbf{W}(\L)\bigr)$ which is described in Theorem~\ref{thm:main}.  
\end{theorem}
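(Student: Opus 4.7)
The plan is to transfer the algebraic computation in Theorem~\ref{thm:icohomology} across the real cycle class map of~\cite{4real} and then read off the torsion information from the resulting description. First I would invoke that, since $\Fl(\D)$ is a projective homogeneous variety for a split reductive group, it is cellular in the sense of Definition~\ref{def:cellular}; by \cite{4real}, the real cycle class map
\[
cy\colon \op{H}^q\bigl(\Fl(\D),\mathbf{I}^q(\L)\bigr) \to \op{H}^q\bigl(\Fl(\D,\mathbb{R}),\mathbb{Z}(\L)\bigr)
\]
is therefore an isomorphism for every $q$ and every line bundle $\L$ over $\Fl(\D)$. Since $cy$ is a map of graded-commutative algebras compatible with pullbacks and characteristic classes, the (total, twisted) integral singular cohomology ring of $\Fl(\D,\mathbb{R})$ is identified with the total $\mathbf{I}$-cohomology ring computed on the algebraic side.

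Next I would apply Theorem~\ref{thm:icohomology}, which (using the freeness of $\mathbf{W}$-cohomology from Theorem~\ref{thm:main} and Lemma~\ref{lem:wsplit}) provides a splitting
\[
\op{H}^q\bigl(\Fl(\D),\mathbf{I}^q(\L)\bigr) \cong \op{Im}\beta_{\L} \oplus \op{H}^q\bigl(\Fl(\D),\mathbf{W}(\L)\bigr).
\]
The image of $\beta_{\L}$ is 2-torsion because $\beta_{\L}$ factors through the mod~2 Chow group $\op{Ch}^{q-1}\bigl(\Fl(\D)\bigr)$, which is an $\mathbb{F}_2$-vector space. On the other hand, for $F=\mathbb{R}$ we have $\op{W}(\mathbb{R}) \cong \mathbb{Z}$, and Theorem~\ref{thm:main} shows that the $\mathbf{W}$-cohomology is a \emph{free} $\op{W}(\mathbb{R})$-module, hence a free $\mathbb{Z}$-module. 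Thus the decomposition above separates the $\mathbf{I}$-cohomology precisely into a 2-torsion summand and a torsion-free summand, and the same holds (via $cy$) for the integral singular cohomology.

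It remains to identify the torsion-free quotient with $\op{H}^*\bigl(\Fl(\D),\mathbf{W}(\L)\bigr)$ \emph{as a graded ring}. For this I would observe that the short exact sequence $\mathbf{I}^{q+1}\xrightarrow{\eta}\mathbf{I}^q\to \mathbf{W}$ of coefficient sheaves induces a ring homomorphism from total $\mathbf{I}$-cohomology to total $\mathbf{W}$-cohomology whose kernel is the image of $\eta$, which under the splitting above equals the 2-torsion summand $\op{Im}\beta_{\L}$. Quotienting by the 2-torsion on both sides gives the desired ring isomorphism between the torsion-free quotient of $\op{H}^*\bigl(\Fl(\D,\mathbb{R}),\mathbb{Z}(\L)\bigr)$ and $\op{H}^*\bigl(\Fl(\D),\mathbf{W}(\L)\bigr)$, whose explicit presentation is given by Theorem~\ref{thm:main}.

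The main obstacle is not any single step but rather ensuring that all the inputs line up cleanly: the real cycle class map isomorphism, the ${\rm W}(F)$-module splitting of $\mathbf{I}$-cohomology, and the identification of the complementary summand with a ring quotient of $\mathbf{I}$-cohomology. The splitting from Lemma~\ref{lem:wsplit} is only stated as a splitting of $\op{W}(F)$-modules, so the multiplicativity of the torsion-free quotient must be argued separately via the quotient map $\mathbf{I}\to\mathbf{W}$ rather than deduced directly from the splitting; all the computational work has already been carried out in earlier sections, and no further calculation is needed.
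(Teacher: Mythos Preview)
Your proposal is correct and follows essentially the same route as the paper: invoke the real cycle class map isomorphism for cellular varieties from \cite{4real}, transport the $\op{Im}\beta_\L\oplus{\rm H}^*(\mathbf{W})$ splitting of Theorem~\ref{thm:icohomology} to singular cohomology, and use $\op{W}(\mathbb{R})\cong\mathbb{Z}$ together with the freeness in Theorem~\ref{thm:main} to read off the torsion statement. Your added care in arguing the ring structure of the torsion-free quotient via the surjection $\mathbf{I}^q\to\mathbf{W}$ (rather than directly from the module splitting) is a valid refinement that the paper leaves implicit.
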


While the additive structure of the integer coefficient cohomology of certain partial flag manifolds (projective spaces, Grassmannians, maximal rank flag manifolds \cite{matszangosz} and most recently complete flag manifolds \cite{yang}) are known, the above result in full generality appears not to be known. Knowing that all torsion is 2-torsion allows to completely determine the additive structure of cohomology (at least in principle), and we will discuss the example case of the complete flag manifolds in the next subsection.

\subsection{The case of complete flag manifolds}
 Based on the fact that all torsion is 2-torsion, the additive structure of the cohomology of complete flag manifolds $\Fl(\mathbb{R}^N)$ can be described by their Poincar\'e polynomials. We denote the Poincar\'e polynomials of the free part and torsion part of $\op{H}^*\bigl(X;\Z(\L)\bigr)$ by ${\rm P}^{\L}_0(X,t)$ and ${\rm P}^{\L}_{\op{Tor}}(X,t)$ respectively. By Poincar\'e polynomial of the torsion part, we mean the Poincar\'e polynomial of the ranks of the $\Z/2\Z$ summands.

 \begin{theorem}\label{thm:completeflag}
The Poincar\'e polynomials of $\op{H}^*(\Fl(\R^N);\Z)$ are as follows:
\begin{equation}\label{eq:WPoincare}
	P_{0}\bigl(\Fl(2k+1),t\bigr)= \prod_{i=1}^k (1+t^{4i-1}),\qquad P_{0}\bigl(\Fl(2k),t\bigr)= (1+t^{2k-1})\prod_{i=1}^{k-1} (1+t^{4i-1}),
\end{equation}
\begin{equation}\label{eq:PoincareTor1}
P_{\op{Tor}}\bigl(\Fl(2k+1),t\bigr)=\frac{t}{t+1}\left(
\frac{\prod_{j=1}^{2k+1} (1-t^j)}{(1-t)^{2k+1}}-\prod_{i=1}^k (1+t^{4i-1})
\right),
\end{equation}
\begin{equation}\label{eq:PoincareTor0}
P_{\op{Tor}}\bigl(\Fl(2k),t\bigr)=\frac{t}{t+1}\left(
\frac{\prod_{j=1}^{2k} (1-t^j)}{(1-t)^{2k}}-(1+t^{2k-1})\prod_{i=1}^{k-1} (1+t^{4i-1})
\right).
\end{equation}
For any non-trivial twist $\L\in \op{Pic}\bigl(\Fl(N)\bigr)/2$, the $\mathbf{W}(\L)$-cohomology is 0, and the entire $\Z(\L)$-cohomology consists of 2-torsion, whose ranks are given by
\begin{equation}\label{eq:PoincareTortwisted}
 P_{\op{Tor}}^{\mathscr L}\bigl(\Fl(N),t\bigr)=\frac{t}{t+1}\frac{\prod_{j=1}^N (1-t^j)}{(1-t)^N}.
\end{equation}
\end{theorem}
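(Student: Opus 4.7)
The plan is to extract all three Poincar\'e polynomials from the computations in the preceding sections: the untwisted free part comes directly from Theorem~\ref{thm:untwisted}, the vanishing of the twisted free part comes from Theorem~\ref{thm:main}, and the torsion ranks are then forced by the mod~2 Poincar\'e polynomial via a universal coefficient argument, using that by Theorem~\ref{thm:realflag} all integral torsion is 2-torsion.

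First I would specialize to the complete flag case $\D=(1,\ldots,1)$, for which every $d_i$ is odd, so $q=0$ and $n=\lfloor N/2\rfloor$. In Theorem~\ref{thm:untwisted} the Pontryagin factor collapses to $\op{W}(F)$, leaving the exterior algebra $\bigwedge_{l=1}^n\langle\op{R}_l\rangle$, where $\deg \op{R}_l=4l-1$ except that $\deg \op{R}_n=N-1$ when $N$ is even. Reading off its Poincar\'e polynomial and invoking the isomorphism of Theorem~\ref{thm:realflag} between $\op{H}^*\bigl(\Fl(N),\mathbf{W}\bigr)$ and the torsion-free quotient of $\op{H}^*\bigl(\Fl(N,\R);\Z\bigr)$ gives \eqref{eq:WPoincare}, with the parity split between $N=2k$ and $N=2k+1$ accounted for by the exceptional degree of $\op{R}_n$.

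Next, for a non-trivial twist $\L$, Theorem~\ref{thm:main} immediately gives $\op{H}^*\bigl(\Fl(N),\mathbf{W}(\L)\bigr)=0$: since every $d_i=1$ is odd, all Euler classes $\op{e}(\DD_i)$ vanish, so the $\Z\oplus\op{Pic}/2$-graded total $\mathbf{W}$-cohomology is concentrated in Picard-degree zero. By Theorem~\ref{thm:realflag}, this forces $\op{H}^*\bigl(\Fl(N,\R);\Z(\L)\bigr)$ to be pure 2-torsion. To measure its ranks I would use the classical presentation $\op{H}^*\bigl(\Fl(N,\R);\mathbb{F}_2\bigr)\iso \mathbb{F}_2[w_1(\DD_i)]/\bigl(\prod(1+w_i)=1\bigr)$ with Poincar\'e polynomial $\prod_{j=1}^N(1-t^j)/(1-t)^N$; the same polynomial computes $\op{H}^*\bigl(\Fl(N,\R);\mathbb{F}_2(\L)\bigr)$ for any twist, because $\op{Aut}(\mathbb{F}_2)$ is trivial and hence every $\Z/2$-local system of $\mathbb{F}_2$-modules is canonically trivial.

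Finally, the torsion Poincar\'e polynomials follow from universal coefficients: writing $\op{H}^i\bigl(X;\Z(\L)\bigr)=\Z^{a_i}\oplus(\Z/2)^{b_i}$ (noting $a_0=\delta_{\L,\mathscr O}$ and $b_0=0$) gives $\dim_{\mathbb{F}_2}\op{H}^i\bigl(X;\mathbb{F}_2(\L)\bigr)=a_i+b_i+b_{i+1}$, so $P_2=P_0+P_{\op{Tor}}\cdot(1+t)/t$, which inverts to $P_{\op{Tor}}=t(P_2-P_0)/(1+t)$. Substituting the Poincar\'e polynomials from the first two steps produces \eqref{eq:PoincareTor1} and \eqref{eq:PoincareTor0}, and substituting $P_0=0$ in the twisted case produces \eqref{eq:PoincareTortwisted}. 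There is no substantive obstacle given the main theorems; the only things needing care are the parity distinction between $N=2k$ and $N=2k+1$ (which shifts one generator of the exterior algebra) and the observation that mod~2 cohomology is insensitive to the twist $\L$.
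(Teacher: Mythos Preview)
Your proposal is correct and follows essentially the same approach as the paper: both derive $P_0$ from the exterior-algebra description of $\mathbf{W}$-cohomology in Theorems~\ref{thm:untwisted} and~\ref{thm:main}, invoke Theorem~\ref{thm:realflag} to reduce to 2-torsion, and then use the identity $P_{\op{Tor}}=\tfrac{t}{t+1}(P_2-P_0)$ together with the mod~2 Poincar\'e polynomial (Proposition~\ref{prop:poincare-mod2}). The paper cites this last identity from Borel \cite[Lemma~24.1]{Borel} whereas you spell out the universal-coefficient computation directly, and you also make explicit why the twisted $\mathbf{W}$-cohomology vanishes and why the twist is invisible mod~2.
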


 \begin{proof}
By Proposition~\ref{prop:poincare-mod2}, the Poincar\'e polynomial of the Chow ring mod 2 of $\Fl(N)$ is
$$P_2\bigl(\Fl(N),t\bigr)= \frac{\prod_{j=1}^N (1-t^j)}{(1-t)^N}.$$
For $\Fl(N)$, the $\mathbf{W}$-cohomology is an exterior algebra on generators $x_3,x_7\ldots $ cf.\ Theorem~\ref{thm:main} and Theorem~\ref{thm:untwisted}, whose Poincar\'e polynomials are exactly given by \eqref{eq:WPoincare}.
For any space whose integer coefficient cohomology consists of 2-torsion, $P_{Tor}^{\mathscr L}=\frac{t}{t+1}(P_2-P_0^{\mathscr L})$, \cite[Lemma 24.1]{Borel}. Since for flag varieties all torsion is 2-torsion by Theorem~\ref{thm:realflag}, we get the Poincar\'e polynomial of the ranks of $\Z/2\Z$'s  \eqref{eq:PoincareTor1}, \eqref{eq:PoincareTor0} and \eqref{eq:PoincareTortwisted}.
\end{proof}

\begin{example}
	For $\Fl(3)$, we have
	$$ P_2\bigl(\Fl(3),t\bigr)=(1-t)(1-t^2)(1-t^3)/(1-t)^3=(1+t)(1+t+t^2)=1+2t+2t^2+t^3,$$
	$$ P_{0}\bigl(\Fl(3),t\bigr)=1+t^3,$$
	$$ P_{\op{Tor}}\bigl(\Fl(3),t\bigr)=\frac{t}{t+1}(1+2t+2t^2+t^3-1-t^3)=2t^2.$$
Since for all twists $\mathbf{W}(\L)$-cohomology is 0:
$$ P_{\op{Tor}}^{\L}\bigl(\Fl(3),t\bigr)=\frac{t}{t+1}(1+2t+2t^2+t^3)=t+t^2+t^3.$$
\end{example}

\begin{example}
	For $\Fl(4)$, we have
	$$ P_2\bigl(\Fl(4),t\bigr)=(1+t)(1+t+t^2)(1+t+t^2+t^3)=1+3t+5t^2+6t^3+5t^4+3t^5+t^6,$$
	$$ P_{0}\bigl(\Fl(4),t\bigr)=(1+t^3)^2=1+2t^3+t^6,$$
	$$ P_{\op{Tor}}\bigl(\Fl(4),t)=\frac{t}{t+1}(3t+5t^2+4t^3+5t^4+3t^5)=
t(3t+2t^2+2t^3+3t^4),
	$$
	in accordance with \cite[8.1]{casianstanton}, see also \cite[Section 3.10]{matszangosz}. Since for all twists $\mathbf{W}(\L)$-cohomology is 0:
$$ P_{\op{Tor}}^{\L}\bigl(\Fl(4),t\bigr)=\frac{t}{t+1}(1+3t+5t^2+6t^3+5t^4+3t^5+t^6)=
t+2t^2+3t^3+3t^4+2t^5+t^6.
$$	
\end{example}

\section{Flags incident to hypersurfaces}
\label{sec:enumerative}

A classical problem in enumerative geometry is the count of lines on a cubic surface - its history goes back to the 19th century. For a nice and thorough account of the history of this problem, see \cite{dolgachev}. The solution over the complex field is due to Cayley and Salmon. The real case is less straightforward, as the number of lines now depends on the surface, and its solution is due to Schl\"afli \cite{schlafli}. B.\ Segre  \cite{segre} classified the lines into two types. Topological interpretations of these results were developed later by Finashin and Kharlamov \cite{finashinkharlamov1}, and Okonek and Teleman \cite{okonekteleman}. More recently Kass and Wickelgren \cite{kasswickelgren} gave arithmetic extensions of these results, giving solutions over general fields and in particular also in characteristics other than $0$.

Over the complex field the `modern' solution of the problem is the computation of a Chern class of a certain bundle over a Grassmannian $\Gr_2(\C^4)$. Over the reals - as demonstrated by Finashin, Kharlamov \cite{finashinkharlamov1}, Okonek and Teleman \cite{okonekteleman} - a lower bound is obtained by a similar computation of an Euler class over the Grassmannian $\Gr_2(\R^4)$. Finashin and Kharlamov also gave a series of similar enumerative problems over larger Grassmannians \cite{finashinkharlamov2}. Other enumerative problems can be formulated over general Grassmannians; topological lower bounds in real Schubert calculus problems have been obtained in \cite{FeherMatszangoszschubert}, \cite{heinhillarsottile} and \cite{FeherMatszangosz}. Arithmetic results on Schubert calculus in Chow--Witt rings have been developed in \cite{schubert}.

Underlying the Chern class formulation of the lines on the cubic surface is the question of counting $2$-dimensional linear subspaces in $\mathbb{A}^4$ contained in a hypersurface $X\se \mathbb{A}^4$ defined by a degree 3 homogeneous polynomial $f$ on $\mathbb{A}^4$. Indeed, after projectivization such subspaces are just lines on the cubic surface. On the other hand, the polynomial $f$ provides a section $s_f$ of the bundle $\op{Sym}^3(S^\vee)$ on ${\rm Gr}(2,4)$ via restriction. A subspace $W\se \mathbb{A}^4$ is contained in $X$ exactly if the restriction of $f$ to $W$ is zero, so the zeros of the section $s_f$ correspond exactly to the lines contained in the cubic surface. The fundamental class of such a zero-locus is described by the Euler class of the bundle in case the section $s_f$ is generic, which holds in case $X$ is smooth. 

\subsection{Enumerative problems in flag varieties}
In the context of the paper, we want to consider similar enumerative problems over flag varieties:

\begin{question}
  \label{quest}
  Given a partition $\D=(d_1,\dots,d_m)$ of $N=\sum_{i=1}^md_i$, how many flags $F_\bullet$ in $\mathbb{A}^N$ of type $\D$ exist, so that $F_1\se S_1$, $F_2\se S_2\stb F_m\se S_m$, where each $S_i\subseteq\mathbb{A}^N$ is a general  hypersurface defined by a homogeneous polynomial of degree $\ell_i$?
\end{question}

We are interested in such questions when the expected dimension of the set of solutions is zero, so that for suitably generic problems, the number of solutions is finite.

Over the complex numbers, such enumerative problems can be formulated in terms of intersection multiplicities for subvarieties in a smooth variety $X$, or in terms of top Chern classes of suitable bundles $\EE$ arising from the enumerative problem. We can extract numbers, the solutions to the enumerative problem, from such an intersection product or top Chern class, whenever they are elements in a top Chow group $\op{CH}^{\dim X}(X)$ which is isomorphic to $\Z$, e.g.\ for Grassmannians or flag varieties. In this case, a natural choice of isomorphism ${\rm CH}^{\dim X}(X)\cong\mathbb{Z}$ is given by mapping 1 to the class of a point. 

Similarly, in real enumerative or refined enumerative contexts, the problems can be formulated in terms of Euler classes of suitable bundles, e.g.\ as in \cite{kasswickelgren}, or in terms of intersection products in Chow--Witt rings. In the latter case, to extract quadratic forms counting the solutions of the enumerative problem requires that the intersection product is an element in a top Chow--Witt group $\widetilde{\rm CH}^{\dim X}(X)$ which is isomorphic to ${\rm GW}(F)$, the Grothendieck--Witt ring of the base field. This is the case for Grassmannians ${\rm Gr}(k,n)$ whenever $n$ is even, cf.~\cite{realgrassmannian}, or for maximal rank flag varieties, cf.~Theorem~\ref{thm:maxrank}. A noteworthy difference in the Chow--Witt situation is that the choice of isomorphism $\widetilde{\rm CH}^{\dim X}(X)\cong{\rm GW}(F)$ depends very much on the choice of orientation; there is no natural ``class of a point'' without a choice of orientation of the tangent bundle.\footnote{Over the real numbers, this choice only affects a sign and could be seen as less significant. However, over arbitrary base fields, the Grothendieck--Witt ring might have many units, resulting in a potentially rather high number of different possibilities for classes of points.} If such a choice is made, the resulting quadratic forms provide the solutions to the enumerative problems: the rank recovers the number of solutions over algebraically closed fields, and the signature recovers signed counts from real enumerative geometry which provide in particular lower bounds for the enumerative problem over the real numbers.

The purpose of the present section is to sketch this procedure in a simple case, to give an idea how Euler class computations for relevant bundles on flag varieties or intersection-product computations could be done, and to illustrate the structure of Witt-sheaf cohomology and Chow--Witt rings of flag varieties described in the paper. To fix the setting, we want to consider an instance of Question~\ref{quest} where the resulting quadratic form will not be a multiple of the hyperbolic plane. To get this, several additional constraints arise and we summarize the conditions:\footnote{Admittedly, what we are considering is not the utmost generality. The first two conditions can be generalized to relatively orientable vector bundles by taking twisted coefficients and assuming that $\det \EE=\det TX$, but we will only consider this simpler setting.}
\begin{itemize}
	\item $X$ should be orientable,
	\item $\EE$ should be orientable,
	\item $\EE$ should be of even rank (otherwise ${\rm e}(\EE)=0$) and
	\item $\op{rk}(\EE)=\dim X$.	
\end{itemize}

\subsubsection{The case of flag varieties}
We are interested in flag varieties $\op{Fl}(\D)$, and bundles of the form $\EE=\bigoplus_{i\in I}\op{Sym}^{\ell_i}(\SS_i^\vee)$ which (as in the lines on smooth cubics case) control when the $i$-th subspace in the flag lies on a hypersurface of degree $\ell_i$.

To find a setting in which the above conditions on orientability etc are satisfied, first let $\SS\to X$ be an arbitrary bundle of rank $s$ and let $\EE:=\op{Sym}^{\ell}(\SS)$. Then by the proof of \cite[Lemma 3.1.2]{finashinkharlamov2}, 
\begin{equation}\label{eq:rkw1Sym}
  {\rm c}_1(\EE)=\binom{\ell+s-1}{s}\cdot {\rm c}_1(\mathcal{S}),\qquad \op{rk}(\EE)=\binom{\ell+s-1}{s-1}.
\end{equation}
So -- unless $\mathcal{S}$ itself is orientable -- $\EE$ is orientable iff $\binom{\ell+s-1}{s} c_1(\SS)\in\op{Pic}(X)/2$ is zero, i.e.\ $\binom{\ell+s-1}{s}$ is even.

We now want to restrict to the simplest (non-Grassmannian) case of such an enumerative problem in which only two incidence conditions appear, i.e., we want to compute the Euler class of a bundle 
$$\EE=\op{Sym}^{\ell_1}(\SS_1^\vee)\oplus \op{Sym}^{\ell_2}(\SS_2^\vee).$$
over a two-step flag variety $\Fl(2,2,d)$. The conditions listed above translate to the following numerical conditions on $d$ and $(\ell_1,\ell_2)$:
\begin{itemize}
	\item $d$ must be even,
	\item $\binom{\ell_1+1}{2}$ and $\binom{\ell_2+3}{4}$ must be even, cf.\ \eqref{eq:rkw1Sym},
	\item $\ell_1+1$ and $\binom{\ell_2+3}{3}$ must be even, cf.\ \eqref{eq:rkw1Sym},
	\item $\ell_1+1+\binom{\ell_2+3}{4}=4(d+1)$.
\end{itemize}
The two parity conditions on $\ell_2$ show that $\ell_2=5$ is the smallest possible choice, and the remaining smallest choices are $\ell_1=3$ and $d=14$.
In particular, the smallest problem satisfying these conditions can be formulated as follows:
\begin{question}
	How many two-step flags $A_2\se A_4\se \mathbb{A}^{18}$ exist, s.t.\ $A_2$ lies on a degree 3 hypersurface, and $A_4$ lies on a degree 5 hypersurface?
\end{question}

Essentially, the answer is obtained by computing the Euler number of the vector bundle $\op{Sym}^{3}(\SS_1^\vee)\oplus \op{Sym}^{5}(\SS_2^\vee)$ on $\op{Fl}(2,2,14)$. In the next two subsections we will describe the computations to solve this problem over the real and complex numbers, and sketch the refined enumerative solution over general fields. Some general statements concerning Euler classes of symmetric and tensor powers of vector bundles which we'll need for the computations will be postponed to the last subsection.

\subsection{Solution of the problem over complex and real numbers}

\begin{theorem}\label{thm:enumerativeexample}
	There are at least $\mathcal{M}_\R=24681637575$ real flags $(U^2\se W^4\se \R^{18})$ so that $U^2$ lies on a generic degree $3$ hypersurface $H_3$ and $W^4$ lies on a generic degree 5 hypersurface $H_5$.
\end{theorem}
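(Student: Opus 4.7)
The plan is to realize $\mathcal{M}_\R$ as the signed count arising from the Euler class in Witt-sheaf cohomology of the appropriate bundle on $X=\Fl(2,2,14)$. Let $\SS_1\subset\SS_2\subset\mathscr{O}^{18}$ be the tautological subbundles and set
\[
\EE:=\op{Sym}^3(\SS_1^\vee)\oplus\op{Sym}^5(\SS_2^\vee).
\]
A homogeneous polynomial $f_i$ of degree $\ell_i$ on $\mathbb{A}^{18}$ restricts to a section of $\op{Sym}^{\ell_i}(\SS_i^\vee)$ whose zero locus consists of exactly those flags whose $i$th subspace lies on $\{f_i=0\}$, so the flags sought in the theorem are the zeros of $s=(s_{f_1},s_{f_2})$. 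The numerical conditions verified just before the theorem --- $\op{rk}\EE=4+56=60=\dim X$, orientability of both $\EE$ and $X$, and evenness of $\op{rk}\EE$ --- guarantee that ${\rm e}(\EE)$ lives in untwisted $\mathbf{W}$-cohomology ${\rm H}^{60}(X,\mathbf{W})$, and that for generic $f_i$ the real section $s$ has isolated transverse zeros whose signed count equals the evaluation of ${\rm e}(\EE)$ against the class of a point. Via the real cycle class map of \cite{4real}, this algebraic signed count agrees with the topological Euler number of the underlying real bundle, which is a well-known lower bound for the real count by the standard Okonek--Teleman / Finashin--Kharlamov argument.

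Next I would compute ${\rm e}(\EE)={\rm e}(\op{Sym}^3\SS_1^\vee)\cdot{\rm e}(\op{Sym}^5\SS_2^\vee)$ explicitly. Since $X$ is maximal rank, Theorem~\ref{thm:main} supplies a complete presentation of ${\rm H}^*(X,\mathbf{W})$ in terms of Pontryagin and Euler classes of the three subquotient bundles $\DD_1,\DD_2,\DD_3$, so the computation can in principle be done intrinsically. A more flexible route is the splitting principle of Remark~\ref{rem:splitting-principle}: pullback along $\Fl(\D_{\rm split})\to X$ is injective on $\mathbf{W}$-cohomology, and there $\SS_1^\vee$ and $\SS_2^\vee/\SS_1^\vee$ split as sums of line bundles with formal Chern roots $x_1,x_2$ and $x_3,x_4$. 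For $\FF$ a sum of line bundles with Chern roots $y_j$, the Euler class of $\op{Sym}^\ell\FF$ factors as $\prod_{|a|=\ell}(a_1y_1+\cdots+a_ry_r)$, giving ${\rm e}(\EE)$ as a concrete symmetric polynomial in $x_1,\dots,x_4$ whose projection back to ${\rm H}^*(X,\mathbf{W})$ is a polynomial in the Pontryagin and Euler classes of $\DD_1,\DD_2$ (with trivial dependence on $\DD_3$).

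To extract the number $\mathcal{M}_\R$, I would evaluate this top-degree class against a chosen generator of ${\rm H}^{60}(X,\mathbf{W})$. The top $\mathbf{W}$-cohomology is one-dimensional over ${\rm W}(\R)$ by Theorem~\ref{thm:main}, and a canonical generator can be pinned down from Proposition~\ref{prop:classofpoint} together with Corollary~\ref{cor:free-chow-w}: the latter identifies the top $\mathbf{W}$-cohomology of $\Fl(2\D')$ with ${\rm CH}^{\rm top}\bigl(\Fl(\D')\bigr)\otimes{\rm W}(F)$ in the maximal-rank case, converting the Euler-class integral into a classical Chern-class integral on the ``half-flag'' variety where it can be evaluated by standard Schubert calculus. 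The result is a rank-one form $\langle\pm\mathcal{M}_\R\rangle\in{\rm W}(\R)$, whose signature bounds the number of real zeros from below with the correct parity.

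The main obstacle is not conceptual but combinatorial: the polynomial ${\rm e}(\op{Sym}^5\SS_2^\vee)$ is a product of $56$ linear forms in four variables, and multiplying by the $4$-factor ${\rm e}(\op{Sym}^3\SS_1^\vee)$ then pushing forward to the Chow ring of a maximal-rank flag variety produces an enormous symmetric polynomial that is not feasible to manipulate by hand. The precise value $24681637575$ must therefore come from a computer-algebra evaluation (for example via a Schubert calculus package). What the paper supplies --- and what makes this mechanical computation a legitimate proof of the theorem --- is the $\mathbf{W}$-cohomology ring presentation of Theorem~\ref{thm:main}, the class-of-a-point formula of Proposition~\ref{prop:classofpoint}, and the compatibility of the algebraic Euler class with the topological one through the real cycle class map, which together guarantee that the computed integer is an honest signed count and hence a lower bound for the number of real flags.
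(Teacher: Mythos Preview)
Your overall framework is correct: the problem reduces to computing the Euler number of $\EE=\op{Sym}^3(\SS_1^\vee)\oplus\op{Sym}^5(\SS_2^\vee)$ in top $\mathbf{W}$-cohomology (equivalently, in rational singular cohomology of the real flag manifold), and the orientability checks and the interpretation as a lower bound via the Okonek--Teleman mechanism are exactly right.

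The gap is in your proposed computation. The splitting principle of Remark~\ref{rem:splitting-principle} splits bundles into \emph{rank~$2$} summands, not line bundles: $\D_{\rm split}=(2,\dots,2)$, and the pullback to $\Fl(\D_{\rm split})$ leaves $\SS_1^\vee$ as a rank~$2$ bundle and decomposes $\SS_2^\vee$ into two rank~$2$ pieces. There is no further splitting into line bundles in $\mathbf{W}$-cohomology, because Euler classes of odd-rank (in particular rank~$1$) bundles vanish there and the projective bundle formula fails in the odd-dimensional case (cf.\ Example~\ref{ex:leray-hirsch-pn}). Consequently the Chern-root formula ${\rm e}(\op{Sym}^\ell\FF)=\prod_{|a|=\ell}(a_1y_1+\cdots+a_ry_r)$ is not available; each linear factor would be the Euler class of a line bundle, which is zero.

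The paper instead writes $\SS_2^\vee\cong A\oplus B$ with $A,B$ of rank~$2$, uses the decomposition ${\rm e}\bigl(\op{Sym}^5(A\oplus B)\bigr)=\prod_{i=0}^5{\rm e}\bigl(\op{Sym}^i A\otimes\op{Sym}^{5-i}B\bigr)$, and evaluates each factor via Levine's closed formulas (Proposition~\ref{prop:levine} and the Cauchy-type identity of Proposition~\ref{prop:Cauchy}) in the Pontryagin roots $a={\rm e}(A)$, $b={\rm e}(B)$. This produces an explicit polynomial in $a^2,b^2$ (equation~\eqref{eq:Sym5}), which is then multiplied by ${\rm e}(\op{Sym}^3\SS_1^\vee)=3a^2$ and reduced modulo the relations of Proposition~\ref{prop:realpoint}; by Corollary~\ref{cor:monomialvanishing} only the $x_1^7x_2^8$ and $x_1^8x_2^7$ monomials survive, and the coefficient is read off by hand. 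No computer algebra is needed: the rank~$2$ splitting plus Levine's identities turn what you described as a $60$-factor product into a short symmetric-function computation.
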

The proof of this theorem relies on computing the coefficient of the class of a point in the Euler class
$$ {\rm e}\bigl(\op{Sym}^3(\SS_1^\vee)\bigr)\cdot {\rm e}(\op{Sym}^5\bigl(\SS_2^\vee)\bigr)$$
in the top-degree singular cohomology of the real flag manifold $\Fl(2,2,14;\mathbb{R})$.\footnote{Similar formulas will hold in Chow--Witt-theory or Witt-sheaf cohomology, we will discuss these in the next subsection.}
This consists of two parts -- expressing the Euler class in terms of the variables $x_i={\rm p}_2(\DD_i)$ and computing the coefficient of the class of a point -- we carry them out in this order. 

For the computation of $\op{Sym}^5(\SS_2^\vee)$, we can use the splitting principle and compute $\op{Sym}^5$ of a direct sum of two rank 2 vector bundles $A$ and $B$, with Euler classes denoted by $a$ and $b$, respectively. Then
\begin{equation}\label{eq:Symdirectsum}
	{\rm e}\bigl(\op{Sym}^k(A\oplus B)\bigr)=\prod_{i=0}^k{\rm e}\left(\op{Sym}^i(A)\otimes \op{Sym}^{k-i}(B)\right).
\end{equation}

  We will give the computation $e_i:={\rm e}\left(\op{Sym}^{5-i}(A)\otimes \op{Sym}^i(B)\right)$ in Example~\ref{ex:Sym5rk2} below, after recalling some relevant Euler class formulas from \cite{levine:schur}. Using these computations, the product of the $e_i$'s for $i=0,1,2$ is
	$$ p(a,b):=15a^3\bigl((9a^4-40a^2b^2+16b^4)3a^2\bigr)\bigl((64a^4-20a^2b^2+b^4)b\bigr),$$
	and then the product for $i=3,4,5$ is $-p(b,a)$ by symmetry. Then 
by \eqref{eq:Symdirectsum} the product over $i=0\stb 5$ is $-p(a,b)p(b,a)$, which is
\begin{equation}
  \begin{split}
    18662400a^{22}b^6 - 508680000a^{20}b^8 + 4194860400a^{18}b^{10}\\ - 14714257500a^{16}b^{12} + 22941470025a^{14}b^{14} - 14714257500a^{12}b^{16} \\+ 4194860400a^{10}b^{18} - 508680000a^8b^{20} + 18662400a^6b^{22}.
  \end{split}
  \label{eq:Sym5}
\end{equation}

We now describe the class of a point in the Chow ring of the complex flag variety $\op{Fl}(1,1,n)$ and the corresponding description of the class of a point in the singular cohomology of the real flag varieties $\op{Fl}(2,2,2n)$. 

\begin{proposition}
  \label{prop:fl11n}
  In the complex case:
  $$ \op{H}_{\rm sing}^*\bigl(\Fl(1,1,n;\mathbb{C});\mathbb{Z}\bigr)=\Z\left[ x_1^ix_2^j\mid i,j\leq n+1\right]\bigg/\left(\sum_{i=0}^dx_1^ix_2^{d-i}=0, \quad d>n\right),$$
  where $x_i={\rm c}_1(\DD_i)$ and the class of a point is (up to sign)
  $$[pt]= -x_1^{n+1}x_2^n=x_1^{n}x_2^{n+1}.$$
\end{proposition}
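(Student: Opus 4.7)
The proof splits naturally into two parts: establishing the ring presentation and identifying the class of a point.

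For the ring presentation, I specialize the general description~\eqref{eq:chowringFlD} to $\D=(1,1,n)$. Since $\DD_1$ and $\DD_2$ are line bundles with first Chern classes $x_1$ and $x_2$, the Whitney relation $\op{c}(\DD_1)\op{c}(\DD_2)\op{c}(\DD_3)=1$ forces
\[
\op{c}(\DD_3)=(1+x_1)^{-1}(1+x_2)^{-1}=\sum_{d\ge 0}(-1)^d\sum_{i=0}^d x_1^ix_2^{d-i},
\]
so all Chern classes of $\DD_3$ are polynomials in $x_1,x_2$ and the Chow ring is generated by $x_1,x_2$. The relations come from the vanishing $\op{c}_d(\DD_3)=0$ for $d>\op{rk}\DD_3=n$, which translates into $\sum_{i=0}^d x_1^i x_2^{d-i}=0$ for $d>n$. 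A Poincar\'e polynomial check against Proposition~\ref{prop:poincare-mod2} confirms that no further relations are needed.

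For the class of a point, I apply Proposition~\ref{prop:classofpoint} with $m=3$, $N=n+2$ to a fixed flag $F_\bullet$. The quotients $\A^{n+2}/F_i$ are trivial of rank $n+2-i$, so
\[
\Hom(\DD_i,\A^{n+2}/F_i)\cong(\DD_i^\vee)^{\oplus(n+2-i)}
\]
(using that each $\DD_i$ is a line bundle for $i=1,2$), whose top Chern class is $\bigl(c_1(\DD_i^\vee)\bigr)^{n+2-i}=(-x_i)^{n+2-i}$. The product gives
\[
[F_\bullet]=(-x_1)^{n+1}\cdot(-x_2)^n=(-1)^{2n+1}x_1^{n+1}x_2^n=-x_1^{n+1}x_2^n.
\]

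The one step that requires a small algebraic verification is the equality $-x_1^{n+1}x_2^n=x_1^n x_2^{n+1}$. I deduce the auxiliary vanishings $x_1^{n+2}=x_2^{n+2}=0$ directly from the stated relations via the identity $x_2^{n+2}=x_1\cdot r_{n+1}-r_{n+2}$ (and symmetrically for $x_1$), where $r_d=\sum_{i=0}^d x_1^ix_2^{d-i}$; geometrically this is the Whitney relation for the rank $n+1$ bundle $\DD_1\oplus\DD_3$. Multiplying $r_{n+1}=0$ by $x_2^n$ then annihilates every term $x_1^i x_2^{2n+1-i}$ with $i<n$, since the $x_2$-exponent is at least $n+2$. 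Only the $i=n$ and $i=n+1$ terms survive, yielding $x_1^{n+1}x_2^n+x_1^n x_2^{n+1}=0$, as required. This last cancellation argument is the main (but entirely routine) obstacle in the proof.
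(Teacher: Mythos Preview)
Your proof is correct and follows essentially the same approach as the paper: both specialize the general Chow ring presentation~\eqref{eq:chowringFlD} and compute the class of a point via Proposition~\ref{prop:classofpoint}. Your additional verification of the equality $-x_1^{n+1}x_2^n=x_1^{n}x_2^{n+1}$ is a welcome detail that the paper leaves implicit (it records the underlying vanishing $x_1^ix_2^{2n+1-i}=0$ for $i\neq n,n+1$ separately as Corollary~\ref{cor:monomialvanishing}).
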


\begin{proof}
  More generally, the cohomology ring $\op{H}^*\bigl(\Fl(1^k,n-k)\bigr)$ has the following presentation:
  $$ \Z\big[x_1\stb x_k,c_1\stb c_{n-k}\big]\bigg/\left(
  c_i=(-1)^ih_i(x),\sum_{i=1}^k e_i(x)c_{d+(n-k)-i}=0
  \right)$$
  and the class of a point by Proposition~\ref{prop:classofpoint} is
  \begin{equation}\label{eq:eulerclassofpoint}
    {\rm c}_{\rm top}\bigl(\Hom(\DD_1,\C^{n+1})\bigr)\cdot {\rm c}_{\rm top}\bigl(\Hom(\DD_2,\C^{n})\bigr)= \prod_{i=1}^{n+1}(0-x_1)\prod_{i=1}^n(0-x_2)=-x_1^{n+1}x_2^n
  \end{equation}
\end{proof}

\begin{corollary}\label{cor:monomialvanishing}
  In the notation of Proposition~\ref{prop:fl11n}
  $$ x_1^ix_2^{2n+1-i}=0$$
  for all $i\neq n, n+1$.
\end{corollary}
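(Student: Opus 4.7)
\textbf{Proof plan for Corollary~\ref{cor:monomialvanishing}.}

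The plan is to exploit the relations $h_d := \sum_{j=0}^{d} x_1^j x_2^{d-j} = 0$ for $d > n$ from the presentation in Proposition~\ref{prop:fl11n} to show that $x_1^{n+2}$ (and by symmetry $x_2^{n+2}$) vanishes in the ring. Once this is established, any monomial $x_1^i x_2^{2n+1-i}$ with $i \neq n, n+1$ has either $i \geq n+2$ or $2n+1-i \geq n+2$, and hence contains $x_1^{n+2}$ or $x_2^{n+2}$ as a factor, so it vanishes.

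The key step is the vanishing of $x_1^{n+2}$, which follows from the simple recursion
\[
h_{d+1}(x_1,x_2) = x_2 \cdot h_d(x_1,x_2) + x_1^{d+1}.
\]
Applied with $d = n+1$, this gives $h_{n+2} = x_2 \cdot h_{n+1} + x_1^{n+2}$. Since both $h_{n+1}$ and $h_{n+2}$ lie in the ideal of relations, we conclude $x_1^{n+2} = 0$ in the cohomology ring. The analogous argument (or the symmetry $x_1 \leftrightarrow x_2$) yields $x_2^{n+2}=0$.

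There is no real obstacle; the proof is a one-line manipulation of the defining relations. The only subtlety worth mentioning is that the relations include all $h_d$ for $d > n$ (not just $h_{n+1}$): indeed, $h_{n+1}=0$ alone only yields $x_1^{n+2}=x_2^{n+2}$, which does not suffice. Using both $h_{n+1}=0$ and $h_{n+2}=0$ is essential to obtain the stronger vanishing. In particular, this also re-confirms that the only potentially nonzero top-degree monomials are $x_1^{n+1}x_2^n$ and $x_1^n x_2^{n+1}$, consistent with the class-of-a-point description in Proposition~\ref{prop:fl11n}.
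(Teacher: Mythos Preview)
Your proof is correct. The paper does not supply a proof for this corollary, treating it as an immediate consequence of the presentation in Proposition~\ref{prop:fl11n}: there the ring is already described as being additively spanned by monomials $x_1^i x_2^j$ with $i,j\leq n+1$, so any monomial with $i\geq n+2$ or $j\geq n+2$ vanishes automatically. Your argument instead derives the vanishing $x_1^{n+2}=x_2^{n+2}=0$ directly from the relations $h_d=0$ for $d>n$ via the recursion $h_{d+1}=x_2 h_d + x_1^{d+1}$, which is precisely what justifies that additive description in the first place. Both routes are essentially equivalent; yours is self-contained and makes explicit why the two relations $h_{n+1}=0$ and $h_{n+2}=0$ together (rather than just one) are needed.
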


\begin{proposition}\label{prop:realpoint}
  In the real case:
  $$\op{H}_{\rm sing}^*\bigl(\Fl(2,2,2n;\mathbb{R});\mathbb{Q}\bigr)=\Q\big[x_1^ix_2^j\mid i,j\leq n+1\big]\bigg/\left(\sum_{i=0}^dx_1^ix_2^{d-i}=0, \quad d>n\right)$$
  where $x_i={\rm p}_2(\DD_i)$ and the class of a point is (up to sign)
  $$[pt]= -x_1^{n+1}x_2^n=x_1^{n}x_2^{n+1}.$$
\end{proposition}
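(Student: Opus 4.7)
The plan is to deduce this proposition from the complex case in Proposition~\ref{prop:fl11n} via the comparison machinery of Sections~\ref{sec:maximal} and \ref{sec:chowwittsingular}. First, by Theorem~\ref{thm:realflag} (the real cycle class map isomorphism for cellular varieties), all torsion in $\op{H}^*\bigl(\op{Fl}(2,2,2n;\mathbb{R});\mathbb{Z}\bigr)$ is 2-torsion given by the image of the Bockstein, and the torsion-free quotient is canonically isomorphic to $\op{H}^*\bigl(\op{Fl}(2,2,2n);\mathbf{W}\bigr)$. Over the base field $\mathbb{R}$ we have $\op{W}(\mathbb{R})=\mathbb{Z}$, so tensoring with $\mathbb{Q}$ (which kills the 2-torsion) yields
\[
\op{H}^*_{\rm sing}\bigl(\op{Fl}(2,2,2n;\mathbb{R});\mathbb{Q}\bigr) \cong \op{H}^*\bigl(\op{Fl}(2,2,2n);\mathbf{W}\bigr)\otimes_{\mathbb{Z}}\mathbb{Q}.
\]
Pontryagin classes on the algebraic side correspond to topological Pontryagin classes on the right by naturality of the real cycle class map, cf.~\cite{4real}.

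For the ring presentation, I would apply Theorem~\ref{thm:maxrank} to the maximal rank partition $\D=(2,2,2n)$; note that $q=n+2=\lfloor N/2\rfloor$, so no exterior generators appear. This gives
\[
\op{H}^*\bigl(\op{Fl}(2,2,2n);\mathbf{W}\bigr) \cong \mathbb{Z}[\op{p}_{2j}(\DD_i)]\Big/\Bigl(\prod\nolimits_{i=1}^3\op{p}(\DD_i)=1\Bigr).
\]
Setting $x_i:=\op{p}_2(\DD_i)$ for $i=1,2$, the Whitney sum relation forces $\op{p}(\DD_3)=(1+x_1)^{-1}(1+x_2)^{-1}=\sum_d(-1)^d h_d(x_1,x_2)$, and the vanishing $\op{p}_{2d}(\DD_3)=0$ for $d>n$ (since $\DD_3$ has rank $2n$) translates to $h_d(x_1,x_2)=0$ for $d>n$. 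Eliminating the Pontryagin classes of $\DD_3$ produces the stated presentation. Equivalently, one can invoke the doubling isomorphism of Corollary~\ref{cor:free-chow-w} applied to $\D=(1,1,n)$, which is a ring isomorphism $\CH^*\bigl(\op{Fl}(1,1,n)\bigr)\otimes \mathbb{Z}\cong \op{H}^{4*}\bigl(\op{Fl}(2,2,2n);\mathbf{W}\bigr)$ sending ${\rm c}_1(\DD_i)\mapsto \op{p}_2(\DD_i)$, directly transferring the presentation of Proposition~\ref{prop:fl11n} to the real side.

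For the class of a point, I would apply the argument of Proposition~\ref{prop:classofpoint} in the $\mathbf{W}$-cohomology setting, with Euler classes replacing top Chern classes; the argument (zero loci of tautological sections of the relevant $\Hom$-bundles) adapts verbatim once one uses the algebraic Euler class of \cite{fasel:memoir}. Since $\mathbb{A}^{2n+4}/F_i$ is a trivial bundle of rank $2n+4-2i$, the $\Hom$-bundles split as $\Hom(\DD_i,\mathbb{A}^{2n+4}/F_i)\cong (\DD_i^\vee)^{\oplus (2n+4-2i)}$, so their Euler classes are $e(\DD_1^\vee)^{2n+2}$ and $e(\DD_2^\vee)^{2n}$. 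Using the maximal-rank squaring relation $e(\DD_i^\vee)^2=\op{p}_2(\DD_i^\vee)=\op{p}_2(\DD_i)=x_i$ from Theorem~\ref{thm:maxrank} (together with the fact that Pontryagin classes are invariant under dualization), these simplify to $x_1^{n+1}$ and $x_2^n$. The product gives $[pt]=\pm x_1^{n+1}x_2^n$, and the relation $h_{2n+1}(x_1,x_2)=x_1^n x_2^{n+1}+x_1^{n+1}x_2^n=0$ (all other monomial terms vanishing by Corollary~\ref{cor:monomialvanishing}) yields $-x_1^{n+1}x_2^n=x_1^n x_2^{n+1}$, so the two descriptions in the proposition are equivalent. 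The main technical subtlety is fixing the overall sign, which depends on an orientation convention; matching the orientation through the doubling isomorphism of Corollary~\ref{cor:free-chow-w} with the complex case (where the sign originates from ${\rm c}_1(\DD_i^\vee)=-{\rm c}_1(\DD_i)$ and squares away under Pontryagin comparison) is the cleanest way to pin it down.
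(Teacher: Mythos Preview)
Your proof is correct and follows essentially the same line as the paper. The paper's own argument is briefer: for the ring presentation it simply cites \cite{he}, whereas you derive it from the paper's internal results (Theorem~\ref{thm:realflag} and Corollary~\ref{cor:free-chow-w}), which is a nice self-contained alternative. For the class of a point, both you and the paper invoke Proposition~\ref{prop:classofpoint} with Euler classes replacing top Chern classes; the only difference is in how the Euler class of $\Hom(\DD_i,\mathbb{A}^{2k})$ is evaluated. You use the direct-sum splitting and Whitney sum to get $\e(\DD_i^\vee)^{2k}=x_i^k$, while the paper applies Corollary~\ref{cor:tensorrk2} (the tensor-product formula) to obtain $\e\bigl(\Hom(\DD_i,\mathbb{A}^{2k})\bigr)=(-\p_2(\DD_i))^k$. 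The two answers differ by $(-1)^k$, reflecting the different orientation conventions for $\DD^\vee\otimes\mathbb{A}^{2k}$ versus $(\DD^\vee)^{\oplus 2k}$; since the statement is only up to sign, both are acceptable, and your candid remark about the sign subtlety is appropriate. The paper's route via Corollary~\ref{cor:tensorrk2} has the slight advantage that it makes the computation literally parallel to \eqref{eq:eulerclassofpoint}, pinning down the sign as $-x_1^{n+1}x_2^n$ without further discussion.
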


\begin{proof}
  For the rational coefficient cohomology ring, see \cite{he}. For the statement about the class of the point, the only difference from the complex case in Proposition~\ref{prop:classofpoint} is that the ranks are doubled. However, for a rank two bundle $\DD$ by Corollary~\ref{cor:tensorrk2}:
  $$ {\rm e}\bigl(\Hom(\DD,\A^{2k})\bigr)=\bigl(-{\rm p}_2(\DD)\bigr)^k,$$
  so the computation is formally the same as \eqref{eq:eulerclassofpoint}.
\end{proof}

\begin{proof}[Proof of Theorem~\ref{thm:enumerativeexample}]
  According to Proposition~\ref{prop:realpoint}, we have to determine the coefficient of 
  \[[pt]=x_1^{7}x_2^{8}\in {\rm H}^*_{\rm sing}\bigl(\Fl(2,2,14);\Q\bigr)\] in the product of $\op{e}\bigl(\op{Sym}^3(\DD_1^\vee)\bigr)=3x_1$ and ${\rm e}\bigl(\op{Sym}^5(\DD_1^\vee\oplus \DD_2^\vee)\bigr)$. The latter is given by Equation~\ref{eq:Sym5} with $x_1=a^2$ and $x_2=b^2$ if $a={\rm e}(\DD_1^\vee)$ and $b={\rm e}(\DD_2^\vee)$ (the top Pontryagin classes are the squares of the Euler classes). By Corollary~\ref{cor:monomialvanishing}, all coefficients besides $x_1^{8}x_2^{7}$ and $x_1^{7}x_2^{8}$ vanish, so we are interested in the coefficient of $x_1^{7}x_2^{8}$ in 
  \begin{eqnarray*}&&{\rm e}\bigl(\op{Sym}^3(\DD_1^\vee)\bigr){\rm e}\bigl(\op{Sym}^5(\DD_1^\vee\oplus \DD_2^\vee)\bigr)\\
    &=&3x_1\cdot(22941470025x_1^{7}x_2^{7} - 14714257500x_1^{6}x_2^{8}+\cdots)=24681637575x_1^{8}x_2^{7},
  \end{eqnarray*}
  which up to sign is the class of a point. The sign corresponds to a different orientation, which can be determined via a careful tracking of the orientations of the symmetric power and tensor product bundles which appeared so far. Since there is no additional benefit to this tedious calculation, we will omit this. 
\end{proof}

\begin{proposition}\label{prop:upperbound}
  The number of complex flags $(U^2\se W^4\se \C^{18})$ so that $U^2$ lies on a generic degree $3$ hypersurface $H_3$ and $W^4$ lies on a generic degree 5 hypersurface $H_5$ is
  \[
  \mathcal{M}_\C=1731448582963698760147916022054375.
  \]
\end{proposition}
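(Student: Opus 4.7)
The plan is to compute $\mathcal{M}_{\mathbb{C}}$ as the degree of the top Chern class
\[
c_{\rm top}\bigl(\op{Sym}^{3}(\SS_{1}^{\vee})\bigr)\cdot c_{\rm top}\bigl(\op{Sym}^{5}(\SS_{2}^{\vee})\bigr)
\]
in the Chow ring of the complex flag variety $\Fl(2,2,14;\mathbb{C})$, in close analogy with the real Euler class computation of Theorem~\ref{thm:enumerativeexample} but now using integral Chern classes. A homogeneous form of degree $\ell$ on $\mathbb{C}^{18}$ restricts to a section of $\op{Sym}^{\ell}(\SS_{i}^{\vee})$ whose zero locus is precisely the set of flags whose $i$-th subspace lies on the corresponding hypersurface, and for generic choices of hypersurfaces these sections are transverse. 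Since the ranks $\binom{4}{3}+\binom{8}{5}=4+56$ sum to the complex dimension $\sum_{i<j}d_{i}d_{j}=60$ of the flag variety, the product lives in $\CH^{60}\cong\mathbb{Z}$ and its degree equals the desired count.

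First I would apply the splitting principle along the projection $\pi\colon\Fl(1,1,1,1,14;\mathbb{C})\to\Fl(2,2,14;\mathbb{C})$ that refines the two rank-$2$ subquotients into complete flags. Writing $y_{j}=c_{1}(L_{j})$ for the tautological line bundles on the refinement, the pullbacks split as $\pi^{*}\SS_{1}\cong L_{1}\oplus L_{2}$ and $\pi^{*}(\SS_{2}/\SS_{1})\cong L_{3}\oplus L_{4}$. The Chern roots of $\pi^{*}\op{Sym}^{3}(\SS_{1}^{\vee})$ are the four linear forms $-(iy_{1}+(3-i)y_{2})$ for $0\le i\le 3$, and the Chern roots of $\pi^{*}\op{Sym}^{5}(\SS_{2}^{\vee})$ are the $56$ forms $-(i_{1}y_{1}+i_{2}y_{2}+i_{3}y_{3}+i_{4}y_{4})$ with $i_{k}\ge 0$ and $\sum i_{k}=5$. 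Multiplying all $60$ factors presents $\pi^{*}c_{\rm top}(\EE)$ as an explicit polynomial of degree $60$ in $y_{1},\dots,y_{4}$, symmetric in $\{y_{1},y_{2}\}$ and in $\{y_{3},y_{4}\}$, as required since it descends from $\Fl(2,2,14;\mathbb{C})$.

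Next I would compute $\pi_{\ast}$ using the standard projective bundle pushforward formulas for the two $\mathbb{P}^{1}$-bundles composing $\pi$, rewriting the symmetric polynomials in $y_{1},y_{2}$ (respectively $y_{3},y_{4}$) as polynomials in the Chern classes $c_{1}(\SS_{1}),c_{2}(\SS_{1})$ (respectively $c_{1}(\SS_{2}/\SS_{1}),c_{2}(\SS_{2}/\SS_{1})$), and then integrate over $\Fl(2,2,14;\mathbb{C})$ by extracting the coefficient of the class of a point. By Proposition~\ref{prop:classofpoint}, this class of a point is $\prod_{i=1}^{2}c_{\rm top}\bigl(\op{Hom}(\DD_{i},\mathbb{C}^{18}/F_{i})\bigr)$; since $\mathbb{C}^{18}/F_{i}$ is trivial, each factor reduces to a simple expression in the Chern classes of the subquotient bundles, pinning down the point class.

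The main obstacle is purely computational rather than conceptual: the expansion of the $56$-factor product associated with $\op{Sym}^{5}$ of the rank-$4$ bundle $\SS_{2}^{\vee}$, combined with the symmetric-function reductions and the reduction modulo the Whitney-sum relations $\prod_{i=1}^{3}c(\DD_{i})=1$ in the Chow ring of $\Fl(2,2,14;\mathbb{C})$, is infeasible by hand. The stated value $1731448582963698760147916022054375$ would therefore be obtained through a symbolic computation in a computer algebra system implementing the steps above.
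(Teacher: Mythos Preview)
Your approach is correct in principle and would yield the right number, but it takes a genuinely different route from the paper. You set up a direct Chern-class computation on $\Fl(2,2,14;\mathbb{C})$ via the splitting principle, acknowledging that the $56$-factor expansion for $\op{Sym}^{5}(\SS_{2}^{\vee})$ is infeasible by hand and would require a computer algebra system. The paper instead exploits the fiber-bundle structure $\Fl(2,2,14;\mathbb{C})\to\Gr(4,\mathbb{C}^{18})$ with fiber $\Gr(2,\mathbb{C}^{4})$: the count factors as the number $\mathcal{N}_{5}^{\mathbb{C}}=64127725294951805931404297113125$ of $4$-planes on a generic quintic in $\mathbb{P}^{17}$ (already computed by Finashin--Kharlamov) times the classical $27$ lines on a cubic surface in each such $4$-plane, giving $\mathcal{M}_{\mathbb{C}}=27\cdot\mathcal{N}_{5}^{\mathbb{C}}$. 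The paper's factoring avoids all computation beyond a single multiplication, while your direct method, though self-contained, defers the actual content to a machine verification.
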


\begin{proof}
  Finashin and Kharlamov \cite[p.190]{finashinkharlamov2} showed that there are \[\mathcal{N}_5^\C=64127725294951805931404297113125\] many 4-subspaces $W^4\se \C^{18}$, which lie on a generic degree 5 hypersurface $H_5$. For each such subspace $W$, the number of 2-subspaces $U^2\se W^4$ which lie on a degree 3 hypersurface $H_3\cap W^4$ is 27 by the classical result. The product of these two numbers is equal to $\mathcal{M}_\C$.
\end{proof}

\begin{remark}
  The lower bound to the real problem $\mathcal{M}_\R$ in Theorem~\ref{thm:enumerativeexample} can also be obtained by a proof similar to Proposition~\ref{prop:upperbound}. By computing the Euler class of $\op{Sym}^5(S^\vee)$ over $\Gr_4(\R^{18})$, there are at least $822721252$ 4-subspaces $W^4\se \R^{18}$ on a generic degree 5 hypersurface. For each such $W$, there are at least 3 many 2-subspaces $U^2\se W$ lying on a generic degree 3 hypersurface $W\cap \op{H}_3$. In particular, the product of these two numbers is exactly the lower bound $\mathcal{M}_\R$ in Theorem~\ref{thm:enumerativeexample}.	
\end{remark}

\subsection{Sketch of refined enumerative interpretation}

\newcommand{\M}{\mathcal{M}}

We will now sketch a formulation of the above results in terms of refined enumerative geometry, similar to the result of \cite{kasswickelgren}.

We consider the flag variety $\Fl(\D)$ for $\D=(2,2,14)$ over a field $K$ of characteristic different from $2$, which has dimension 60. In the previous section, we computed the Euler numbers of the vector bundle
$$\EE=\op{Sym}^3(\SS_1^\vee)\oplus \op{Sym}^5(\SS_2^\vee)$$
over $\Fl(\D)$. Recall that the situation was chosen in such a way that $\Fl(\D)$ and $\EE$ are both orientable.

Over the complex numbers, the Euler class was $\M_{\mathbb{C}}\cdot[\op{pt}]$ in Proposition~\ref{prop:upperbound}, i.e., the Euler class in $\op{CH}^{\rm 60}\bigl(\Fl(\D)\bigr)$ is a multiple of the class of the point, and the multiplicity is the number of incident flags over $\mathbb{C}$. Over the real numbers, the result was $\M_{\mathbb{R}}$ in Theorem~\ref{thm:enumerativeexample}. The interpretation here is that the real flag manifold $\Fl(\D;\mathbb{R})$ is an orientable manifold, i.e., ${\rm H}^{60}_{\rm sing}\bigl(\Fl(\D;\mathbb{R}),\mathbb{Z}\bigr)\cong \mathbb{Z}$. Choosing an orientation amounts to a choice of such isomorphism, and the image of the Euler class under the isomorphism is $\M_{\mathbb{R}}$. In this case, the choice of isomorphism only affects a sign, and the resulting number $\M_{\mathbb{R}}$ can be interpreted as a lower bound of the number of zeroes of a generic section of $\EE$, i.e., a lower bound on the number of real solutions of the enumerative problem. 

Now for a general base field, our orientability assumptions imply that the Euler class of $\EE$ is an element of $\widetilde{\rm CH}^{60}\bigl(\Fl(\D)\bigr)$, and the latter group is isomorphic to the Grothendieck--Witt ring $\op{GW}(K)$ of the base field. While the choice of orientation mattered little over $\mathbb{R}$ and not at all over $\mathbb{C}$, it matters now, e.g., since $\op{GW}(K)$ may have many units, depending on the base field $K$. In the Grassmannian case, there are preferred choices of orientations, cf.~\cite{FeherMatszangoszschubert} or \cite{schubert}, but for now we'll not go into that for flag varieties. As in \cite{kasswickelgren}, the Euler class of $\EE$ can be written as a sum of local indices, and it is likely that under the isomorphism $\widetilde{\rm CH}^{60}\bigl(\Fl(\D)\bigr)\cong \op{GW}(K)$ corresponding to a suitable orientation, the following formula holds in $\op{GW}(K)$, (compare to similar formulas in Chow--Witt Schubert calculus \cite{schubert}):
\[
\sum_{\substack{F_\bullet \in \Fl(\D):\\
    F_1\se \op{H}_3, F_2\se \op{H}_5}}
\op{ind}F_\bullet=\frac{\M_\C +\M_\R}{2}\bra 1\ket+\frac{\M_\C -\M_\R}{2}\bra-1\ket.
\]
Here, $\M_\C$ and $\M_\R$ are the numbers of complex and real solutions computed in Theorem~\ref{thm:enumerativeexample} and Proposition~\ref{prop:upperbound}, respectively, and $\op{ind}F_\bullet$ is the local index for a zero of a generic section of $\EE$ at a point $F_\bullet$ of $\Fl(\D)$.

The right-hand side is a very explicit quadratic form encoding correctly the enumerative results over the complex and real numbers. It also seems likely that the phenomenon in the Grassmannian case that the right-hand side is always of the form $a\bra 1\ket+b\bra-1\ket$ persists in the case of type A flag varieties. The left-hand side is still very inexplicit (and actually implicitly requires choices of orientations of bundles and local charts as well). Comparing to other Schubert calculus situations, one could try to rewrite the Euler class computation as an intersection problem. In the smooth cubic line count, the computation of the Euler class can be rewritten as 4-fold self-intersection of the Schubert variety corresponding to ${\rm c}_1(\SS^\vee)$. Ignoring the singularities, the fundamental class for the Schubert variety is essentially given by a choice of orientation of the normal bundle, and the local index at an intersection point is the rank 1 quadratic form $\bra u\ket$ where $u$ is the unit comparing the orientations on the tangent bundle and its decomposition into normal bundles to the intersecting Schubert varieties. We hope to make this explicit in the sequel to this paper which will deal with Schubert varieties in flag varieties and their fundamental classes, choices of orientations etc.

\subsection{Computing Euler classes}

The final subsection now provides some general computations of Euler classes of symmetric powers of rank 2 bundles, which is used in the proof of Theorem \ref{thm:enumerativeexample}. In particular, the definition of $p(a,b)$ is based on Example \ref{ex:Sym5rk2}. These computations are mainly based on the following proposition of Levine, cf.\ \cite[Theorem 8.1, Proposition 9.1]{levine:schur}.

\begin{proposition}\label{prop:levine}
  Let $A,B$ be rank 2 bundles with Euler classes $a={\rm e}(A)$, $b={\rm e}(B)$. Then
  $$ {\rm p}({\rm Sym}^mA)=\prod_{i=0}^{\lfloor m/2\rfloor}\bigl(1+(m-2i)^2a^2\bigr),\qquad {\rm p}(A\otimes B)=1+2(a^2+b^2)+(a^2-b^2)^2,$$
  $$ {\rm e}({\rm Sym}^{2r+1}A)=(2r+1)!! a^r,\qquad {\rm e}(A\otimes B)=a^2-b^2.$$
\end{proposition}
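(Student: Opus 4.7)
\smallskip
\noindent\textbf{Proof plan.} Both assertions are universal identities: they hold in general as soon as they hold for the universal rank~$2$ symplectic (equivalently $\op{SL}_2$-oriented) bundles over $\op{BSL}_2$ and $\op{BSL}_2\times\op{BSL}_2$, where the relevant $\mathbf{W}$-cohomology rings are polynomial rings $\op{W}(F)[a]$ and $\op{W}(F)[a,b]$ on the Euler classes. The strategy, following Levine \cite{levine:schur}, is the \emph{symplectic splitting principle}: the bundles $\op{Sym}^{2r+1}A$ and $A\otimes B$ carry canonical symplectic structures, coming from the symplectic $\op{SL}_2$- (resp.\ $\op{SL}_2\times\op{SL}_2$-) representations $\op{Sym}^{2r+1}V$ and $V\otimes V'$. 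Thus Corollary~\ref{cor:splitting} applied to the maximal-rank symplectic flag bundle of each yields a pullback which is injective on $\mathbf{W}$-cohomology and under which these bundles decompose into rank~$2$ symplectic summands. The formulas then follow by Whitney multiplicativity of ${\rm p}$ and ${\rm e}$ once one identifies the summands and their Euler classes.

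For $A\otimes B$: the weights of $V\otimes V'$ under the diagonal maximal torus of $\op{SL}_2\times\op{SL}_2$ are $(\pm 1,\pm 1)$, pairing symplectically into $\{(+,+),(-,-)\}$ and $\{(+,-),(-,+)\}$. Thus after pullback $A\otimes B$ splits as $L_+\oplus L_-$ with rank~$2$ symplectic summands of Euler classes $a+b$ and $a-b$; multiplicativity gives ${\rm e}(A\otimes B)=(a+b)(a-b)=a^2-b^2$ and ${\rm p}(A\otimes B)=\bigl(1+(a+b)^2\bigr)\bigl(1+(a-b)^2\bigr)=1+2(a^2+b^2)+(a^2-b^2)^2$. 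For $\op{Sym}^m A$: the weights of $\op{Sym}^m V$ are $m,m-2,\dots,-m$, pairing symplectically into $\{\pm(m-2i)\}$ for $0\le i\le \lfloor m/2\rfloor$ (with a trivial rank~$1$ summand corresponding to the weight $0$ when $m$ is even). The rank~$2$ symplectic summand at weight pair $\pm(m-2i)$ has Euler class $(m-2i)\,a$ and Pontryagin class $1+(m-2i)^2 a^2$, so Whitney multiplicativity immediately produces the claimed total Pontryagin class, and for $m=2r+1$ the product of the Euler classes of the summands gives the claimed formula for ${\rm e}(\op{Sym}^{2r+1}A)$.

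\smallskip
\noindent\textbf{Main obstacle.} The nontrivial step is the identification of the Euler class of the rank~$2$ summand corresponding to the weight pair $\pm k$: in the absence of a genuine ``splitting of the rank~$2$ bundle $A$ into line bundles'' in $\mathbf{W}$-cohomology, one cannot formally write ``$(k\alpha)(-k\alpha)=-k^2\alpha^2$'' with $\alpha$ a formal root. Instead, one must realise the weight decomposition intrinsically: restrict the universal symplectic $\op{Sym}^m$-bundle on $\op{BSL}_2$ to the classifying space of a maximal torus (where every weight-$k$ subrepresentation has Euler class $k$ times that of the standard one by direct computation of $\op{H}^\ast(\op{BSO}_2,\mathbf{W})$) and pull back via the injective map from Corollary~\ref{cor:splitting}. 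Once this base case is settled, the universality argument above propagates the formulas to arbitrary $A,B$.
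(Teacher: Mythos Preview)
The paper does not give an independent proof of this proposition; it simply records the formulas and cites Levine \cite[Theorem~8.1, Proposition~9.1]{levine:schur}. So there is no ``paper's own proof'' to compare against beyond that reference, and your sketch should be judged on whether it reconstructs Levine's argument.

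Your broad strategy (universal identity over $\op{BSL}_2$, weight decomposition, Whitney multiplicativity) is indeed Levine's, but there is a genuine gap in the execution. Invoking Corollary~\ref{cor:splitting} for the flag bundle of $\op{Sym}^{2r+1}A$ or of $A\otimes B$ produces rank~$2$ subquotients $\DD_i$ whose Euler classes are \emph{new free generators} of the cohomology of the flag bundle, constrained only by $\prod_i\op{e}(\DD_i)=\pi^\ast\op{e}(\EE)$ and $\prod_i\bigl(1+\op{e}(\DD_i)^2\bigr)=\pi^\ast\op{p}(\EE)$. Nothing in that corollary forces $\op{e}(\DD_i)$ to equal $(m-2i)a$ or $a\pm b$; those expressions live on the base, not the flag bundle, and the relation $\prod_i\op{e}(\DD_i)=\pi^\ast\op{e}(\EE)$ is exactly the quantity you are trying to compute. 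So the step ``after pullback $A\otimes B$ splits as $L_+\oplus L_-$ with Euler classes $a+b$ and $a-b$'' is unjustified. There is also a small error: the Frobenius--Schur indicator of $V\boxtimes V'$ for $\op{SL}_2\times\op{SL}_2$ is $(-1)(-1)=+1$, so $A\otimes B$ carries a canonical \emph{orthogonal} form, not a symplectic one; the symplectic splitting principle does not apply to it as stated.

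What is actually needed is a pullback along which the bundle decomposes into \emph{specific} rank~$2$ pieces whose Euler classes can be computed in terms of $a$ (resp.\ $a,b$). Levine achieves this using the normaliser $N=N_{\op{SL}_2}(T)$: as an $N$-representation $\op{Sym}^m V$ decomposes into the $2$-dimensional pieces $\rho_j$ (weights~$\pm j$), the pullback $BN\to\op{BSL}_2$ is injective on $\mathbf{W}$-cohomology, and the heart of the matter is the identity $\op{e}(\rho_j)=j\cdot\op{e}(\rho_1)$ over $BN$, proved by an explicit computation with the group structure of $N$. Your ``Main obstacle'' paragraph gestures towards restricting to a torus, but restriction to the split torus $BT=B\mathbb{G}_m$ kills the Euler class (since ${\rm H}^{>0}(B\mathbb{G}_m,\mathbf{W})=0$), and the reference to $\op{BSO}_2$ does not obviously supply the needed injectivity either. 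The missing ingredient is precisely this $N(T)$-level identification, which is the substantive content of \cite[Theorem~8.1, Proposition~9.1]{levine:schur}.
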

We will now consider some higher rank analogues, following the outline already given in \cite[Remark 9.2]{levine:schur}.

\subsubsection{Tensor products}

In order to discuss Euler classes of tensor products we recall orientations for tensor product bundles, cf.\ \cite[Section 9]{levine:schur}.

\begin{definition}
  \label{def:tensor-orientation}
  Let $A,B$ be vector bundles with orientations $\omega_A\colon \det A\xrightarrow{\cong}\L^{\otimes 2}$ and $\omega_B\colon \det B\xrightarrow{\cong}\mathscr{M}^{\otimes 2}$. Then we define the orientation on $A\otimes B$ to be
  \[
  \omega_A\wedge\omega_B\colon \det(A\otimes B)\xrightarrow{\varphi_{AB}^{-1}} \det A\otimes \det B\xrightarrow{\omega_A\otimes\omega_B} \L^{\otimes 2}\otimes\mathscr{M}^{\otimes 2}\xrightarrow{\cong}(\L\otimes\mathscr{M})^{\otimes 2}.
  \]
  Here, the isomorphism $\varphi_{AB}\colon\det A\otimes \det B\to \det(A\otimes B)$ is induced by the bilinear map 
  \[	\begin{split}
    \varphi_{AB}\colon\bigwedge^k A\times \bigwedge^l B&\to \bigwedge^{k\cdot l}(A\otimes B)\\
    (a_1\wedge\ldots \wedge a_k,b_1\wedge\ldots \wedge b_l)&\mapsto a_1\otimes b_1\wedge \ldots \wedge a_1\otimes b_l\wedge\ldots
    \wedge a_k\otimes b_1\wedge \ldots \wedge a_k\otimes b_l
  \end{split}
  \]
  taken in the lexicographical order, and $k=\op{rk}A$ and $l=\op{rk} B$. The last isomorphism simply exchanges two tensor factors. 
\end{definition}

Note that this orientation and subsequently the Euler class depends on the order of $A$ and $B$. The following proposition describes how the Euler classes for $A\otimes B$ and $B\otimes A$ are related. 

\begin{proposition}
  If $A$ and $B$ are oriented bundles, then with the induced orientations:
  \[{\rm e}(A\otimes B)=\langle-1\rangle^N{\rm e}(B\otimes A),\]
  where $N=\binom{\operatorname{rk} A}{2}\cdot \binom{\operatorname{rk} B}{2}$.
\end{proposition}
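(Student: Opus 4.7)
The plan is to compare the two Euler classes via the tautological swap isomorphism $\tau\colon A\otimes B\xrightarrow{\cong}B\otimes A$, $a\otimes b\mapsto b\otimes a$, and to measure the discrepancy between the two induced orientations. First I would apply Lemma~\ref{lem:euler-iso} to $\tau$ in its orientable form: taking the orientation $\omega_B\wedge\omega_A$ on $B\otimes A$ and the pulled-back orientation $(\omega_B\wedge\omega_A)\circ\det\tau$ on $A\otimes B$ produces a compatible pair of orientations, so the associated Euler classes in untwisted Chow--Witt groups are matched by $\tau^\ast$. It then remains to compare this pulled-back orientation on $A\otimes B$ with the intrinsic orientation $\omega_A\wedge\omega_B$ from Definition~\ref{def:tensor-orientation}, and Lemma~\ref{lem:change-orientation} will translate the resulting unit $u\in\mathscr{O}^\times(X)$ into the factor $\langle u\rangle$ relating the two versions of ${\rm e}(A\otimes B)$.

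Next I would carry out this orientation comparison locally. In a trivialization with bases $\{a_1,\dots,a_k\}$ of $A$ and $\{b_1,\dots,b_l\}$ of $B$ (with $k=\op{rk}A$, $l=\op{rk}B$), the intrinsic orientation $\omega_A\wedge\omega_B$ picks out the generator
\[
g=a_1\otimes b_1\wedge a_1\otimes b_2\wedge\cdots\wedge a_k\otimes b_l\in\det(A\otimes B)
\]
with factors in lexicographic order on the pairs $(i,j)$, whereas the pulled-back orientation yields the same tensor monomials but arranged in the order induced by lexicographic order on the pairs $(j,i)$. The two generators of the free rank-one module $\det(A\otimes B)$ therefore differ by the sign of the permutation $\pi_{k,l}$ converting one ordering to the other. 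A direct inversion count now shows that a pair $\bigl((i,j),(i',j')\bigr)$ is an inversion of $\pi_{k,l}$ precisely when $i<i'$ and $j>j'$, so $\pi_{k,l}$ has exactly $\binom{k}{2}\binom{l}{2}$ inversions; hence the comparison unit is $u=(-1)^N$ with $N=\binom{k}{2}\binom{l}{2}$.

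Combining the two steps through Lemma~\ref{lem:change-orientation} then yields
\[
{\rm e}(A\otimes B)=\langle u\rangle\,{\rm e}(B\otimes A)=\langle-1\rangle^N\,{\rm e}(B\otimes A),
\]
which is the claimed formula. The whole argument is essentially formal once Definition~\ref{def:tensor-orientation} together with Lemmas~\ref{lem:euler-iso} and~\ref{lem:change-orientation} are in hand; the only non-formal ingredient is the elementary inversion count for $\pi_{k,l}$, which is the main but very mild obstacle.
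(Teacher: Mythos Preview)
Your proposal is correct and follows essentially the same route as the paper: both arguments use the swap isomorphism together with Lemmas~\ref{lem:euler-iso} and~\ref{lem:change-orientation}, reducing the claim to the combinatorial sign of the permutation that compares the two lexicographic orderings of $\{1,\dots,k\}\times\{1,\dots,l\}$, which has $\binom{k}{2}\binom{l}{2}$ inversions. The paper packages the orientation comparison in a commutative diagram tracking the maps $\varphi_{AB}$, $\varphi_{BA}$ and the various flips (in particular checking that the identification $(\L\otimes\mathscr{M})^{\otimes 2}\cong(\mathscr{M}\otimes\L)^{\otimes 2}$ contributes no extra sign), whereas you compute directly with local bases; these are two presentations of the same argument.
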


\begin{proof}
 Consider the following diagram:
  \[\xymatrix{
    \det (A\otimes B)\ar[d]^{\det F} \ar[r]^{\varphi_{AB}^{-1}}& \det A\otimes\det B\ar[d]^{(-1)^N\cdot F'}\ar[rr]^-{\omega_A\otimes \omega_B}&&(\L\otimes\mathscr{M})^{\otimes 2}\ar[d]^{(-1)^N(F'')^{\otimes 2}}&\\			
    \det(B\otimes A) \ar[r]_{\varphi_{BA}^{-1}} & \det B \otimes \det A\ar[rr]_-{\omega_B\otimes \omega_A}&& (\mathscr{M}\otimes\L)^{\otimes 2}& \phantom{a} \hspace{-0.5 cm},
  }
  \]
  where $F\colon A\otimes B\to B\otimes A$, $F'\colon \det A\otimes \det B\to \det B\otimes \det A$ and $F''\colon \L\otimes\mathscr{M}\to\mathscr{M}\otimes\L$ are the appropriate flip maps. Commutativity of the left square is a consequence of the following property of lexicographic orderings: if $M$ and $N$ are finite sets, then the relative sign of the lexicographic ordering of their product $M\times N$ with respect to the lexicographic ordering of $N\times M$ is $(-1)^{\binom{|M|}{2}\cdot \binom{|N|}{2}}$ (after identifying $(m,n)$ with $(n,m)$).\footnote{It is a nice combinatorics exercise to check that the number of inversions is equal to the number of pairs $(\{m\neq m'\},\{n\neq n'\})$.} The commutativity of the right square follows: it arises from an obviously commutative square with $\L^{\otimes 2}\otimes\mathscr{M}^{\otimes 2}\xrightarrow{\cong} \mathscr{M}^{\otimes 2}\otimes\L^{\otimes 2}$ on the right-hand side by multiplying the vertical maps with $(-1)^N$ and making an even number of flips of tensor factors in the right-hand vertical map. The claim then follows from Lemmas~\ref{lem:euler-iso} and \ref{lem:change-orientation}.
\end{proof}

Next we will give a formula for Euler classes of tensor products involving the Pontryagin classes of the bundles. First, we have to introduce some notation. Given indeterminates $v=(v_i)_{i\in \Z}$ and a partition $\la$, introduce the notation for the determinant:
\begin{equation}\label{eq:JacobiTrudidet}
	\De_{\la}(v):=\det(v_{\la_i+j-i})_{i,j=1\stb \ell(\la)}.
\end{equation}
The (second) Jacobi--Trudi formula \cite[I.(3.5)]{MacDonald} expresses the Schur polynomial in variables $x_i$ in terms of elementary symmetric polynomials $e_j(x_1\stb x_n)$:
$$ s_\la(x_1\stb x_n)=\De_{\la^T}(e_1\stb e_n),$$
where $e_j=0$ for $j>n$ and $j<0$.

For Pontryagin classes of even rank bundles we have a splitting principle, see Corollary~\ref{cor:splitting} and Remark~\ref{rem:splitting-principle}. In particular, this enables the use of the calculus of elementary symmetric polynomials and Schur polynomials. We give universal identities between characteristic classes by dealing with the universal case, e.g.\ by working over large enough Grassmannians, or by applying Corollary~\ref{cor:splitting} to the case at hand.

\begin{proposition}[Cauchy identity]\label{prop:Cauchy}
  Let $A$ and $B$ be vector bundles of rank $2m$ and $2n$, respectively. Then 
  $$ {\rm e}(A\otimes B)=\sum_{\la\se m\times n} (-1)^{|\tilde{\la}|}\De_{\la^T}\bigl({\rm p}(A)\bigr)\De_{\tilde{\la}^T}\bigl({\rm p}(B)\bigr),$$
  where $\la\se m\times n$ are partitions fitting into an $m\times n$ box, $\tilde{\la}$ denotes the dual partition to $\la\se m\times n$ and $\mu^T$ denotes the transpose of a partition $\mu$. For the definition of the determinant $\De_{\la}$, see Definition \eqref{eq:JacobiTrudidet}, ${\rm p}(A)=\bigl({\rm p}_{2i}(A)\bigr)_{i\in \Z}$ and ${\rm p}(B)=\bigl({\rm p}_{2i}(B)\bigr)_{i\in \Z}$ (with ${\rm p}_{<0}=0$).
\end{proposition}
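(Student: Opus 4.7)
The plan is to prove the identity by reducing to a polynomial identity in Pontryagin roots via the splitting principle, and then recognize that identity as a variant of the dual Cauchy identity for Schur polynomials. First, I would apply Corollary~\ref{cor:splitting} together with Remark~\ref{rem:splitting-principle} to choose an iterated maximal-rank flag bundle $\pi\colon X \to \Fl(\D)$ over which both $A$ and $B$ split as direct sums of rank-$2$ bundles, $\pi^*A \cong A_1 \oplus \cdots \oplus A_m$ and $\pi^*B \cong B_1 \oplus \cdots \oplus B_n$, and over which $\pi^*$ is injective on $\mathbf{W}$-cohomology. Setting $a_i := {\rm e}(A_i)$ and $b_j := {\rm e}(B_j)$, the Whitney sum formula for Pontryagin classes gives $\pi^*{\rm p}_{2k}(A) = e_k(a_1^2, \ldots, a_m^2)$, and by the second Jacobi--Trudi identity this translates to $\pi^*\De_{\la^T}\bigl({\rm p}(A)\bigr) = s_\la(a_1^2, \ldots, a_m^2)$, and analogously for $B$.

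Second, the Whitney sum formula for the Euler class, combined with the splitting $\pi^*(A\otimes B) \cong \bigoplus_{i,j}A_i\otimes B_j$, yields
\[
\pi^*{\rm e}(A\otimes B) \;=\; \prod_{i=1}^m\prod_{j=1}^n{\rm e}(A_i\otimes B_j) \;=\; \prod_{i=1}^m\prod_{j=1}^n\bigl(a_i^2 - b_j^2\bigr),
\]
where the last equality is the rank-$2$ tensor formula from Proposition~\ref{prop:levine}, applied summand by summand with orientations induced as in Definition~\ref{def:tensor-orientation}. Combining these two steps reduces the proposition to a polynomial identity in $\Z[x_1,\ldots,x_m,y_1,\ldots,y_n]$ with $x_i=a_i^2$, $y_j=b_j^2$, of the shape
\[
\prod_{i,j}\bigl(x_i-y_j\bigr) \;=\; \sum_{\la\subseteq m\times n}(-1)^{|\tilde\la|}\,s_\la(x_1,\ldots,x_m)\,s_{\tilde\la}(y_1,\ldots,y_n).
\]

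To prove this symmetric-function identity, my plan is to start from the standard dual Cauchy identity $\prod_{i,j}(1+x_iy_j) = \sum_{\la\subseteq m\times n} s_\la(x)\,s_{\la'}(y)$, substitute $x_i\mapsto x_i^{-1}$ and $y_j\mapsto -y_j$, multiply through by $(x_1\cdots x_m)^n$ to clear denominators, and then apply the rectangular complementation identity $s_\la(x_1^{-1},\ldots,x_m^{-1}) = (x_1\cdots x_m)^{-n} s_{\tilde\la}(x_1,\ldots,x_m)$ for $\la\subseteq m\times n$. After reindexing the sum so that complement and conjugation play their intended roles (using the fact that complementation commutes with conjugation of partitions inside the rectangle), the two identities are matched. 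Since $\pi^*$ is injective, the resulting identity on $X$ descends to the required identity on $\Fl(\D)$.

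The main obstacle is getting the precise combinatorial form of the Schur-polynomial identity right: namely, reconciling the conventions for $\tilde\la$ (complementation in the box versus conjugation), handling the sign $(-1)^{|\tilde\la|}$, and ensuring that the rectangular box matches the ranks of $A$ and $B$, since small test cases (say $m=n=2$) must check out term by term. A secondary technical point is orientation bookkeeping: the non-symmetry ${\rm e}(A\otimes B) = \langle -1\rangle^N{\rm e}(B\otimes A)$ needs to be handled compatibly with Definition~\ref{def:tensor-orientation} so that $a_i^2 - b_j^2$ appears on the nose rather than up to a sign twist.
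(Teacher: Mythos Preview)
Your proposal is correct and follows essentially the same approach as the paper: reduce via the splitting principle to the product $\prod_{i,j}(a_i^2-b_j^2)$ using Proposition~\ref{prop:levine}, then identify this with the Schur-polynomial expansion via a Cauchy-type identity and Jacobi--Trudi. The paper shortcuts your final step by directly citing the identity $\prod_{i,j}(\alpha_i+\beta_j)=\sum_{\la\subseteq m\times n}s_\la(\alpha)s_{\tilde\la}(\beta)$ from Macdonald~[I.4, Ex.~5] and substituting $\alpha_i=a_i^2$, $\beta_j=-b_j^2$, which produces the sign $(-1)^{|\tilde\la|}$ immediately by homogeneity of $s_{\tilde\la}$ and avoids your detour through inversion and rectangular complementation.
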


\begin{proof}
  Write ${\rm p}(A)=\prod_{i=1}^m(1+a_i^2)$ and ${\rm p}(B)=\prod_{j=1}^n(1+b_j^2)$. Then by Proposition~\ref{prop:levine}, we have 
  $$ {\rm e}(A\otimes B)=\prod_{i=1}^m\prod_{j=1}^n(a_i^2-b_j^2)$$
  and substituting $\al_i=a_i^2$ and $\be_i=-b_i^2$ in the classical Cauchy identity \cite[I.4. Ex.\ 5]{MacDonald}
  $$\prod_{i,j}(\al_i+\be_j)=\sum_{\la\se m\times n} s_\la(\al)s_{\tilde{\la}}(\be)$$
  proves the claim. Indeed, since $e_j(a_1^2\stb a_m^2)=\p_j(A)$ and $e_j(b_1^2\stb b_n^2)=\p_j(B)$, so by \eqref{eq:JacobiTrudidet}:
  \[
  e(A\otimes B)=\prod_{i=1}^m\prod_{j=1}^n(a_i^2-b_j^2)=\sum_{\la\se m\times n} s_\la(a_i^2)s_{\tilde{\la}}(-b_j^2)=\sum_{\la\se m\times n} \De_{\la^T}\bigl({\rm p}(A)\bigr)(-1)^{|\tilde \la|}\De_{\tilde{\la}^T}\bigl({\rm p}(B)\bigr)
  \]
\end{proof}

\begin{corollary}
  \label{cor:tensorrk2}
  In the situation of Proposition~\ref{prop:Cauchy}, if $n=1$, with ${\rm e}(B)=b$, then
  $${\rm e}(A\otimes B)=\sum_{k=0}^m(-1)^k{\rm p}_{2(m-k)}(A)b^{2k}.$$
\end{corollary}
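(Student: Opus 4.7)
The plan is to specialize Proposition~\ref{prop:Cauchy} to $n=1$, where the Cauchy sum collapses dramatically. The most transparent route is to reuse the splitting picture from the proof of that proposition, and then just expand a single-variable polynomial.

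First I would invoke the splitting principle for $\mathbf{W}$-cohomology via Corollary~\ref{cor:splitting} together with Remark~\ref{rem:splitting-principle}: pulling back to an appropriate $(2,\dots,2)$-flag bundle of $A$ gives an injection on $\mathbf{W}$-cohomology, so we may replace $A$ by a direct sum $A_1\oplus\cdots\oplus A_m$ of rank $2$ bundles with Euler classes $a_i={\rm e}(A_i)$. Distributing then gives $A\otimes B=\bigoplus_i (A_i\otimes B)$, and combining the Whitney sum formula with the rank-2 computation ${\rm e}(A_i\otimes B)=a_i^2-b^2$ from Proposition~\ref{prop:levine} yields
\[{\rm e}(A\otimes B)=\prod_{i=1}^m(a_i^2-b^2)=\sum_{k=0}^m(-1)^{k}\,e_{m-k}(a_1^2,\dots,a_m^2)\,b^{2k}.\]
The identification $e_j(a_1^2,\dots,a_m^2)={\rm p}_{2j}(A)$ built into the splitting principle for Pontryagin classes then completes the substitution and gives the claimed formula.

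I don't anticipate any real obstacle here: the splitting step is a direct appeal to Corollary~\ref{cor:splitting}, and the remaining computation is an elementary polynomial expansion in $b^2$. Equivalently, one could unwind the Jacobi--Trudi determinants in Proposition~\ref{prop:Cauchy} directly: the only partitions in the sum are $\la=(1^k)\subseteq(1^m)$ for $0\le k\le m$, giving $\La^T=(k)$ and $\tilde\la^T=(1^{m-k})$, so $\De_{(k)}({\rm p}(A))={\rm p}_{2k}(A)$, while $\De_{(1^{m-k})}({\rm p}(B))$ collapses to $b^{2(m-k)}$ because ${\rm p}(B)=1+b^2$ forces the corresponding Jacobi--Trudi matrix to be bidiagonal with $b^2$ on the diagonal. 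Reindexing $k\mapsto m-k$ then produces the claimed signs and indexing.
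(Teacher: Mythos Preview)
Your proposal is correct and mirrors the paper's own proof almost exactly: the paper also gives both the direct splitting-principle computation via Corollary~\ref{cor:splitting}, Remark~\ref{rem:splitting-principle}, and Proposition~\ref{prop:levine} (expanding $\prod_i(a_i^2-b^2)$ and identifying $e_j(a_i^2)={\rm p}_{2j}(A)$), and the specialization of Proposition~\ref{prop:Cauchy} to one-column partitions $\lambda=(1^k)$. Your unwinding of the Jacobi--Trudi determinant for $B$ is a bit more explicit than the paper's, but the substance is identical.
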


\begin{proof}
  This is a special case of Proposition~\ref{prop:Cauchy} when $n=1$, the sum goes over the one-column partitions $1^i\se m\times 1$ and $\De_{i}\bigl(\p(A)\bigr)=\p_i(A)$.

  Alternatively, it can also be seen directly. Using the splitting principle, cf.\ Corollary~\ref{cor:splitting} and Remark~\ref{rem:splitting-principle}, 
  we can assume $A=A_1\oplus\cdots \oplus A_m$ with rank $2$ bundles $A_i$ with Euler class ${\rm e}(A_i)=a_i$. By multiplicativity of the Euler class and Proposition~\ref{prop:levine}, we have
  $$ {\rm e}(A\otimes B)=\prod_{i=1}^m(a_i^2-b^2)=\sum_{k=0}^{m} e_{m-k}(a_i^2)(-b^2)^k,$$
  where $e_j(x_i)$ is the $j$th elementary symmetric polynomial in variables $x_i$. The claim follows from the splitting principle, expressing the Pontryagin classes as elementary symmetric polynomials in the Pontryagin roots, as in \cite[Proposition 4.8]{chowwitt}. 
\end{proof}

\subsubsection{Symmetric powers}

Now we turn to tensor products of symmetric powers. Introduce the notation for the ``quadratic specialization'' of Schur polynomials; for $M\in \Z$, let
$$ q_\la(M):=s_\la\bigl(M^2, (M-2)^2, \ldots\bigr),$$
where the last element is 1 or 0 depending on the parity of $M$. 
\begin{theorem}
  Let $A,B$ be rank 2 bundles with Euler classes $a={\rm e}(A)$, $b={\rm e}(B)$. Let 
  $$ E=\sum_{\la\se m\times n}(-1)^{|\tilde{\la}|}q_\la(M)q_{\tilde{\la}}(N)a^{2|\la|}b^{2|\tilde{\la}|},$$
where $m=\lfloor\frac{M+1}{2}\rfloor$, $n=\lfloor\frac{N+1}{2}\rfloor$. Then 
  $$ {\rm e}\bigl({\rm Sym}^M(A)\otimes \op{Sym}^N(B)\bigr)=\begin{cases}
		E\qquad &\text{if both }M, N \text{ are odd},\\
		E\cdot N!!\cdot b^n\qquad &\text{if }M \text{ is even and }  N \text{ is odd},\\
		E\cdot M!!\cdot a^m\qquad &\text{if }N \text{ is even and }  M \text{ is odd},\\
		0\qquad &\text{if both }M, N \text{ are even}.
  \end{cases}
  $$
\end{theorem}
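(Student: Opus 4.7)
My approach generalizes the argument behind Proposition~\ref{prop:Cauchy} (which handled the rank-2 tensor product case), by formally splitting both symmetric powers into rank-2 pieces, distributing the tensor product, and applying the Cauchy identity. The total Pontryagin factorization $\p(\op{Sym}^M A) = \prod_{i=0}^{\lfloor M/2\rfloor}\bigl(1 + (M-2i)^2 a^2\bigr)$ from Proposition~\ref{prop:levine} models $\op{Sym}^M A$ for characteristic-class purposes as a virtual direct sum $V_0 \oplus \cdots \oplus V_{m-1}$ of rank-2 bundles with ${\rm e}(V_i) = (M-2i)a$, together with an additional trivial line bundle summand $\mathcal{O}$ when $M$ is even (corresponding to the degenerate factor $1$ at $i = M/2$). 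When $M$ is odd, $\op{Sym}^M A$ has even rank $2m$ and the splitting of Corollary~\ref{cor:splitting} applies directly; when $M$ is even, I first stabilize by a trivial line bundle to reach even rank and then invoke the splitting principle. An analogous decomposition holds for $\op{Sym}^N B$ with rank-2 pieces $W_0, \ldots, W_{n-1}$ and ${\rm e}(W_j) = (N-2j)b$.

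With these formal decompositions, I expand $\op{Sym}^M A \otimes \op{Sym}^N B$ by distributivity of tensor over direct sum, and compute the Euler class using the Whitney sum formula together with the rank-2 tensor formula ${\rm e}(V_i \otimes W_j) = (M-2i)^2 a^2 - (N-2j)^2 b^2$ from Proposition~\ref{prop:levine}. The four parity cases then proceed as follows. When $M, N$ are both odd, $\op{Sym}^M A$ and $\op{Sym}^N B$ are formally of rank $2m$ and $2n$ respectively, so Proposition~\ref{prop:Cauchy} applies directly and yields the sum $\sum_{\lambda \se m \times n}(-1)^{|\tilde\lambda|}\Delta_{\lambda^T}(\p(\op{Sym}^M A))\,\Delta_{\tilde\lambda^T}(\p(\op{Sym}^N B))$; the Jacobi--Trudi determinant evaluates to $\Delta_{\lambda^T}(\p(\op{Sym}^M A)) = s_\lambda((M-2i)^2 a^2) = a^{2|\lambda|}q_\lambda(M)$, and similarly for $\op{Sym}^N B$, recovering exactly $E$. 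When $M$ is even and $N$ odd (and symmetrically for the other mixed case), the trivial summand $\mathcal{O}$ of $\op{Sym}^M A$ contributes an extra direct summand $\mathcal{O} \otimes \op{Sym}^N B = \op{Sym}^N B$ in the tensor product, whose Euler class $N!! \cdot b^n$ (from Proposition~\ref{prop:levine}) multiplies $E$. When both $M, N$ are even, the tensor product contains $\mathcal{O} \otimes \mathcal{O} = \mathcal{O}$ as a direct summand in the formal splitting, which produces a nowhere-vanishing section and thus forces the Euler class to vanish.

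The principal subtlety is rigorously justifying the formal splitting when the symmetric power has odd rank, since Corollary~\ref{cor:splitting} is formulated only for even-rank bundles. The mixed-parity cases are handled cleanly by the identity ${\rm e}\bigl((\op{Sym}^M A \oplus \mathcal{O}) \otimes \op{Sym}^N B\bigr) = {\rm e}(\op{Sym}^M A \otimes \op{Sym}^N B) \cdot {\rm e}(\op{Sym}^N B)$: the stabilized bundle on the left has even rank and splits literally by the splitting principle, and since ${\rm e}(\op{Sym}^N B) = N!! b^n$ is a nonzero divisor (as can be checked on the universal classifying space), one solves for the desired Euler class. The both-even case, where this division is unavailable, is most transparently handled by working universally: since $\op{Sym}^M A \otimes \op{Sym}^N B$ is constructed functorially from $A$ and $B$, its Euler class is a universal polynomial in $a$ and $b$ that can be computed by pullback to any convenient test scheme. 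Combining this with the self-duality forced by Proposition~\ref{prop:euler-dual} for the odd total rank $(M+1)(N+1)$, together with the structure of the universal coefficient ring ${\rm W}(F)[a,b]$ in the relevant twists, produces the required vanishing.
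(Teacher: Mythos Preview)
Your odd--odd case is identical to the paper's. For the mixed-parity case you take a genuinely different route: the paper does not stabilize and divide, but instead invokes Levine's structural result (from the proof of \cite[Theorem~8.1]{levine:schur}) that for $M$ even, after pullback along the normalizer-of-torus reduction of $A$, the bundle $\op{Sym}^M A$ already splits as a trivial line plus a rank-$2m$ piece whose rank-$2$ summands have Euler-class squares $(M{-}2i)^2a^2$; tensoring with $\op{Sym}^N B$ and applying the Whitney formula then gives $E\cdot N!!\,b^n$ directly, with no division step. Your stabilize-and-divide approach is viable in principle---$N!!$ is odd and ${\rm W}(F)$ has only $2$-primary torsion, so $N!!\,b^n$ is indeed a nonzero divisor in the universal ring---but note that applying Proposition~\ref{prop:Cauchy} to $(\op{Sym}^M A \oplus \mathcal{O})\otimes\op{Sym}^N B$ contributes a factor $\prod_{j=0}^{n-1}\bigl(0-(N{-}2j)^2b^2\bigr)=(-1)^n(N!!)^2b^{2n}$ from the zero Pontryagin root, so after dividing by $N!!\,b^n$ you obtain $(-1)^nE\cdot N!!\,b^n$. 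The extra sign $(-1)^n$ records the orientation discrepancy between computing $e(\mathcal{O}^{\oplus 2}\otimes W_j)$ via Levine's tensor formula (which gives $-e(W_j)^2$) and computing it as $e(W_j\oplus W_j)=e(W_j)^2$; the paper's route avoids ever forming $\mathcal{O}^{\oplus 2}$ and hence this mismatch.

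For the both-even case, your $\mathcal{O}\otimes\mathcal{O}$-summand observation in the second paragraph is exactly the paper's argument, made rigorous there via the same Levine splitting. Your third-paragraph justification through Proposition~\ref{prop:euler-dual}, however, has a genuine gap: self-duality of the odd-rank bundle together with $e(E^\vee)=-e(E)$ yields only $2\,e(E)=0$, which does not force $e(E)=0$ over fields where ${\rm W}(F)$ has $2$-torsion. A clean universal argument that does work is simply that $e\bigl(\op{Sym}^M A\otimes\op{Sym}^N B\bigr)$ lies in odd cohomological degree $(M{+}1)(N{+}1)$, whereas the total $\mathbf{W}$-cohomology of ${\rm B}{\rm GL}_2\times{\rm B}{\rm GL}_2$ is ${\rm W}(F)[a,b]$ with $|a|=|b|=2$ and hence concentrated in even degrees. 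If this is what you intended by ``the structure of ${\rm W}(F)[a,b]$ in the relevant twists'', then the appeal to Proposition~\ref{prop:euler-dual} is a red herring and should be dropped.
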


\begin{proof}
By Proposition~\ref{prop:levine}: 
  \[
  {\rm p}\bigl(\op{Sym}^M (A)\bigr)=\prod_{i=0}^{\lfloor M/2\rfloor }\bigl(1+(M-2i)^2a^2\bigr),
  \]
so
\[
\De_{\la^T}\Bigl({\rm p}_1\bigl(\op{Sym}^M (A)\bigr)\stb {\rm p}_m(\op{Sym}^M (A)\bigr)\Bigr)=s_\la\bigl(M^2a^2, (M-2)^2a^2,\ldots\bigr)=q_\la(M) a^{2|\la|}
\]
and a similar computation gives 
\[\De_{\la^T}\Bigl({\rm p}\bigl(\op{Sym}^N(B)\bigr)\Bigr)=q_{\la}(N)b^{2|\la|}.\] If $M,N$ are odd, then $\op{Sym}^M(A)$ and $\op{Sym}^N(B)$ are even rank bundles (of ranks $M+1$ and $N+1$), so Cauchy's identity (Proposition~\ref{prop:Cauchy}) directly applies, and we get the claim.

When $M$ is even, for rank 2 bundles $A$, the bundle $\op{Sym}^M(A)$ splits as direct sum of a one-dimensional representation and another representation which is obtained from $\op{Sym}^{M-1}(A)$ by a sign change in some direct summands, see the proof of \cite[Theorem 8.1]{levine:schur}. This influences the sign of the Euler class of the respective direct summand, see \cite[Theorem 7.1]{levine:schur}, but doesn't affect the Euler class of the symmetric power because the Euler class enters quadratically. In particular, we get an equality of Euler classes 
\[
  \op{e}\left(\op{Sym}^M(A)\otimes \op{Sym}^N(B)\right)=\op{e}\left(\op{Sym}^{M-1}(A)\otimes \op{Sym}^N(B)\oplus \op{Sym}^N(B)\right).
\]
Then we can apply the already established case when both $M-1$ and $N$ are odd, and Proposition~\ref{prop:levine} applies to the second term. The other odd-even case follows by a symmetric argument. Finally, when both $M,N$ are even, then $\op{Sym}^M(A)\otimes\op{Sym}^N(B)$ splits off a rank one trivial bundle, so the Euler class is 0.
\end{proof}

\Yboxdim{3pt}
\begin{example}\label{ex:Sym5rk2}
For ${\rm e}\bigl(\op{Sym}^3(A)\otimes \op{Sym}^2(B)\bigr)$, we have $m=2$ and $n=1$. The quadratic specialization of Schur polynomials are
$$ q_{\yng(1)}(3)=10,\qquad q_{\yng(1,1)}(3)=9,\qquad q_{\yng(1)}(2)=4,\qquad q_{\yng(2)}(2)=16,$$
so 
$$ {\rm e}\bigl(\op{Sym}^3(A)\otimes \op{Sym}^2(B)\bigr)=(9a^4-40a^2b^2+16b^4)3a^2.$$

For ${\rm e}\bigl(\op{Sym}^4(A)\otimes \op{Sym}^1(B)\bigr)$, we have $m=2$ and $n=1$. The quadratic specialization of Schur polynomials are
$$ q_{\yng(1)}(4)=20,\qquad q_{\yng(1,1)}(4)=64,\qquad q_{\yng(1)}(1)=1,\qquad q_{\yng(2)}(1)=1,$$
so
$$ {\rm e}\bigl(\op{Sym}^4(A)\otimes \op{Sym}^1(B)\bigr)=(64a^4-20a^2b^2+b^4)b.$$
\end{example}

\end{document}